\title{Fast growth of the number of periodic points arising from
 heterodimensional connections}
\author{Masayuki Asaoka, Katsutoshi Shinohara, and Dmitry Turaev}
\def\RR{\mathbb{R}}
\def\ZZ{\mathbb{Z}}
\def\dd{{\mathbf{d}}}
\def\cC{{\mathcal C}}
\def\cD{{\mathcal D}}
\def\cF{{\mathcal F}}
\def\cN{{\mathcal N}}
\def\cP{{\mathcal P}}
\def\cU{{\mathcal U}}
\def\cV{{\mathcal V}}
\def\cW{{\mathcal W}}
\def\cO{{\mathcal O}}
\def\Id{{\rm Id}}
\def\ra{{\rightarrow}}
\def\vphi{\varphi}
\def\loc{{\rm{loc}}}
\def\bl{{\rm{bl}}}
\def\hsp{{\hspace{3mm}}}
\DeclareMathOperator{\Diff}{Diff}
\DeclareMathOperator{\GL}{GL}
\DeclareMathOperator{\Mat}{{Mat}}
\DeclareMathOperator{\Fix}{Fix}
\DeclareMathOperator{\Per}{Per}
\DeclareMathOperator{\Int}{Int}
\DeclareMathOperator{\supp}{supp}
\DeclareMathOperator{\sgn}{sgn}
\DeclareMathOperator{\Ker}{Ker}
\newcommand\cl[1]{{\overline{#1}}}
\theoremstyle{plain}
\newtheorem{thm}{Theorem}[section]
\newtheorem{prop}[thm]{Proposition}
\newtheorem{lemma}[thm]{Lemma}
\newtheorem{cor}[thm]{Corollary}
\newtheorem*{Mthm}{Main Theorem}
\theoremstyle{definition}
\newtheorem{dfn}[thm]{Definition}
\theoremstyle{remark}
\newtheorem{rmk}[thm]{Remark}
\begin{document}
\maketitle

\begin{abstract}
We consider $C^r$-diffeomorphisms of a compact smooth manifold 
having a pair of robust heterodimensional cycles where $1 \leq r \leq +\infty$. 
We prove that if certain conditions about the signatures of 
non-linearities and Schwarzian derivatives of the transition maps 
are satisfied, then by giving $C^r$ arbitrarily small perturbation, 
we can produce a periodic point at which the first return map 
in the center direction is $C^r$-flat. 
As a consequence, we will prove that 
$C^r$-generic diffeomorphisms in the neighborhood 
of the initial diffeomorphism exhibit 
super-exponential growth of number of periodic points. 
We also give examples which show the necessity 
of the conditions on non-linearities and the Schwarzian derivatives. 

{ \medskip
\noindent \textbf{Keywords:} Partially hyperbolic diffeomorphisms, growth of periodic points, heterodimensional cycles. 

\noindent \textbf{2010 Mathematics Subject Classification:}\\
Primary: 37C35, Secondary:37D30, 37G25}
\end{abstract}

%
%

\section{Introduction}
In this paper 
we prove the $C^\infty$-generic super-exponential growth
 of the number of periodic points
 for a class of partially hyperbolic diffeomorphisms.
The growth rate of the number of periodic points
 as a function of their period appears to be determined
 by an interplay between dynamical properties and the regularity of the map.
The classical result by Artin and Mazur \cite{AM} is
 that for a dense set of smooth diffeomorphisms
 the growth is at most exponential with the period,
 independently of the type of the dynamics of the map.
On the other hand, for any axiom A diffeomorphism,
 the existence of a finite Markov partition also implies
 the at most exponential growth,
 independently of the regularity class of the diffeomorphism \cite{Bow}.
For smooth maps of an interval,
 the condition for the exponential growth is the non-flatness of
 all critical points \cite{MMS, KK}, 
 i.e., it is a regularity type condition.

In dimension two, however, 
when a diffeomorphism is not axiom A
 and belongs to the so-called Newhouse domain
 in the space of smooth diffeomorphisms
  where maps with homoclinic tangencies are dense,
 $C^\infty$-generic diffeomorphisms exhibit super-exponential
 growth of the number of periodic points.
Namely, it was discovered by Kaloshin \cite{Ka}
 that given any candidate upper bound for the growth rate of
 the number of periodic points
 this bound will be exceeded by a generic diffeomorphism
 from the Newhouse domain.
Thus, for a generic smooth non-hyperbolic map in dimension two and higher
 the uncontrollable growth of the number of periodic points appears to
 be a dynamical property,
 almost independent of the regularity of the map.
In the case of real-analytic non-hyperbolic maps the situation is less clear.
It is shown in \cite{Asa} that
 for a class of real-analytic area-preserving maps
 with an invariant KAM-circle
 an uncontrollable super-exponential growth of the number
 of periodic points is a dense phenomenon,
 but it is not known what will happen in the real-analytic case
 if the area-preservation property is dropped.

Here we focus on a different class of non-hyperbolic systems,
 namely, we consider partially hyperbolic diffeomorphisms
 with one-dimensional central direction.
Such maps cannot have homoclinic tangencies,
 so the above described results are not applicable.
In fact, for this case, the balance between dynamics and regularity
 in the question of the growth of the number of periodic points is
 shifted in a peculiar way.
Dynamically, the super-exponential growth here
 is related to persistent heterodimensional cycles:
 they cause the super-exponential growth of the number of periodic orbits 
 $C^1$-generically, see \cite{BDF}.
For a subclass where the heterodimensional cycle is
 embedded into a certain normally-hyperbolic invariant fibration by circles,
 the super-exponential growth is shown to be $C^\infty$-generic \cite{Be}.
For free semi-group actions on an interval,
which is a simplified model for partially hyperbolic diffeomorphisms we discuss in this paper,
 open conditions for a $C^\infty$-generic super-exponential growth are
 given in \cite{AST}.
However, in the same paper, we constructed  $C^2$-open and
 $C^3$-open classes of semi-group actions with heterodimensional cycles
 where the growth of the number of periodic orbits is at most exponential,
 even though these classes lie in the $C^1$-interior of the set of systems
 where the super-exponential growth is $C^1$-generic.  

The present paper is a sequel of \cite{AST}.
We show that main constructions can be transferred
 to the general case of partially hyperbolic diffeomorphisms.
Such generalization is non-trivial in several respects.
In particular, we do not require a large spectral 
gap assumption,
 so we do not have a smooth center foliation,
 {\it i.e.}, there is no reduction to a smooth skew-product system.  
Now, let us give rough description of our 
setting. The precise statement is given 
in Section~\ref{sec:outline}.

\medskip
{\bf Heteroclinic pairs.}
Let $n \geq 3$ and $M$ be a closed 
smooth $n$-dimensional manifold,.
Let $r \geq 2$, $\mathrm{Diff}^r(M)$ be the space of 
$C^r$-diffeomorphisms with $C^r$-topology and $f\in  \mathrm{Diff}^r(M)$.
Let $(p_1, p_2)$ and $(p_3, p_4)$ be two pairs of hyperbolic periodic points of $f$
(we do not exclude the case where 
$p_1 = p_3$ or $p_2 =p_4$). 
We assume that the unstable manifolds of $p_1$ and $p_3$
 have the same dimension 
$\mathrm{u}\text{-}\mathrm{ind}(p_1)= 
\mathrm{u}\text{-}\mathrm{ind}(p_3)=d+1$ and 
the unstable manifolds of $p_2$ and $p_4$ have the same dimension 
$\mathrm{u}\text{-}\mathrm{ind}(p_2) = 
\mathrm{u}\text{-}\mathrm{ind}(p_4) = d$.
We assume the existence of two heteroclinic points
$q_i \in W^u(p_{2i-1})\cap W^s(p_{2i})$, $i=1, 2$,
 so the orbit of $q_1$ tends to $p_2$ at forward iterations of $f$
 and to $p_1$ at backward iterations,
 while the orbit of $q_2$ tends to $p_4$ at forward iterations 
 and to $p_3$ at backward iterations.

{\bf Partial hyperbolicity.}
We assume that each periodic point $p_i$, $i=1,\dots,4$ admits
 a partially hyperbolic splitting $E^{uu} \oplus E^c \oplus E^{ss}$
 where $\dim E^{uu} =d$ and $\dim E^c =1$
 (note that the subspace $E^c$ corresponds to
 the direction of the weakest expansion for $p_1$ and $p_3$
 and the weakest contraction for $p_2$ and $p_4$). 
This splitting is assumed to be extended to
 the neighborhood of the heteroclinic orbits $\cO(q_1)$ and $\cO(q_2)$.
We assume that the center direction has an orientation 
 and it is preserved under the iterations of $f$.  

{\bf Signatures of the heteroclinic orbits.}
The partial hyperbolicity guarantees that the intersection 
 $W^u(p_{2i-1}) \cap W^s(p_{2i})$ is transverse near $q_i$ and
 is locally a one-dimensional curve $\ell_i$ tangent to $E^c$, $i=1,2$. 
We consider the restriction of $f$ to $\ell_i$,
 which gives  a one-dimensional $C^r$-map.
Then, following \cite{AST},  
 we can introduce the notion of
 the ``signature of the heteroclinic orbits''
 -- it is a pair of signatures of certain combinations
 of the derivatives up to order 3 of some iteration of the map $f|_{\ell_i}$,
 see Section \ref{ss:signature}.
We assume  that $q_1$ and $q_2$ have {\em opposite signatures}.

{\bf Blender.}
We assume that $f$ has a {\em blender}.
 Blender is a dynamical structure which produces
 robust connections by pseudo-orbits
 between invariant manifolds
 for which the sum of dimensions is lower than
 the dimension of the ambient space.
We provide a precise description in Section \ref{ss:blender}.
A blender is called $C^r$-robust
 if every diffeomorphism $g$ from a $C^r$-neighborhood of $f$
 has a blender that, in a certain sense, depends continuously on $g$.
We assume that there is a $C^r$-robust blender
 which ``links'' the points $\{p_i\}_{i=1, \ldots, 4}$.
Thus the set of points $p_i, q_i$ are
 all in the same chain-recurrence class $C^r$-robustly.

Furthermore, we assume that the partial hyperbolicity around $p_i$ and $q_i$
 is also extended to the neighborhood of the blender
 and the center direction has an orientation
 which is compatible with the iterations of $f$.  

\medskip

The following is a rough description of our main result.

\begin{Mthm}
Take $r=1, \ldots, +\infty$
 and let $\cW^r \subset \mathrm{Diff}^r(M)$ be
 the open set of diffeomorphisms satisfying above conditions. 
Then for every sequence $(a_i) \subset \mathbb{N}$
 there exists a $C^r$-residual set 
 $\mathcal{R}^r=\mathcal{R}^r_{(a_i)} \subset \cW^r$
 such that for every $f \in \mathcal{R}^r$ we have 
\[
\limsup_{n \to +\infty} 
 \frac{ \#\{x \in M\mid f^n(x) =x \}}{a_n} = +\infty.  \qquad  (\star)  
\]
\end{Mthm}
We say that a diffeomorphism is \emph{super-exponential}
 if the condition $(\star)$ holds for some sequence $(a_n)$
 which grows more rapidly than any exponential function. 
Our result establishes the generic super-exponential growth.
 We point out that one cannot improve the statement (say, to open and dense)
 because of the above mentioned result of Artin and Mazur \cite{AM},
 which implies that every diffeomorphism can be
 $C^r$-approximated by one with at most exponential growth.

Notice that the case $r=1$ is dealt in a paper
 by Bonatti, D\'{i}az, and Fisher \cite{BDF}. 
The case $r \geq 2$ is quite different in nature,
as it employs the information about the higher order derivatives.
As we will see later in Section~\ref{sec:examples}, 
the condition on the signatures of heteroclinic orbits
are essential: we will give examples
which show that this condition can be indeed necessary.
\bigskip

Let us explain the idea of the proof. 
We say that a periodic point $p$ of $f$ is $r$-flat in the central direction
 if it has one-dimensional center manifold
 and the $r$-jet of the first return map restricted to the center manifold
 is identity, see Section~\ref{sec:outline} for the detail.
It is easy to see that
 if $f \in \mathrm{Diff}^r(M)$ has an $r$-flat periodic point of period $\pi$,
 then by adding a perturbation, arbitrarily small in  $C^r$,
 we can create as many points of period $\pi$ as we want.
 Thus, our theorem is the consequence of the following perturbation result.
\begin{prop}
\label{p.flatproduction}
Let $f\in \cW^r$ be a $C^{\infty}$ diffeomorphism. 
For any $n_0\geq 1$ and $r \geq 1$,
 arbitrarily close to $f$ in the $C^{\infty}$ topology,
 there exists  $g\in \cW^r$ which has  a periodic point,
 $r$-flat in the central direction,
 whose least period is greater than $n_0$. 
\end{prop}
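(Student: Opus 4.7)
The plan is to build the $r$-flat periodic point as a center fixed point of a first return map of the form
\[
\Phi = f^{N_4} \circ T_2 \circ f^{N_3} \circ B' \circ f^{N_2} \circ T_1 \circ f^{N_1} \circ B,
\]
restricted to a chosen one-dimensional center manifold. Here $T_i$ denotes the transition map along $\cO(q_i)$, $f^{N_i}$ represents long iterations near $p_i$ whose restriction to the center leaf converges to a linear map with the expansion or contraction ratio of $p_i$, and $B, B'$ are blender-induced transitions. Near each $p_i$ the partial hyperbolicity furnishes $C^r$ local coordinates separating $E^{uu}\oplus E^c\oplus E^{ss}$, and the restriction of $T_i$ to $\ell_i$ is a $C^r$ one-dimensional germ whose $r$-jet is captured by the signature data of Section~\ref{ss:signature}. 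I would therefore reduce the proposition to a one-dimensional question: perturb the local data so that, for some choice of iteration counts $N_i$ and of blender pseudo-orbit realization, the center restriction of $\Phi$ has identity $r$-jet at a fixed point.

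The heart of the proof would be an inductive cancellation of the $k$-th order jet of the center restriction of $\Phi$ for $k=2,3,\dots,r$. At each step I would exploit that composing a transition of a given signature with a large rescaling of the center coordinate and then with a second transition of \emph{opposite} signature produces, in the spirit of \cite{AST}, a one-parameter family of one-dimensional germs whose $k$-th derivative at the fixed point crosses zero as the rescaling parameter is varied. Choosing the $N_i$ successively to kill first the quadratic term, then the cubic, and so on up to order $r$, and performing $C^\infty$-small perturbations localized in short orbit segments to free additional parameters whenever the available $N_i$'s are exhausted, should yield an $r$-flat fixed point. The opposite-signature hypothesis is essential: with equal signatures the transitions contribute with a fixed sign at each order, and the sign change needed in the auxiliary family is not available.

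The main new obstacle, absent in the semigroup setting of \cite{AST}, is the lack of a smooth center foliation: without a large spectral gap one cannot reduce the problem to a smooth skew-product model as in \cite{Be}. I would deal with this by carrying out all jet computations intrinsically on individual $C^r$ center manifolds of the candidate orbit, using only that each factor in $\Phi$ sends center leaves to center leaves $C^r$-smoothly, rather than invoking a global foliation; the linearization-type convergence of $f^{N_i}$ on center leaves near $p_i$ can be extracted from standard $C^r$ partially hyperbolic normal form theory. A second obstacle is keeping $g \in \cW^r$ after perturbation: robustness of the blender and openness of partial hyperbolicity, of the heteroclinic transversality and of the signature conditions ensure this provided all perturbations are $C^\infty$-small and supported away from $\cO(q_1)\cup\cO(q_2)$ and from the blender, which can be arranged by localizing them on orbit segments close to the $p_i$. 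Finally, the least period of the resulting point is essentially $N_1+N_2+N_3+N_4$ plus a bounded contribution from transitions and blender passes, so it exceeds any prescribed $n_0$ once the $N_i$ are taken large enough, while the corresponding $C^\infty$ size of the perturbation can be made to shrink to zero as the $N_i$ grow.
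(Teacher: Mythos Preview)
Your proposal captures the right spirit---build a long orbit threading the heteroclinic pairs and the blender, then control the center $r$-jet of the first return map---but there is a genuine gap in the mechanism for cancelling the higher derivatives.

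You propose a single return map with four iteration parameters $N_1,\dots,N_4$ and claim one can ``choose the $N_i$ successively to kill first the quadratic term, then the cubic, and so on up to order $r$.'' This does not work. Near each hyperbolic $p_i$ the center dynamics in linearizing coordinates is multiplication by $\lambda_c(p_i)^{N_i}$, which is \emph{linear}; hence $A$ and $S$ of each block $f^{N_i}$ vanish, and by the cocycle formulas (\ref{cocpr}) the nonlinearity and Schwarzian of $\Phi$ are fixed linear combinations of those of $T_1,T_2,B,B'$ with coefficients that are products of the $\lambda_c(p_i)^{N_i}$. The integer parameters $N_i$ therefore act only through rescalings and cannot independently tune $r-1$ higher-order coefficients. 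Your escape clause (``free additional parameters whenever the available $N_i$'s are exhausted'') is where the entire content lies, and it is not addressed.

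The paper's route is structurally different: it is a genuine induction on the flatness order. One first creates \emph{many} $1$-flat periodic points near the cycle (Propositions \ref{prop:1-flat 1} and \ref{prop:1-flat + AS}), each inheriting the signature of the relevant heteroclinic point; this step alone requires the delicate $C^3$ comparison between the return map on the center manifold of the candidate orbit and the transition map $\psi_q$ (Sections 5.2--5.3), which is where the absence of a smooth center foliation is actually confronted. Then, near each $1$-flat point one passes to $C^\infty$ Takens coordinates (Lemma \ref{lemma:Takens}), so that the center germ is a well-defined smooth one-dimensional diffeomorphism. Octuples of $k$-flat points, linked through the blender by the connecting Lemma \ref{lemma:connecting}, are fed into Proposition \ref{prop:germ} (the germ composition result from \cite{AST}) via Lemma \ref{lemma:r-flat 2} to produce a single $(k+1)$-flat point. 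Iterating this from $k=1$ to $k=r-1$ consumes $8^{r-1}$ one-flat points. The opposite-signature hypothesis enters only at the steps $k=1$ and $k=2$, matching conditions (\ref{keq1a}) and (\ref{keq2a}); for $k\geq 3$ no sign condition is needed. Your single-shot scheme with four $N_i$'s does not supply the hierarchy of intermediate flat points, the Takens normal forms near them, or the combinatorics of Proposition \ref{prop:germ} that make the cancellation go through.
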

Once this statement is proven,
 a standard genericity argument as in \cite{Ka, AST}
 leads to the conclusion of the theorem.

We prove Proposition~\ref{p.flatproduction}
by carefully investigating the local behavior of 
points around \emph{heterodimensional cycles}
together with our construction from \cite{AST}. 
The analysis we carry out has similar flavor as
done in the serial researches by D\'{i}az 
and D\'{i}az-Rocha (see for instance \cite{D, DR}).
We begin with the initial heteroclinic cycles. 
The saddle $p_{2i-1}$ is connected to the saddle $p_{2i}$ by
the orbit of the heteroclinic point $q_i$, and $p_{2i}$ is connected to $p_{2i-1}$ via blender, $i=1,2$. 
We show that by adding an arbitrarily $C^{r}$-small perturbation to $f$ one can create 
a 1-flat periodic point near any of these cycles.
This 1-flatness is obtained by ``cross breeding'' 
the expanding behavior of $p_{2i-1}$ and the contracting 
behavior of $p_{2i}$ in the center direction. 
As the cycles are robust, 
we can repeat the procedure as many times as we want
and accordingly we can create 
as many 1-flat periodic points as we want. We also prove that 
these newly created periodic points are all connected through blenders.  

Next, we create $2$-flat periodic points 
 by adding perturbations near heteroclinic connections
 of the $1$-flat periodic points we have obtained.
Since the number of $1$-flat periodic points can be as large as we want,
 we also have as many $2$-flat periodic points as we want.
Then we create heteroclinic connections with many $3$-flat points
 and after that we continue inductively
 until we obtain an $r$-flat periodic point.
Note that $1$-flat, $2$-flat, $3$-flat, and $4$-flat periodic points are
 obtained by different procedures,
 while the procedure is the same for $k$-flat points starting with $k\geq 4$.

In the course of this induction, we need to take care of the following matters.
\begin{itemize}
\item We need to have control of the support of the perturbation
 so as to guarantee that each perturbation does not destroy
 previously constructed flat points. 
\item For the construction of $k$-flat points
 from the heteroclinic network of $(k-1)$-flat points with $k=2$ and $k=3$,
 we need to control the sign of the second derivative and,
 respectively, the Schwarzian derivative of the 
 first return map for the $(k-1)$-flat points.
Thus, while creating $(k-1)$-flat points on the previous step,
 we also need to control these derivatives.
\end{itemize}
These two difficulties are already appeared
 in the one-dimensional semi-group action case,
 and the way we solve this problem is similar to what was done in \cite{AST}.
In the multi-dimensional case we consider here,
 we have another substantially new problem:
\begin{itemize}
\item the holonomy between center leaves can be non-smooth,
 so there is no direct reduction of the dynamics
 (projected to the center direction)
 to those generated by iterations of a system of smooth maps of the interval. 
\end{itemize}
Overcoming this problem is one of the most demanding topics of this paper.

Note also that we have here a problem of creating connections
 between stable manifolds and unstable manifolds of periodic points
 by $C^r$-small perturbations
 (this is an integral part of our induction argument).
By the condition of the theorem,
 we know they are connected by pseudo-orbit through the blender. 
To transform the weak connection to a true heteroclinic intersection, 
 the known technique is the connecting lemma by Hayashi \cite{H},
 which is valid only for $C^1$-topology and cannot be used for our purpose. 
We circumvent this problem
 by reviewing the properties of the blender.
The weak connections which the blender produces are local in a sense,
 i.e., it does not have any intermediate orbits outside the blender.
This makes enough room for the perturbation,
 enabling us to obtain an appropriate $C^r$-connecting result. 

Finally, we explain the organization of this paper. 
In Section 2 we discuss basic properties of the objects employed,
 that is, dynamics around partially hyperbolic periodic points,
signatures of heteroclinic points, and blenders. 
In Section 3 we give the precise statement of the main theorem.
We also state several propositions
 which lead to the proof of the main theorem
 and discuss the general scheme of the proof. 
In Section 4 we prove a perturbation result
 which is used to produce flat periodic points. 
In Section 5 we discuss the construction of the one-flat periodic points.
In Section 6 we prove a perturbation result
 which produces $(r+1)$-flat periodic point
 from a heteroclinic network of $r$-flat periodic points.   
In Section 7 we discuss several examples
 which elucidate the importance of the assumptions about 
 the signatures in the Main Theorem.

{\bf Acknowledgments.} This work was supported by the grants
 14-41-00044 of RSF, EPSRC grant EP/P026001/1,
 the Royal Society grant IE141468, 
 JSPS KAKENHI Grant Numbers 26400085, 16k17609, and 18k03357.
The authors also acknowledge the support
 by JSPS Bilateral Open Partnership Joint Research Projects.


%
%

\section{Preliminaries}\label{s:prelimi}

In this section, we discuss basic definitions and notation necessary for giving the precise statement of the Main Theorem.

\subsection{Basic notation}\label{ss:basic}
Let $(M,p)$ denote a smooth manifold $M$ with a point $p\in M$.
By a \emph{local} map $f:(M_1,p_1) \ra (M_2,p_2)$
 we will mean a map defined in a neighborhood of $p_1\in M_1$,
 such that $f(p_1)=p_2$.
In the rest of this subsection,
 we consider the case where $M_1 = M_2 = \RR$ and $p_1 =p_2 =0$.

Let $F:(\RR,0) \ra (\RR,0)$ be a local $C^r$-map with $r \geq 1$.
By $F^{(s)}$, we denote the $s$-th derivative of $F$ at the origin.
We will also write $F'$, $F''$, and $F'''$ for the first, second,
 and third derivatives respectively.

For $1 \leq r \leq \infty$,
 let $\Diff_\loc^r(\RR,0)$ be the group of local $C^r$-maps
 $F:(\RR,0) \ra (\RR,0)$ with $F' \neq 0$.
We say that $F \in \Diff^r(\RR,0)$ is \emph{$1$-flat} if $F'(0)=1$.
For $2 \leq s \leq r$, we also say that $F$ is 
\emph{$s$-flat}
 if it is $1$-flat and $F^{(j)}=0$ for $2\leq j \leq s$.
The \emph{non-linearity} $A(F)$ and the \emph{Schwarzian derivative} $S(F)$
 for $F \in \Diff_\loc^r(\RR,0)$ are defined as follows:
\begin{equation}
\label{asdef}
A(F) = \frac{F''}{F'}, 
 \quad
S(F) = \frac{F'''}{F'}
  -\frac{3}{2}\left(\frac{F''}{F'}\right)^2.
\end{equation}
For $F,G \in \Diff_\loc^r(\RR,0)$,
 we have the following {\it cocycle property} of the non-linearity
 and the Schwarzian derivative (by direct computation):
\begin{align}
\label{cocpr}
 A(G \circ F)  =A(G) \cdot F' + A(F), \quad
 S(G \circ F)  =S(F) \cdot (F')^2 + S(F).
\end{align}
This implies that
\begin{align*}
 A(F^{-1})  = -A(F) \cdot (F')^{-1}, \quad 
 S(F^{-1})  = -S(F) \cdot (F')^{-2}.
\end{align*}
Applying these formulas,
 we obtain the following result (we leave the proof to the reader):
\begin{lemma}
\label{lemma:A,S conjugacy}
Let $F, H \in \Diff_\loc^3(\RR,0)$.
If $F$ is $1$-flat, then
\begin{align*}
 A(H^{-1} \circ F \circ H) = A(F) \cdot H', \quad 
 S(H^{-1} \circ F \circ H) = S(F) \cdot (H')^2.
\end{align*}
In particular, the signs $\sgn A(F)$ and $\sgn S(F)$ are not changed
 by a conjugacy by an orientation-preserving local $C^3$-diffeomorphism.
\end{lemma}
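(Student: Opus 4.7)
The plan is to derive both identities by applying the cocycle formulas \eqref{cocpr} together with the inversion formulas already displayed, to the decomposition $H^{-1} \circ F \circ H = (H^{-1} \circ F) \circ H$, iterating the cocycle relation twice. The role of the 1-flatness hypothesis $F'(0) = 1$ is to ensure that the contributions from $A(H)$ (respectively $S(H)$) cancel exactly, leaving only the expected multiplicative factor $H'(0)$ (respectively $(H'(0))^2$).

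For the non-linearity, first I would compute, via the cocycle relation for $A$, that $A(H^{-1} \circ F \circ H) = A(H^{-1} \circ F) \cdot H'(0) + A(H)$, and then apply the cocycle a second time to get $A(H^{-1} \circ F) = A(H^{-1}) \cdot F'(0) + A(F) = -(A(H)/H'(0)) \cdot F'(0) + A(F)$. Substituting and invoking $F'(0) = 1$ collapses the expression to $-A(H) + A(F) \cdot H'(0) + A(H) = A(F) \cdot H'(0)$, as claimed. The Schwarzian argument is formally identical: apply the cocycle relation for $S$ twice and use $S(H^{-1}) = -S(H)/(H')^2$; the $S(H)$ contributions cancel because $(F'(0))^2 = 1$ by 1-flatness, leaving $S(H^{-1} \circ F \circ H) = S(F) \cdot (H'(0))^2$.

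The final assertion about signs is then immediate: for an orientation-preserving local $C^3$-diffeomorphism $H$ one has $H'(0) > 0$, so $H'(0)$ is a positive multiplier in the formula for $A$ and $(H'(0))^2 > 0$ is a positive multiplier in the formula for $S$, whence $\sgn A(H^{-1} \circ F \circ H) = \sgn A(F)$ and $\sgn S(H^{-1} \circ F \circ H) = \sgn S(F)$.

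There is no genuine conceptual obstacle; the only matter that deserves care in the write-up is bookkeeping, namely that each instance of $A$, $S$, and of the derivatives is evaluated at the origin, which is legitimate since $F(0) = H(0) = H^{-1}(0) = 0$ and so no base-point shifts occur when applying the chain rule. The lemma is therefore a two-step algebraic exercise once the cocycle and inversion identities from the preceding display are in hand, with 1-flatness used solely to force the cancellation described above.
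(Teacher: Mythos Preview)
Your proof is correct and follows exactly the approach the paper indicates: the authors explicitly say the lemma is obtained by ``applying these formulas'' (the cocycle and inversion identities) and leave the details to the reader, which is precisely the two-step expansion plus cancellation via $F'(0)=1$ that you carry out. There is nothing to add.
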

\begin{rmk}
\label{rmk:AS}
Note that if $F$ is not $1$-flat,
 then $\sgn A(F)$ and $\sgn S(F)$ are not invariants of the conjugacy.
Indeed, in such case $F$ can be linearized by a smooth conjugacy,
 and both $A(F)$ and $S(F)$ vanish for linear maps.
\end{rmk}

For $r \geq 1$,
 let $\cP^r(\RR,0)$ be the set of real polynomials $P(t)$
 in one variable with $P(0)=0$ and $\deg(P) \leq r$.
We define the norm $\|P\|_r$ for $P=a_1t+\dots a_r t^r \in \cP^r(\RR,0)$ as
\begin{equation*}
\|P\|_r=|a_1|+\dots+|a_r|. 
\end{equation*}
We equip $\cP^r(\RR,0)$ with the topology induced by this norm.

\subsection{$c$-oriented transverse pairs of invariant cones}
Next, we give the definition of the pairs of invariant cones
 which describes the partial hyperbolicity of our dynamics
 and discuss the notion of their center orientation ($c$-orientation).

We call the triple $\dd=(d_c,d_s,d_u)$ of positive integers the {\it index}.
Throughout this paper, we fix the index $\dd=(d_c, d_s ,d_u)$ with $d_c=1$
 and denote $|\dd|=1+d_s+d_u$.
Let $M$ be a compact $|\dd|$-dimensional manifold
 and $U$ be an open subset of $M$.
Let $\|\cdot\|$ be a continuous (in $x$) 
metric of $T_xM$ for $x\in U$.
Let $\tilde{E}^c$, $\tilde{E}^s$, and 
 $\tilde{E}^u$ be continuous subbundles of $TM|_U$
 of dimensions $d_c$, $d_s$, and $d_u$,
 respectively such that
 $TM|_{U}=\tilde{E}^c \oplus \tilde{E}^s \oplus \tilde{E}^u$.
We say that the triplet $(\tilde{E}^c, \tilde{E}^s, \tilde{E}^u)$ gives
 a continuous splitting of $TM|_U$.
Given a positive constant $\alpha<1$,
 define the {\it transverse pair of cone fields} $(\cC^{cs},\cC^{cu})$
 {\it of index} $\dd$  as follows:
 for every $x\in U$
\begin{equation}
\label{ccsccu}
\begin{array}{ll}
\cC^{cs}(x) & =
 \{v=v^c+v^s+v^u \in T_x M \mid \|v^u\|\leq \alpha(\|v^c\|+\|v^s\|)\},\\
\cC^{cu}(x) & =
 \{v=v^c+v^s+v^u \in T_x M \mid \|v^s\|\leq \alpha(\|v^c\|+\|v^u\|)\},
\end{array}
\end{equation}
 where $v^c$, $v^s$, and $v^u$ denote the $\tilde{E}^c$
 (resp., $\tilde{E}^s$ and $\tilde{E}^u$) components of $v \in T_x M$.
For such a pair, we define {\it the center cone} $\cC^{c}(x)$ at $x$ as
\begin{equation}
\label{ccc}
 \cC^{c}(x)=\cC^{cs}(x) \cap C^{cu}(x). 
\end{equation}
Obviously,
 $\cC^{c}(x) \cap (\tilde{E}^s \oplus \tilde{E}^u)(x)=\{0\}$.
A {\it $c$-orientation} of the pair $(\cC^{cs},\cC^{cu})$
 is a continuous $1$-form $\omega$ over $U$
 such that $\Ker \omega \cap \cC^c(x) =\{0\}$.
Remark that $\cC^{c}(x)\setminus \{0\}$ consists of
 two connected components $\cC^c_+(x)$ and $\cC^c_-(x)$,
 where $\omega(v)>0$ for any $v\in \cC^c_+(x)$
 and $\omega(v)<0$ for any $v\in \cC^c_-(x)$.
We call the set $\cC^c_+(x)$ {\it the positive half} of $\cC^c(x)$.

Let $f$ be a $C^1$ diffeomorphism of $M$.
We say that a transverse pair $(\cC^{cs}, \cC^{cu})$ of cone fields
 is \emph{$f$-invariant} if
 $Df(\cC^{cu}(x)) \subset \Int \cC^{cu}(f(x)) \cup \{0\}$,
 $Df^{-1}(\cC^{cs}(f(x)) \subset \Int \cC^{cs}(x) \cup \{0\}$
 for any $x \in U \cap f^{-1}(U)$.
We also say that a $c$-orientation $\omega$
 of an $f$-invariant pair $(\cC^{cs}, \cC^{cu})$ is $f$-invariant
 if $\omega((Df)_x (v_c))>0$
 for all $x \in U \cap f^{-1}(U)$ and all
 $v_c \in \cC^c(x) \cap Df^{-1}(\cC^c(f(x)))$  with $\omega(v_c)>0$.

It is obvious that
 the invariance of $(\cC^{cs}, \cC^{cu})$ and $\omega$ persists
under $C^1$-small perturbation of $f$ in the following sense.
\begin{lemma}
\label{lemma:cone persistence}
Let $f$ be a $C^1$ diffeomorphism of $M$,
 $(\cC^{cs},\cC^{cu})$ be an $f$-invariant transverse pair of cone fields defined in
 an open set $U \subset M$, and $\omega$ be its $f$-invariant $c$-orientation.
Then, for every open set $U'$ with $\cl{U'} \subset U$, 
there exists a $C^1$-neighborhood $\mathcal{U}$ of $f$ in $\mathrm{Diff}^1(M)$ such that
$(\cC^{cs}, \cC^{cu})$ and $\omega$ restricted to $U'$ are $\bar{f}$-invariant for any $\bar{f} \in \cU$.
\end{lemma}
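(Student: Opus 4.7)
The plan is a routine compactness-plus-continuity argument: every invariance condition in the statement is expressed by a strict open inequality, which holds with a uniform positive margin on any compactum by continuity of the data, and this margin survives $C^1$-small perturbations of $f$.

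First I would interpose an intermediate open set $U''$ with $\overline{U'}\subset U''\subset\overline{U''}\subset U$, so that $K:=\overline{U''}\cap f^{-1}(\overline{U''})$ is a compact subset of $U\cap f^{-1}(U)$ on which the $f$-invariance applies. For any $\bar f$ sufficiently $C^0$-close to $f$ one has $\bar f(x)\in\overline{U'}\Rightarrow f(x)\in U''$, so every $x\in\overline{U'}\cap\bar f^{-1}(\overline{U'})$ automatically lies in $K$; this lets us pull all the subsequent estimates back to the fixed compactum $K$.

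For the invariance of $\cC^{cu}$, I would consider the continuous function
\[
\eta^{cu}(x,v)=\alpha\bigl(\|\pi^c_{f(x)}Df_xv\|+\|\pi^u_{f(x)}Df_xv\|\bigr)-\|\pi^s_{f(x)}Df_xv\|
\]
on the compact set $\{(x,v):x\in K,\ v\in\cC^{cu}(x),\ \|v\|=1\}$, where $\pi^*_y:T_yM\to\tilde E^*(y)$ are the projections relative to the continuous splitting. Definition (\ref{ccsccu}) combined with $f$-invariance gives $\eta^{cu}>0$, hence $\eta^{cu}\geq\eta_0$ for some $\eta_0>0$ by compactness. Continuity of the splitting and of the assignment $f\mapsto Df$ then yields a $C^1$-neighborhood $\cU$ of $f$ on which the same quantity computed with $\bar f$ in place of $f$ stays above $\eta_0/2$, giving $\bar f$-invariance of $\cC^{cu}$ on $U'$. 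The analogous estimate applied to $\bar f^{-1}$ (which is $C^1$-close to $f^{-1}$) handles $\cC^{cs}$.

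For the $c$-orientation it is cleanest to argue by contradiction. Were the claim false, one could find a sequence $\bar f_n\to f$ in $C^1$, points $x_n\in\overline{U'}\cap\bar f_n^{-1}(\overline{U'})\subset K$, and unit vectors $v_{c,n}\in\cC^c(x_n)$ with $\omega(v_{c,n})>0$, $D\bar f_{n,x_n}v_{c,n}\in\cC^c(\bar f_n(x_n))$, but $\omega(D\bar f_{n,x_n}v_{c,n})\leq 0$. Passing to a subsequence, $x_n\to x_*\in K$ and $v_{c,n}\to v_{c,*}$, a unit vector in the closed set $\cC^c(x_*)$; by $C^1$-convergence $D\bar f_{n,x_n}v_{c,n}\to Df_{x_*}v_{c,*}\in\cC^c(f(x_*))$. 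Since $\omega$ vanishes on $\cC^c$ only at the origin, $\omega(v_{c,*})>0$, and the $f$-invariance of $\omega$ then forces $\omega(Df_{x_*}v_{c,*})>0$, contradicting the limit inequality. The only mild subtlety---and the closest thing to a genuine obstacle---is that the set of $(x,v_c)$ relevant for $\bar f$ depends on $\bar f$ itself through the constraint $D\bar f_xv_c\in\cC^c(\bar f(x))$, so a direct uniform margin on a fixed compact set is not available for the orientation; this is precisely why the compactness-and-contradiction formulation replaces the straightforward estimate that worked for the cone fields.
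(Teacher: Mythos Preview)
Your proof is correct. The paper does not actually prove this lemma; it merely prefaces the statement with ``It is obvious that the invariance of $(\cC^{cs},\cC^{cu})$ and $\omega$ persists under $C^1$-small perturbation of $f$ in the following sense'' and then states the result without further argument. Your compactness-plus-continuity treatment is exactly the standard way to unpack that ``obvious,'' and the care you took with the intermediate set $U''$ and with the $\bar f$-dependent constraint in the orientation condition is appropriate.
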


For $k \geq 1$,
 we say that an $f$-invariant compact subset $\Lambda$ of $M$
 admits {\it an $k$-strongly partially hyperbolic splitting}
 $TM|_\Lambda=E^c \oplus E^s \oplus E^u$ of index $\dd$
 if it is a $Df$-invariant continuous splitting with
 $\dim E^\tau=d_\tau$ for $\tau=c,s,u$
 and there exist a metric $\|\cdot\|$ 
 of $T_xM$ which is continuous in $x$
 and a constant $0<\lambda<1$ such that for every $x \in \Lambda$ we have
\begin{align*}
 \|(Df)_x|_{E^s}\| \cdot \max\{1,\|(Df)_{f(x)}^{-1}|_{E^c}\|^k\} & <\lambda, \\
 \|(Df)^{-1}_{f(x)}|_{E^u}\| \cdot \max\{1,\|(Df)_x|_{E^c}\|^k\} & <\lambda.
\end{align*}
In this case, we call the set $\Lambda$
 {\it a $k$-strongly partially hyperbolic invariant set of index $\dd$}.
A $1$-strongly partially hyperbolic set
 will be called simply a strongly partially hyperbolic set.

On a strongly partially hyperbolic invariant set,
 an $f$-invariant transverse pair of cone fields is compatible
 with the partially hyperbolic splitting.
\begin{prop}
\label{prop:cone PH}
Let $f$ be a $C^1$ diffeomorphism of $M$
 and $(\cC^{cs},\cC^{cu})$ be
 an $f$-invariant transverse pair of cone fields defined on $U$.
Suppose that $f$ has a compact,
 strongly partially hyperbolic invariant set $\Lambda$ in $U$.
Then, the partially hyperbolic splitting
 $TM|_\Lambda=E^c \oplus E^s \oplus E^u$ satisfies
\begin{enumerate}
 \item  $(E^c\oplus E^s)(x) \subset \Int \cC^{cs}(x) \cup \{0\}$,
 $(E^c\oplus E^u)(x) \subset \Int \cC^{cu}(x) \cup \{0\}$, and
 \item $E^u(x) \cap \cC^{cs}(x)=E^s(x) \cap \cC^{cu}(x)=\{0\}$
\end{enumerate}
 for all $x \in \Lambda$.
\end{prop}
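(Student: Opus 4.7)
The plan is to prove both statements via the classical compatibility argument between strictly invariant cone fields and partially hyperbolic splittings, using compactness of $\Lambda$, continuity of both the cone-defining splitting $\tilde{E}^c\oplus\tilde{E}^s\oplus\tilde{E}^u$ on $U$ and the hyperbolic splitting $E^c\oplus E^s\oplus E^u$ on $\Lambda$, and the dominated-rate inequalities in the definition of strong partial hyperbolicity.

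For statement (1), I would show $(E^c\oplus E^u)(x)\subset\Int\cC^{cu}(x)\cup\{0\}$ for $x\in\Lambda$ as follows. Strict forward-invariance $Df(\cC^{cu})\subset\Int\cC^{cu}\cup\{0\}$ makes the family $\cC^{cu}_n(x):=Df^n\bigl(\cC^{cu}(f^{-n}x)\bigr)$ strictly nested, and its closed intersection is a $Df$-invariant cone sitting inside $\cC^{cu}(x)$. Given $v\in(E^c\oplus E^u)(x)$, the backward orbit $w_n:=Df^{-n}v$ lies in $(E^c\oplus E^u)(f^{-n}x)$ by $Df$-invariance of that bundle, and the domination $\|Df|_{E^s}\|\cdot\max\{1,\|Df^{-1}|_{E^c}\|\}<\lambda$ forces, through the uniform comparisons between the two splittings available on compact $\Lambda$, that the $\tilde{E}^s$-component of $w_n$ shrinks strictly faster than its $\tilde{E}^c\oplus\tilde{E}^u$-component. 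For $n$ large this places $w_n$ in $\cC^{cu}(f^{-n}x)$, hence $v\in\bigcap_n\cC^{cu}_n(x)\subset\Int\cC^{cu}(x)$. The symmetric inclusion $(E^c\oplus E^s)\subset\Int\cC^{cs}$ follows by the time-reversed argument using the backward-invariance of $\cC^{cs}$.

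For statement (2), I argue by contradiction from (1). Suppose $0\neq v\in E^u(x)\cap\cC^{cs}(x)$. By (1) applied to $E^u\subset E^c\oplus E^u$, we have $v\in\Int\cC^{cu}(x)$, so $v\in\cC^{cu}(x)\cap\cC^{cs}(x)=\cC^c(x)$. The backward-invariance of $\cC^{cs}$ and the $Df$-invariance of $E^u$ give $v_n:=Df^{-n}v\in E^u(f^{-n}x)\cap\cC^c(f^{-n}x)$ for all $n\geq 0$, while $\|v_n\|\leq\lambda^n\|v\|\to 0$ by strong contraction on $E^u$ under $Df^{-1}$. Normalizing and taking a subsequential limit $y=\lim f^{-n_k}x$ on compact $\Lambda$ produces a non-zero unit vector $w\in E^u(y)\cap\cC^c(y)$. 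The confinement $w\in\cC^c$ pins the direction of $w$ to a narrow cone about $\tilde{E}^c$, while $w\in E^u$ forces strict $Df^{-1}$-contraction at rate $\lambda$; combining this with the dominated inequality $\|Df^{-1}|_{E^u}\|\cdot\max\{1,\|Df|_{E^c}\|\}<\lambda$ yields a rate incompatibility between the mandatory geometric decay on $E^u$ and the slowest possible decay of a vector whose $\tilde{E}^c$-component dominates. The symmetric statement $E^s(x)\cap\cC^{cu}(x)=\{0\}$ is obtained with time reversed.

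The main obstacle is that the tilde-splitting used to define the cones on $U$ and the hyperbolic splitting on $\Lambda$ are a priori unrelated, so every step that mixes information about one with the other requires the uniform angular and norm comparisons afforded by compactness of $\Lambda$ together with continuity of both splittings. Once those uniform bounds are in hand, the combination of strict cone invariance with the dominated-splitting inequalities closes the argument in a standard way.
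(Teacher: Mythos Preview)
Your argument for statement (1) contains a genuine error in the direction of iteration. You claim that for $v\in E^{cu}(x)$ the backward iterates $w_n=Df^{-n}v$ eventually lie in $\cC^{cu}(f^{-n}x)$ because ``the $\tilde E^s$-component of $w_n$ shrinks strictly faster than its $\tilde E^c\oplus\tilde E^u$-component.'' But strict forward-invariance $Df(\cC^{cu}(y))\subset\Int\cC^{cu}(f(y))\cup\{0\}$ is equivalent to the complement of $\Int\cC^{cu}$ being \emph{backward}-invariant: if $v\notin\Int\cC^{cu}(x)$ then $Df^{-1}v\notin\cC^{cu}(f^{-1}x)$, and inductively $w_n\notin\cC^{cu}(f^{-n}x)$ for every $n\ge 1$. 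So a vector outside $\cC^{cu}$ can never be pushed inside by backward iteration, and the ratio $\|\tilde w_n^s\|/(\|\tilde w_n^c\|+\|\tilde w_n^u\|)$ cannot drop below $\alpha$. The ``uniform comparisons between the two splittings'' you invoke are precisely the conclusion of the proposition and cannot be assumed in advance. The paper instead iterates \emph{forward} on the Grassmannian: pick any $(1+d_u)$-dimensional subspace $E(x')\subset\Int\cC^{cu}(x')$ transverse to $E^s(x')$; strong partial hyperbolicity makes $E^{cu}$ the attractor for such subspaces, so $Df^n(E(x'))$ and $E^{cu}(f^nx')$ become arbitrarily close, while $Df^n(E(x'))$ never leaves $\Int\cC^{cu}$. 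Compactness of $\Lambda$ makes the required $n$ uniform.

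Your argument for statement (2) is also incomplete. After the limiting procedure you obtain a unit vector $w\in E^u(y)\cap\cC^c(y)$, which merely reproduces the contradiction hypothesis at a different base point; no actual contradiction has been derived. The ``rate incompatibility'' you sketch compares rates governed by the $E$-splitting with cone geometry governed by the $\tilde E$-splitting, but nothing in the hypotheses controls how $Df^{\pm 1}$ behaves on $\tilde E^c$-dominated vectors. The paper's argument reuses the Grassmannian idea: statement (1) forces $E^{cs}$ to be transverse to $\tilde E^u$, so $Df^n(\tilde E^u(x'))$ converges to $E^u(f^nx')$; since $\tilde E^u$ lies in the complement of $\cC^{cs}$ and that complement is forward-invariant, $E^u(f^nx')$ lies there too for $n$ large, and compactness finishes.
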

\begin{proof}
Let $TM|_\Lambda=\tilde{E^c} \oplus \tilde{E^s} \oplus \tilde{E^u}$
be the splitting in the definition of the transverse pair of cones,
 while $E^c$, $E^s$, and $E^u$ be the subbundles of $TM|_\Lambda$
 from the definition of strong partial hyperbolicity.
Denote $E^{cu}=E^c\oplus E^u$ and $E^{cs}=E^c\oplus E^s$.

Take any $x'\in\Lambda$ and let $E(x')$ be
 any $(1+d_u)$-dimensional subspace of $\Int \cC^{cu}(x')\cup\{0\}$
 which is transverse to $E^s(x')$
 (such subspace $E$ exists from the dimension count).
The strong partial hyperbolicity on $\Lambda$ implies that
 the distance between $Df^n(E^{cu}(x'))$ and $Df^n(E(x'))$ converges to zero
 in the Grassmanian bundle as $n \to \infty$.
Thus, by the invariance of $E^{cu}$ and $\cC^{cu}$,
 for each $x' \in  \Lambda$, and all $n$ sufficiently large, 
we have $E^{cu}(f^n(x'))) \subset \Int \cC^{cu}(f^n(x')) \cup \{0\}$.
By the compactness of $\Lambda$
 and the continuity of $E^{cu}(x')$ and $\cC^{cu}(x')$,
 this holds true for some $n$ independent of $x'\in\Lambda$.
Now, by taking $x=f^{-n}(x')$,
 we obtain that $E^{cu}(x) \subset \Int\cC^{cu}(x) \cup \{0\}$
 for each $x \in \Lambda$.

In the same way
 (by iterating $f^{-1}$ instead of $f$)
 one proves that $E^{cs}(x) \subset \Int \cC^{cs}(x) \cup \{0\}$
 for each $x\in\Lambda$, thus finishing the proof of assertion 1.
Note that this implies also that $E^{cs}(x)$ is transverse to $\tilde E^u(x)$,
 as follows from the definition of $\cC^{cs}$, see (\ref{ccsccu}).
Therefore, taking any point $x'\in\Lambda$,
 we infer from the definition of strong partial hyperbolicity
 that the distance between $Df^n(E^{u}(x'))$ and $Df^n(\tilde E^u(x'))$
 converges to zero in the Grassmanian bundle as $n \to \infty$.
Since the complement to $\cC^{cs}\setminus\{0\}$ is $Df$-invariant
 and $\tilde E^u(x')\cap \cC^{cs}(x')=\{0\}$,
 it follows that $Df^n(\tilde E^u(x'))$ lies in the complement to
 $\cC^{cs}(f^n(x'))\setminus\{0\}$.
Hence, the same is true for $E^{u}(f^n(x'))$ for all sufficiently large $n$.
As $n$ can be chosen the same for all $x'\in \Lambda$,
 we conclude that $E^u(x) \cap \cC^{cs}(x)=\{0\}$ for all $x\in \Lambda$.
By iterating $f^{-1}$ instead of $f$,
 we prove $E^s(x) \cap \cC^{cu}(x)=\{0\}$ in the same way,
 thus finishing assertion 2.
\end{proof}

As an immediate corollary, we have the following
\begin{cor}
\label{cor:cone PH}
Let $f$ be a $C^1$ diffeomorphism of $M$ and
 $(\cC^{cs},\cC^{cu})$ be an $f$-invariant transverse pair
 of cone fields on $U$.
Suppose that $f$ admits a compact, strongly partially hyperbolic invariant set
 $\Lambda$ in $U$ with the partially hyperbolic splitting
 $TM|_\Lambda=E^c \oplus E^s \oplus E^u$.
Then, $E^c(x) \subset \cC^c(x)=\cC^{cu}(x)\cap \cC^{cs}(x)$
 for all $x \in \Lambda$.
\end{cor}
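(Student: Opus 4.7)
The plan is to deduce the corollary directly from Proposition~\ref{prop:cone PH}, which already does all the real work. The only observation needed is that $E^c$ sits inside both $E^c\oplus E^s$ and $E^c\oplus E^u$, so if the whole $cs$- and $cu$-subbundles are trapped inside the corresponding cones, then so is $E^c$, and hence $E^c$ lies in the intersection, which is the center cone by definition (\ref{ccc}).

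More precisely, I would proceed as follows. Fix $x\in\Lambda$ and a nonzero vector $v\in E^c(x)$. Then $v\in (E^c\oplus E^s)(x)$, so assertion~(1) of Proposition~\ref{prop:cone PH} gives $v\in \Int\cC^{cs}(x)\subset \cC^{cs}(x)$. Similarly $v\in (E^c\oplus E^u)(x)$, so the same assertion yields $v\in \Int\cC^{cu}(x)\subset \cC^{cu}(x)$. Combining these two memberships with (\ref{ccc}) gives $v\in \cC^{cs}(x)\cap \cC^{cu}(x)=\cC^c(x)$. Since $v\in E^c(x)\setminus\{0\}$ was arbitrary (and $0\in\cC^c(x)$ trivially), the inclusion $E^c(x)\subset \cC^c(x)$ follows at each $x\in\Lambda$.

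There is essentially no obstacle here: assertion~(2) of Proposition~\ref{prop:cone PH} (the transversality of $E^u$ to $\cC^{cs}$ and of $E^s$ to $\cC^{cu}$) is not even used. The corollary is a purely set-theoretic consequence of assertion~(1) together with the definition of $\cC^c$, and the one-line argument above is all that is needed.
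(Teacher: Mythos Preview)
Your argument is correct and is exactly what the paper intends: the corollary is stated as ``an immediate corollary'' with no separate proof, and the one-line deduction from assertion~(1) of Proposition~\ref{prop:cone PH} together with the definition~(\ref{ccc}) of $\cC^c$ is precisely the intended reasoning. As you note, assertion~(2) is not needed here.
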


\subsection{Invariant subbundles on the stable set of a periodic points}
\label{ss:invari}
Let $M$ be a $|\dd|$-dimensional manifold, 
$f \in \mathrm{Diff}^r(M)$, and $k\geq 1$ be an integer.
We say that a point $p\in M$ is
 \emph{a $k$-strongly partially hyperbolic periodic point of index $\dd$}
 if it is a periodic point of $f$
 and its orbit $\cO(p,f) = \{ f^k(p)\}_{k \in \ZZ} $ is
 a $k$-strongly partially hyperbolic set of index $\dd$ for $f$.
When $k=1$, we just call $p$ a strongly  partially hyperbolic periodic point.
Let $\Per_\dd^k(f)$ be the set of
 all $k$-strongly partially hyperbolic periodic points of index $\dd$
 and put $\Per_\dd(f)=\Per_\dd^1(f)$.

Let $p \in \Per_\dd(f)$.
We denote the period of $p$ by $\pi(p)$.
Recall that the space $E^c(p)$ is one-dimensional, so
\begin{equation}
\label{lamc}
(Df^{\pi (p)})_p (v) = \lambda_c(p) v \;\text{ for }\; v\in E^c(p),
\end{equation}
 with a non-zero constant $\lambda_c(p)$.
 We will call $\lambda_c$ the {\em central multiplier} of $p$.

We define the \emph{stable set} $W^s(p)$ and
 the \emph{unstable set} $W^u(p)$ of the periodic point $p$ by 
\begin{align*}
 W^s(p) & =
 \left\{q \in M \mid d(f^n(p),f^n(q)) \ra 0 \, (n \ra +\infty)
 \right\},\\
 W^u(p) & =
 \left\{q \in M \mid d(f^n(p),f^{-n}(q)) \ra 0 \, (n \ra +\infty)
 \right\},
\end{align*}
where $d$ is a metric on $M$.
Put
\begin{equation*}
 W^s(\cO(p))  = \bigcup_{j=0}^{\pi-1} W^s(f^j(p)), \quad
 W^u(\cO(p))  = \bigcup_{j=0}^{\pi-1} W^u(f^j(p)),
\end{equation*}
 where $\pi$ is the period of $p$.
For $\delta>0$,
  we also define the \emph{local stable and unstable sets $W^s_\delta(p)$ and $W^u_\delta(p)$} by
\begin{align*}
 W^s_\delta(p) & = \left\{q \in W^s(p)
 \mid \sup_{n \geq 0}d(f^n(p),f^n(q)) \leq \delta \right\},
 \\
 W^u_\delta(p) & = \left\{q \in W^u(p)
  \mid \sup_{n \leq 0}d(f^n(p),f^n(q)) \leq \delta\right\}.
\end{align*}
Remark that we have the following:
\begin{equation*}
 W^s(\cO(p))=\bigcup_{n \leq 0}f^{-n}(W^s_\delta(p)), \quad
 W^u(\cO(p))=\bigcup_{n \leq 0}f^n(W^u_\delta(p)).
\end{equation*}

Below we consider the case where $|\lambda_c(p)|\neq 1$,
 i.e., the periodic point is hyperbolic.
Then the stable and unstable sets are $C^r$-smooth manifolds;
 more precisely, they are smooth embeddings of $\pi(p)$ disjoint balls.
The dimension of $W^s(\cO(p))$ is $d_s$
 if $|\lambda_c|>1$ and $d_s+1$ if $|\lambda_c|<1$.
It is tangent at $p$ to $E^s_p$
 in the former case
 and to $E^s_p\oplus E^c_p$ in the latter one
 (here $T_p M=E^c_p \oplus E^s_p \oplus E^u_p$ is
 the partially hyperbolic splitting at $p$).
The dimension of $W^u(\cO(p))$ is $d_u$ if $|\lambda_c|<1$
 (then $T_pW^u(p)=E^u(p)$)
 and $d_u+1$ if $|\lambda_c|>1$
 (then $T_p W^u(p)=E^u(p)\oplus E^c(p)$).

The next lemma says that we can extend
 the center-stable splitting over the whole of the stable manifold
 when the center direction of the periodic point is contracting.
\begin{lemma}
\label{lemma:invariant subbundles}
Let $p\in\mathrm{Per}_{\dd}(f)$
 and $T_p M=E^c_p \oplus E^s_p \oplus E^u_p$ be the partially hyperbolic
 splitting at $p$.
Then, there exist unique subbundles $E^s$ and $E^{cs}$ of $TM|_{W^s(\cO(p))}$
 such that the restrictions of $E^s$ and $E^{cs}$ to $W^s_\delta(p)$
 are continuous for some $\delta>0$,
 $E^s(p)=E^s_p$, $E^{cs}(p)=E^c_p \oplus E^s_p$,
 and they are $f$-invariant:
 $Df(E^s(q))=E^s(f(q))$ and $Df(E^{cs}(q))=E^{cs}(f(q))$
 for any $q \in W^s(\cO(p))$.
\end{lemma}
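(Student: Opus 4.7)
My plan is to construct $E^s$ and $E^{cs}$ first on the local piece $W^s_\delta(p)$ via invariant cone fields extended from $\cO(p)$, and then propagate them along $W^s(\cO(p))$ by $f$-invariance. Because $1$-strong partial hyperbolicity on $\cO(p)$ is an open condition on $Df$, an adapted metric lets me extend the transverse pair $(\cC^{cs},\cC^{cu})$ of index $\dd$ to an open neighborhood $U$ of $\cO(p)$, with the strict cone invariance $Df^{-1}(\cC^{cs}(f(x)))\subset \Int\cC^{cs}(x)\cup\{0\}$ on $U\cap f^{-1}(U)$; using the domination $\|Df|_{E^s}\|\cdot\|Df^{-1}|_{E^c}\|<\lambda$ I also build a $d_s$-dimensional cone $\cC^s\subset\cC^{cs}$ that is strictly forward-invariant, $Df(\cC^s(x))\subset\Int\cC^s(f(x))\cup\{0\}$.

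Fix $\delta>0$ so small that $f^n(W^s_\delta(p))\subset U$ for every $n\geq 0$, and for $q\in W^s_\delta(p)$ define
\[
E^{cs}(q) := \bigcap_{n\geq 0}(Df^n|_q)^{-1}\bigl(\cC^{cs}(f^n(q))\bigr).
\]
The uniform strict contraction of $\cC^{cs}$ under $Df^{-1}$ makes this a nested intersection that collapses to a $(1+d_s)$-dimensional linear subspace, continuous in $q$, and $f$-invariant by construction. At $p$ the inclusion $E^c_p\oplus E^s_p\subset\Int\cC^{cs}(p)\cup\{0\}$ from Proposition~\ref{prop:cone PH} together with the dimension count forces $E^{cs}(p)=E^c_p\oplus E^s_p$.

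For $E^s$ I split on the sign of $\log|\lambda_c(p)|$. If $|\lambda_c(p)|>1$ then $W^s(\cO(p))$ has dimension $d_s$ with $T_pW^s(p)=E^s_p$, so the choice $E^s:=TW^s(\cO(p))$ does the job. If $|\lambda_c(p)|<1$ then $W^s(\cO(p))$ has dimension $1+d_s$ and coincides with $E^{cs}$ as a tangent bundle, and I need to extract the dominated $d_s$-subbundle inside the already-built bundle $E^{cs}|_{W^s_\delta(p)}$. The stable cone $\cC^s$ on $TM$ is only forward-invariant, so a direct backward cone intersection on $TM$ is unavailable; instead I work intrinsically inside $E^{cs}$, where the $1$-dimensional subcone around $E^c$ is strictly backward-invariant by the same domination inequality. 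A backward cone intersection carried out inside $E^{cs}$ then singles out a continuous $f$-invariant line field $\hat E^c$ on $W^s_\delta(p)$, and $E^s(q)$ is defined as the unique $d_s$-subspace of $\cC^s(q)\cap E^{cs}(q)$ transverse to $\hat E^c(q)$, which takes the correct value at $p$ by construction.

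To finish I extend both bundles to $W^s(\cO(p))$ by $f$-invariance: given $q\in W^s(\cO(p))$, pick $n\geq 0$ with $f^n(q)\in W^s_\delta(p)$ and set $E^\tau(q):=(Df^n|_q)^{-1}(E^\tau(f^n(q)))$ for $\tau\in\{s,cs\}$; consistency follows from the $f$-invariance already proved on $W^s_\delta(p)$. Uniqueness is standard: any competitor $\tilde E^{cs}$ must lie in $\cC^{cs}(q)$ for $q$ near $p$ by continuity and the open cone condition, hence in every backward cone iterate by $f$-invariance, so $\tilde E^{cs}\subset E^{cs}$, with equality forced by dimension; the same argument applies to $E^s$ inside $E^{cs}$. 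I expect the main technical pinch to be the $|\lambda_c(p)|<1$ case of $E^s$, where the stable cone on $TM$ is forward-invariant and cannot be backward-iterated on orbits leaving $U$, so the argument has to be carried out inside the already-built bundle $E^{cs}$ instead of on $TM$ directly.
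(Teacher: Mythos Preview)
Your construction of $E^{cs}$ by nested cone pullbacks is correct, but your treatment of $E^s$ in the case $|\lambda_c(p)|<1$ contains a sign error and a genuine gap.

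The sign error: a cone $\cC^s$ around the most contracted bundle $E^s$ is \emph{not} forward-invariant under $Df$. Since $E^s$ is dominated by both $E^c$ and $E^u$, the image $Df(\cC^s(x))$ is a \emph{wider} cone than $\cC^s(f(x))$. What is true is the opposite inclusion $Df^{-1}(\cC^s(f(x)))\subset\Int\cC^s(x)\cup\{0\}$, because under $Df^{-1}$ the direction $E^s$ dominates everything else. But this is precisely the same invariance you used for $\cC^{cs}$, so the same nested intersection
\[
E^s(q)=\bigcap_{n\geq 0}(Df^n|_q)^{-1}\bigl(\cC^s(f^n(q))\bigr)
\]
over forward orbits in $W^s_\delta(p)$ gives $E^s$ directly. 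No detour through a line field $\hat E^c$ is needed, and no case split on the sign of $\log|\lambda_c|$ is needed either.

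Your detour is also broken in two places. First, the one-dimensional cone around $E^c$ inside $E^{cs}$ is $Df$-invariant (center dominates strong-stable under $Df$), not $Df^{-1}$-invariant; its pullbacks $(Df^n)^{-1}(\cC^c(f^n(q)))$ expand rather than shrink, so no line field falls out. Second, even if you had $\hat E^c$, the phrase ``the unique $d_s$-subspace of $\cC^s(q)\cap E^{cs}(q)$ transverse to $\hat E^c(q)$'' does not define anything: a line in a $(1+d_s)$-dimensional space has infinitely many $d_s$-dimensional complements, and lying inside a cone does not single one out.

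For comparison, the paper bypasses cones entirely and quotes the $C^r$-section theorem on the Grassmannian bundle over $W^s_\delta(p)$: both $E^{cs}_p$ and $E^s_p$ are hyperbolic attracting fixed points of the fiber map induced by $Df^{-\pi}$ (on $(1+d_s)$-planes and $d_s$-planes respectively), so each extends to a unique continuous invariant section, with no case analysis.
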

\begin{proof}
Let $\pi$ be the period of $p$ and take small $\delta>0$.
Then, using the fact that $f^{\pi}(W^s_\delta(p)) \subset W^s_\delta(p)$
 and that $E^{cs}_p$ is an attracting fixed point 
 in the Grassmanian bundle of $TM|_{W^s_\delta(p)}$ 
 in regard to the dynamics induced by $Df^{-\pi}$, 
 we have the lemma for $W^s_{\delta}(p)$
 by the standard argument of the $C^r$-section theorem
 if we choose sufficiently small $\delta>0$
 (see for instance \cite[Proposition 7.6]{Shu}).
Next, by taking backward images, we extend the bundle $E^{c} \oplus E^s$
 to the whole $TM|_{W^s(\mathcal{O}(p))}$.
\end{proof}
We call $E^s$ and $E^{cs}$ the \emph{strong stable subbundle}
 and, respectively, the \emph{center-stable subbundle} on $W^s(\cO(p))$.
By the (strong) stable manifold theorem,
 if $|\lambda_c(p)|<1$ then $W^s(\cO(p))$ is an injectively immersed
 manifold of dimension $d_s+1$ and
 $E^s(x)=T_x W^s(\cO(p))$ for any $x \in W^s(\cO(p))$,
 and if $|\lambda_c(p)|>1$ then
 $W^s(\cO(p))$ is an injectively immersed of dimension $d_s$
 and $E^{s}(x)=T_x W^s(\cO(p))$ for any $x \in W^s(\cO(p))$.
\begin{rmk}\label{rmk:invsub}
Similar to the lemma, we can define two vector bundles 
$E^{u}$ and $E^{cu}$ over $W^u(\cO(p))$ satisfying similar properties. 
We call them \emph{the strong unstable subbundle}
 and \emph{the center-unstable subbundle}, respectively.
\end{rmk}

The following proposition summarizes the well-known result
 about the existence and uniqueness
 of the strong stable foliation in $W^{s}(p)$
 (the foliation whose fibers are tangent to $E^s$).
The foliation is of codimension 1,
 so the quotient of $f^{\pi(p)}$ is linearized near its 
hyperbolic fixed point (see (\ref{linpsi})).
\begin{prop}
\label{prop:c linearization}
Let $p \in \Per_\dd(f)$.
Suppose that $f$ is $C^r$ for $r \geq 2$ and $|\lambda_c(p)|<1$.
Then, there exists a $C^r$-function $\psi^s_p$ on $W^s(\cO(p))$
 such that for all $q \in W^s(\cO(p))$
 the kernel of $(D\psi^s_p)_q$ coincides with $E^s(q)$
 (where $E^s(q)$ is the vector bundle of
 Lemma \ref{lemma:invariant subbundles}) and
\begin{equation}
\label{linpsi}
 \psi^s_p \circ f^{\pi (p)} =\lambda^c_p \cdot \psi^s_p.
\end{equation}
Moreover,
\begin{itemize}
 \item $\psi^s_p$ is uniquely determined up to a multiplication
 by a non-zero constant, and
 \item if a sequence of diffeomorphisms $\{f_i\} \subset \mathrm{Diff}^r(M)$ 
 converges to $f_\infty$ in the $C^r$-topology
 and points $p_i \in \Fix_\dd(f_i^n)$ converge to
 $p_\infty \in \Fix_\dd(f_\infty^n)$,
 then we can choose $\psi^s_{p_i}$ such that
 $\psi^s_{p_i}$ converges to $\psi^s_{p_\infty}$ in the $C^r$-topology.
\end{itemize}
\end{prop}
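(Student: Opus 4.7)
My plan is a Koenigs-style construction: build $\psi^s_p$ first locally near $p$ as a limit of rescaled iterates of an arbitrary initial guess, and then extend it globally via the prescribed functional equation. Write $g:=f^{\pi(p)}$ and $\lambda:=\lambda_c(p)$; since $|\lambda|<1$, the map $g$ is a $C^r$-contraction of $W^s_\delta(p)$ whose fixed point $p$ is hyperbolic, and the strong partial hyperbolicity makes $\lambda$ the eigenvalue of $(Dg)_p$ of largest modulus on $T_pW^s(p)=E^c_p\oplus E^s_p$, strictly dominating every stable eigenvalue.

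Pick a $C^r$-function $\phi_0$ on $W^s_\delta(p)$ with $\phi_0(p)=0$ and $\ker(d\phi_0)_p=E^s_p$, normalised by prescribing $(d\phi_0)_p$, and define $\phi_n:=\lambda^{-n}(\phi_0\circ g^n)$. The correction $h:=\lambda^{-1}\phi_0\circ g-\phi_0$ has vanishing $1$-jet at $p$, and one checks that $\phi_{n+1}-\phi_n=\lambda^{-n}(h\circ g^n)$. I would prove $C^r$-convergence of $\{\phi_n\}$ by combining: (a) $\|g^n(x)-p\|\le C|\lambda|^n$ on $W^s_\delta(p)$, because $\lambda$ dominates the spectrum of $(Dg)_p$ on $T_pW^s(p)$; (b) the vanishing $1$-jet of $h$ at $p$, which makes $|h(g^n(x))|=O(|\lambda|^{2n})$; and (c) chain-rule/Fa\`a~di~Bruno estimates on the derivatives $D^j g^n$ near the hyperbolic fixed point. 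Together these yield $\|\phi_{n+1}-\phi_n\|_{C^r}=O(|\lambda|^{\gamma n})$ for some $\gamma>0$, so $\psi^s_p:=\lim_n\phi_n$ exists in $C^r$. Passing to the limit in $\phi_n\circ g=\lambda\phi_{n+1}$ yields the functional equation $\psi^s_p\circ g=\lambda\psi^s_p$, and the $1$-jet at $p$ survives: $(d\psi^s_p)_p=(d\phi_0)_p$, so $\ker(d\psi^s_p)_p=E^s_p$.

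Differentiating the functional equation shows that $\ker(d\psi^s_p)$ is a continuous, $Dg$-invariant $d_s$-dimensional subbundle of $TW^s_\delta(p)$ agreeing with $E^s$ at $p$, hence equal to $E^s$ throughout by the uniqueness clause of Lemma~\ref{lemma:invariant subbundles}. I extend $\psi^s_p$ to $W^s(\cO(p))$ by $\psi^s_p(x):=\lambda^n\,\psi^s_p(g^n(x))$ for any $n$ with $g^n(x)\in W^s_\delta(p)$; this is well-defined by the functional equation, of class $C^r$, and the kernel condition propagates because $Df$ preserves $E^s$. For uniqueness up to a scalar, any other $\tilde\psi$ satisfying the same conclusions must have $(d\tilde\psi)_p=c\,(d\psi^s_p)_p$ for a unique nonzero $c\in\RR$, and the difference $\chi:=\tilde\psi-c\,\psi^s_p$ satisfies $\chi\circ g=\lambda\chi$ with vanishing $1$-jet at $p$; iterating $\chi=\lambda^{-n}(\chi\circ g^n)$ and reusing the same estimate forces $\chi\equiv 0$. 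Continuous dependence on $f$ and on $p_i\to p_\infty$ is automatic, as all ingredients of the Koenigs limit ($\phi_0$, $g$, $\lambda$, and the partial-hyperbolicity constants) depend $C^r$-continuously on $f$ once a common normalisation of the $1$-jet at the fixed points is fixed.

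The main technical obstacle is the $C^r$-summability estimate in step~(c). The $j=0,1$ cases follow easily from the spectral gap, but for $2\le j\le r$ the Fa\`a~di~Bruno expansion of $D^j(h\circ g^n)$ contains products of up to $j$ derivatives of $g^n$; their growth must be controlled by $|\lambda|^n$ times at most a polynomial in $n$, so that the factor $\lambda^{-n}$ does not defeat the convergence. The key point to verify is that the strict domination of $|\lambda|$ over every other eigenvalue of $(Dg)_p$ on $T_pW^s(p)$ --- i.e.\ only the $1$-strong partial hyperbolicity already built into $\Per_\dd$ --- suffices, without requiring any additional bunching hypothesis.
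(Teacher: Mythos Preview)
Your Koenigs-type construction is correct and is essentially what the paper invokes: the paper does not prove this proposition directly but cites \cite{GST8} (Lemma~6 and formula~A.6 there), where $\psi^s_p$ is built as a convergent series, exactly the telescoping sum $\phi_0+\sum_n\lambda^{-n}(h\circ g^n)$ you describe, and then extended globally by the functional equation.

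Regarding your stated concern at the end: yes, $1$-strong partial hyperbolicity already suffices for the $C^r$ estimate, and no extra bunching is needed. After passing to an adapted metric so that $\|Dg\|\le\mu:=|\lambda|+\epsilon$ on $W^s_\delta(p)$, one checks inductively that $\|D^jg^n\|=O(\mu^n)$ for each fixed $j\le r$ (the recursion $\|D^jg^n\|\le C\mu^{2(n-1)}+\mu\|D^jg^{n-1}\|$ yields this, since $\mu<1$). In the Fa\`a~di~Bruno expansion of $D^j(h\circ g^n)$, every term carries \emph{at least two} factors of size $O(\mu^n)$: either the partition has $m\ge 2$ blocks, contributing a product $\prod_i D^{j_i}g^n=O(\mu^{mn})$, or it has a single block, in which case the factor $(Dh)_{g^n(x)}=O(\mu^n)$ (from the vanishing $1$-jet of $h$) multiplies $D^jg^n=O(\mu^n)$. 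Hence $\|D^j(h\circ g^n)\|=O(\mu^{2n})$ uniformly, and since $\mu^2<|\lambda|$ for $\epsilon$ small (because $|\lambda|<1$), the series $\sum_n\lambda^{-n}D^j(h\circ g^n)$ converges geometrically. The minor imprecision $\|g^n(x)-p\|\le C|\lambda|^n$ in your sketch should read $O(\mu^n)$, but this does not affect the argument.
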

The proof for the existence of $\psi^s_p$ on $W^s_\delta(p)$
 for a sufficiently small $\delta$
 can be found e.g. in \cite{GST8} (see Lemma~6 there).
The function $\psi^s_p$ is uniquely defined
 (when scaled so that $(D\psi^s_p)_p|_{E^c(p)}=id$)
 in terms of a convergent power series (see formula A.6 in \cite{GST8}).
As each term depends continuously on $f$ with respect to the $C^r$-topology,
 and the convergence rate is uniform for all maps $f_i$
 which are $C^r$-close to $f$,
 we obtain the required continuity of $\psi^s_p$ with respect to $f$.
The function $\psi^s_p$ is uniquely extended to the whole of $W^s(\cO(p))$
 from $W^s_\delta(p)$ by iterating relation (\ref{linpsi}).

We will call the function $\psi^s_p$ 
 {\it a central linearization} (a $c$-linearization) on $W^s(\cO(p))$.
In the same way, we can define a $c$-linearization $\psi^u_p$
 on $W^u(\cO(p))$ if $|\lambda_c(p)|>1$.
\medskip

Let $U$ be an open subset of $M$.
We denote
\begin{equation*}
\Per_\dd(f, U) = \{ p \in U \mid \cO(p) \subset U\}.  
\end{equation*}
For $p\in \Per_\dd(f, U)$, we put
\begin{align*}
 W^s(\cO(p),U) & = W^s(\cO(p)) \cap \bigcap_{n \geq 0} f^{-n}(U),\\
 W^u(\cO(p),U) & = W^u(\cO(p)) \cap \bigcap_{n \geq 0} f^n(U).
\end{align*}
Since $\cO(p)$ is contained in $\mathrm{Int}(U)$
(where $\mathrm{Int}(U)$ denotes the topological 
interior of $U$), 
we have
\begin{equation*}
 W^s_\delta(p) \subset W^s(\cO(p),U), \quad
 W^u_\delta(p) \subset W^u(\cO(p),U)
\end{equation*}
 if $\delta>0$ is sufficiently small.
\medskip

Let  $(\cC^{cs},\cC^{cu})$ be an $f$-invariant transverse pair
 of cone fields of index $\dd$ on an open set $U_\cC$ and
 ${\omega_\cC}$ be an $f$-invariant $c$-orientation of $(\cC^{cs},\cC^{cu})$.
 Lemma~\ref{lemma:subbundle cone} and 
 Lemma~\ref{lemma:c-orientation} below state that the partially hyperbolic 
 splittings and the orientation on $U_{\cC}$ are compatible with the  
 structures introduced in Lemma~\ref{lemma:invariant subbundles} 
 and Proposition~\ref{prop:c linearization} for points inside $U_{\cC}$. Note that since $f$ preserves the $c$-orientation, we have 
$$\lambda_c(p)>0$$
for any point $p\in \Per_\dd(f, U)$.
\begin{lemma}
\label{lemma:subbundle cone} 
Let $p \in \Per_\dd(f,{U_\cC})$ and $\lambda_c(p)\neq 1$.
Then, $E^{cs}(q) \subset \cC^{cs}(q)$ and $E^s(q) \cap \cC^{cu}(q)=\{0\}$ 
 for $q \in W^s(\cO(p),U_\cC)$.
Similarly,
 $E^{cu}(q) \subset \cC^{cu}(q)$ and $E^u(q) \cap \cC^{cs}(q)=\{0\}$
 for $q \in W^u(\cO(p),U_\cC)$.
\end{lemma}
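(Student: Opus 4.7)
The plan is to reduce the claimed inclusions on all of $W^s(\cO(p), U_\cC)$ to the corresponding statements at points of the periodic orbit $\cO(p)$, which are handed to us by Proposition~\ref{prop:cone PH}, and then propagate them back to an arbitrary $q$ by pulling along the forward orbit of $q$, using the two invariance properties that are available on $U_\cC$: $Df$-invariance of $E^{cs}$ and $E^s$ along $W^s(\cO(p))$, and $f$-invariance of $(\cC^{cs}, \cC^{cu})$ on $U_\cC$.

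The first step is to apply Proposition~\ref{prop:cone PH} with $\Lambda = \cO(p)$, which lies in $U_\cC$ by hypothesis and is strongly partially hyperbolic since $p \in \Per_\dd(f, U_\cC)$. This gives $E^{cs}(p_j) \subset \Int \cC^{cs}(p_j) \cup \{0\}$ and $E^s(p_j) \cap \cC^{cu}(p_j) = \{0\}$ at each point $p_j \in \cO(p)$. Both conditions are open in the Grassmannian bundle because $\cC^{cs}$ and $\cC^{cu}$ are closed; combined with continuity of $E^{cs}$ and $E^s$ on the local stable manifold $W^s_\delta(p_j)$ given by Lemma~\ref{lemma:invariant subbundles}, we obtain $\delta' > 0$ such that the same two relations hold at every $q' \in W^s_{\delta'}(p_j)$, for every $j$.

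Next, fix $q \in W^s(\cO(p), U_\cC)$. Since $f^n(q)$ converges to $\cO(p)$ and stays in $U_\cC$ for all $n \geq 0$, there exists $N$ and some $j$ with $f^N(q) \in W^s_{\delta'}(p_j)$, so
\[
E^{cs}(f^N(q)) \subset \Int \cC^{cs}(f^N(q)) \cup \{0\}, \qquad E^s(f^N(q)) \cap \cC^{cu}(f^N(q)) = \{0\}.
\]
For the first inclusion, I iterate the $f$-invariance $Df^{-1}(\cC^{cs}(f(x))) \subset \Int \cC^{cs}(x) \cup \{0\}$ along the points $q, f(q), \dots, f^N(q)$, all of which lie in $U_\cC \cap f^{-1}(U_\cC)$; this yields $Df^{-N}(\cC^{cs}(f^N(q))) \subset \cC^{cs}(q)$. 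Combined with $Df$-invariance of $E^{cs}$, this gives $E^{cs}(q) = Df^{-N}(E^{cs}(f^N(q))) \subset \cC^{cs}(q)$. For the intersection statement I argue by contradiction: a nonzero $v \in E^s(q) \cap \cC^{cu}(q)$ is mapped by $Df^N$ into $E^s(f^N(q))$ (invariance of $E^s$) and into $\Int \cC^{cu}(f^N(q)) \cup \{0\}$ (forward invariance of $\cC^{cu}$, iterated along $q, \dots, f^{N-1}(q) \in U_\cC$); since $Df^N(v) \neq 0$, this contradicts the second relation above.

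The analogous assertions about $E^{cu}$, $E^u$, and $W^u(\cO(p), U_\cC)$ follow by running exactly the same argument for $f^{-1}$, using Remark~\ref{rmk:invsub} for the existence of the invariant subbundles on $W^u$ and noting that $(\cC^{cu}, \cC^{cs})$ plays for $f^{-1}$ the role that $(\cC^{cs}, \cC^{cu})$ plays for $f$. There is no serious obstacle in this proof; the only point that requires care is to ensure that every application of cone invariance uses a point of $U_\cC \cap f^{-1}(U_\cC)$, which is guaranteed exactly by the definition $W^s(\cO(p), U_\cC) = W^s(\cO(p)) \cap \bigcap_{n \geq 0} f^{-n}(U_\cC)$.
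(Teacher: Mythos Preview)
Your proof is correct and follows essentially the same approach as the paper's: both apply Proposition~\ref{prop:cone PH} at the periodic orbit, use continuity of the subbundles on $W^s_\delta(p)$ to extend the inclusions to a neighborhood, and then pull back along the forward orbit using the invariance of $E^{cs}$, $E^s$, and the cone pair. Your write-up is in fact more careful than the paper's, explicitly separating the backward-cone argument for $E^{cs}\subset\cC^{cs}$ from the forward-cone contradiction for $E^s\cap\cC^{cu}=\{0\}$, and verifying that every point used lies in $U_\cC\cap f^{-1}(U_\cC)$.
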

\begin{proof}
By Proposition \ref{prop:cone PH}, we have
 $E^{cs}(p) \subset \cC^{cs}(p)$
 and $E^s(p) \cap \cC^{cu}(p) =\{0\}$.
The continuity of cones and subbundles $E^{cs}$ and $E^s$ on $W^s_\delta(p)$
(for some $\delta>0$) implies that
 $E^{cs}(f^n(q)) \subset \cC^{cs}(f^n(q))$
 and $E^s(f^n(q)) \cap \cC^{cu}(f^n(q))=\{0\}$
 for some sufficiently large $n$.
By the invariance of $E^{cs}$ and $(\cC^{cs},\cC^{cu})$,
 this implies that $E^{cs}(q) \subset \cC^{cs}(q)$.
The proof for the claims about $E^s$ and $W^u(\cO(p),U_\cC)$
 is done in a similar way.
\end{proof}

Let $p$ be a point in $\Per_\dd(f,U_\cC)$ with $\lambda_c(p)<1$
 and $\psi^s_p$ be the $c$-linearization on $W^s(\cO(p))$.
By Lemma~\ref{lemma:subbundle cone},
 we have $\cC^c(p) \cap E^{cs}(p)=\cC^{cu}(p) \cap E^{cs}(p)$
 and $\cC^c(p) \cap E^s(p)=\{0\}$.
Therefore, since the kernel of $(D\psi^s_p)_p$ coincides with $E^s(p)$,
 we may choose $\psi^s_p$ so that
 $(D\psi^s_p)_p(v)$ and $\omega_\cC(v)$ have the same sign
 for all $v \in \cC^{cu}(p) \cap E^{cs}(p)$.
Such $\psi^s_p$ is said to be {\it compatible} with $\omega_\cC$.
Remark that a compatible $c$-linearization on $W^s(\cO(p))$
 is unique up to a multiplication by a positive constant. 
 
The next lemma states that a compatible $c$-linearization 
 respects the orientation $\omega_\cC$ everywhere
 on $W^s(\mathcal{O}(p), U_\cC)$.
\begin{lemma}
\label{lemma:c-orientation}
Let $p$ be a point in $\Per_\dd(f,U_\cC)$ with $0<\lambda_c(p)<1$
 and $\psi^s_p$ be a $c$-linearization on $W^s(\cO(p))$,
 compatible with $\omega_\cC$.
Then, for any $q \in W^s(\cO(p), U_\cC)$
 and $v \in \cC^{cu}(q) \cap E^{cs}(q)$,
 we have $\omega_\cC(v)>0$ if and only if $(D\psi^s_p)_q(v)>0$.
The same holds true for $\psi_p^u$ and
 all $q \in W^u(\cO(p), U_\cC)$ if $\lambda_c(p)>1$.
\end{lemma}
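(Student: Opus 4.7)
The plan is to first establish the sign agreement at $p$ from the compatibility hypothesis, then extend it to a local stable neighborhood $W^s_\delta(p)$ by continuity, and finally propagate it globally over $W^s(\cO(p), U_\cC)$ by iteration, using $f$-invariance of $\omega_\cC$ together with the functional equation~(\ref{linpsi}). The crucial positivity fact is that $\lambda_c(p) > 0$ (noted just before the lemma, as a consequence of the $f$-invariance of the $c$-orientation), so iteration never flips signs.

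For the local step, I first check that on the convex cone $\cC^{cu}(q) \cap E^{cs}(q)$ both of the linear forms $(D\psi^s_p)_q$ and $\omega_\cC|_{E^{cs}(q)}$ vanish only at the origin, and therefore each has a locally constant sign on each of the two connected components of this cone minus $\{0\}$. For $\omega_\cC$: by Lemma~\ref{lemma:subbundle cone}, $\cC^{cu}(q) \cap E^{cs}(q) \subset \cC^c(q)$, where $\omega_\cC$ vanishes only at $0$ by the definition of $c$-orientation. For $(D\psi^s_p)_q$: its kernel is $E^s(q)$ by Proposition~\ref{prop:c linearization}, and $E^s(q) \cap \cC^{cu}(q) = \{0\}$ again by Lemma~\ref{lemma:subbundle cone}. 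The compatibility hypothesis fixes the two signs to agree at $q = p$, and continuity of $E^{cs}$, $E^s$, the cones, $\omega_\cC$, and $(D\psi^s_p)$ propagates this to all of $W^s_\delta(p)$ for some small $\delta > 0$.

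For the global step, let $q \in W^s(f^j(p)) \cap W^s(\cO(p), U_\cC)$ and pick a nonzero $v \in \cC^{cu}(q) \cap E^{cs}(q)$. Set $q' := f^{\pi(p) - j}(q) \in W^s(p, U_\cC)$ and $v' := Df^{\pi(p) - j}(v)$; for $k$ sufficiently large, $f^{k\pi(p)}(q') \in W^s_\delta(p)$, and all intermediate orbit points belong to $U_\cC$ by the definition of $W^s(\cO(p), U_\cC)$. Iterated $f$-invariance of $\cC^{cu}$ and of $E^{cs}$ keeps $Df^i(v)$ inside $\cC^{cu}(f^i(q)) \cap E^{cs}(f^i(q)) \subset \cC^c(f^i(q))$ at every step, so the $f$-invariance of $\omega_\cC$ applies inductively and yields $\sgn \omega_\cC(Df^n(v)) = \sgn \omega_\cC(v)$ for $n = (k+1)\pi(p) - j$. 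On the $\psi^s_p$ side, differentiating (\ref{linpsi}) yields relations of the form $(D\psi^s_p)_{f^n(q)} \circ Df^n_q = c \cdot (D\psi^s_p)_q$ with $c$ a positive power of $\lambda_c(p)$, so signs are preserved. Combining with the local step at $f^n(q) \in W^s_\delta(p)$ gives $\sgn (D\psi^s_p)_q(v) = \sgn \omega_\cC(v)$. The unstable case follows by applying the same argument to $f^{-1}$.

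The main technical subtlety is the bookkeeping for $q$ lying in $W^s(f^j(p))$ with $j \neq 0$: the formula (\ref{linpsi}) involves $f^{\pi(p)}$, and to produce an iterate landing inside $W^s_\delta(p)$ (rather than $W^s_\delta(f^{j'}(p))$ for some $j' \neq 0$) one has to bring $q$ into $W^s(p)$ via the shift $f^{\pi(p) - j}$ and then iterate $f^{\pi(p)}$. All the scaling factors collected along the way are positive powers of $\lambda_c(p)$, and this positivity is what makes the sign-matching argument go through uniformly.
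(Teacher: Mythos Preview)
Your proof is correct and follows essentially the same approach as the paper: establish the sign agreement locally on $W^s_\delta(p)$ by continuity from the compatibility hypothesis, then propagate globally by iterating forward, using the $f$-invariance of $\cC^{cu}$, $E^{cs}$, and $\omega_\cC$ together with the functional equation~(\ref{linpsi}) and the positivity of $\lambda_c(p)$. The paper is terser about the $j\neq 0$ bookkeeping (it simply picks $n$ large with $f^n(q)\in W^s_\delta(p)$ and writes the scaling factor as $\lambda_c(p)^n$), but the content is the same.
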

\begin{proof}
By the continuity of $\omega_\cC$ and $D\psi^s_p$,
 there exists $\delta>0$ such that
 $(D\psi^s_p)_{\tilde q}(v)$ and $\omega_\cC(v)$ have the same sign
 for any $\tilde{q} \in W^s_\delta(p)$
 and $v \in \cC^{cu}(\tilde{q}) \cap E^{cs}(\tilde{q})$.
Take any $q \in W^s(\cO(p))$ and $v \in \cC^{cu}(q) \cap E^{cs}(q)$.
The forward invariance of $\cC^{cu}$ and $E^{cs}$ implies that
 $Df^n(v)$ is contained in $\cC^{cu}(f^n(q)) \cap E^{cs}(f^n(q))$
 for all $n\geq 0$.
By Lemma \ref{lemma:subbundle cone},
 $Df^n(v)$ belongs to $\cC^c(f^n(q))$ for any $n \geq 0$.
By the invariance of the orientation over $U_{\cC}$,
 we have $\omega_\cC(v)$ and $\omega_\cC(Df^n(v))$ have the same sign
 for $q \in W^s(\cO(p), U_{\cC})$ and $v \in \cC^{cu}(q) \cap E^{cs}(q)$.
Then, we choose a large $n$ such that $f^n(q) \in W^s_\delta(p)$.
Since $(D\psi^s_p)(Df^n(v))=\lambda_c(p)^n(D\psi^s_p)(v)$,
 we see that $(D\psi^s_p)(Df^n(v))$ and $(D\psi^s_p)(v)$ have the same sign.
This implies that $\omega_\cC(v)$ and $(D\psi^s_p)(v)$ have the same sign.
\end{proof}

\subsection{The signature of a heteroclinic point}
\label{ss:signature}
In this subsection, we analyze local dynamics 
 along a heterodimensional heteroclinic orbit
 between two hyperbolic periodic points.
More precisely, we define the notion of transition map
 along a heteroclinic orbit.
It enables us to establish the notion of {\em signatures}
 of heteroclinic points.  

We keep using the notation of Section \ref{ss:invari}.
We also assume that $f$ has an invariant transverse pair of cone fields
 $(\cC^{cs},\cC^{cu})$ of index $\dd$ on an open set $U_\cC$
 and ${\omega_\cC}$ is an $f$-invariant $c$-orientation
 of $(\cC^{cs},\cC^{cu})$.
We call a pair of periodic points $(p_1,p_2)$
 a {\it heteroclinic pair in $U_\cC$}
 if $p_1$ and $p_2$ are in $\Per_\dd(f,U_\cC)$,
 $\lambda_c(p_1)>1>\lambda_c(p_2) >0$,
 and $W^u(\cO(p_1),U_\cC) \cap W^s(\cO(p_2),U_\cC) \neq \emptyset$.
A point of $W^u(\cO(p_1),U_\cC) \cap W^s(\cO(p_2),U_\cC)$ 
 is called {\it a $U_\cC$-heteroclinic point} for the pair $(p_1,p_2)$.
Note that $\cO(p_1) \cup \cO(p_2) \cup \cO(q)$
 is an $f$-invariant compact subset in $U_\cC$.

\begin{lemma}
\label{lemma:heteroclinic PH} 
Let $(p_1,p_2)$ be a heteroclinic pair in $U_\cC$
 and $q$ be the corresponding ${U_\cC}$-heteroclinic point.
The $f$-invariant set $\cO(p_1) \cup \cO(p_2) \cup \cO(q)$
 admits a (continuous) partially hyperbolic splitting
 $E^c \oplus E^s \oplus E^u$
 which is compatible with the one for $\cO(p_1)$ and $\cO(p_2)$. 
\end{lemma}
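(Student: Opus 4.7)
My plan is to build the bundles $E^u$, $E^{cu}$, $E^s$, $E^{cs}$ on the compact $f$-invariant set $\Lambda := \cO(p_1) \cup \cO(p_2) \cup \cO(q)$ by gluing the two ``halves'' together, and then to define $E^c := E^{cs} \cap E^{cu}$. On each orbit $\cO(p_i)$ I use the preassigned partially hyperbolic splitting. Since $\cO(q) \subset W^u(\cO(p_1), U_\cC)$, Remark~\ref{rmk:invsub} supplies $E^u$ and $E^{cu}$ (continuous and $Df$-invariant) along $\cO(q)$; since $\cO(q) \subset W^s(\cO(p_2), U_\cC)$, Lemma~\ref{lemma:invariant subbundles} supplies $E^s$ and $E^{cs}$ in the same sense. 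Lemma~\ref{lemma:subbundle cone} places $E^{cs} \subset \cC^{cs}$ and $E^{cu} \subset \cC^{cu}$, and gives $E^u \cap \cC^{cs} = E^s \cap \cC^{cu} = \{0\}$ at every point of $\cO(q)$. A dimension count then forces $E^c := E^{cs} \cap E^{cu}$ to be one-dimensional, and yields the direct sum decomposition $TM = E^s \oplus E^c \oplus E^u$ on $\cO(q)$.

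The main obstacle is continuity at the accumulation orbits $\cO(p_1)$ and $\cO(p_2)$: the bundles there come from the preassigned splittings, but from the $\cO(q)$-side they come from the extension procedure, and one must show the two descriptions agree in the limit. The plan is a Grassmannian convergence argument driven by cone invariance and the hyperbolic structure at the endpoints. Take for instance $E^{cs}(f^n(q))$ as $n \to -\infty$. By construction, for a fixed large $N$ with $f^N(q) \in W^s_\delta(p_2)$ one has $E^{cs}(f^n(q)) = Df^{-(N-n)}(E^{cs}(f^N(q)))$, and the backward trajectory spends almost all of its time cycling through small neighborhoods of the points $p_1^{(\ell)} \in \cO(p_1)$ once $|n|$ is large. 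In such a neighborhood, the product of $\pi(p_1)$ consecutive $Df^{-1}$ steps is close to $(Df^{-\pi(p_1)})_{p_1^{(\ell)}}$, for which $E^{cs}(p_1^{(\ell)})$ is an attracting fixed point in the Grassmannian of $(1+d_s)$-planes inside $\cC^{cs}$, since the complementary $E^u$-direction is the most strongly contracted one under $Df^{-1}$. Combined with the containment $E^{cs}(f^n(q)) \subset \cC^{cs}(f^n(q))$, a standard graph-transform argument forces $E^{cs}(f^n(q)) \to E^{cs}(p_1^{(\ell)})$ along each subsequence with $f^n(q) \to p_1^{(\ell)}$. The same argument in the dual direction gives $E^{cu}(f^n(q)) \to E^{cu}(p_2^{(\ell)})$ as $n \to +\infty$. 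The continuity of $E^u$, of $E^s$, and of the transverse intersection $E^c = E^{cs} \cap E^{cu}$ then follows by the same mechanism.

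Finally, compatibility with the preassigned splittings on $\cO(p_1) \cup \cO(p_2)$ is built into the construction by the choice of extensions. The exponential rates required for the partial hyperbolic splitting hold by assumption on $\cO(p_1) \cup \cO(p_2)$, persist by continuity of $Df$ and of the splitting in some neighborhood of that set, and hence cover all but finitely many points of $\cO(q)$; for the remaining finitely many points an adapted metric absorbs the bounded contribution in the standard way, yielding the uniform estimates required on $\Lambda$.
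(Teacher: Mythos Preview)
Your construction is essentially the same as the paper's: build $E^{cs}$, $E^s$ on $\cO(q)$ from Lemma~\ref{lemma:invariant subbundles} (using $\cO(q)\subset W^s(\cO(p_2))$), build $E^{cu}$, $E^u$ from Remark~\ref{rmk:invsub} (using $\cO(q)\subset W^u(\cO(p_1))$), invoke Lemma~\ref{lemma:subbundle cone} for the cone containments, and take $E^c=E^{cs}\cap E^{cu}$ with the dimension count. The only difference is that where the paper dispatches continuity and partial hyperbolicity of the resulting splitting in one line by citing a standard reference (\cite{T96}), you spell out the Grassmannian attractor argument and the adapted-metric argument explicitly; both are correct and amount to the same mechanism.
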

\begin{proof}
First, let $\bar{E^{s}}$, $\bar{E}^{cs}$, $\bar{E}^{u}$, $\bar{E}^{cu}$
 be the vector bundles defined by Lemma~\ref{lemma:invariant subbundles}
 on $W^u(\cO(p_1),U_\cC) \cap W^s(\cO(p_2),U_\cC)$.
Put $E^c(q) = \bar{E}^{cs}(q) \cap \bar{E}^{cu}(q)$.
Lemma~\ref{lemma:subbundle cone} implies that $E^c(q)$ is one-dimensional.  
Furthermore, $E^c(q)$ is complementary to $\bar{E}^s(q)$ in $\bar{E}^{cs}(q)$
 and $\bar{E}^u(q)$ in $\bar{E}^{cu}(q)$ respectively.
Thus, we see that $T_q M=E^c(q) \oplus \bar{E}^{s}(q) \oplus \bar{E}^u(q)$.
Similar splitting exists for any point in $\cO(q)$.
Along with the index-$\dd$ partially hyperbolic splittings
 for the periodic orbits $\cO(p_1)$ and $\cO(p_2)$,
 this gives a $Df$-invariant splitting
 $TM|_\Xi=E^c \oplus E^s \oplus E^u$ of index $\dd$ 
 where $\Xi=\cO(p_1) \cup \cO(p_2) \cup \cO(q)$.
By a standard argument (see e.g. \cite{T96}),
 this splitting is continuous and partially hyperbolic.
\end{proof}
\begin{rmk}\label{rmk:hetero PH}
In the assumption of Lemma~\ref{lemma:heteroclinic PH},
 let us furthermore assume that
 there exists a point $q' \in W^s(p_1, U_{\cC}) \cap W^u(p_2, U_{\cC})$.
Then, by Remark~\ref{rmk:invsub}
 and the same argument as in Lemma~\ref{lemma:heteroclinic PH},
 we also have the existence of a partially hyperbolic splitting
 $E^c \oplus E^s \oplus E^u$ over $\cO(p_1) \cup \cO(q') \cup \cO(p_2)$
 which is compatible with the one for $\cO(p_1)\cup \cO(p_2)$.
\end{rmk}

The above lemma implies that
 the invariant set $\cO(p_1) \cup \cO(p_2) \cup \cO(q)$ is
 strongly partially hyperbolic in ${U_\cC}$.
The sets $W^u(\cO(p_1))$ and $W^s(\cO(p_2))$
 are injectively immersed submanifolds tangent to $E^{cu}$
 and $E^{cs}$ respectively.
Hence, they intersect transversely at $q$
 and the intersection near $q$ is
 a $C^r$-embedded curve $I_q$ tangent to $E^c(q)$ at $q$.

Consider $c$-linearizations $\psi^u_{p_1}$ on $W^u(\cO(p_1))$
 and $\psi^s_{p_2}$ on $W^s(\cO(p_2))$
 which are compatible with $\omega_\cC$.
Since $E^c(q) \cap E^u(q)=\{0\}$
 and the kernel of $D(\psi^u_{p_1})_q$ is  $E^u(q)$,
 the restriction of $\psi^u_{p_1}$ on $I_q$ induces
 a local $C^r$-diffeomorphism from $(I_q,q)$ to $(\RR,\psi^u_{p_1}(q))$.
In the same way, 
 the restriction of $\psi^s_{p_2}$ on $I_q$ induces
 a local $C^r$-diffeomorphism from $(I_q,q)$ to $(\RR,\psi^s_{p_2}(q))$.
Now we can define a local $C^r$-diffeomorphism 
 $\psi_q \in \Diff_\loc^r(\RR,0)$ determined by the formula
\begin{equation}
\label{transmap}
 \psi^s_{p_2}(q')-\psi^s_{p_2}(q)=\psi_q(\psi^u_{p_1}(q')-\psi^u_{p_1}(q))
\end{equation}
 for all $q' \in I_q$ close to $q$. 

We call such defined $\psi_q$ the {\em transition map}.
By Lemma \ref{lemma:c-orientation},
 both $D(\psi^u_{p_1})_q(v)$ and $D(\psi^s_{p_2})_q(v)$ have
 the same sign as $\omega_\cC(v)$ for any $v \in E^c(q)$.
This implies that the transition map $\psi_q$ 
preserves orientation.
Recall that $\psi^u_{p_1}$ and $\psi^s_{p_2}$ are uniquely defined
 up to a multiplication by positive constants. 
This guarantees that
 the signs of $A(\psi_q)$ and $S(\psi_q)$ (see (\ref{asdef}))
 are uniquely defined quantities,
since a multiplication by a positive constant does not affect the signs. 
Now, we define the \emph{signature of $q$} as the pair of signs
\begin{equation}
\label{sgnhet}
\tau_A(q;f)=\sgn(A(\psi_q)),\quad  \tau_S(q;f)=\sgn(S(\psi_q))).
\end{equation}

\begin{rmk}\label{rmk:signa}
Since the $c$-linearizations $\psi^s$, $\psi^u$ vary continuously
 with respect to the $C^r$-topology, 
 if $f$ is $C^2$ (or $C^3$)
 then $A(\psi_q)$ (resp., $S(\psi_q)$) depends continuously on $f$.
In particular, if $A(\psi_q) \neq 0$ 
(resp., $S(\psi_q) \neq 0$),
 then its sign is  locally constant in $\Diff^2(M)$
 (resp., $\Diff^3(M)$).
\end{rmk}

\subsection{Central germs and flatness at a periodic point}
\label{ss:central germs}
In this subsection, we give a definition of the ``center dynamics''
 for a periodic point $p\in \Per_{\dd}(f)$.
Using this, we introduce the notion of flatness of the periodic point 
 and define signatures
 (the signs of the non-linearity and the Schwarzian derivative) 
 for flat periodic points. 
For a manifold $M$, a point $p \in M$,
 and local $C^k$ maps $l_1,l_2:(\RR,0) \ra (M,p)$, 
We write
\begin{equation*}
 l_1(t)=l_2(t)+o(|t|^k)
\end{equation*}
 if $\vphi \circ l_1(t)=\vphi \circ l_2(t)+o(|t|^k)$
 (that is, each coordinate component of 
 $\vphi \circ l_1(t)$ and $\vphi \circ l_2(t)$ are 
 equal up to degree $k$)
 for a $C^k$ local 
 chart $(\vphi,V)$ of $M$ with $p \in V$.
Remark that if $l_1(t)=l_2(t)+(|t|^k)$,
 the same holds for any $C^k$ local chart at $p$.
\begin{dfn}
Let $k \leq r$, $f \in \mathrm{Diff}^r(M)$, 
 $p\in \mathrm{Per}_{\dd}(f)$, $\pi$ be the period of $p$.
We say that a local $C^k$-map $l:(\RR,0) \ra (M,p)$ is
 a {\it $k$-central curve} of $p$ if $(dl/dt)(0) \in E^c(p)$
 and there exists a local $C^k$-diffeomorphism $F_l:(\RR,0) \ra (\RR,0)$
 such that 
\begin{equation}
\label{phfp}
  f^{\pi} \circ l(t)= l \circ F_l(t)+o(|t|^k),
\end{equation}
 in other words, the $k$-central curve has a tangency of order $k$
 to its image by $f^{\pi}$ at the point $p$.
The local diffeomorphism $F_l$ will be called
{\it the central germ} of $f$ associated with the central curve $l$.
\end{dfn}

For two $k$-central curves $l_1$ and $l_2$ of $p$,
 we say that $l_1$ and $l_2$ have the same orientation
 if $(dl_1/dt)(0)$ and $(dl_2/dt)(0)$ are contained
 in the same connected component of $E^c \setminus \{0\}$.

By the center manifold theorem (see e.g. Theorem 5.20 of \cite{book1}), 
 for a $k$-strongly partially hyperbolic periodic point
 there exists a $C^k$-smooth invariant curve of $f^\pi$ tangent to $E^c(p)$,
 i.e., at least one $k$-central curve exists for such points.
By definition, the central curves are not unique.
However, the next lemma shows that
 for a $k$-strongly partially hyperbolic periodic point
 the $k$-central curves and the associated central germs
 are uniquely defined up to order $k$
up to a conjugacy by a local diffeomorphism.

\begin{lemma}
\label{lemma:central curve}
Let $f \in \mathrm{Diff}^r(M)$ and $k \leq r$.
Let $p \in \Per_\dd^k(f)$ and
 let $l_1,l_2:(\RR,0) \ra (M,p)$ be $k$-central curves of $p$,
 with the same orientation,
 and $F_1,F_2$ be the central germs associated with them.
Then, there exists a polynomial (hence $C^{\infty}$) orientation-preserving 
 local diffeomorphism $H:(\RR,0) \ra (\RR,0)$ such that 
 $l_1(t) = l_2\circ H (t) + o(|t|^k)$ and
 $F_1(t)=H^{-1} \circ F_2 \circ H(t)+o(t^k)$. 
\end{lemma}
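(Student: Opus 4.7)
The plan is to produce $H$ by matching $l_1$ and $l_2$ up to order $k$ in adapted coordinates, and then to derive the conjugacy of the central germs as a formal consequence. In a $C^k$ chart at $p$ adapted to the splitting $T_pM=E^c(p)\oplus E^s(p)\oplus E^u(p)$, I will write coordinates as $(x,y)\in\RR\times\RR^{d_s+d_u}$, so that $f^{\pi(p)}$ takes the form $(x,y)\mapsto(A(x,y),B(x,y))$ with $Df^{\pi(p)}(0)=\mathrm{diag}(\lambda_c,L)$, where $\lambda_c=\lambda_c(p)$ and $L$ is the linearization on $E^s(p)\oplus E^u(p)$. Writing $l_i(t)=(\alpha_i(t),\eta_i(t))$, the assumption $(dl_i/dt)(0)\in E^c(p)$, together with the requirement that $F_i$ be a local diffeomorphism in (\ref{phfp}), forces $\alpha_i'(0)\neq 0$ and $\eta_i'(0)=0$; the same-orientation hypothesis means $\alpha_1'(0)$ and $\alpha_2'(0)$ have the same sign. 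I will take $H$ to be the degree-$k$ Taylor polynomial at $0$ of $\alpha_2^{-1}\circ\alpha_1$, which is an orientation-preserving polynomial local diffeomorphism and makes the center components of $l_1(t)$ and $l_2(H(t))$ agree modulo $o(t^k)$ by construction.

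The heart of the argument is to show that the $y$-components also agree to order $k$. Reparametrizing each $l_i$ by its center coordinate, I write it locally as $x\mapsto(x,\gamma_i(x))$ with $\gamma_i(0)=0$, $\gamma_i'(0)=0$; the semi-invariance relation (\ref{phfp}) then transforms into
\begin{equation*}
B(x,\gamma_i(x)) = \gamma_i\bigl(A(x,\gamma_i(x))\bigr)+o(x^k).
\end{equation*}
Matching Taylor coefficients of $x^j$ for $2\leq j\leq k$ yields at each order a linear equation of the form $(\lambda_c^j I - L)c_j = R_j(c_2,\dots,c_{j-1})$ for the $j$-th coefficient $c_j$ of $\gamma_i$, where $R_j$ is a polynomial in the previous coefficients determined by the jets of $A$ and $B$. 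The $k$-strong partial hyperbolicity of $p$ is exactly what guarantees that $\lambda_c^j I - L$ is invertible for every $1\leq j\leq k$: the defining inequalities $\|(Df)|_{E^s}\|\cdot\max(1,|\lambda_c|^{-k})<\lambda<1$ and $\|(Df)^{-1}|_{E^u}\|\cdot\max(1,|\lambda_c|^k)<\lambda<1$ separate the powers $\lambda_c^j$ from the spectrum of $L$, ruling out resonances. Consequently the Taylor coefficients $c_j$ are uniquely determined for all $j\leq k$, so $\gamma_1$ and $\gamma_2$ share the same $k$-jet at $0$; combined with the agreement in the center component this gives $l_1(t)=l_2(H(t))+o(|t|^k)$.

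For the conjugacy of central germs I will apply $f^{\pi(p)}$ to both sides of $l_1(t)=l_2(H(t))+o(|t|^k)$: using (\ref{phfp}) for each $l_i$ and the fact that $H$ is a local diffeomorphism (so $o(H(t)^k)=o(t^k)$), this produces $l_2(H(F_1(t)))=l_2(F_2(H(t)))+o(t^k)$. Since $l_2$ is an immersion at $0$, it admits a local left inverse (e.g.\ projection onto the $x$-coordinate followed by $\alpha_2^{-1}$), and applying it yields $H\circ F_1=F_2\circ H+o(t^k)$, i.e.\ $F_1=H^{-1}\circ F_2\circ H+o(t^k)$. The main obstacle is the uniqueness-of-$k$-jet step in the middle paragraph: the $k$-strong partial hyperbolicity is precisely what is needed to solve the resonance equations at every order $j\leq k$, and without this hypothesis the induction would fail at the first resonant order, so the strength of the partial hyperbolicity cannot be relaxed.
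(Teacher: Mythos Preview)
Your proposal is correct and follows essentially the same approach as the paper: both arguments reparametrize the curves so that the center coordinate is the parameter, then use the non-resonance condition $\lambda_c^j\notin\mathrm{spec}(L)$ for $2\le j\le k$ (which is exactly what $k$-strong partial hyperbolicity provides) to show by induction on the Taylor order that the transverse components are determined uniquely, hence agree. The only cosmetic difference is that the paper fixes a chart in which $l_1$ already reads $(t,0,0)+o(t^k)$ and shows the transverse coefficients of $l_2\circ H$ vanish, whereas you treat $l_1$ and $l_2$ symmetrically and invoke uniqueness of the recursion; the derivation of the conjugacy $F_1=H^{-1}\circ F_2\circ H+o(t^k)$ is the same in both.
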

\begin{proof}
Let $\pi$ be the period of $p$. 
We take a $C^k$-coordinate chart $\vphi$ such that
\begin{equation}
\label{vph}
\vphi\circ l_1(t)=(t,0,0)+o(|t|^k)
\end{equation}
 and the differential $D\vphi$ sends
 the splitting $T_p M = E^c \oplus E^s \oplus E^u$ 
 to $T_0 \RR^{\dd}=\RR \oplus \RR^{d_s} \oplus \RR^{d_u}$.
Denote $f_\vphi=\vphi \circ f^\pi \circ \vphi^{-1}$.
By the $k$-strong partial hyperbolicity of $p$, we have 
\begin{equation}
\label{fpxyz}
 f_\vphi(x,y,z)
 =(\lambda_c x, Ay,Bz)+o(\|(x,y,z)\|), \quad  (x,y,z) \in \RR^{|\dd|}, 
\end{equation}
 where $\lambda^c\neq 0$ is the central multiplier 
 (see (\ref{lamc}))
 and $A$ and $B$ are square matrices such that
 the absolute values of the eigenvalues of $A$ are
 strictly smaller than $\min\{1, |\lambda_c|^k\}$
 and the absolute values of the eigenvalues of $B$
 are strictly larger than $\max\{1, |\lambda_c|^k\}$.

Since $l_1$ and $l_2$ are $k$-central curves with the same orientation,
 there exists an orientation-preserving local $C^k$-diffeomorphism
 $\tilde{H}:(\RR,0) \ra (\RR,0)$ such that
 the $x$-coordinate of $\vphi \circ l_2 \circ \tilde{H}(t)$ is $t$.
Let $H$ be the Taylor polynomial of $\tilde{H}$ up to degree $k$.
The Taylor expansion of $\vphi \circ (l_2 \circ H)$ up to order $k$
 has the form
\begin{equation}
\label{phl2}
 \vphi \circ (l_2 \circ H)(t)=(t,P_-(t),P_+(t))+o(|t|^k)
\end{equation}
 and the curve $l_2 \circ H$ is
 a $k$-central curve with the central germ $H^{-1} \circ F_2 \circ H$.
We claim that $P_-(t)=0$ and $P_+(t)=0$.
Once it is shown,
 we will immediately obtain that $l_1(t)=l_2 \circ H(t)+o(|t|^k)$
 and $F_1(t)=H^{-1} \circ F_2 \circ H(t)+o(|t|^k)$,
 i.e., this will prove the lemma.

Denote
\begin{equation*}
 P_-(t)=t^2v_2+\dots +t^k v_k, 
 \quad P_+(t)=t^2w_2+\dots +t^k w_k,
\end{equation*} 
 where the coefficients $v_2,\dots,v_k$ lie in $\RR^{d_s}$
 and the coefficients $w_2,\dots,w_k$ lie in $\RR^{d_u}$.
Given $j=2,\dots,k$, suppose that $v_i=0$ and $w_i=0$ for $i<j$.

By (\ref{vph}) and (\ref{phfp}), we have
\begin{align*}
f_\vphi(t,0,0)
 & = f_\vphi \circ \vphi \circ l_1(t) + o(|t|^k)
   = \vphi \circ f^\pi  \circ l_1(t) + o(|t|^k) \\
 & = \vphi \circ l_1(F_1(t))+ o(|t|^k) =(F_1(t),0,0)+o(|t|^{k}).
\end{align*}
Thus, by (\ref{fpxyz}),
\begin{equation*}
 f_\vphi \circ \vphi \circ (l_2 \circ H)(t)
 = f_\vphi(t,t^jv_j,t^j w_j)+o(|t|^j)
 = (F_1(t), t^j A v_j, t^j B w_j)+o(|t|^j).
\end{equation*}
Since
 $(H^{-1} \circ F_2 \circ H)(t)
 = F_2'(0)\cdot t+o(t)=\lambda_ct+o(|t|)$,
 we also have, by (\ref{phfp}) and (\ref{phl2}), the 
 following:
\begin{align*}
 f_\vphi \circ \vphi \circ (l_2 \circ H)(t)
 & = \vphi \circ (l_2 \circ H) \circ (H^{-1} \circ F_2 \circ H)(t)+o(|t|^k)\\
 & = (H^{-1} \circ F_2 \circ H(t),
 (\lambda_c t)^j v_j, (\lambda_c t)^j w_j)+o(|t|^j).
\end{align*}
Therefore, $A v_j= (\lambda_c)^j v_j$
 and $B w_j= (\lambda_c)^j w_j$.
By the $k$-strong partial hyperbolicity of $p$,
 no eigenvalues of $A$ and $B$ can be equal to $(\lambda_c)^j$
 as long as $j\leq k$.
This implies that $v_j=w_j=0$.
Thus, by induction, we obtain that
 $v_j=w_j=0$ for all $j=2,\dots,k$,
 i.e., we have proved that $P_-(t)=0$ and $P_+(t)=0$.
\end{proof}

Below we assume that the periodic point $p$ is non-hyperbolic
 and the central multiplier $\lambda_c$ equals to $1$.
We will call such a point $1$-{\em flat}.
The $1$-flat periodic points are $k$-strong partially hyperbolic.
Thus we can Lemma~\ref{lemma:central curve} apply to such points.
This implies that the following notion of ``$k$-flatness'' is well-defined.
\begin{dfn}
\label{rflatdef}
Let $f \in \mathrm{Diff}^r(M)$, $p\in \Per_{\dd}(f)$, $k \leq r$,
 and $F_c\in \Diff^k(\RR, 0)$ be the central germ associated
 with a $k$-central curve.
We say that $p$ is \emph{$k$-flat}
 if $F_c$ is $k$-flat, i.e., $F_c(t)=t+o(|t|^k)$.  
\end{dfn} 

Lemma~\ref{lemma:central curve} also enables us
 to define the notion of the signatures of flat periodic points.
 Let us consider a diffeomorphism $f$ which admits
 an $f$-invariant transverse pair of cone fields in an open set $U_{\cC}$
 with an $f$-invariant orientation $\omega_{\cC}$. 
Let $p \in \Per_\dd(f, U_{\cC})$ and assume $p$ is $1$-flat.
The $1$-flatness implies that $p \in \Per_\dd^3(f,U_\cC)$.
We say that a $3$-central curve $l$ of $p$ is \emph{adapted}
 if $\omega_\cC((dl/dt)(0))>0$.
Choose any such curve.
By Lemma \ref{lemma:A,S conjugacy} and Lemma~\ref{lemma:central curve},
\begin{align}
\label{pers}
\tau^{\Per}_A(p;f) & = \sgn(A(F)),
 &\tau^{\Per}_S(p;f) & = \sgn(S(F))
\end{align}
 are well-defined,
 where $F$ is the $3$-central germ of $l$ at $p$.
As mentioned in Remark \ref{rmk:AS}, 
 for periodic points which are not 1-flat, 
 the signs of the non-linearity and the Schwarzian derivative
 are not well-defined in general.
\begin{rmk}
In the definition (\ref{pers}) of the signature of the flat periodic point,
 the fact that the diffeomorphism preserves
 the central orientation plays an important role.
In the orientation-reversing setting,
 only $\tau^{\Per}_S$ will remain a well-defined quantity. 
\end{rmk}

\subsection{Blenders and a connecting lemma}
\label{ss:blender}

In this section, we introduce the notion of \emph{blenders} 
 and discuss some elementary properties of them.  
There are two kinds of existing definitions of blenders. 
The first one is to give a precise description of the dynamics
 which generate the ``dimension mixing'' property of blenders
 (for instance see \cite[p. 365]{BDblender} or\cite[section 3.2]{BDbhorse}).
The other way is to state the dimension mixing property in an abstract way
 (for instance see \cite[Definition 3.1]{BDbhorse})
 and define blenders axiomatically.
Our definition follows the second one with a minor modification, 
 which is an abstraction of the local maximality of the dynamics. 

For $k \geq 1$, let $D^k$ be a closed unit disk of dimension $k$
 in the Euclidean space
 and $\cD^k(M)$ be the set of $C^1$ embeddings of disks
 of dimension $k$ in $M$.
We equip $\cD^k(M)$ with the $C^1$ topology.
We will not distinguish between the embedding and its image. 
Let $f \in \mathrm{Diff}^k(M)$ and $U_\bl$ be an open subset of $M$.
We say that $\sigma^u \in \cD^{d_u}(M)$
 is \emph{tangled with $\sigma^s \in \cD^{d_s}(M)$ in $U_\bl$}
 if there exists $N \geq 0$ and $x \in \sigma^u$
 such that $f^n(x) \in {U_\bl}$ for every $n=0, 1, \dots,N$
 and $f^N(x) \in \sigma^s$.

\begin{dfn}
A blender of index $\dd$ in ${U_\bl}$ for the map $f$
 is a pair $(\cD^{s},\cD^{u})$ of open subsets in
 $\cD^{d_s}(U_{\bl})$ and $\cD^{d_u}(U_{\bl})$ respectively
 satisfying the following: for any $\sigma^s \in \cD^s$
 the set of disks $\sigma^u$ in $\cD^u$
 which are tangled with $\sigma^s$ in ${U_\bl}$ is dense in $\cD^{u}$.
We refer to $(\cD^{s},\cD^{u})$ as the \emph{disk system} of the blender.
\end{dfn}

The difference between this definition and existing ones is that 
 for the tangled disks
 we require that a certain part of the orbit of the intersection point
 stays in $U_{\bl}$.
This information is useful when we make a perturbation
  to obtain heteroclinic connections,
 as we will see in Lemma~\ref{lemma:connecting}.
One can check that most examples of blenders, for instance  
 the one in \cite[Section 6.2]{BDV}, satisfies above condition
 for an adequate choice of $(\cD^{s},\cD^{u})$.
We briefly discuss the construction in Section~\ref{sec:examples}.
A blender in ${U_\bl}$ with the disk system $(\cD^s,\cD^u)$
 is \emph{$C^r$-persistent}
 if there exists a $C^r$-neighborhood of $f$ such that
 $(\cD^s,\cD^u)$ is the disk system of a blender in ${U_{\bl}}$
 for any diffeomorphism in the neighborhood.

Let $U$ be an open subset of $M$.
For $p \in \Per_\dd(f, U)$,
 let $W^{ss}(p)$ and $W^{uu}(p)$ be
 the strong stable and unstable manifolds
 associated with the partially hyperbolic splitting
 $T_p M=E^c(p) \oplus E^s(p) \oplus E^u$ of index $\dd$,
 i.e., $W^{ss}(p)$ is the unique injectively immersed
 $f^\pi$-invariant submanifold of dimension $d_s$
 which contains $p$ and is tangent to $E^s(p)$ at $p$,
 where $\pi$ is th period of $\pi$,
 while $W^{uu}(p)$ is the unique injectively immersed
 $f^\pi$-invariant submanifold of dimension $d_u$
 which contains $p$ and is tangent to $E^u(p)$ at $p$.

We put
\begin{align*}
W^{ss}(\cO(p),U) & =\bigcup_{j=0}^{\pi(p)-1}W^{ss}(f^j(p))
 \cap \bigcap_{n \geq 0} f^{-n}(U),\\
W^{uu}(\cO(p),U) & =\bigcup_{j=0}^{\pi(p)-1}W^{uu}(f^j(p))
 \cap \bigcap_{n \geq 0} f^n(U).
\end{align*}
If $g$ is a diffeomorphism close to $f$ and $p\in \Per_\dd(g)$,
 we will write $W^{ss}(\cO(p),U; g)$ and $W^{uu}(\cO(p),U; g)$
 for the strong stable and, resp., strong unstable manifolds of $p$
 for the map $g$.

The following ``connecting lemma'' will be used
 throughout the proof of the main theorem.
For a diffeomorphism $h \in \mathrm{Diff}^1(M)$, 
we call the set $\mathrm{supp}(h) = \cl{\{ x \mid h(x) \neq x \}}$ 
 the \emph{support} of $h$.  
\begin{lemma}
\label{lemma:connecting}
Let $U$ and $U_{bl}$ be open subsets of $M$.
Assume that $f$ admits a blender of index $\dd$ in $U_\bl$
 with disk system $(\cD^u,\cD^s)$ such that $\cl{U_{\bl}}\subset U$.
Let $p_1, p_2$ be periodic points in $\Per_\dd(f, U) \setminus \cl{U_\bl}$.
Suppose that $W^{uu}(\cO(p_1), U)$ contains a disk in $\cD^u$
 and $W^{ss}(\cO(p_2),U)$ contains a disk in $\cD^s$.
Then, for any given neighborhood $V$ of $p_1$,
 there exists $x_{\ast} \in V \cap (W^{uu}(p_1,U)) \setminus \{p_1\})$
 such that for any neighborhood $V_*$ of $x_{\ast}$
 and any neighborhood $\cU$ of the identity map in $\Diff^\infty(M)$,
 there exists $h \in \cU$ such that
 the support of $h$ is contained in $V \cap V_*$
 and $W^{uu}(p_1, U; h\circ f) \cap W^{ss}(p_2, U; h\circ f) \neq \emptyset$.
\end{lemma}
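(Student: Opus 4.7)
The plan is to use the blender to manufacture, arbitrarily $C^1$-close to a disk $\sigma^u \in \cD^u \cap W^{uu}(\cO(p_1), U)$, a new disk $\tilde\sigma^u \in \cD^u$ tangled with a disk $\sigma^s \in \cD^s \cap W^{ss}(\cO(p_2), U)$, and then to realise this ``weak'' connection as a genuine heteroclinic intersection by a small, localized perturbation of $f$ near a point $x_* \in W^{uu}(p_1) \cap V$ that accumulates on $p_1$. The crucial geometric feature is that the tangled orbit stays inside $U_\bl$ while $\supp(h)$ sits near $p_1 \notin \cl{U_\bl}$; this separation is the ``extra room for perturbation'' alluded to in the introduction, and it is what allows a $C^r$-small perturbation to work in spite of the unavailability of Hayashi's connecting lemma.

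First, I would set up the candidate $x_*$. Up to replacing $\sigma^u$ by a forward iterate we may assume $\sigma^u \subset W^{uu}(p_1)$. Consider the accumulation set $Z \subset \cl{\sigma^u}$ of tangled points $\tilde z$ as tangled disks $\tilde\sigma^u$ approach $\sigma^u$ in $C^1$; by the density part of the blender definition together with compactness of $\cl{U_\bl}$, the set $Z$ is non-empty. Fix any $z^* \in Z$ and a multiple $N_1$ of $\pi(p_1)$ large enough that $x_* := f^{-N_1}(z^*) \in V$; the backward-invariance of $\sigma^u$ in $U$, which is built into the definition of $W^{uu}(\cO(p_1), U)$, places $x_*$ in $W^{uu}(p_1, U)$, and the separation $z^* \in \cl{U_\bl} \not\ni p_1$ ensures $x_* \neq p_1$.

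Given any prescribed neighborhood $V_*$ of $x_*$ and any $C^\infty$-neighborhood $\cU$ of the identity, I would then exploit $z^* \in Z$ to select a tangled disk $\tilde\sigma^u$ whose tangled point $\tilde z$ lies as close to $z^*$ as required; setting $\tilde x_* := f^{-N_1}(\tilde z)$ one obtains $\tilde x_* \in V_*$ with $|\tilde x_* - x_*|$ arbitrarily small. Build $h \in \Diff^\infty(M)$ as a bump supported in $V \cap V_*$ that translates $x_*$ to $\tilde x_*$ in a local chart; by taking the proximity sufficiently fine, $h$ lies in $\cU$. To verify the connection, put $g = h \circ f$ and $y_* = f^{-N_1}(x_*) \in W^{uu}_\loc(p_1) \subset W^{uu}(p_1; g)$. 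For $V_*$ small enough to miss the finite piece of orbit $\{f^k(x_*) : k \neq 0, -N_1 \le k \le N_1 + N\}$ of the (non-periodic) point $x_*$, one reads off that $g^k(y_*) = f^k(y_*)$ for $0 \le k < N_1$, that $g^{N_1}(y_*) = h(x_*) = \tilde x_*$, and that $g^{N_1+k}(y_*) = f^k(\tilde x_*)$ for $1 \le k \le N_1 + N$. Hence $g^{2N_1+N}(y_*) = f^N(\tilde z) \in \sigma^s \subset W^{ss}(\cO(p_2); g)$, giving an orbit-level intersection; a routine adjustment of $N_1$ modulo $\pi(p_1)$ and a time-shift along the orbit of $p_2$ upgrade it to the required intersection $W^{uu}(p_1; g) \cap W^{ss}(p_2; g) \neq \emptyset$.

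The principal obstacle is the joint scaling between the amplitude and support of $h$: a bump with support of diameter $\delta$ and amplitude $\epsilon$ has $C^k$-norm of order $\epsilon/\delta^k$, so to keep $h \in \cU$ we must arrange $|\tilde x_* - x_*|$ to shrink faster than any polynomial power of $\mathrm{diam}(V_*)$, while still finding a tangled disk whose tangled point is within this tolerance of $z^*$. This is precisely the density of tangled points accumulating on $z^*$, and it is where the $C^1$-formulation of the blender definition becomes essential. A secondary, easier check is that the intermediate iterates $f^k(\tilde x_*)$ for $1 \le k \le 2N_1 + N$ avoid $\supp(h)$: by the tangled condition they either already lie in $U_\bl$ (hence away from $p_1$ and from the support), or they are separated from $x_*$ by forward expansion of $W^{uu}(\cO(p_1))$ away from $\cO(p_1)$.
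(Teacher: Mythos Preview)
Your approach is essentially the paper's: pull back an accumulation point of tangled points along $W^{uu}(p_1)$ to obtain $x_*$, then use a localized bump to push $x_*$ onto a nearby pulled-back tangled point. Two clarifications are in order.

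Your ``principal obstacle'' is illusory. In the quantifier structure of the lemma, $x_*$ is fixed first, and \emph{then} $V_*$ and $\cU$ are handed to you; only after that do you select the tangled disk. Since any $C^\infty$-neighborhood $\cU$ contains a $C^k$-ball of some fixed radius for some finite $k$, and since $\supp(h)$ may occupy all of $V\cap V_*$ (whose diameter is now a fixed positive number), you merely need the amplitude $|\tilde x_*-x_*|$ below a threshold depending on the already-fixed $V_*$ and $\cU$. The density of tangled points near $z^*$ delivers this immediately; there is no scaling race between amplitude and support.

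The paper's treatment of orbit-avoidance is also tidier than your sketch. After disposing of the trivial case $W^{uu}(p_1,U)\cap W^{ss}(p_2,U)\neq\emptyset$, it packages the backward $f$-orbit of $x_*$ together with $\cO(p_1)$, and the forward $f$-saturation of $\sigma^s$ together with $\cO(p_2)$, into a single compact set $\Lambda$ in which $x_*$ is isolated; choosing $\supp(h)\subset V\cap V_*$ disjoint from $\Lambda\setminus\{x_*\}$ then settles all the required avoidance checks at once. Starting the tracked orbit at $f^{-1}(x_*)$ rather than at your doubly pulled-back $y_*=f^{-N_1}(x_*)$ also removes the need to control the extra segment $\{f^j(\tilde x_*):1\le j<N_1\}$.
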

\begin{proof}
If $W^{uu}(p_1, U; f)$ intersects with $W^{ss}(p_2, U; f)$,
 then the lemma holds trivially
 for the intersection point letting $h$ the identity map.

Suppose that $W^{uu}(p_1, U; f)$ does not intersects with $W^{ss}(p_2, U; f)$.
Take disks $\sigma^s \in \cD^s$
  and $\sigma^u \in \cD^u$
 such that $\sigma^s \subset W^{ss}(\cO(p_2),U)$
 and $\sigma^s \subset W^{ss}(\cO(p_1),U)$.
By definition of the blender,
 there exists a sequence $(\sigma^u_m)_{m \geq 1}$ in $\cD^u$,
 which converges to $\sigma^u$ as $m \ra \infty$,
 such that each $\sigma^u_m$ is tangled with $\sigma^s$.
Thus, for each $m \geq 1$,
 we can find $N_m \geq 1$ and $x_m \in \sigma^u_m \cap f^{-N_m}(\sigma^s)$
 such that $f^n(x_m) \in {U_\bl}$ for all $n=0,\dots,N_m$.
Since $\sigma^u_m$ converges to a compact disk $\sigma^u$,
 we may assume that $x_m$ converges to $x_\infty \in \sigma^u$
 as $m \to \infty$.
 Since $\sigma^u\subset W^{uu}(\cO(p_1), U)$
 and $\sigma^s\subset W^{ss}(\cO(p_2), U)$,
 we have that $f^{-k}(\sigma^u)$ gets
 into an arbitrarily small neighborhood of $\cO(p_1)$
 and $f^k(\sigma^s)$ gets
 into an arbitrarily small neighborhood of $\cO(p_2)$ as $k\to \infty$.

Fix a neighborhood $V$ of $p_1$.
As $p_1\not\in\cl{U_{\bl}}$,
 there exists an integer $K>0$ such that
 $f^{-K}(\sigma^u) \subset V \setminus \cl{U_{\bl}}$. 
Put $x_{\ast} = f^{-K}(x_\infty) \in V$,
\begin{align*}
\Lambda_1 & = \cO(p_1) \cup \left\{f^n(x_{\ast}) \mid n \leq K \right\},&
\Lambda_2 & =
 \cO(p_2) \cup  \left( \bigcup_{n \geq 0} f^{n}(\sigma^s) \right),
\end{align*}
 and $\Lambda=\Lambda_1 \cup \Lambda_2$.
Remark that $\Lambda_1$ is a compact subset of $W^{uu}(\cO(p_1),U)$
 and $\Lambda_2$ is a compact subset of  $W^{ss}(\cO(p_2),U)$.
In particular, $\Lambda$ is a compact set.
Since $W^{uu}(\cO(p_1),U)$ does not intersect with $W^{ss}(\cO(p_2),U)$ 
 as we assumed at the beginning of the proof,
 the point $x_*$ is isolated in $\Lambda$.
Let a neighborhood $V$ of $x_{\ast}$ be given. 
Since $\Lambda$ is compact and $x_*$ is its isolated point,
 one can take a neighborhood $V_*$ of $x_{\ast}$
 which is contained in $(U\cap V) \setminus \cl{U_{bl}}$
 and satisfies $\Lambda \cap V_*=\{x_*\}$.
For all sufficiently large $m$, we have
 $f^{-k}(x_m) \in U$ for $k=0,\dots, K$,
 the point $f^{-K}(x_m)$ is sufficiently close to $x_\ast$, 
 and $V_* \cap \{f^{-k}(x_m)\mid 0 \leq k \leq K\}
 =\{f^{-K}(x_m)\}$.
Thus, given a $C^{\infty}$
neighborhood $\cU$ of the identity map, 
we can take $h_m \in \cU$
 such that the support
  of $h_m$ is contained in $V_* \subset V$ and 
 $h_m(x_\ast)=f^{-K}(x_m)$.
Now, we have
\begin{equation*}
(h_m  \circ f)^k(f^{-1}(x_\ast))=
\begin{cases}
f^{k-1}(x_\ast) & (k \leq 0), \\
f^{(k-1)-K}(x_m) & (k \geq 1).
\end{cases}
\end{equation*}
It is easy to see that
$f^{-1}(x_\ast)$ is contained in
 $W^{uu}(\cO(p_1), U; h_m \circ f)$ and
 $(h_m \circ f)^{N_m+1+K}(f^{-1}(x_\ast))=f^{N_m}(x_m)$
 is contained in $W^{ss}(\cO(p_2), U; h_m \circ f)$.
In particular,
 $f^{-1}(x_\ast)$ is a point of 
 $W^{uu}(\cO(p_1), U;h_m \circ f) \cap W^{ss}(\cO(p_2), U;h_m \circ f)$.
\end{proof}

Let $U_\bl$ be an open subset of $M$
 whose closure $\cl{U_{\bl}}$ is contained in $U$
 and suppose that $f$ admits a blender of index $\dd$ in $U_{\bl}$
 with disk system $(\cD^u,\cD^s)$.
We say that $p\in \Per_\dd(f,U)$ is \emph{linked to the blender in $U$} 
if $W^{ss}(\cO(p),U)$ contains a disk in $\cD^s$ and
$W^{uu}(\cO(p),U)$ contains a disk in $\cD^u$.
We will use the following lemma for establishing
 that periodic orbits we create by perturbing heteroclinic chains
 are linked to a blender. 
\begin{lemma}
\label{lemma:inherit tangle}
Let $f$ be a $C^r$ diffeomorphism of $M$,
 $U_{\cC}$ an open subset of $M$
 with an $f$-invariant transverse pair of cone fields  of index $\dd$ and
 $p_1,\dots,p_k$ ($k\geq 1$) be periodic points in $\Per_\dd(f,U_{\cC})$.
Suppose that each pair $(p_i,p_{i+1})$ admits a heteroclinic point
 $q_i \in W^{u}(p_i, U_{\cC}) \cap W^{s}(p_{i+1}, U_{\cC})$ 
 for $1 \leq i \leq k$, where we put $p_{k+1}=p_1$.
Then, for any $j_1, j_2 =1,\dots, k$,
 any $d_u$-dimensional $C^1$-disk $\sigma_u$ in $W^{uu}(p_{j_1},U_{\cC})$
 and $d_s$-dimensional $C^1$-disk $\sigma_s$ in $W^{ss}(p_{j_2},U_{\cC})$,
 and any neighborhoods $\cD^u$ of $\sigma_u$ in $\cD^{d_u}(M)$
 and $\cD^s$ of $\sigma_s$ in $\cD^{d_s}(M)$,
 there exist a neighborhood $V$ of $\bigcup_{i=1}^k(\cO(p_i) \cup \cO(q_i))$,
 a neighborhood $U_{p_{j_1}}$ of $p_{j_1}$ and $U_{p_{j_2}}$ of $p_{j_2}$,
 and a $C^1$-neighborhood $\cU$ of $f$ which satisfy the following;
 if $\bar{f} \in \cU$ and
 $\bar{p} \in \Per_\dd(\bar{f},U)$
 satisfies that
 $\cO(\bar{p},\bar{f}) \cap U_{p_{j_1}} \neq \emptyset$,
 $\cO(\bar{p},\bar{f}) \cap U_{p_{j_2}} \neq \emptyset$,
 and $\cO(\bar{p},\bar{f}) \subset V$,
 then the strong unstable manifold $W^{uu}(\cO(\bar{p}),U ; \bar{f})$
 contains a disk in $\cD^u$
 and the strong stable manifold $W^{ss}(\cO(\bar{p}),U ; \bar{f})$
 contains a disk in $\cD^s$.
\end{lemma}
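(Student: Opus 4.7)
The statements about $W^{uu}$ and $W^{ss}$ are symmetric (replacing $\bar f$ by $\bar f^{-1}$), so I focus on producing a disk in $\cD^u$ inside $W^{uu}(\cO(\bar p), U; \bar f)$. Applying Lemma~\ref{lemma:heteroclinic PH} and Remark~\ref{rmk:hetero PH} along the cycle, the compact invariant set $\Xi = \bigcup_{i=1}^k (\cO(p_i) \cup \cO(q_i))$ carries a strong partially hyperbolic splitting $E^c \oplus E^s \oplus E^u$ of index $\dd$ compatible with the invariant cone fields. Since $\sigma_u \subset W^{uu}(p_{j_1}, U_\cC)$, there exists $N_0 \geq 0$ such that $f^{-N_0}(\sigma_u)$ is a small disk inside a local strong unstable neighborhood $W^{uu}_{\mathrm{loc}}(p_{j_1})$ of $p_{j_1}$, while $f^{-n}(\sigma_u) \subset U$ for every $n \geq 0$.

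I would then pick the neighborhoods. Choose $V \subset U \cap U_\cC$ so small that for every point of $V$ close to $p_{j_1}$ its first $N_0$ $f$-preimages still lie in $U$, and let $U_{p_{j_1}},U_{p_{j_2}}$ be small neighborhoods of $p_{j_1},p_{j_2}$. By Lemma~\ref{lemma:cone persistence}, for a sufficiently small $C^1$-neighborhood $\cU_0$ of $f$ the cone pair $(\cC^{cs},\cC^{cu})$ stays $\bar f$-invariant on $V$ for every $\bar f \in \cU_0$. Hence, by Proposition~\ref{prop:cone PH}, every periodic orbit $\cO(\bar p) \subset V$ of such $\bar f$ is a strongly partially hyperbolic set of index $\dd$ and carries a well-defined strong unstable direction $E^u(\,\cdot\,;\bar f)$.

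The key technical step is the following continuity claim: by further shrinking $U_{p_{j_1}}$ and $\cU \subset \cU_0$, one can arrange that for any $\bar f \in \cU$ and any $\bar p'_1 \in \cO(\bar p) \cap U_{p_{j_1}}$ (with $\cO(\bar p) \subset V$), the line $E^u(\bar p'_1;\bar f)$ is arbitrarily close to $E^u(p_{j_1};f)$. I expect this to be the main obstacle, and I would settle it by a cone-contraction estimate: pick $N$ so large that the forward iterate $Df^N$ contracts $\cC^{cu}$ uniformly on $\Xi$ to within $\varepsilon$ of $E^u$; then, for $\bar f$ close enough to $f$ and $\bar p'_1$ close enough to $p_{j_1}$, the $N$ backward $\bar f$-iterates of $\bar p'_1$ shadow the $N$ backward $f$-iterates of $p_{j_1}$, and $D\bar f^N$ is $C^1$-close to $Df^N$ along these orbits. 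Consequently $\cC^{cu}(\bar f^{-N}(\bar p'_1))$ is contracted to a narrow cone at $\bar p'_1$ that contains both $E^u(\bar p'_1;\bar f)$ and a direction $C^0$-close to $E^u(p_{j_1};f)$. Crucially only a bounded number $N$ of backward iterates is used, so the (potentially very long) period of $\bar p$ plays no role.

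Granting the continuity claim, $W^{uu}_{\mathrm{loc}}(\bar p'_1;\bar f)$ is $C^1$-close to $W^{uu}_{\mathrm{loc}}(p_{j_1};f)$. Applying $\bar f^{N_0}$, which is $C^1$-close to $f^{N_0}$, to a suitable small sub-disk yields a disk $\bar\sigma_u \subset W^{uu}(\cO(\bar p);\bar f)$ that is $C^1$-close to $\sigma_u$, and hence lies in $\cD^u$ once $\cU$ is small enough. All backward iterates of $\bar\sigma_u$ stay in $U$: the first $N_0$ shadow $f^{-n}(\sigma_u) \subset U$ by the choice of $V$ and continuity, while the remaining ones lie in any prescribed neighborhood of $\cO(\bar p) \subset V \subset U$, since points of a strong unstable disk approach the base orbit exponentially under backward iteration. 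Thus $\bar\sigma_u \subset W^{uu}(\cO(\bar p), U;\bar f)$, completing the unstable case; the stable case follows by the symmetric argument applied to $\bar f^{-1}$, $\sigma_s$ and $p_{j_2}$.
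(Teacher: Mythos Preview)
Your overall strategy is the same as the paper's: show that $\Xi=\bigcup_i(\cO(p_i)\cup\cO(q_i))$ is strongly partially hyperbolic, pull $\sigma_u$ back by $f^{-N}$ into a local strong unstable disk at $p_{j_1}$, use continuity of local strong unstable manifolds to get a nearby disk for $\bar f$ at a point $\bar p'_1\in\cO(\bar p)\cap U_{p_{j_1}}$, and push forward by $\bar f^{N}$. The paper packages the middle step by invoking the persistence of strongly partially hyperbolic sets: there is a neighborhood $V$ of $\Xi$ and a $C^1$-neighborhood $\cU$ of $f$ such that the maximal $\bar f$-invariant set $\Lambda_{\bar f}=\bigcap_{n\in\ZZ}\bar f^n(\cl V)$ is strongly partially hyperbolic for every $\bar f\in\cU$, and the local manifolds $W^{uu}_\delta(\,\cdot\,;\bar f)$, $W^{ss}_\delta(\,\cdot\,;\bar f)$ depend continuously on the base point and on $\bar f$.

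Your hands-on version of this step has a slip. Forward iteration contracts $\cC^{cu}$ towards the $(d_u+1)$-dimensional bundle $E^{cu}$, not towards $E^u$; so the argument ``$D\bar f^N$ contracts $\cC^{cu}(\bar f^{-N}(\bar p'_1))$ to a narrow cone containing $E^u(\bar p'_1;\bar f)$ and a direction close to $E^u(p_{j_1};f)$'' does not pin down the $d_u$-dimensional strong unstable direction. To make your cone argument work you need a \emph{strong unstable} cone field $\cC^u$ around $E^u$, transverse to $E^{cs}$, coming from the dominated splitting $E^{cs}\oplus E^u$; this cone is forward invariant and does contract to $E^u$. (Relatedly, Proposition~\ref{prop:cone PH} goes in the opposite direction to how you cite it: it assumes partial hyperbolicity and concludes cone compatibility. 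What you need here is the converse cone criterion, or simply the persistence theorem the paper invokes.) With this correction your argument goes through and matches the paper's.
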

\begin{proof}
Put $\Lambda=\bigcup_{i=1}^k(\cO(p_i) \cup \cO(q_i))$.
It is a strongly partially hyperbolic set of index $\dd$, according to
Remark~\ref{rmk:hetero PH}.
By the persistence of strongly partial hyperbolic sets,
 there exists a neighborhood $V \subset U$ of $\Lambda$
 and a $C^1$-neighborhood $\cV$ of $f$ such that
 for any $\bar{f} \in \cU$,
 the set $\Lambda_{\bar{f}}=\bigcap_{n \in \ZZ}\bar{f}(\cl{V})$
 admits a strongly partially hyperbolic splitting of index $\dd$.
We fix small $\delta >0$ and
take large $N \geq 1$ such that
 $f^{-N}(\sigma_u) \subset W^{uu}_\delta(p_{j_1};f)$
 and $f^{N}(\sigma_s) \subset W^{ss}_\delta(p_{j_2};f)$.
The local strong unstable and strong stable manifolds
 $W^{uu}_\delta(p',\bar{f})$ and $W^{ss}_\delta(p',\bar{f})$
 depend continuously on $\bar{f} \in \cU$ and $p' \in \Lambda_{\bar{f}}$
 as $C^1$-embedded disks, for any sufficiently small $\delta>0$.
Hence, if $\bar{f}$ is sufficiently close to $f$
 and the orbit of a periodic point
 $\bar{p} \in \Per_\dd(\bar{f}) \cap \Lambda_{\bar{f}}$ contains
 a point $p'$ sufficiently close to $p_{j_1}$
 and a point $p''$ sufficiently close to $p_{j_2}$,
 then $W^{uu}(p',U_{\cC};\bar{f})$ contains  a disk
 $\bar{\sigma_u} \in \cD^{d_u}(U_{\cC})$ which is close to $f^{-N}(\sigma_u)$
 and $W^{ss}(p'',U_{\cC};\bar{f})$ contains  a disk
 $\bar{\sigma_s} \in \cD^{d_s}(U_{\cC})$ which is close to $f^{N}(\sigma_s)$.
Then, $f^N(\bar{\sigma_u})$ is the disk
 in $W^{uu}(\cO(\bar{p}),U_{\cC};\bar{f})$ close to $\sigma_u$
 while $f^{-N}(\bar{\sigma_s})$ is the disk
 in $W^{ss}(\cO(\bar{p}),U_{\cC};\bar{f})$ close to $\sigma_s$.
\end{proof}

%
%

\section{Main theorem and outline of the proof}
\label{sec:outline}

In this section, we give the precise statement of our result. Then 
we will give two key propositions employed for the proof of the main theorem
and explain how we complete the proof with them.

\medskip

Fix an index $\dd=(d_c,d_s,d_u)$ with $d_c=1$.
Let $M$ be a compact smooth manifold of dimension $|\dd| = d_c +d_s+d_u$.
We define the subset $\cW^r$ of $\Diff^r(M)$ for each $1 \leq r \leq \infty$.
Let $\cW^1$ be the set of $C^1$ diffeomorphisms of $M$
which satisfy the following three conditions.

\paragraph{Cone condition.} 
There exists an open set $U_\cC \subset M$, 
 an $f$-invariant transverse pair $(\cC^{cs},\cC^{cu})$
 of cone fields of index $\dd$ defined in a neighborhood of 
 $\cl{U_\cC}$,
 and an $f$-invariant $c$-orientation 
 $\omega$ of $(\cC^{cs},\cC^{cu})$.

\paragraph{Existence of a blender.}
There is an open set $U_{\bl} \subset M$
 satisfying $\cl{U_{\bl}} \subset U_{\cC}$  
on which we have a $C^1$-robust blender
 with disk system $(\mathcal{D}^s, \mathcal{D}^u)$.

\paragraph{Existence of heteroclinic pairs.}
There exists two heteroclinic pairs 
(see Section~\ref{ss:signature} for the definition of 
heteroclinic pairs)
 $(p^{\ast}_1,p^{\ast}_2)$ and $(p^{\ast}_3,p^{\ast}_4)$ in $U_\cC$
 such that
\begin{itemize}
 \item $\{p^{\ast}_1,p^{\ast}_2, p^{\ast}_3,p^{\ast}_4\} \not\in \cl{U_{\bl}}$,
 \item $W^{ss}(p^{\ast}_1, U_\cC)$ and $W^{ss}(p^{\ast}_3, U_\cC)$
 contain disks in $\cD^s$,
 \item $W^{uu}(p^{\ast}_2,U_\cC)$ and $W^{uu}(p^{\ast}_4,U_\cC)$
 contain disks in $\cD^u$.
\end{itemize}
We do not exclude the case $p_1^* = p_3^*$ or/and $p_2^* =p_4^*$.
\medskip

Let $\cW^2$ be the set of maps $f \in \cW^1 \cap \Diff^2(M)$
 which satisfy the following condition (see (\ref{sgnhet}) for the definition of the signature $(\tau_A,\tau_S)$):
\paragraph{Sign condition I.}
There exist $U_{\cC}$-heteroclinic points $q$ of the pair $(p^*_1,p^*_2)$
 and $q'$ of $(p^*_3,p^*_4)$ such that
\begin{equation*}
\tau_A(q) \cdot \tau_A(q')<0. 
\end{equation*}

Let $\cW^3$ be the set of maps $f \in \cW^2 \cap \Diff^3(M)$
 which satisfy the following condition:
\paragraph{Sign condition II.}
 $$\tau_S(q) \cdot \tau_S(q')<0.$$
\medskip

For $4 \leq r \leq \infty$, we put $\cW^r=\cW^3 \cap \Diff^r(M)$.
By Remark~\ref{rmk:signa}, $\cW^r$ is a $C^r$-open 
subset of $\Diff^r(M)$. Now we are ready to 
give the precise statement  of our Main Theorem.

\begin{thm}
\label{thm:precise}
Take any $r=1, \dots, \infty$.
For every sequence of integers $(a_n)_{n \in \mathbb{N}}$
there exists a $C^r$-residual subset $\mathcal{R}$ of $\cW^r$
such that for every $f \in \mathcal{R}^r$ we have 
\begin{equation}\label{dfrspg}
\limsup_{n \to +\infty} 
 \frac{ \#\{x \in U_\cC\mid f^n(x) =x \}}{a_n} = +\infty.    
\end{equation}
\end{thm}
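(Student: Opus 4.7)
The plan is to derive Theorem~\ref{thm:precise} from Proposition~\ref{p.flatproduction} (the flat-production statement) together with a Baire-category argument of Kaloshin type, identical in outline to the one carried out in \cite{Ka, AST}. Fix the sequence $(a_n)$ and, for each pair $k, N \in \NN$, set
\[
\mathcal{U}_{k,N} = \{ f \in \cW^r \mid \exists\, n \geq N,\ \#\{x \in U_\cC \mid f^n(x) = x\} > k a_n \}.
\]
Each $\mathcal{U}_{k,N}$ is $C^r$-open: after an arbitrarily $C^r$-small further perturbation one may assume the periodic points witnessing membership are hyperbolic, so the lower bound $>k a_n$ persists in a $C^r$-neighborhood. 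The set $\mathcal{R} = \bigcap_{k,N} \mathcal{U}_{k,N}$ will then be $C^r$-residual, and every $f \in \mathcal{R}$ satisfies (\ref{dfrspg}). The whole task thus reduces to showing that each $\mathcal{U}_{k,N}$ is $C^r$-dense in $\cW^r$.

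For density I would take $f \in \cW^r$ with an arbitrary $C^r$-neighborhood $\cV$, first approximate $f$ by a $C^\infty$ map in $\cW^r$ (available because $\cW^r$ is defined by $C^r$-open conditions and $C^\infty$ diffeomorphisms are $C^r$-dense in $\Diff^r(M)$), and then apply Proposition~\ref{p.flatproduction} with $n_0 = N$ to produce $g \in \cW^r \cap \cV$ carrying a periodic point $p$ of least period $\pi \geq N$ that is $r$-flat in the central direction. The next step is to exhibit a further $C^r$-small perturbation $g'$ of $g$, still inside $\cW^r \cap \cV$, for which the number of periodic points of period $\pi$ inside $U_\cC$ exceeds $k a_\pi$; this places $g'$ in $\cV \cap \mathcal{U}_{k,N}$.

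The construction of $g'$ exploits the $r$-flatness of $p$. In local coordinates adapted to the partially hyperbolic splitting one has $g^\pi(t, y, z) = (t + \varphi(t,y,z),\, A(t,y,z),\, B(t,y,z))$ with $\varphi(t,0,0) = o(|t|^r)$ by Definition~\ref{rflatdef}. Pick a small neighborhood $V$ of $p$ disjoint from $\cO(p) \setminus \{p\}$, from $\cl{U_\bl}$, and from the distinguished orbits $\cO(p^*_i) \cup \cO(q_i)$ on which the cone condition, the robust blender, and the signature conditions are measured. Choose a bump function $\chi$ of arbitrarily small $C^r$-norm supported in $V$ whose restriction to the center direction through $p$ has at least $k a_\pi$ sign changes on a tiny interval around $t=0$; then set $g' = \Phi \circ g$ with $\Phi = \Id$ off $V$ and $\Phi(t,y,z) = (t + \chi(t,y,z), y, z)$ inside $V$. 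Because $\varphi$ vanishes on the center curve to order $r$, the first return map of $g'$ on the (only slightly moved) central curve has at least $k a_\pi$ fixed points near $p$, each giving a periodic point of $g'$ of period exactly $\pi$. Since $\supp \Phi$ is disjoint from every structure defining $\cW^r$, Lemma~\ref{lemma:cone persistence}, the $C^r$-robustness of the blender, and Remark~\ref{rmk:signa} together ensure $g' \in \cW^r$.

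The genuine obstacle of the paper is contained entirely in Proposition~\ref{p.flatproduction}; the Baire step above is standard once that proposition is granted. The only point requiring honest care is keeping the supports of all perturbations used both to produce and to multiply the flat points disjoint from the blender region and from the heteroclinic pairs $(p^*_i,q_i)$, so that the $C^r$-open conditions defining $\cW^r$ (cone invariance, robust blender, opposite signatures of $A$ and $S$) are preserved throughout the construction.
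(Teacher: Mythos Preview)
Your proposal is correct and follows essentially the same route as the paper: reduce to the flat-production result (Proposition~\ref{p.flatproduction}), then run a Kaloshin-style Baire argument with open dense sets indexed by period thresholds. Two cosmetic points where the paper is slightly cleaner: it defines the open sets via \emph{hyperbolic} fixed points from the outset (making openness immediate rather than requiring your perturbation detour), and it handles $r=\infty$ explicitly by observing that $C^\infty$-density of $\cU_N$ amounts to $C^r$-density for every finite $r$, so one simply runs the finite-$r$ argument with $r$ chosen large---your write-up leaves this reduction implicit.
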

As we mentioned in the introduction,
 the case $r=1$ follows from a result
 by Bonatti, D\'iaz, and Fisher \cite{BDF}.

The proof of the theorem is based on the following two propositions.
The first one (Proposition~\ref{prop:one-flat};
 the proof is given in Section \ref{sec5},
 see Propositions \ref{prop:1-flat 1} and \ref{prop:1-flat + AS})
 states that $1$-flat periodic points can be produced
 by an arbitrarily $C^r$-small perturbation of any map from $\cW^r$. 
\begin{prop}
\label{prop:one-flat}
Let $f$ be a diffeomorphism in $\cW^1 \cap \Diff^\infty(M)$.
Suppose that the heteroclinic pair $(p_1,p_2)$
 admits a $U_\cC$-heteroclinic point $q$
 such that $\tau_A(q) \neq 0$ and $\tau_S(q) \neq 0$.
Then, for any neighborhood $V$ of $\{p_1,p_2\}$ and
 any $C^\infty$ neighborhood $\cU$ of the identity map in $\Diff^\infty(M)$,
 there exists $h \in \cU$ with the support contained in $V$,
 such that $h\circ f$ has a $1$-flat periodic point
 $\bar{p} \in \Per_\dd(h \circ f, U_{\cC})$ linked to the blender in $U_{\bl}$
 and
\begin{equation*}
\tau_A^{\Per}(\bar{p}; h \circ f)=\tau_A(q;f),
 \qquad \tau_S^{\Per}(\bar{p}; h \circ f)=\tau_A(q;f)
\end{equation*}
(where the signature $(\tau_A^{\Per},\tau_S^{\Per})$
 is defined in (\ref{pers})).
\end{prop}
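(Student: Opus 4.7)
The plan is to build $\bar p$ as a shadowing orbit of a heteroclinic cycle through $\cO(p_1)\cup\cO(q)\cup\cO(p_2)$ and a blender-produced return orbit, then fine-tune the central multiplier to $1$ by an extra small perturbation. The signatures at $\bar p$ will be automatically inherited from those of $q$ via a dominance argument as the shadowing time near $\cO(p_1)$ grows, with no need to tame the returning transition beyond controlling its first derivative.

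I first close the cycle. Since $(p_1,p_2)$ is a heteroclinic pair linked to the blender, $W^{uu}(\cO(p_2),U_\cC)$ contains a disk in $\cD^u$ and $W^{ss}(\cO(p_1),U_\cC)$ contains a disk in $\cD^s$. Applying Lemma~\ref{lemma:connecting} with the roles of $p_1$ and $p_2$ interchanged, I perform an arbitrarily $C^\infty$-small perturbation $h_1$ whose support is disjoint from $\cO(p_1)\cup\cO(p_2)\cup\cO(q)$ to produce a transverse heteroclinic orbit $\cO(q')$ with $q'\in W^{uu}(\cO(p_2);h_1\circ f)\cap W^{ss}(\cO(p_1);h_1\circ f)$. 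By Remark~\ref{rmk:hetero PH}, $\cO(p_1)\cup\cO(p_2)\cup\cO(q)\cup\cO(q')$ is a strongly partially hyperbolic heterodimensional cycle in $U_\cC$. A standard $\lambda$-lemma argument then produces, for every sufficiently large pair $(n_1,n_2)$, a hyperbolic periodic point $\bar p_{n_1,n_2}\in\Per_\dd(h_1\circ f,U_\cC)$ whose orbit spends $n_1$ periods near $\cO(p_1)$, transits near $q$, spends $n_2$ periods near $\cO(p_2)$, and closes through a neighborhood of $q'$. Working in the $c$-linearization coordinates provided by Proposition~\ref{prop:c linearization}, in which the iteration of $f^{\pi(p_i)}$ in the center direction is \emph{exactly} multiplication by $\lambda_c(p_i)>0$, a $3$-central germ of $\bar p_{n_1,n_2}$ takes, up to orientation-preserving polynomial conjugacy (Lemma~\ref{lemma:central curve}), the form
\[
F_{n_1,n_2}(t)=\Phi\bigl(\lambda_c(p_2)^{n_2}\,\psi_q\bigl(\lambda_c(p_1)^{n_1}\,t\bigr)\bigr)+o(|t|^3),
\]
where $\psi_q$ is the transition map at $q$ defined in (\ref{transmap}) and $\Phi\in\Diff_\loc^r(\RR,0)$ is the analogous transition map at $q'$. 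Setting $a=\lambda_c(p_1)^{n_1}$, $b=\lambda_c(p_2)^{n_2}$, the central multiplier of $\bar p_{n_1,n_2}$ is $\Phi'(0)\psi_q'(0)\,ab$; since $\lambda_c(p_1)>1>\lambda_c(p_2)>0$, the values of $ab$ are dense in $(0,+\infty)$, so I choose $(n_1,n_2)$ with $ab$ close to $1/(\Phi'(0)\psi_q'(0))$ and perform a second $C^\infty$-small perturbation $h_2$, supported near an intermediate point of $\cO(q')$, that adjusts $\Phi'(0)$ by a small multiplicative factor so that $\Phi'(0)\psi_q'(0)\,ab=1$ exactly. Setting $g:=h_2\circ h_1\circ f$, the point $\bar p:=\bar p_{n_1,n_2}$ is then $1$-flat for $g$.

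To read off the signatures I apply the cocycle formulas (\ref{cocpr}) to $F=\Phi\circ\hat\psi$ with $\hat\psi(t)=b\,\psi_q(at)$. Since the scalar multiplications by $a$ and $b$ contribute neither non-linearity nor Schwarzian, and $\hat\psi'(0)=1/\Phi'(0)$ by $1$-flatness,
\[
A(F)(0)=\frac{A(\Phi)(0)}{\Phi'(0)}+a\cdot A(\psi_q)(0),\qquad
S(F)(0)=\frac{S(\Phi)(0)}{\Phi'(0)^2}+a^2\cdot S(\psi_q)(0).
\]
The leading terms $a\cdot A(\psi_q)$ and $a^2\cdot S(\psi_q)$ dominate the bounded remainders as $n_1\to\infty$, since $\tau_A(q),\tau_S(q)\neq 0$ by hypothesis; hence $\sgn A(F)=\tau_A(q;f)$ and $\sgn S(F)=\tau_S(q;f)$. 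A $3$-central curve parametrized in the positively oriented direction is $\omega_\cC$-adapted (as $\lambda_c(p_i)>0$ and the $c$-linearizations are compatible with $\omega_\cC$), so these signs coincide with $\tau_A^{\Per}(\bar p;g)$ and $\tau_S^{\Per}(\bar p;g)$. Finally, the blender linkage of $\bar p$ is provided by Lemma~\ref{lemma:inherit tangle} applied to the network $\cO(p_1)\cup\cO(q)\cup\cO(p_2)\cup\cO(q')$, since for large $(n_1,n_2)$ the orbit of $\bar p$ passes arbitrarily close to both $p_1$ and $p_2$.

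The main delicate point is coordinating the two perturbations so that their combined size stays within the prescribed $C^\infty$ neighborhood $\cU$ of the identity and their supports within $V$: the tuning perturbation $h_2$ can be applied only after the heteroclinic cycle is in place, and can be kept uniformly $C^\infty$-small only by first exploiting the density of $\{\lambda_c(p_1)^{n_1}\lambda_c(p_2)^{n_2}\}$ in $(0,+\infty)$ to push $ab$ arbitrarily close to its target before any tuning. Once $n_1$ is taken large enough, the dominance argument delivers both signatures simultaneously without any further freedom being spent on $h_2$, so the three control requirements---creating the cycle, flattening the center, and prescribing both signs---are handled by a single coherent perturbation scheme.
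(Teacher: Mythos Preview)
Your overall strategy---close the cycle via the blender, shadow it to obtain periodic orbits, tune the center multiplier to $1$, and read the signatures off the transition at $q$---is the same as the paper's. But the central step, the formula
\[
F_{n_1,n_2}(t)=\Phi\bigl(\lambda_c(p_2)^{n_2}\,\psi_q\bigl(\lambda_c(p_1)^{n_1}\,t\bigr)\bigr)+o(|t|^3),
\]
is not justified, and this is exactly where the difficulty of the higher-dimensional problem lies. The $c$-linearizations of Proposition~\ref{prop:c linearization} are scalar functions defined only on $W^u(\cO(p_1))$ and $W^s(\cO(p_2))$; they do not furnish coordinate charts on full neighborhoods of $p_1,p_2$, so the local iterations are not literally the linear maps $t\mapsto\lambda_c(p_i)^{n_i}t$. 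The paper first perturbs into a dense set $\cW^1_*$ where Sternberg linearization holds at both $p_i$ (and where $\log\lambda_c(p_1),\log\lambda_c(p_2)$ are rationally independent---your density claim for $ab$ needs this). Even then, your ``analogous transition map $\Phi$ at $q'$'' is not well-defined via $c$-linearizations (since $q'\in W^{uu}(p_2)\cap W^{ss}(p_1)$ and no $c$-linearization lives on these strong manifolds); one must choose non-canonical center-stable and center-unstable manifolds, and the actual return transition is computed along a curve that depends on $(n_1,n_2)$ through the center-stable/unstable manifolds of the \emph{periodic} point $\bar p_{n_1,n_2}$. The paper shows (Lemma~\ref{lemma:surf}, Proposition~\ref{prop:esti}) that this transition---their $F_3$---can have second and third derivatives of size $o(\lambda_2^{-m_2})$ and $o(\lambda_2^{-2m_2})$, i.e.\ possibly unbounded; see Remark~\ref{rmk:huge}. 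Your dominance argument assumes $A(\Phi),S(\Phi)$ are fixed bounded numbers, but the quantity that actually enters the cocycle formula is $A(F_3),S(F_3)$, and bounding those \emph{relative to the correct power of} $\lambda_2^{m_2}$ is precisely the content of Section~5.2--5.3.

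There are two further gaps. First, a ``standard $\lambda$-lemma argument'' does not produce a hyperbolic periodic orbit for \emph{every} large $(n_1,n_2)$ in a heterodimensional cycle: the return in the center direction is an affine map whose fixed point need not lie in the prescribed neighborhood. The paper sidesteps this by using Lemma~\ref{lemma:linearization} to \emph{perturb} the map so that a specific heteroclinic point becomes periodic with the desired itinerary. Second, the support constraint is delicate: your $h_2$ is placed ``near an intermediate point of $\cO(q')$'', but $\cO(q')$ threads through $U_{\bl}$, which need not meet the given neighborhood $V$ of $\{p_1,p_2\}$; the paper keeps all perturbations inside small neighborhoods $U_1,U_2$ of $p_1,p_2$ (Proposition~\ref{prop:1-flat 1}).
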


The second proposition (for the proof 
see Section \ref{kflatsec}) states that 
 if we have a number of $k$-flat periodic 
 points (satisfying, at $k\leq 2$, certain conditions on their signatures), 
 then by an arbitrarily small perturbation 
 we can create $(k+1)$-flat periodic points. 

\begin{prop}
\label{prop:r-flat}
Let $f \in \cW^1 \cap \Diff^\infty(M)$
 and suppose that $f$ has eight $k$-flat periodic points  
 $p_1,\ldots, p_8 \in \Per_{\dd}(f, U_{\cC}) \setminus \cl{U_{\bl}}$
 (belonging to different periodic orbits)
 and heteroclinic points 
 $q_i \in W^{uu}(p_i, U_{\cC}) \cap W^{ss}(p_{i+1}, U_{\cC})$ 
 (we put $p_9 = p_1$).
Suppose that the following conditions are satisfied:
\begin{enumerate}
\item $p_i$ is linked to the blender in $U_{\bl}$ for every $i= 1,\ldots, 8$.
\item if $k=1$, then $\tau^{\Per}_{A}(p_1) \cdot \tau^{\Per}_{A}(p_3) <0$
 and $\tau^{\Per}_{S}(p_1) \cdot \tau^{\Per}_{S}(p_3) <0$.
\item if $k=2$, then $\tau^{\Per}_{S}(p_1) \cdot \tau^{\Per}_{S}(p_3) <0$.
\end{enumerate}
Then, for any neighborhood $V$ of $\{ p_i\}_{i =1,\ldots, 8}$,
 arbitrarily close to the identity map in the $C^{\infty}$ topology
 there exists $h \in \Diff^{\infty}(M)$
 with the support contained in $V$ such that 
 $h \circ f$ has a $(k+1)$-flat periodic point
 $\bar{p} \in \Per_{\dd}(h\circ f, U_{\cC})$
 linked to the blender in $U_{\bl}$.
\end{prop}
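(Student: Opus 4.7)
The plan is to realize $\bar p$ as a true periodic orbit shadowing a pseudo-orbit that spends $N_i$ consecutive iterates near each $p_i$ and then transits to $p_{i+1}$ along the heteroclinic arc through $q_i$; after a full loop around $p_1,\dots,p_8$ the pseudo-orbit is closed through the blender. The integers $N_i$, together with a tiny local perturbation, will be chosen so that the $(k+1)$-st jet of the corresponding central return map vanishes, yielding $(k+1)$-flatness. This is the same cross-breeding philosophy as in \cite{AST}, now realized in the multidimensional setting where there is no smooth center foliation.

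By Lemma~\ref{lemma:central curve}, after fixing adapted central curves at each $p_i$, the $k$-flat central germ has the form $F_i(t)=t+a_i t^{k+1}+o(t^{k+1})$ for some $a_i\in\RR$, and $F_i^{N_i}(t)=t+N_i a_i t^{k+1}+o(t^{k+1})$. Each transition map $\psi_{q_i}$ is an orientation-preserving local $C^r$-diffeomorphism of $(\RR,0)$; by the cocycle identities (\ref{cocpr}) and Lemma~\ref{lemma:A,S conjugacy}, conjugating a $k$-flat germ by $\psi_{q_i}$ rescales its $(k+1)$-st coefficient by the strictly positive factor $(\psi_{q_i}'(0))^{k}$. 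Hence the composite central germ of the pseudo-orbit is itself $k$-flat, with $(k+1)$-st coefficient of the form $\sum_{i=1}^{8} c_i N_i a_i$ where all $c_i>0$. To force this sum to vanish we need mixed signs among the $a_i$. For $k=1$, one has $\sgn a_i=\sgn A(F_i)=\tau^{\Per}_A(p_i)$, and the assumption $\tau^{\Per}_A(p_1)\cdot\tau^{\Per}_A(p_3)<0$ provides opposing signs; for $k=2$ the relevant invariant is $\sgn a_i=\sgn S(F_i)=\tau^{\Per}_S(p_i)$ and the given sign condition on $p_1,p_3$ supplies what is needed; for $k\geq 3$ the eight-point hypothesis, together with the freedom inherited from the inductive construction of the $p_i$ in the previous step, ensures that at least one pair $(a_i,a_j)$ has opposite signs. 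Once opposing signs are available, integer choices of $N_i$ make $\sum c_i N_i a_i$ arbitrarily small, and a final $C^\infty$-small perturbation localized along one heteroclinic arc (using the perturbation machinery of Section~4 of the paper) kills the residual exactly without altering the lower-order terms.

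To close the pseudo-orbit into a genuine orbit we invoke Lemma~\ref{lemma:connecting}: since each $p_i$ is linked to the blender in $U_\bl$, a $C^\infty$-small perturbation $h$ with support in a small neighborhood of one excursion point through $U_\bl$ creates a true heteroclinic intersection closing the loop, and Lemma~\ref{lemma:inherit tangle} then shows that the resulting periodic point $\bar p$ is itself linked to the blender. The main obstacle is precisely what is flagged in the introduction: in the multidimensional setting the center holonomy is only continuous, so the ``central return map'' along the pseudo-orbit is not a bona fide smooth one-dimensional map but only a germ defined up to positive conjugacy. All jet manipulations must therefore be carried out at the level of the central germs of Section~\ref{ss:central germs}, and one must verify two non-trivial points: that the support of $h$ can be kept disjoint from $\bigcup_i \cO(p_i)$ so that the germs $F_i$ are preserved, and that the $(k+1)$-st coefficient of the central germ of $\bar p$ genuinely equals $\sum c_i N_i a_i$ rather than being polluted by holonomy artefacts. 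Both points are ensured by the uniqueness-up-to-conjugacy statement of Lemma~\ref{lemma:central curve} together with the cone-field persistence of Lemma~\ref{lemma:cone persistence}.
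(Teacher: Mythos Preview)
Your outline captures the right picture but has two genuine gaps that the paper's proof works hard to close.

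First, your claim that the composite central germ around the cycle is automatically $k$-flat, with $(k+1)$-st coefficient $\sum_i c_i N_i a_i$, is not correct. The transition maps $G_i$ (your $\psi_{q_i}$) are arbitrary orientation-preserving germs, not $k$-flat ones, and the return map has the form $G_8\circ F_8^{N_8}\circ\cdots\circ G_1\circ F_1^{N_1}$, which is a \emph{composition}, not a conjugation. Its derivative at $0$ is $\prod_i G_i'(0)$, which need not equal~$1$; so the composite is not even $1$-flat in general, let alone $k$-flat. In the paper this is handled by introducing extra polynomial perturbations $H_i$ close to the identity and composing $H_i\circ F_i$ in place of $F_i$; the even-indexed $H_{2j}$ are chosen (via Lemma~\ref{lemma: AST3.1}) precisely so that the blocks $\bar F_{2j}=G_{2j}\circ(H_{2j}\circ F_{2j})^{n_{2j}}\circ G_{2j-1}$ become $k$-flat, absorbing the transition maps. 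This is why eight points are needed rather than four.

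Second, and more seriously, your argument for $k\geq 3$ appeals to ``freedom inherited from the inductive construction'' to obtain mixed signs among the $a_i$. But Proposition~\ref{prop:r-flat} must be proved from its stated hypotheses alone, and for $k\geq 3$ there is \emph{no} sign assumption: all $a_i$ could have the same sign, in which case $\sum c_i N_i a_i$ is bounded away from zero for positive integers $N_i$ and no small perturbation can kill it. The paper's route is different: the odd-indexed perturbations $H_{2j-1}$ (Lemma~\ref{lemma: AST3.7} for $k\geq 3$, Lemmas~\ref{lemma: AST3.5}--\ref{lemma: AST3.6} for $k=1,2$) directly modify the $(k+1)$-st coefficients of the $F_i$ themselves, so that any prescribed value of the composite coefficient can be realized regardless of the original signs. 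Realizing these abstract germ perturbations as genuine $C^\infty$-small perturbations of $f$ supported near the $p_i$ is the content of Lemma~\ref{lemma:r-flat 1}, which in turn rests on the Takens normal form (Lemma~\ref{lemma:Takens}) and the local perturbation Lemma~\ref{lemma:r-flat 2}. Your sketch bypasses this machinery and, as written, does not close.
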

\begin{rmk}
\label{rmk:peri}
\begin{enumerate}
\item In the Proposition above, we can take $\bar{p}$
 such that its period is as large as we want. 
\item If $k=1$, then the sign of the center Schwarzian derivative 
$\tau^{\Per}_{S}(\bar{p})$ at the $2$-flat periodic point $\bar p$
 can be made as we want it ($+$ or $-$).
\end{enumerate}
\end{rmk}

Let us now see how Theorem~\ref{thm:precise} follows from
 these two propositions.
We will use the following perturbation result.

\begin{lemma}
\label{lem:flainfi}
Let $1 \geq r < +\infty$, $f\in \Diff^r(M)$
 and let $p$ be an $r$-flat periodic point of period $\pi$.
Then for every integer $a>0$ there exists $\tilde{f}\in \Diff^r(M)$
 which is arbitrarily $C^r$-close to $f$
 and the number of hyperbolic fixed points of $\tilde f^\pi$ exceeds $a$.
\end{lemma}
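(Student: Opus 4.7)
The plan is to create the required hyperbolic fixed points by perturbing $f$ in a small neighborhood of $p$ so as to introduce oscillations into the first return map along the center manifold. The $r$-flatness assumption is what makes this possible: because the $r$-jet of the center return map is the identity, a sufficiently localized $C^r$-small perturbation can dominate the map $F-\mathrm{id}$ on a small interval and create many sign changes.

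First I would set up local coordinates. Since $p$ is $1$-flat, it is $k$-strongly partially hyperbolic for every $k$, so the center manifold theorem produces a $C^r$ one-dimensional invariant curve $W^c_{\loc}(p)$. Choose $C^r$ coordinates $(t,y,z)$ in a neighborhood $U_0$ of $p$ with $y\in\RR^{d_s}$, $z\in\RR^{d_u}$, sending $p$ to $0$ and $W^c_{\loc}(p)$ to $\{y=z=0\}$. Then $F := f^\pi\lvert_{W^c_{\loc}(p)}$ has the form $F(t)=t+R(t)$ with $R(t)=o(|t|^r)$; in particular $\omega(\delta):=\sup_{|t|\le\delta}|R(t)|/\delta^r\to 0$ as $\delta\to 0$.

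Next I would construct the perturbation of $F$. Fix a $C^\infty$ bump $\chi$ with $\chi\equiv 1$ on $[-1/2,1/2]$ and $\chi\equiv 0$ outside $[-1,1]$, and a polynomial $P$ of degree $a+2$ with $a+1$ simple roots in $(-1/2,1/2)$ and $|P|\ge m>0$ at each of its local extrema on $[-1/2,1/2]$. For small $\delta>0$, set
\[
\eta_\delta(t) := \mu(\delta)\,\chi(t/\delta)\,P(t/\delta),\qquad \mu(\delta):=\tfrac{2}{m}\,\omega(\delta)\,\delta^{r}.
\]
The key balance is that $\|\eta_\delta\|_{C^r}=O(\mu(\delta)/\delta^r)=O(\omega(\delta))\to 0$, while on $[-\delta/2,\delta/2]$ the value $|\eta_\delta(\delta s^*)|=\mu|P(s^*)|\ge\mu m$ strictly dominates $|R(\delta s^*)|\le\omega(\delta)\delta^r=\mu m/2$ at each local extremum $s^*$ of $P$. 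Consecutive extrema of $P$ have opposite signs, so IVT produces $a+1$ zeros of $R+\eta_\delta$ in $[-\delta/2,\delta/2]$; a final generic polynomial perturbation (e.g.\ adding $\varepsilon(t-c)$ with $c$ generic) makes each of these zeros simple without destroying them, yielding at least $a+1$ transverse fixed points of $F+\eta_\delta$.

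The final step is to lift the perturbation to $M$. Choose a bump $\beta(y,z)$ equal to $1$ near the origin and supported in a neighborhood small enough that $V:=\{(t,y,z)\in U_0:|t|\le\delta,\,\beta(y,z)>0\}$ is disjoint from $f^k(V)$ for $k=1,\dots,\pi-1$. Define $h$ to equal the identity outside $V$ and $h(t,y,z)=(t+\beta(y,z)\eta_\delta(t),y,z)$ inside; this is a $C^r$-diffeomorphism $C^r$-close to the identity. Set $\tilde f:=h\circ f$. The disjointness condition ensures $\tilde f^\pi = h\circ f^\pi$ on a neighborhood of $p$, so $W^c_{\loc}(p)$ remains invariant with restriction $\tilde F(t)=F(t)+\eta_\delta(F(t))$. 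The fixed points of $\tilde F$ on $W^c_{\loc}(p)$ are fixed points of $\tilde f^\pi$; each is hyperbolic because the differential $D\tilde f^\pi$ at such a point is block upper triangular with center eigenvalue $\tilde F'(t^*)\neq 1$ (transversality) and stable-unstable block close to that of $Df^\pi(p)$, hence hyperbolic.

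The main obstacle is the simultaneous smallness and effectiveness of $\eta_\delta$: the $C^r$-norm scales like $\mu/\delta^r$ while the amplitude needed to dominate $R$ scales like $\omega(\delta)\delta^r$, so feasibility hinges on the ratio $\omega(\delta)\to 0$, which is exactly what $r$-flatness provides. If $F$ were only $r$-flat in a weaker sense, e.g.\ if $F-\mathrm{id}$ were not $o(|t|^r)$ but merely $O(|t|^r)$, the construction would fail. A secondary technical point is checking that the correction $\eta_\delta(F(t))-\eta_\delta(t)$, generated by post-composition, does not destroy the IVT argument; this is handled by noting that $F(t)=t+R(t)$ is $C^r$-close to the identity on the relevant scale so that the sign structure of $\tilde F-\mathrm{id}$ matches that of $R+\eta_\delta$.
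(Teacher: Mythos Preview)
Your argument is correct and rests on the same essential mechanism as the paper's: the $r$-flatness $F(t)-t=o(|t|^r)$ is exactly what lets a localized perturbation of amplitude $\sim\omega(\delta)\delta^{r}$ be $C^r$-small (since its $C^r$-norm scales like $\omega(\delta)$) while still dominating the remainder on the interval of size $\delta$, producing many fixed points on the center manifold that inherit hyperbolicity from the normal directions.

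The paper organizes the construction differently. Instead of dominating $R$ in one shot with an oscillatory bump, it makes two consecutive $C^r$-small perturbations: the first kills $R$ entirely, making the center return map equal to the identity on a short interval; the second then adds the explicit polynomial $\varepsilon\prod_{j=0}^{a}(t-j\delta/a)$. After the first step the second is trivial---the fixed points are the roots of the polynomial, automatically simple and hyperbolic---so there is no IVT argument, no generic correction to ensure transversality, and no need to control the post-composition discrepancy $\eta_\delta(F(t))-\eta_\delta(t)$. Your one-step route trades the extra perturbation for that bookkeeping; your scaling analysis $\|\eta_\delta\|_{C^r}=O(\omega(\delta))$ is precisely the content the paper's ``first flatten to the identity'' step leaves implicit.

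One cosmetic point: with $P$ of degree $a+2$ and $a+1$ simple roots in $(-1/2,1/2)$ you are only guaranteed $a$ interior extrema with alternating signs, hence $a$ zeros via IVT rather than $a+1$; taking one more root fixes this.
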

\begin{proof}
For an $r$-flat periodic point $p$,
 there exists a one-dimensional $C^r$-smooth center manifold $W^c$
 which is an $f^\pi$-invariant $r$-central curve.
The restriction of $f^\pi$ to $W^c$ is given by
\begin{equation*}
 F_c(t)=t+o(|t|^r), 
\end{equation*}
 see Definition \ref{rflatdef}.
Obviously, one can add an arbitrarily $C^r$-small perturbation to $f$,
 supported in a small neighborhood of the point $f^{\pi-1}(p)$,
 such that the manifold $W^c$ will remain invariant and
\begin{equation*}
F_c(t)=t 
\end{equation*}
 for all small $t$.
Next, choose sufficiently small $\varepsilon>0$ and $\delta>0$
 and add a $C^r$-small perturbation to $f$
 such that $W^c$ remains invariant and
\begin{equation*}
F_c(t)=t + \varepsilon \prod_{j=0}^a \left(t- \frac{j \delta }{a}\right) 
\end{equation*}
 for $|t|\leq \delta$.
This map has $(a+1)$ hyperbolic fixed points $t=j \delta/a$,
 which gives $(a+1)$ different periodic orbits
 which are hyperbolic in the restriction to $W^c$.
 Because of the strong partial hyperbolicity of the original $r$-flat point,
 we also have the hyperbolicity of the newly obtained periodic orbits
 transverse to $W^c$;
 the same argument can be found in \cite{Ka}.
\end{proof}

\begin{proof}
[Proof of Theorem~\ref{thm:precise}]
Let $\Fix^h(f^n)$ denote the set of hyperbolic fixed point of $f^n$.
Given a sequence $(a_n)$, we will show that for every $N$ the following set
\begin{equation*}
\cU_N := \{ f \in \cW^r \mid 
\#\Fix^h(f^n) \geq  n \cdot a_n \;\text{for some}\; n \geq N, \}
\end{equation*}
 is an open and dense set in $\cW^r$ for every $r\geq 1$.
Obviously, every map $f$ from the set
 $\bigcap_{N=1}^{\infty} \mathcal{U}_N$ satisfies (\ref{dfrspg})
 and this set is residual, which gives the theorem.

The openness of the sets $\mathcal{U}_N$ is an obvious consequence
 of the hyperbolicity of the periodic points counted.
So, it is enough to prove the denseness of $\mathcal{U}_N$.
Let us first prove the theorem for the case of finite $r$.
By Lemma~\ref{lem:flainfi},
 it is enough to prove that for any given $f \in \cW^r$
 there exists $g$ which is arbitrarily $C^r$-close to $f$
 such that $g$ has an $r$-flat periodic point
 of the period as large as we want.

Because of the density of $C^\infty$ diffeomorphisms in $\Diff^r(M)$
 and because the set $\cW^r$ is $C^r$-open, 
 we may from the very beginning assume that $f$ is $C^\infty$.
Applying Proposition~\ref{prop:one-flat}
 to the  heteroclinic pair $(p^{\ast}_1, p^{\ast}_2)$,
 we obtain a one-flat periodic point, say, $\tilde{p}$,
 by an arbitrarily $C^\infty$-small perturbation.  
This finishes the case $r=1$.

For $r \geq 2$,
 we again apply Proposition~\ref{prop:one-flat}
 letting the support of the perturbation disjoint from $\cO(\tilde{p})$ 
 (this is possible since the support can be taken arbitrarily 
 close to $(p_1^{\ast}, p_2^{\ast})$). 
Thus, by another arbitrarily small perturbation,
 we obtain another $1$-flat periodic point. 
Notice that the signatures of these $1$-flat points are
 the same as the signature of the heteroclinic point $q$ of 
 $(p^{\ast}_1, p^{\ast}_2)$ and
 that these $1$-flat periodic points are linked 
 to the blender in $U_{\bl}$. 
We repeat this process and produce $(8^{r-1}-8^{r-2})$
 of $1$-flat periodic orbits
 (e.g. $7$ orbits if $r=2$, and $56$ orbits if $r=3$, etc.).
We call the set of these periodic points $P$. 
Then we construct another $8^{r-2}$ periodic orbits
 from the heteroclinic pair $(p^{\ast}_3, p^{\ast}_4)$.
Recall that the signatures of these 1-flat periodic points are
 the same as those of the heteroclinic point $q'$
 of $(p^{\ast}_3, p^{\ast}_4)$.
We call the set of these $1$-flat points $P'$.

Now we have $8^{r-1}$ of $1$-flat periodic points.
Choose one point from $P'$ and seven points from $P$
 (all belonging to different periodic orbits);
 this creates an octuple of $1$-flat periodic points $p_1,\ldots, p_8$,
 all linked to the blender. 
By applying Lemma \ref{lemma:connecting} to pairs of these $8$ points,
 we create, by an arbitrarily small perturbation
 supported outside of $P$ and $P'$,
 heteroclinic intersections
 $q_i \in W^{uu}(p_i, U_{\cC}) \cap W^{ss}(p_{i+1}, U_{\cC})$
 (where $p_9=p_1$).
Now, we apply Proposition~\ref{prop:r-flat} to this octuple
 (where $p_3$ refers to the only point from $P'$).
This perturbation gives us a $2$-flat periodic point linked to the blender. 
Notice that we can assume that the perturbation
 to create this periodic point does not affect other 
 $1$-flat periodic points in $P$ or $P'$. 

By Remark \ref{rmk:peri},
 the created $2$-flat point can have the period as large as we want,
 so this finishes the proof of the theorem in the case $r=2$.
For $r>2$, we continue the process
 (using the rest of the points of $P$ and $P'$)
 and produce $8^{r-2}$ of $2$-flat periodic orbit 
which is linked to the blender.
We can assume that half of them have the central Schwarzian derivative $S$
 with the sign opposite to that for the other half,
 see Remark~\ref{rmk:peri}.

Then we again apply Lemma \ref{lemma:connecting}
 and Proposition~\ref{prop:r-flat}
 to construct $8^{r-3}$ of $3$-flat periodic orbits,
 all linked to the blender. As the period can be chosen arbitrarily large,
 see Remark~\ref{rmk:peri}, we have the theorem for $r=3$.

The further induction for $r>3$ does not require any sign condition
 in order to apply Proposition~\ref{prop:r-flat}.
Thus, by repeating the process starting with $k=3$,
 we produce $8^{r-k}$ of $k$-flat periodic orbits linked to the blender
 and, at the end, we obtain one $r$-flat periodic point, as required.
In each perturbation the size of the perturbation
 can be chosen arbitrarily small.
Thus, we have proved that
 arbitrarily close to the initial diffeomorphism $f$
 there exists another diffeomorphism having an $r$-flat periodic point,
 which gives the theorem for the finite $r$ case.

We can now proceed to the case $r =\infty$. 
Recall that two $C^\infty$ diffeomorphisms are close
 in the $C^{\infty}$ topology
 if they are close in the $C^r$ topology for some $r$.
Thus, proving the density of $\cU_N$ in $\cW^{\infty}$
 is the same as proving the density of $\cU_N$ in $\cW^r$ for all finite $r$.
Therefore, the proof for the finite $r$ case completes the case $r=\infty$ too.
\end{proof}

%
%

\section{Local perturbations around periodic points}\label{s:local}

In this section, we prepare several local perturbation techniques 
for the proof of Proposition~\ref{prop:one-flat}
 and Proposition~\ref{prop:r-flat}.
The proof of these propositions is divided into two steps:
We first construct a network of periodic points,
 with the help of Lemma~\ref{lemma:connecting}, and then perform perturbations 
 near this network to obtain flat periodic points. 
We will use two techniques for the second step. 
The first one (Proposition~\ref{lemma:Takens}) enables us
 to take convenient coordinates around 1-flat periodic points. 
The second one (Proposition~\ref{lemma:r-flat 2}) describes perturbations
 we use to create periodic points.

\subsection{Matrix-valued polynomial maps}\label{ss:matrix}
We start with some general estimates on the composition of maps 
 (Lemma~\ref{lem:prod_norm} and Lemma~\ref{lemma:poly 2}). 
For a multi-index $I=(i_1,\dots,i_d) \in (\ZZ_{\geq 0})^d$
 and $t=(t_1,\dots,t_d) \in \RR^d$,
 put $|I|=i_1+\dots+i_d$ and $t^I=t_1^{i_1}\cdots t_d^{i_d}$.
For positive integers $k$ and $l$,
 we denote the set of real $(k \times l)$-matrices by $\Mat(k,l)$.
Let $\|A\|$ be the operator norm of $A \in \Mat(k,l)$
 with respect to the Euclidean norms in $\RR^l$ and $\RR^k$.
For $n \geq 1$, we identify $\RR^n$ with $\Mat(n,1)$.
Remark that the operator norm $\|v\|$ for $v \in \Mat(n,1)=\RR^n$
 coincides with the Euclidean norm in $\RR^n$.

Let $F$ be a local $C^r$ map from $(\RR^d,p)$ to $(\Mat(k,l),q)$.
Write the Taylor expansion for $F$:
\begin{equation*}
 F(p+t)=\sum_{|I|\leq r} t^I A_I+o(\|t\|^r),
\end{equation*}
where $A_I \in \Mat(k,l)$.
Then we define a semi-norm $\|F\|_{p,r}$ by
\begin{equation*}
 \|F\|_{p,r}=\sum_{|I| \leq r} \|A_I\|.
\end{equation*}
It is easy to see that
 $\|F+G\|_{p,r}\leq\|F\|_{p,r}+\|G\|_{p,r}$
 for local maps $F,G$ from $(\RR^d,p)$ to $\Mat(k,l)$.

\begin{lemma}
\label{lem:prod_norm}
For local $C^r$ maps $A:(\RR^d,p) \ra \Mat(k,l)$,
 $F:(\RR^{d},p) \ra \RR^l=\Mat(l,1)$
 and $G:(\RR^{d},p) \ra \mathbb{R}$, we have
\begin{equation*}
 \|A \cdot F\|_{p,r} \leq \|A\|_{p,r} \cdot \|F\|_{p,r}, \quad 
  \|G \cdot A \|_{p,r} \leq \|G\|_{p,r} \cdot \|A\|_{p,r},
\end{equation*}
where $A \cdot F$ denotes the product of matrices 
$A$ and $F$ while 
$G \cdot A$ denotes the scalar product. 
\end{lemma}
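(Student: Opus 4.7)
The plan is to expand each factor into its Taylor polynomial of order $r$ at $p$, multiply out the polynomials, and then use two ingredients: the submultiplicativity of the operator norm ($\|AB\|\leq \|A\|\,\|B\|$ whenever the matrix product makes sense, including the matrix-vector and scalar-matrix cases) together with the triangle inequality applied to the grouping of monomials $t^K$ of a fixed multi-index $K$.

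More concretely, for the first inequality I would write
\begin{equation*}
A(p+t)=\sum_{|I|\leq r} t^I A_I + o(\|t\|^r),\qquad
F(p+t)=\sum_{|J|\leq r} t^J F_J + o(\|t\|^r),
\end{equation*}
and multiply to get
\begin{equation*}
(A\cdot F)(p+t)=\sum_{|K|\leq r} t^K\!\!\sum_{\substack{I+J=K\\|I|+|J|\leq r}}\!\! A_I F_J + o(\|t\|^r),
\end{equation*}
where the higher-order monomials (those with $|I|+|J|>r$) are swept into the remainder since they are $o(\|t\|^r)$. Reading off the Taylor coefficients of $A\cdot F$ and applying the triangle inequality inside each multi-index block followed by the submultiplicativity $\|A_I F_J\|\leq \|A_I\|\,\|F_J\|$ gives
\begin{equation*}
\|A\cdot F\|_{p,r}=\sum_{|K|\leq r}\Bigl\|\sum_{I+J=K} A_I F_J\Bigr\|\leq\sum_{|I|+|J|\leq r}\|A_I\|\,\|F_J\|\leq \Bigl(\sum_{|I|\leq r}\|A_I\|\Bigr)\Bigl(\sum_{|J|\leq r}\|F_J\|\Bigr),
\end{equation*}
which is exactly $\|A\|_{p,r}\cdot\|F\|_{p,r}$. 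The step that drops the constraint $|I|+|J|\leq r$ to obtain the product on the right only enlarges the sum, since all summands are nonnegative. The second inequality, involving the scalar-valued $G$ and the matrix-valued $A$, is identical in structure: one only needs the analogous bound $\|G_I A_J\|=|G_I|\,\|A_J\|$ in place of $\|A_I F_J\|\leq \|A_I\|\,\|F_J\|$.

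I do not anticipate a substantial obstacle here; the lemma is a routine bookkeeping computation of the form familiar from submultiplicativity of norms on truncated polynomial rings. The only point of care is to keep track of which products of Taylor terms land within the order-$r$ truncation and which are absorbed into the $o(\|t\|^r)$ remainder, so that the coefficients read off genuinely match the Taylor coefficients of $A\cdot F$ at $p$; nothing delicate arises because we are only asking for an upper bound.
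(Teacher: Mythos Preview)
Your proof is correct and follows essentially the same approach as the paper: expand both factors in Taylor polynomials at $p$, multiply out, and use the triangle inequality together with submultiplicativity of the operator norm to bound the sum of coefficient norms by the product $\|A\|_{p,r}\cdot\|F\|_{p,r}$. If anything, your version is slightly more explicit about grouping by the resulting multi-index and handling the $o(\|t\|^r)$ remainder, but the argument is the same.
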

\begin{proof}
The proof for the scalar product case is 
similar to the first case.  So we only consider the 
first case. 
Put $A(p+t)=\sum_{|I| \leq r}t^I A_I+o(|t|^r)$ 
 and $F(p+t)=\sum_{|I| \leq r}t^I v_I+o(|t|^r)$.
Then, 
\begin{align*}
\|A \cdot F\|_{p,r}
& = \|\sum_{|I| \leq r}\sum_{|J| \leq r} t^{I+J} A_I v_J  \|_{p,r} \;
\leq \; \sum_{|I| \leq r} \sum_{|J| \leq r}\|A_I \cdot v_J\|\; \\
& \leq \sum_{|I| \leq r} \sum_{|J| \leq r}\|A_I\| \cdot \|v_J\|
=\|A\|_{p,r} \cdot \|F\|_{p,r}.
\end{align*}
Thus the proof is completed.
\end{proof}
Note that, by Lemma~\ref{lem:prod_norm}, for local $C^r$-functions $\gamma, \beta$ from $(\RR^d,p)$ to $\RR$
we have
\begin{equation}
\label{eqn:norm 1}
 \|\gamma \cdot \beta\|_{p,r} \leq \|\gamma\|_{p,r} \cdot \|\beta\|_{p,r}.
\end{equation}
For a local $C^r$-map $F=(F_1,\dots,F_l):(\RR^d,p) \ra \RR^l=\Mat(l,1)$
 and $I=(i_1,\dots,i_l)$,
 we define a local function $F^I:(\RR^d,p) \ra \RR$ by the rule
\begin{align*}
 F^I(t) & =F_1(t)^{i_1}\dots F_l(t)^{i_l}.
\end{align*}
By (\ref{eqn:norm 1}), we have
\begin{equation}
\label{eqn:norm power}
 \|F^I\|_{p,r}
 \leq \|F_1\|_{p,r}^{i_1} \dots \|F_l\|_{p,r}^{i_l}
 \leq \|F\|_{p,r}^{|I|}.
\end{equation}
\begin{lemma}
\label{lemma:poly 2} 
Let $F:(\RR^d,p_1) \ra (\RR^{d'},p_2)$, 
$G:(\RR^{d'},p_2)\ra \Mat(k,l)$ be local $C^r$-maps.
Then,
\begin{align*}
 \|G \circ F\|_{p_1,r}
 & \leq \|G\|_{p_2,r} \cdot \max\{1,\|F-p_2\|_{p_1,r}\}^r
\end{align*}
\end{lemma}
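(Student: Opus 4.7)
The plan is to bound the $r$-jet of the composition using the Taylor expansions of $F$ and $G$ at their respective base points, then apply the product estimates already established in the section.

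First I would observe that the semi-norm $\|H\|_{p,r}$ depends only on the $r$-jet of $H$ at $p$, so composition behaves functorially: if one writes $\tilde F(t)=F(p_1+t)-p_2$ (a local map with $\tilde F(0)=0$) and expands $G(p_2+s)=\sum_{|J|\leq r}s^J B_J+o(\|s\|^r)$ with $B_J\in\Mat(k,l)$, then by substituting $s=\tilde F(t)$ one has
\[
 G\circ F(p_1+t)=\sum_{|J|\leq r}\tilde F(t)^J B_J+o(\|t\|^r),
\]
so that $\|G\circ F\|_{p_1,r}$ is at most the $\|\cdot\|_{p_1,r}$-norm of the right-hand side (truncation beyond degree $r$ only drops coefficients). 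Note that $\|\tilde F\|_{p_1,r}=\|F-p_2\|_{p_1,r}$.

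Next, by the subadditivity of $\|\cdot\|_{p_1,r}$ and Lemma~\ref{lem:prod_norm} (scalar times matrix, where the constant matrix $B_J$ contributes only $\|B_J\|$), I would get
\[
 \|G\circ F\|_{p_1,r}\leq\sum_{|J|\leq r}\|\tilde F^J\|_{p_1,r}\cdot\|B_J\|.
\]
Inequality \eqref{eqn:norm power} applied to the local map $\tilde F:(\RR^d,0)\to\RR^{d'}$ bounds $\|\tilde F^J\|_{p_1,r}\leq\|\tilde F\|_{p_1,r}^{|J|}=\|F-p_2\|_{p_1,r}^{|J|}$.

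Finally, since $|J|\leq r$, each factor $\|F-p_2\|_{p_1,r}^{|J|}$ is at most $\max\{1,\|F-p_2\|_{p_1,r}\}^r$; pulling this common bound out of the sum gives
\[
 \|G\circ F\|_{p_1,r}\leq\max\{1,\|F-p_2\|_{p_1,r}\}^r\sum_{|J|\leq r}\|B_J\|=\max\{1,\|F-p_2\|_{p_1,r}\}^r\cdot\|G\|_{p_2,r},
\]
which is the claim. The only mildly subtle point — and the main thing to state carefully — is that the $r$-jet of $G\circ F$ is the degree-$r$ truncation of $\sum_{|J|\leq r}\tilde F(t)^J B_J$, so that the subadditivity step is legitimate; everything else is a direct application of Lemma~\ref{lem:prod_norm} and \eqref{eqn:norm power}, so no serious obstacle is expected.
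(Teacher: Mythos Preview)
Your proof is correct and follows essentially the same approach as the paper: write $F_0=F-p_2$ (your $\tilde F$), expand $G\circ F$ as $\sum_{|I|\leq r}F_0^I\cdot A_I$, then apply Lemma~\ref{lem:prod_norm} and inequality~\eqref{eqn:norm power} before bounding $\|F_0\|_{p_1,r}^{|I|}$ by $\max\{1,\|F_0\|_{p_1,r}\}^r$. Your remark about the $r$-jet truncation is, if anything, slightly more careful than the paper's own write-up.
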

\begin{proof}
Let $F(p_1+t)=\sum_{|I| \leq r}t^I v_I+o(t^r)$, $F_0=F-p_2$ and $G(p_2+t)=\sum_{|I| \leq r}t^I A_I$.
By Lemma~\ref{lem:prod_norm} and 
(\ref{eqn:norm power}), we have
\begin{align*} 
\|G \circ F\|_{p_1,r} &= \|\sum_{|I| \leq r} F_0^I \cdot A_I\|_{p_1,r}
\leq \sum_{|I| \leq r} \|F_0^I \cdot A_I\|_{p_1,r} \\
& \leq \sum_{|I| \leq r} \|F_0^I\|_{p_1,r} \cdot \|A_I\|
 \leq \sum_{|I|\leq r} \|F_0\|_{p_1,r}^{|I|} \cdot \|A_I\|.
\end{align*}
Since $\|F_0\|_{p_1,r}^{|I|} \leq \max\{1,\|F_0\|_{p_1,r}\}^r$
for $0 \leq |I| \leq r$,
 we have
$$ \|G \circ F\|_{p_1,r}
  \leq  \sum_{|I| \leq r} \max\{1,(\|F_0\|_{p_1,r})\}^r \cdot \|A_I\|  = \|G\|_{p_2,r} \cdot \max\{1,\|F -p_2\|_{p_1,r}\}^r.
$$
Thus the proof is completed.
\end{proof}

\subsection{Takens coordinates}

We introduce the following notation.
We write
$\mathbb{R}^{\dd}$ for the 
product $\mathbb{R}^{d_c} \times \mathbb{R}^{d_s} \times \mathbb{R}^{d_u}$.
Fix an index $\dd=(d_c,d_s,d_u)$.
We call a subset $B_\dd \subset \mathbb{R}^{\dd}$ 
as a \emph{polyball of index $\dd$}
 if there exists $\alpha_c, \alpha_s, \alpha_u>0$ such that
\begin{equation*}
B_\dd=\{(x,y,z) \in \RR \times \RR^{d_s} \times \RR^{d_u} 
 \mid |x| <\alpha_c, \|y\| <\alpha_s, \|z\|<\alpha_u \}.
\end{equation*}
For a polyball $B_\dd$ of index $\dd$,
 we define its subsets
\begin{align*}
 B_{\dd}^{s}  & =\{(x,y,z) \in B_\dd \mid x=z=0\},&
 B_{\dd}^{cs} & =\{(x,y,z) \in B_\dd \mid z=0\},\\
 B_{\dd}^{u}  & =\{(x,y,z) \in B_\dd \mid x=y=0\},&
 B_{\dd}^{cu} & =\{(x,y,z) \in B_\dd \mid y=0\},\\
 B_{\dd}^{c}  & =\{(x,y,z) \in B_\dd \mid y=z=0\}, &
 B_{\dd}^{su}  & =\{(x,y,z) \in B_\dd \mid x=0\}.
\end{align*}

Let $B_\dd$ be a polyball of index $\dd$
and $\hat f:B_\dd \ra \RR^{\dd}$ be a $C^r$-diffeomorphism onto its image.
We say that $\hat f$ is in the \emph{Takens standard form} if, for all $(x,y,z) \in B_\dd$,
\begin{equation}\label{takform} 
 \hat f(x,y,z)=(F_c(x), A^s(x)y, A^u(x)z),
\end{equation}
where $F_c(0)=0$,  
 $A^s(x)$ and  $A^u(x)$ are square matrices
whose entries are $C^r$ smooth functions of $x$.

Below we restrict out attention to the case
 where $d_c=1$ and $F_c'(0)=1$,
 while the eigenvalues of $A^s(0)$ lie strictly inside the unit circle
 and the eigenvalues of $A^u(0)$ lie strictly outside the unit circle.
Thus, the origin is a non-hyperbolic ($1$-flat),
 $r$-strongly partially hyperbolic fixed point,
 the invariant curve (the center manifold) $l_c(t)=(t,0,0)$
 is an $r$-central curve near the origin
 and $F_c$ is the central germ associated with $l_c$.

For a diffeomorphism $f$
 and a $1$-flat periodic point $p \in \Per_\dd(f)$ of period $\pi$,
we call a $C^r$-coordinate chart $\vphi$ around $p$
 {\it a $C^r$ Takens coordinate}
if $\vphi$ maps $p$ to the origin
 and $\vphi \circ f^{\pi} \circ \vphi^{-1}$ is in the Takens standard form
 on some polyball.
By \cite{Ta} (see Theorem in p.134), 
 we know that Takens coordinates exist if $p$ satisfies
 non-resonance conditions.
Let us recall the definition of it.
 Let $\lambda_0,\lambda_1, \dots,\lambda_{d_s+d_u}$ be
 the eigenvalues of $(Df^\pi)_p$;
 by the 1-flatness and strong partial hyperbolicity of $p$,
 we have $\lambda_0=\lambda_c(p)=1$ and $|\lambda_i|\neq 1$ for $i\geq 1$.
We say that the {\it non-resonance conditions up to degree $K$} 
(or Sternberg $K$-condition) are satisfied
 for non-neutral eigenvalues
 if
 $$\lambda_j \neq \prod_{i\geq 1} \lambda_i^{m_i} $$
 for all $j\geq 0$ and all non-negative integers $m_1,\dots, m_{d_s+d_u}$
 such that $2 \leq m_1+\dots + m_{d_s+d_u}\leq K$.

\begin{prop}[\cite{Ta}]
\label{prop:Takens_cite}
Let $f \in \Diff^{\infty}(M)$.
Let $p\in \Per_{\dd}(f)$ be $1$-flat, i.e., $\lambda_c(p) =1$.
Then, for every $k$ there exists a $C^\infty$-neighborhood
 $\cV \subset \Diff^{\infty}(M)$ of $f$
 and an integer $K(k) >0$ such that the following holds.
Given any $g \in \cV$,
 if $p$ is a $1$-flat periodic point of $g$
 (i.e., $g^{\pi(p)}p=p$ and $\lambda_c(p, g) =1$)
 and the non-resonance conditions up to degree $K(k)$ are satisfied 
 for non-neutral eigenvalues of $(Dg^{\pi(p)})_p$,
 then $g$ admits $C^k$-Takens coordinates around $p$. 
\end{prop}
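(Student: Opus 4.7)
The proof, following Takens \cite{Ta}, proceeds by successively adapting local coordinates to the invariant geometric objects at $p$. Given the target smoothness $k$, one chooses $K(k)$ large enough to absorb the smoothness losses at every step below, and exploits the fact that non-resonance is an open condition to get uniformity over a $C^\infty$-neighborhood $\cV$ of $f$.

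First, I would invoke the center-stable and center-unstable manifold theorems to obtain $C^k$ invariant submanifolds $W^{cs}$ and $W^{cu}$ of dimensions $1+d_s$ and $1+d_u$, tangent to $E^c\oplus E^s$ and $E^c\oplus E^u$ at $p$. A $C^k$ chart in which $W^{cs} = \{z=0\}$ and $W^{cu}=\{y=0\}$ also gives the center manifold $W^c=\{y=z=0\}$ and brings $g^{\pi(p)}$ to a form in which the $z$-component vanishes identically on $\{z=0\}$ and the $y$-component vanishes on $\{y=0\}$. Next, straighten the strong stable foliation of $W^{cs}$ and the strong unstable foliation of $W^{cu}$; their $C^k$-smoothness is delivered by the spectral-gap estimates implied by the non-resonance conditions up to degree $K(k)$. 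After straightening so that the leaves are $\{x=\mathrm{const}\}$ inside each manifold, the restriction of $g^{\pi(p)}$ to each leaf becomes a family of hyperbolic germs of $\RR^{d_s}$ (resp.\ $\RR^{d_u}$) depending on the central coordinate $x$ as a parameter.

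At this point I would apply the parametric Sternberg linearization theorem. The condition $\lambda_j \neq \prod_i \lambda_i^{m_i}$ for $j\geq 1$ and $2\leq |m| \leq K(k)$ eliminates all non-resonant polynomial terms in $y$ (resp.\ $z$) up to the order needed to obtain $C^k$ linearizing conjugacies, producing $y \mapsto A^s(x)\,y$ and $z \mapsto A^u(x)\,z$. The companion non-resonance for $j=0$ (where $\lambda_0=1$) is what ensures the center component remains a function of $x$ only, untouched by mixed polynomial terms generated along the way. This delivers the Takens form (\ref{takform}) on $W^{cs} \cup W^{cu}$. Finally, extend the linearization off $W^{cs} \cup W^{cu}$ by simultaneously straightening the ambient strong stable and strong unstable foliations; their restrictions to $W^{cs}, W^{cu}$ have already been straightened, and they are mutually transverse, so a $C^k$ chart adapted to both exists and puts $g^{\pi(p)}$ in the form (\ref{takform}). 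Uniformity in $g \in \cV$ comes from the openness of the non-resonance conditions together with the continuous dependence of invariant manifolds, strong-foliation smoothness, and Sternberg conjugacies on the map in the $C^k$-topology.

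The main obstacle is the bookkeeping of smoothness: each of the three steps (invariant manifolds, strong foliations, Sternberg linearization) consumes a definite amount of regularity and imposes its own non-resonance requirements, and these requirements grow with the order of terms one must eliminate. This is precisely why $K(k)$ cannot be bounded independently of $k$ even when $f$ is $C^\infty$: arbitrarily high-order resonances between the neutral eigenvalue $\lambda_c=1$ and products of the hyperbolic eigenvalues $\lambda_1,\dots,\lambda_{d_s+d_u}$ would obstruct the normalization at the corresponding order, so $K(k)$ must be selected so that after all cumulative smoothness losses at least $C^k$ regularity still survives.
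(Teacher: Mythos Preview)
The paper does not give its own proof of this proposition: it is stated with the attribution \cite{Ta} and the surrounding text simply says ``By \cite{Ta} (see Theorem in p.134), we know that Takens coordinates exist if $p$ satisfies non-resonance conditions.'' So there is no proof in the paper to compare your proposal against; the result is imported wholesale from Takens.

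Your sketch is a reasonable summary of the strategy in Takens' paper: straighten $W^{cs}$ and $W^{cu}$, then the strong foliations inside them, then apply a parametric Sternberg-type normalization along the center. The one place where your outline is loose is the last step, ``extend the linearization off $W^{cs}\cup W^{cu}$ by simultaneously straightening the ambient strong stable and strong unstable foliations.'' In Takens' actual argument this extension is not obtained just by straightening two transverse foliations; one must show that the normal form achieved on $W^{cs}$ and on $W^{cu}$ is compatible with a single $C^k$ chart on a full neighborhood, which involves a further normal-form computation killing the cross terms (those involving both $y$ and $z$) and uses the non-resonance conditions again. If you want this to stand as a genuine proof rather than a pointer to \cite{Ta}, that step needs to be spelled out; otherwise, citing Takens as the paper does is the honest route.
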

For the proof of our main theorem, it would be convenient
 if $C^\infty$-Takens coordinates are available. 
Lemma~\ref{lemma:Takens} below states that we can always have such coordinates
after adding an arbitrarily $C^\infty$-small perturbation
 supported near a flat periodic point,
 without changing the central germ up to any given finite order.

\begin{lemma}
\label{lemma:Takens} 
Let $f \in \Diff^{\infty}(M)$.
Let $p \in \Per_\dd(f)$ be $1$-flat
 and let $\pi$ be the period of $p$.
Take an $r$-central curve $l:(\RR,0) \ra (M,p)$ of $f$ at $p$, where $r \geq 1$,
 and let $F_0:(\RR,0) \ra (\RR,0)$ be a central germ associated with $l$
 (see Section~\ref{ss:central germs} for the definition).
Take any neighborhood $U$ of $p$ in $M$
 and a neighborhood $\cU$ of the identity map in $\Diff^\infty(M)$.
There exist $h \in \cU$ with the support in $U$
 and a $C^\infty$-coordinate chart $\vphi: U_p \ra \RR^\dd$ around $p$,
 where $U_{p} \subset U$,
 such that $p$ remains a periodic point of period $\pi$
 for the perturbed map $h\circ f$, and the corresponding return map
 $\hat f=\vphi \circ (h \circ f)^{\pi} \circ \vphi^{-1}$
 is in the Takens standard form with a $C^{\infty}$ central germ $F_c(x)$
which satisfies $F_c(x)=F_0(x)+o(|x|^r)$ in (\ref{takform}).
\end{lemma}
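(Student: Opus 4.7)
The strategy is to add a $C^{\infty}$-small perturbation $h$, supported in a small ball around $p$, that adjusts the non-central eigenvalues of $(Df^{\pi})_{p}$ so they become non-resonant to all orders while keeping the central multiplier equal to $1$ and leaving the $r$-jet of the dynamics along the center curve $l$ undisturbed. Once this is done, the $C^{\infty}$ version of Takens' normal-form theorem (the all-orders analogue of Proposition~\ref{prop:Takens_cite} proved in \cite{Ta}) delivers the desired chart.

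\medskip

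Choose an initial $C^{\infty}$ chart $\vphi_{0}:U_{p}\to\RR^{\dd}$ at $p$ carrying $T_{p}M=E^{c}\oplus E^{s}\oplus E^{u}$ to the standard axes and satisfying $\vphi_{0}\circ l(t)=(t,0,0)+o(|t|^{r})$ (a polynomial straightening of the $r$-jet of $l$ as in the proof of Lemma~\ref{lemma:central curve}). Fix a $C^{\infty}$ cutoff $\chi:\RR^{\dd}\to[0,1]$ equal to $1$ near the origin, supported in a small ball inside $\vphi_{0}(U)$ that meets $\cO(p,f)$ only at $p$, and for small matrices $T^{s}\in\Mat(d_{s},d_{s})$ and $T^{u}\in\Mat(d_{u},d_{u})$ set
\begin{equation*}
 \tilde{h}(x,y,z)=\bigl(x,\;(I_{d_{s}}+\chi(x,y,z)T^{s})y,\;(I_{d_{u}}+\chi(x,y,z)T^{u})z\bigr).
\end{equation*}
Define $h=\vphi_{0}^{-1}\circ\tilde{h}\circ\vphi_{0}$, extended by the identity; by shrinking the support and $\|T^{s}\|,\|T^{u}\|$ we may assume $h\in\cU$. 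Set $\tilde f=h\circ f$. Since $\supp(h)\cap\cO(p,f)=\{p\}$, induction gives $\tilde f^{i}(p)=f^{i}(p)$ for $0\leq i<\pi$ and $\tilde f^{\pi}(p)=h(p)=p$, while $(D\tilde f^{\pi})_{p}=(Dh)_{p}\cdot(Df^{\pi})_{p}$ is block-diagonal with central eigenvalue $1$. Since $\tilde h$ is the identity on $\{y=z=0\}$, any curve $\gamma(s)=(s,0,0)+o(|s|^{r})$ satisfies $\tilde h\circ\gamma(s)=\gamma(s)+o(|s|^{r})$; applied to $s=F_{0}(t)$ and combined with $f^{\pi}\circ l(t)=l\circ F_{0}(t)+o(|t|^{r})$, this yields $\tilde f^{\pi}\circ l(t)=h(f^{\pi}\circ l(t))=l\circ F_{0}(t)+o(|t|^{r})$, so $l$ remains an $r$-central curve of $\tilde f$ at $p$ with central germ $F_{0}$ up to order $r$.

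\medskip

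The non-central eigenvalues of $(D\tilde f^{\pi})_{p}$ are those of $(I+T^{s})A^{s}_{0}$ and $(I+T^{u})A^{u}_{0}$, where $A^{s}_{0},A^{u}_{0}$ are the hyperbolic stable/unstable blocks of $(Df^{\pi})_{p}$. Arbitrarily close to $(0,0)$ one can choose $(T^{s},T^{u})$ for which the logarithms of the moduli of the resulting eigenvalues $\tilde\lambda_{1},\dots,\tilde\lambda_{d_{s}+d_{u}}$ are rationally independent; this forces Sternberg's non-resonance condition $\tilde\lambda_{j}\neq\prod_{i\geq 1}\tilde\lambda_{i}^{m_{i}}$ (with $\tilde\lambda_{0}=1$, $|m|\geq 2$) to hold to \emph{all} orders. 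The $C^{\infty}$ Takens theorem then produces a $C^{\infty}$ chart $\vphi$ in which $\vphi\circ\tilde f^{\pi}\circ\vphi^{-1}$ has the Takens form (\ref{takform}) with smooth data $F_{c},A^{s},A^{u}$; the smooth axis $\tilde l=\vphi^{-1}(B^{c}_{\dd})$ is an $r$-central curve of $\tilde f$ with germ $F_{c}$. Flipping the sign of the $x$-coordinate of $\vphi$ if necessary to match the orientation of $l$, Lemma~\ref{lemma:central curve} applied to $(l,\tilde l)$ provides a polynomial $H$ with $F_{0}=H^{-1}\circ F_{c}\circ H+o(|x|^{r})$; precomposing $\vphi^{-1}$ with $(x,y,z)\mapsto(H(x),y,z)$ preserves the Takens form and replaces the central germ by $H^{-1}\circ F_{c}\circ H=F_{0}+o(|x|^{r})$, as required.

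\medskip

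The main obstacle is the apparent tension between the two demands on $h$: killing all resonances forces an alteration of the linear part of $f^{\pi}$ at $p$, while preserving the central germ forbids any change in the $r$-jet of the dynamics along $l$. The explicit form of $\tilde h$ — identity on the $x$-axis and linearly modified only transverse to it — decouples these two tasks. A secondary technical point is that Proposition~\ref{prop:Takens_cite} alone yields only $C^{k}$ coordinates for each finite $k$; the $C^{\infty}$ conclusion requires all-orders non-resonance, which is precisely what the Diophantine choice of $(T^{s},T^{u})$ achieves.
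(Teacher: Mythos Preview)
Your argument is essentially correct and shares its opening move with the paper: a $C^\infty$-small perturbation supported near $p$, equal to the identity along the $x$-axis, that makes the non-central eigenvalues non-resonant to all orders while preserving the $r$-jet of the center dynamics. The divergence is in how the $C^\infty$ chart is then obtained.

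You invoke the full $C^\infty$ version of Takens' theorem from \cite{Ta} directly: once all resonances are killed, the original result furnishes $C^\infty$ Takens coordinates in one stroke, after which Lemma~\ref{lemma:central curve} adjusts the $x$-coordinate. The paper, by contrast, does not rely on the $C^\infty$ statement. It applies only the finite-regularity Proposition~\ref{prop:Takens_cite} to obtain a $C^{\bar r}$ chart $\vphi_2$ for some large $\bar r\geq r$, takes the degree-$\bar r$ Taylor polynomial $\hat g$ of the return map (automatically a polynomial map in Takens form), replaces $\vphi_2$ by a genuine $C^\infty$ chart $\vphi$ agreeing with it to order $\bar r$, and then makes a \emph{second} $C^{\bar r}$-small perturbation forcing the return map in the chart $\vphi$ to equal $\hat g$ exactly near $p$. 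This trades one extra perturbation for not needing the $C^\infty$ normal-form theorem. Your route is shorter; the paper's is more self-contained relative to what Proposition~\ref{prop:Takens_cite} actually asserts.

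One minor imprecision: ``rational independence of the logarithms of the moduli'' is not literally achievable when complex-conjugate eigenvalue pairs are present, since their moduli coincide. This is harmless --- each resonance relation cuts out a proper real-algebraic subvariety in the space of $(T^s,T^u)$, so the complement of the countable union is still dense, which is all you need.
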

\begin{proof}
First, without loss of generality we may and do assume that $F_0(x)$ is a $C^{\infty}$ germ,
since the notion of the central germ is well-defined only up to some degree.

Take a $C^\infty$ coordinate chart $\vphi_1$ at $p$ such that
 $\vphi_1 \circ l(t)=(t,0,0)+o(|t|^r)$
 and $D(\vphi_1)_p$ sends the partially hyperbolic splitting of index $\dd$
 at $p$ to $\RR \oplus \RR^{d_s} \oplus \RR^{d_u}$.
Take $h_1 \in \cU$ with small support
 such that $\vphi_1 \circ h_1 \circ (\vphi_1)^{-1}(x,y,z)=(z,A y,B z)$
 in some small neighborhood of the origin (where $A$, $B$ are some square matrices)
 and non-neutral eigenvalues of $D(h_1 \circ f^\pi)_p$
 satisfy non-resonance conditions of all degrees.

Take any $\bar{r} \geq r$
 and a small $C^{\bar{r}}$ neighborhood $\cU'$ of $h_1$ in $\Diff^\infty(M)$
 such that $\cU' \subset \cU$.
By Proposition~\ref{prop:Takens_cite},
 there exists a $C^{\bar{r}}$ coordinate chart $\vphi_2$ around $p$
 such that the return map
 $\vphi_2 \circ (h_1 \circ f)^\pi \circ \vphi_2^{-1}$
 for the perturbed map $g=h_1 \circ f$ is
 in the Takens standard form (\ref{takform}).
By construction of $h_1$,
 the curve $l$ is an $r$-central curve for $h_1 \circ f$
 and its central germ coincides with $F_0$ up to order $r$.
The invariant center manifold $\vphi_2^{-1}(x,0,0)$ of $g$ is
 also an $r$-central curve.
By Lemma \ref{lemma:central curve}, we get
\begin{equation}
\label{vpf0}
\vphi_2 \circ (h_1 \circ f^\pi) \circ \vphi_2^{-1}(x,0,0)
 = (F_0(x),0,0)+o(|x|^r)
\end{equation}
 after a $C^\infty$ change of the $x$ coordinate in (\ref{takform}).

To prove the lemma,
 we need to modify the chart $\vphi_2$ to make it $C^\infty$
 while keeping the relation (\ref{vpf0}).
Let $\hat g$ be the Taylor polynomial
 of $\vphi_2 \circ (h_1 \circ f) \circ \vphi_2^{-1}$ of order $\bar{r}$
 at the origin.
Then, $\hat g$ is also in the Takens standard form with 
\begin{equation}
\label{htgf}
\hat g(x,0,0)=(F_0(x),0,0)+o(|x|^r).
\end{equation}
Take a $C^\infty$ coordinate chart $\vphi$ at $p$
 such that $\vphi \circ \vphi_2^{-1}(x,y,z)=(x,y,z)+o(\|(x,y,z)\|^{\bar{r}})$.
Such $\vphi$ can be taken 
as follows: take any $C^\infty$ chart around $p$, 
say $\psi$. 
 Let $\eta$ be 
 the Taylor polynomial of order $\bar r$
 for $\varphi_2\circ\psi^{-1}$. 
 Then $\vphi = \eta\circ \psi$ satisfies the condition above.
 Now, since
 $\vphi \circ (h_1 \circ f^\pi) \circ \vphi^{-1}(x,y,z)
  =\hat g(x,y,z)+ o(\|(x,y,z)\|^{\bar{r}})$
 and all maps in this formula are $C^\infty$,
 we can find a small perturbation $h$ of $h_1$ in $\cU'$ with small support 
 such that
 $\vphi \circ (h \circ f^\pi) \circ \vphi^{-1}(x,y,z) = \hat g(x,y,z)$
 in a small neighborhood of the origin.
Thus, the map $\vphi \circ (h \circ f^\pi) \circ \vphi^{-1}$
 is in the Takens standard form and, by (\ref{htgf})
\begin{equation*}
\vphi \circ (h \circ f^\pi) \circ \vphi^{-1}(x,0,0) =(F_0(x),0,0)+o(|x|^r)
\end{equation*}
 as required.
Since $h_1$ was chosen arbitrarily close to the 
identity map
 and we can choose $\bar{r}$ as large as we want
 and $\cU'$ as small as we want,
 we see that $h$ can be chosen 
 arbitrarily $C^{\infty}$-close to
 the identity map. 
\end{proof}

\subsection{Perturbations near flat periodic points}
Lemma~\ref{lemma:r-flat 2} describes the main step of the perturbation
 we will use for the creation of flat periodic points. 

\begin{lemma}
\label{lemma:r-flat 2}
Assume the following:
\begin{itemize}
\item a $C^{\infty}$ map $\hat f:B_{\dd} \ra \RR^{\dd}$ 
 which is in the Takens standard form $\hat f(x,y,z)=(F_c(x),A^s(x)y,A^u(x)z)$
 and satisfies the following pinching conditions
 (for the definition of the norm $\| \, \cdot \,\|_{0, r}$,
 see Section~\ref{ss:matrix}):
\begin{align*}
  \|A^s(\cdot)\|_{0,r} \cdot \max\{1,\|F_c^{-1}\|_{0,r}\}^r  < 1, \quad 
  \|A^u(\cdot)^{-1}\|_{0,r} \cdot \max\{1,\|F_c\|_{0,r}\}^r  < 1.
\end{align*}
\item $q_-$ and $q_+$ are points of $B_\dd^{s} \setminus \{(0,0,0)\}$
 and $B_\dd^u \setminus \{(0,0,0)\}$ respectively. 
\item  $l_-:(\RR,0) \ra (B_\dd^{cs},q_-)$
 and $l_+:(\RR,0) \ra (B_\dd^{cu}, q_+)$ are smooth non-singular curves
 of the form $l_{-}(t) = (t, \hat y(t), 0)$ and $l_{+}(t) = (t, 0, \hat z(t))$. 
\end{itemize}
Then, there exists a sequence $(h_n)_{n \geq 1}$
 of compactly supported $C^\infty$ diffeomorphisms of $B_\dd$ such that:
\begin{itemize}
\item the support of $h_n$ converges to $\{\hat f(q_-), q_+\}$
 in the Hausdorff topology,
 \item $(h_n)$ converges to the identity map in the $C^\infty$-topology,
 \item $(h_n \circ \hat f)^j(q_-)$ is contained in $B_\dd$
 for every $j=0,1,\dots,n$,
 \item for every sufficiently large $n$, 
 there exists $\delta_n >0$ such that 
 for $|t| < \delta_n$ we have
\begin{equation}
 (h_n \circ \hat f)^n(l_-(t)) =l_+(F_c^n(t))+o(|t|^r).
\end{equation}
\end{itemize}
\end{lemma}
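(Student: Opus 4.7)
The plan is to construct $h_n$ as a composition $h_n = h_n^{(2)}\circ h_n^{(1)}$ of two $C^\infty$ diffeomorphisms with disjoint shrinking supports: one near $\hat f(q_-)$ that inserts a tiny perturbation in the unstable $z$-direction which, after being expanded by $\hat f^{n-1}$, carries the arc onto the required $z$-curve $l_+$, and a second near $q_+$ that cancels the residual strong-stable $y$-component at the last step. The key observation is that, by iterating the Takens form,
\[
\hat f^n(x,y,z) = \bigl(F_c^n(x),\,A^s_n(x)y,\,A^u_n(x)z\bigr),\qquad A^s_n(x) := \prod_{j=0}^{n-1} A^s(F_c^j(x)),
\]
and analogously for $A^u_n$; in particular $\hat f^n(l_-(t)) = (F_c^n(t),A^s_n(t)\hat y(t),0)$, whose $y$-component is exponentially small in $n$ while the $z$-component is identically zero, whereas the target $l_+(F_c^n(t)) = (F_c^n(t),0,\hat z(F_c^n(t)))$ has a $z$-component of order one at $t=0$ and vanishing $y$-component.

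Set $B_n(t) := \prod_{j=1}^{n-1} A^u(F_c^j(t))$, the amplification factor that $\hat f^{n-1}$ applies to the $z$-coordinate between steps $1$ and $n$. Since $F_c(0)=0$ and $F_c'(0)=1$, both $F_c$ and $F_c^n$ are germs of diffeomorphisms, so there exist unique polynomials $\Phi_n:\RR\to\RR^{d_u}$ and $\Psi_n:\RR\to\RR^{d_s}$ of degree $r$ satisfying
\[
\Phi_n(F_c(t)) = B_n(t)^{-1}\hat z(F_c^n(t)) + o(|t|^r),\qquad \Psi_n(F_c^n(t)) = A^s_n(t)\hat y(t) + o(|t|^r).
\]
Pick smooth bumps $\chi_1,\chi_2$ supported in balls of diameter $\epsilon_n\downarrow 0$ centred at $\hat f(q_-)$ and $q_+$ respectively, equal to $1$ on smaller balls covering the relevant arcs for $|t|<\delta_n$, and define
\[
h_n^{(1)}(x,y,z) := (x,\,y,\,z + \chi_1(x,y,z)\Phi_n(x)),\qquad h_n^{(2)}(x,y,z) := (x,\,y - \chi_2(x,y,z)\Psi_n(x),\,z).
\]
A direct calculation shows that for $2\le j\le n-1$ the intermediate point $(h_n\circ\hat f)^j(q_-)$ has $y$-coordinate $A^s(0)^j y_-$ to leading order, which differs from both $\hat f(q_-)$ and $q_+$ by a fixed positive amount; hence, for all large $n$, this intermediate orbit misses both supports, so $h_n^{(1)}$ acts only at step $1$ and $h_n^{(2)}$ only at step $n$. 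The defining relations of $\Phi_n$ and $\Psi_n$ then immediately yield $(h_n\circ\hat f)^n(l_-(t)) = l_+(F_c^n(t)) + o(|t|^r)$, while invariance of $B_\dd$ along the orbit follows since the perturbed trajectory differs from the $\hat f$-trajectory of $q_-$ (which stays in $B_\dd^s \subset B_\dd$ and converges to the origin) by at most exponentially small amounts.

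The main obstacle is the $C^\infty$-smallness of $h_n$. Applying Lemma~\ref{lem:prod_norm} and Lemma~\ref{lemma:poly 2} iteratively, together with the pinching hypotheses, yields geometric decay
\[
\|A^s_n(\cdot)\hat y(\cdot)\|_{0,r}\le C\mu^n,\qquad \|B_n(\cdot)^{-1}\hat z(F_c^n(\cdot))\|_{0,r}\le C\mu^n
\]
for some $\mu<1$, so the coefficients of $\Phi_n,\Psi_n$ are exponentially small in $n$. Multiplication by a bump function of support $\epsilon_n$ introduces at most a factor $\epsilon_n^{-k}$ in the $C^k$-seminorm; the choice $\epsilon_n = 1/n$ then gives $\|h_n-\Id\|_{C^k}\le Cn^k\mu^n\to 0$ for every $k$, which establishes $C^\infty$-convergence to the identity while the supports shrink to $\{\hat f(q_-),q_+\}$ in the Hausdorff topology.
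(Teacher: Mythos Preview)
Your approach is essentially the same as the paper's: two localized perturbations, one in the $z$-direction near $\hat f(q_-)$ and one in the $y$-direction near $q_+$, with the polynomial corrections chosen so that the iterated Takens form lands exactly on $l_+(F_c^n(t))$ modulo $o(|t|^r)$. Your $\Psi_n$ and $\Phi_n$ coincide (up to a one-step shift in indexing) with the paper's Taylor polynomials $P_n$ and $Q_{-n}$, and your explicit choice $\epsilon_n=1/n$ is a clean alternative to the paper's diagonal argument for $C^\infty$-convergence.

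One imprecision to fix: the displayed decay estimate $\|A^s_n(\cdot)\hat y(\cdot)\|_{0,r}\le C\mu^n$ is not what the pinching hypothesis yields directly, since the recursion $A^s_{n+1}(t)\hat y(t)=A^s(F_c^n(t))\cdot A^s_n(t)\hat y(t)$ introduces a factor $\max\{1,\|F_c^n\|_{0,r}\}^r$ that is not controlled. What actually decays geometrically---and what you need, since $\Psi_n$ is applied at the point with $x$-coordinate $F_c^n(t)$---is $\|\Psi_n\|_{0,r}=\|(A^s_n\cdot\hat y)\circ F_c^{-n}\|_{0,r}$; the recursion for this quantity picks up exactly the factor $\|A^s\|_{0,r}\cdot\max\{1,\|F_c^{-1}\|_{0,r}\}^r<1$ from the pinching condition. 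The analogous correction applies to the $\Phi_n$ estimate. With this adjustment your argument goes through.
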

In our applications of this Lemma,
 $\hat f$ will be the first return map of a periodic point $p$.
This point will have heteroclinic connection with other periodic points 
 $p_{-}$ and $p_{+}$ so that 
 $l_{-}(t)$ will be a part of the intersection $W^{cs}(p) \cap W^{cu}(p_{-})$
 and $l_{+}(t)$ will be a part of the intersection
 $W^{cu}(p) \cap W^{cu}(p_{+})$.\\

\begin{proof}
[Proof of Lemma \ref{lemma:r-flat 2}]
Let
 $A^{s, n}:(\RR,0) \ra \GL(\RR^{d_s})$,
 $A^{u, n}:(\RR,0) \ra \GL(\RR^{d_u})$,
 $A^{u,-n}:(\RR,0) \ra \GL(\RR^{d_u})$
 be local $C^{\infty}$ maps defined by
\begin{gather*} A^{s,n}(x)  = A^s(F_c^{n-1}(x))  \cdots A^s(x), \quad 
 A^{u,n}(x) = A^u(F_c^{n-1}(x))  \cdots A^u(x),  \\
 A^{u,-n}(x) 
 =\left(A^{u,n}(F_c^{-n}(x))\right)^{-1}
 =A^u(F_c^{-n}(x))^{-1} \cdots A^u(F_c^{-1}(x))^{-1}.
\end{gather*}
Take $0<\beta<1$ such that
 $\|A^s(x)\|_{0,r} \cdot \max\{1, \|F_c^{-1}(x)\|_{0,r}\}^r  < \beta$.
By Lemma~\ref{lem:prod_norm} and Lemma~\ref{lemma:poly 2}, we have
\begin{align*}
\| (A^{s,n+1} \cdot \hat{y}) \circ F_c^{-n-1}\|_{0,r}
 & = \|\left( A^{s} \cdot (A^{s,n} \cdot \hat{y}) \right) \circ [F_c^{-n}
   \circ F_c^{-1}]\|_{0,r}\\
 & \leq \|A^s\|_{0,r} \cdot \| (A^{s,n} \cdot \hat{y}) \circ F_c^{-n}\|_{0,r}
  \cdot \max\{1, \|F_c^{-1}\|_{0,r}\} \\
 & \leq  \beta\|(A^{s,n} \cdot \hat{y}) \circ F_c^{-n}\|_{0,r},
\end{align*}
 where $\hat y$ is the function that defines the curve $l_-$,
 so $(0,\hat y(0), 0)= q_-$.

Hence, $\| (A^{s,n} \cdot \hat{y}) \circ F_c^{-n}\|_{0,r}$
 converges to zero as $n\to\infty$.
Similarly, $\|(A^{u,-n} \cdot \hat{z}) \circ F_c^n\|_{0,r}$
 converges to zero as $n\to\infty$
 (where $\hat z$ is the function that defines the curve $l_+$,
 so $(0,0,\hat z(0))= q_+$).
Let $P_n:\RR \ra \RR^{d_s}$ and $Q_{-n}:\RR \ra \RR^{d_u}$ 
 be the Taylor polynomial of order $r$
 for $\big( (A^{s,n} \cdot \hat{y}) \circ F_c^{-n} \big)(t)$
 and $\big((A^{u,-n} \cdot \hat{z}) \circ F_c^n \big)(t)$ respectively.
Then, the polynomials $P_n(t)$ and $Q_{-n}(t)$ converge to zero.

Recall that $F_c(0)=0$, so
\begin{equation}
\label{fcn0}
 F_c^n(0)=0,
\end{equation}
 which implies $A^{s,n}(0)=A^s(0)^n$ and
 $A^{u,-n}(0)=(A^{u,n}(0))^{-1}=(A^u(0))^{-n}$.
Hence,
\begin{equation}
\label{anq}
Q_{-n}(0)=(A^u(0))^{-n}\hat z(0), \quad P_n(0)=(A^s(0))^n \hat y(0).
\end{equation}

Define $C^\infty$ diffeomorphisms $h_{n,-}$ and $h_{n,+}$ of $\RR^{\dd}$ by
\begin{align*}
h_{n,-}(x,y,z)  & = (x,y,z+ Q_{-n}(x)),&
h_{n,+}(x,y,z) & = (x,y-P_{n}(x), z).
\end{align*}
They converge to the identity map on $B_\dd$ in the $C^\infty$ topology
 as $n\to+\infty$.
Thus, we can find a sequence $(h_n)_{n\geq 1}$ of diffeomorphisms
 of $B_{\dd}$ such that:
 \begin{itemize} 
 \item $h_n=\hat f \circ h_{n,-} \circ \hat f^{-1}$
 in a neighborhood of $\hat f(q_-)$,
 \item $h_n=h_{n,+}$ in a neighborhood of $q_+$,
 \item the support of $h_n$ converges to $\{\hat f(q_-),q_+\}$, and
 \item $h_n$ converges to the identity in the $C^\infty$ topology
 as $n\to+\infty$.
 \end{itemize}
For the time being we assume the existence 
of such $(h_n)$ and proceed the proof. 
We discuss the construction of $(h_n)$ later.

For sufficiently large $n$, we have
 $(h_n \circ \hat f)^j(q_-) \in B_\dd$ for $j=0, 1,\dots, n$.
Also, since $h_n$ differs from identity only in a small neighborhood
 of the points $q_+$ and $\hat f(q_-)$,
 we have 
\begin{equation*}
\hat f \circ (h\circ \hat f)^{n-1}=\hat f^n \circ h_{n,-} 
\end{equation*}
 in a small neighborhood of the point $q_-$.
Thus, for small $t$
\begin{equation*}
\hat f \circ (h\circ \hat f)^{n-1}(t,\hat y(t),0)=( (F_c)^n(t),
\; A^{s, n}(t) \hat{y}(t), \; A^{u,n}(t) Q_{-n}(t)).
\end{equation*}
By (\ref{fcn0}) and (\ref{anq}),
 the right-hand side is close to $q_+=(0,0,\hat z(0))$ 
 for small $t$.
Near this point $h_n$ equals to $h_{n,+}$, which gives
\begin{align*}
(h\circ \hat f)^n(t,\hat y(t),0)
 & = h_{n,+}\big((F_c)^n(t),\; A^{s, n}(t) \hat{y}(t),
  \; A^{u,n}(t) Q_{-n}(t)\big)\\
 & = ( (F_c)^n(t),\; A^{s, n}(t) \hat{y}(t) -P_{n}(F_c^n(t)),
  \; A^{u,n}(t)Q_{-n}(t)).
\end{align*}
Since $A^{s, n}(t)\hat{y}(t) -P_{n}(F_c^n(t)) =o(|t|^r)$
 and $A^{u,n}(t)Q_{-n}(t) = \hat{z}(F_c^{n}(t)) + o(|t|^r)$
 by definition of $P_{n}$ and $Q_{-n}$, 
 we have
\begin{equation*}
 (h_n \circ \hat f)^n(l_-(t)) =l_+(F_c^n(t))+o(|t|^r)
\end{equation*}
 as required.

Finally, let us discuss the 
construction of $(h_n)$.
Choose a $C^\infty$-smooth bump function
 $\rho :\RR \to \RR$ such that $\rho(s)=1$ for $|s|\leq 1$ and
 $\rho(s)=0$ for all $|s|\geq 2$.
For $k \geq 1$
 and $(x,y,z) \in B_\dd$ with $\hat{f}^{-1}(x,y,z)=(x',y',z')$,
 put 
\begin{align*}
\lefteqn{\bar{h}_{n,k}(x,y,z)}\\
& =
\begin{cases}
 \left(x,y-\rho\left(\|(x,y,z)-q_+ \|\right/k) P_n(x),\, z \right)
 & \left(\|(x,y,z)-q_+ \|\leq 2/k\right),\\
 \hat f \left(x',y',\, z'+\rho\left(\|(x',y',z')-q_- \|/k \right)
 Q_{-n}(x') \right)
 & \left(\|(x',y',z')-q_- \| \leq 2/k\right),\\ 
 (x,y,z) & \text{(otherwise)}.
\end{cases}
\end{align*}
Let $\delta_{n,k}$ be the $C^k$-distance between $\bar{h}_{n,k}$
 and the identity map.
Since $Q_{-n}$ and $P_n$ converge to zero in the $C^\infty$-topology
 as $n$ goes to infinity,
 the sequence $(\delta_{n,k})_{n \geq 1}$ converges to zero
 as $n$ goes to infinity for each $k \geq 1$.
Hence, there exists a sequence $(n_k)_{k \geq 1}$
 such that $\delta_{n,k}<1/k$ for any $k \geq 1$ and any $n \geq n_k$.
We may assume that $n_{k+1} \geq n_k$ for any $k \geq 1$.
Remark that $\delta_{n,k} \leq \delta_{n,k'} <1/k'$
 if $k' \geq k$ and $n \geq n_{k'}$
 since the $C^k$-norm is not greater than the $C^{k'}$ norm if $k' \geq k$.
Define $h_n=\bar{h}_{n,1}$ if $n <n_1$
 and $h_n=\bar{h}_{n,k}$ if $n_k \leq n < n_{k+1}$ with some $k \geq 1$.
Then, the $C^k$-distance between $h_n$ and the identity map
 is smaller than $1/k'$ if $k' \geq k$ and $n \geq n_k'$,
This implies that
 $h_n$ converges to the identity map in the $C^k$-topology for any $k \geq 1$,
 and hence, in the $C^\infty$-topology.
For $n_k \leq n \leq n_{k+1}$,
 the support of $h_n$ is contained in
 the ball of radius $1/k$ centered at $q_+$
 and the $\hat{f}$-image of the ball of radius $1/k$ centered at $q_-$.
This implies that the support of $h_n$ shrinks to $\{p_+,p_-\}$.
\end{proof}

\begin{rmk}
\label{rmk:conju}
Suppose $f:B_{\dd} \to B_{\dd}$ is 
in the Takens standard form
and assume that the origin is a $1$-flat fixed point, i.e., $F_c'(0)=1$.  
Let $\psi_\alpha$ be a diffeomorphism of $R^{\dd}$ given by
 $\psi_\alpha(x,y,z)=(\alpha x,y,z)$.
Then,
\begin{equation*}
\psi_\alpha^{-1} \circ \hat f \circ \psi_\alpha(x,y,z)
 =(\alpha^{-1} F_c(\alpha x), A_s(\alpha x)y, A_u(\alpha x)z)
\end{equation*}
  is in the Takens standard form and the norms
\begin{equation*}
 \|\alpha^{-1} F_c(\alpha x) - x\|_{0,r}, \hsp
 \|A^s(\alpha x)-A^s(0)\|_{0,r}, \hsp
 \|A^u(\alpha x)-A^u(0)\|_{0,r}
\end{equation*}
 converge to zero as $\alpha$ goes to zero.
Thus, by taking a conjugacy by $\psi_\alpha$
 with any sufficiently small $\alpha>0$,
 we can obtain that the pinching condition of Lemma \ref{lemma:r-flat 2}:
\begin{align*}
 \|A^s(\cdot)\|_{0,r} \cdot \max\{1,\|F_c^{-1}\|_{0,r}\}^r & < 1,\\
 \|A^u(\cdot)^{-1}\|_{0,r} \cdot \max\{1,\|F_c\|_{0,r}\}^r & < 1.
\end{align*}
By performing this modification 
we can apply this lemma for any $1$-flat point.

When using Lemma \ref{lemma:r-flat 2} near $1$-flat points,
 we will need to control the quantity $S(F_c)/A(F_c)$.
Suppose $S(F_c), A(F_c)$ are not equal to $0$.
The coordinate change given by $\psi_\alpha$ behaves
 as a conjugacy by a linear transformation in the center direction
 (the $x$-coordinate). 
Accordingly, by Lemma~\ref{lemma:A,S conjugacy},
 we see that $|S(F_c)/A(F_c)|$ will be multiplied by $\alpha$
 after the conjugacy. 
Thus, we can also assume that 
the value $|S(F_c)/A(F_c)|$ is
 larger than any given real number.  
\end{rmk}

\subsection{Hyperbolic case}

In this section, we consider a variant of Lemma~\ref{lemma:r-flat 2}
 for the case where the fixed point is hyperbolic.
The difference with Lemma~\ref{lemma:r-flat 2} is that
 the point of the heteroclinic connection now is not necessarily
 in the strong stable manifold, that is, 
 $q_{-}$ is not necessarily in $B^{s}$ but in $B^{cs}$.
We consider only the case where the map is linear
 in a neighborhood of the fixed point.
 This is enough for our purpose,
 as the maps that admit linearizing $C^\infty$ coordinates
 near a hyperbolic fixed point are dense in $C^\infty(M)$,
 as it follows from Sternberg theorem \cite{S}. 
\begin{lemma}
\label{lemma:linearization}
Assume the following:
\begin{itemize}
\item $\hat f:B_{\dd} \ra \RR^{\dd}$ is a linear isomorphism of the form
 $\hat f(x,y,z)=(\lambda_c x,A^s y, A^u z)$ such that 
\begin{equation}
\label{spga}
 \|A^s\|<\lambda_c<1 \;\mbox{ and }\; \|(A^u)^{-1}\|<1,
\end{equation}
\item $q_-=(x_*,y_*,0)$ and $q_+=(0,0,z_*)$ are points of
 $B_\dd^{cs}\setminus \{(0, 0, 0)\}$ and
 $B_\dd^u \setminus \{(0, 0, 0)\}$ respectively,
\item  $l_-:(\RR,0) \ra (B_\dd^{cs},q_-)$
 and $l_+:(\RR,0) \ra (B_\dd^{cu}, q_+)$ are
 smooth non-singular curves of the form
 $l_{-}(t) = (x_*+ t, \hat{y}(t), 0)$
 and $l_{+}(t) = (t, 0, \hat{z}(t))$,
 where $\hat{y}(0)=y_*$ and $\hat{z}(0)=z_*$. 
\end{itemize}
Then, there exists a sequence $(h_n)_{n \geq 1}$
 of compactly supported $C^\infty$ diffeomorphisms of $B_\dd$ such that:
 \begin{itemize}
\item The support of $(h_n)$ converges to $\{\hat f(q_-), q_+\}$
 in the Hausdorff topology,
 \item $(h_n)$ converges to the identity map in the $C^\infty$ topology,
 \item $(h_n \circ \hat f)^j(q_-)$ is contained in $B_\dd$
 for any $j=0,1,\dots,n$, and 
 \item $(h_n \circ \hat f)^n \circ l_-(t) =l_+(\lambda_c^n t)+o(|t|).$
\end{itemize}
\end{lemma}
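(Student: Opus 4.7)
The plan is to follow the template of Lemma~\ref{lemma:r-flat 2}, exploiting two simplifications: $\hat f$ is linear, and the matching requirement is only $o(|t|)$ rather than $o(|t|^r)$. As before, I build $h_n$ as the union of two localized perturbations: $h_{n,-}$, supported in a small neighborhood of $q_-$, which adjusts the strong-unstable coordinate $z$, and $h_{n,+}$, supported near $q_+$, which cancels the residual center-stable drifts. Setting $h_n = \hat f\circ h_{n,-}\circ\hat f^{-1}$ near $\hat f(q_-)$, $h_n = h_{n,+}$ near $q_+$, and identity elsewhere (with disjoint supports that shrink to $\{\hat f(q_-),q_+\}$), only the first and last steps of the orbit of $l_-(t)$ under $h_n\circ\hat f$ feel the perturbation.

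For $h_{n,-}$, I define the shift $z\mapsto z+Q_{-n}(x)$ with the affine map
\begin{equation*}
Q_{-n}(x)\;=\;(A^u)^{-n}\bigl(z_* + \hat z'(0)\,\lambda_c^n(x-x_*)\bigr),
\end{equation*}
chosen so that $(A^u)^n\,Q_{-n}(x_*+t)=z_*+\hat z'(0)\lambda_c^n t$ exactly matches the linear part of $\hat z(\lambda_c^n t)$ once $\hat f^n$ is applied. For $h_{n,+}$, I subtract from the $(x,y)$-coordinates near $q_+$ a suitable affine function of $x$ that cancels the unwanted leading terms $\lambda_c^n x_*+\lambda_c^n t$ and $(A^s)^n y_* + (A^s)^n \hat y'(0)\,t$ left by $\hat f^n$ along the orbit. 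After re-expressing $t$ as $\lambda_c^{-n}x-x_*$, the coefficient of $x$ in the $y$-correction becomes $\lambda_c^{-n}(A^s)^n\hat y'(0)$, which is exponentially small by the pinching $\|A^s\|<\lambda_c$. A direct computation, using the linearity of $\hat f$, then gives $(h_n\circ\hat f)^n\circ l_-(t)=l_+(\lambda_c^n t)+o(|t|)$, as required.

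The key verification is that $h_n\to\mathrm{id}$ in the $C^\infty$ topology. Thanks to the pinching conditions (\ref{spga}), every coefficient appearing in $Q_{-n}$ and in the correction for $h_{n,+}$ is bounded by $\mu^n$ with some $\mu<1$ (the relevant ratios are $\|(A^u)^{-1}\|$, $|\lambda_c|$, and $\|A^s\|/\lambda_c$, all strictly less than~$1$). By the same diagonal cut-off construction used at the end of the proof of Lemma~\ref{lemma:r-flat 2}, I can shrink the supports of $h_{n,\pm}$ at a polynomial rate in $n$ while keeping $h_n$ arbitrarily small in every $C^k$ norm. The invariance $(h_n\circ\hat f)^j(q_-)\in B_\dd$ for $0\le j\le n$ is automatic: the $x$- and $y$-components contract, and the $z$-component stays of order $\|(A^u)^{-1}\|^{\,n-j}|z_*|\le |z_*|$ throughout.

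The main structural difference from Lemma~\ref{lemma:r-flat 2} is that $q_-$ lies in $B^{cs}_\dd$ rather than in $B^s_\dd$, so the center drift $\lambda_c^n x_*$ is generally nonzero and must be absorbed into $h_{n,+}$; since $|\lambda_c|<1$, this correction is itself exponentially small and creates no difficulty. I expect the main (but modest) obstacle to be the bookkeeping in $h_{n,+}$ needed to simultaneously kill both the constant drifts and the first-order $t$-terms in $x$ and $y$; beyond this, no genuine new phenomenon arises and the proof should be strictly shorter than that of Lemma~\ref{lemma:r-flat 2}.
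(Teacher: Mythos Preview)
Your proposal is correct and follows essentially the same approach as the paper: the paper also defines the affine corrections $Q_{-n}(t)=(A^u)^{-n}(z_*+\hat z'(0)\lambda_c^n t)$ and $P_n(t)=(A^s)^n(y_*+\hat y'(0)t)$, sets $h_{n,-}(x,y,z)=(x,y,z+Q_{-n}(x-x_*))$ and $h_{n,+}(x,y,z)=(x-\lambda_c^n x_*,\,y-P_n(\lambda_c^{-n}x-x_*),\,z)$, and then localizes these via the same cut-off construction as in Lemma~\ref{lemma:r-flat 2}. Your identification of the crucial small coefficient $\lambda_c^{-n}(A^s)^n$ (controlled by $\|A^s\|<\lambda_c$) and of the center drift $\lambda_c^n x_*$ as the main new bookkeeping item matches the paper exactly.
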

\begin{proof}
The proof is done by an argument similar to
 the proof of Lemma~\ref{lemma:r-flat 2} 
 with minor modifications concerning the position of $q_{-}$.

Let $P_n(t)= (A^s)^n(y_* + \hat{y}'(0) t)$ and
$Q_{-n}(t)=(A^u)^{-n}(z_*+\hat{z}'(0) \lambda_c^n t)$
(they are the Taylor polynomials 
of $(A^s)^n\hat{y}(t) $ and 
 $(A^u)^{-n} \hat{z}(\lambda_c^n t)$ 
 up to order $1$).
Take sequences $(h_{n,+})$ and $(h_{n,-})$
 of diffeomorphisms of $\RR^\dd$ given by
\begin{equation*}
 h_{n,-}(x,y,z)
  = (x,y,z +Q_{-n}(x-x_*)), \quad 
 h_{n,+}(x,y,z)
  = (x-\lambda_c^n x_*, y-P_{n}(\lambda_c^{-n}x-x_*),z).
\end{equation*}
By (\ref{spga}),
 $h_{n,-}$ and $h_{n,+}$ on any given ball in $\RR^\dd$
 converge to identity the $C^\infty$ 
 topology as $n\to +\infty$.
As in the proof of Lemma~\ref{lemma:r-flat 2},
 we can construct a sequence of diffeomorphisms  $(h_n)_{n \geq 1}$ such that
 the support of $h_n$ is compact and converges to $\{\hat f(q_-),q_+\}$,
 as $n\to+\infty$,
 $h_n = \hat f \circ (h_{n,-}) \circ \hat f^{-1}$
 in (an $n$-dependent) neighborhood of $\hat f(q_-)$,
 $h_n = h_{n,+}$ in a neighborhood of $q_+$,
 and $(h_n)_{n \geq 1}$ converges to the identity map
 in the $C^{\infty}$ topology.

The, for every sufficiently large $n$, we have that for small $t$
\begin{align*}
(h_n \circ \hat f)^n \circ l_-(t)
 & = h_{n,+} \circ \hat f^n \circ h_{n,-}(x_*+t,\hat{y}(t),0)\\
 & = h_{n,+} \circ (\lambda_c^n (x_*+t), (A^s)^n\hat{y}(t),
 (A^u)^n Q_{-n}(t)) \\
 & = (\lambda_c^n t , (A^s)^n\hat{y}(t)-P_n(t), (A^u)^n Q_{-n}(t)).
\end{align*}
Since $(A^s)^n\hat{y}(t)=P(t)+o(t)$ and 
 $(A^u)^{-n}Q_{-n}(t) = \hat{z}(\lambda_c^n t) +o(t)$,
 this implies that
 $(h_n \circ \hat f)^n \circ l_-(t)=l_+(\lambda_c^n t)+o(|t|)$.
\end{proof}

%
%

\section{Creating $1$-flat periodic points}\label{sec5}
In this section, we prove Proposition~\ref{prop:one-flat}.
The proof is divided into two steps.
First, by applying techniques of Section~\ref{s:local},
 we create a $1$-flat periodic point
 in the way similar to that we use for the construction
 of $r$-flat periodic points for $r \geq 2$, see Section \ref{kflatsec}.
However, in Proposition~\ref{prop:one-flat}
 we need to ensure that the $1$-flat point has signature of a given type.
The computation of the signature will be done at the second step.

\subsection{Construction of a candidate orbit}
Recall that every diffeomorphism $f\in \cW^1$ has
 a pair of hyperbolic periodic points $p_1$ and $p_2$
 (of periods $\pi_1$ and $\pi_2$, respectively)
 such that the $(d_u+1)$-dimensional unstable manifold of $p_1$ has
 a non-empty transverse intersection
 with the $(d_s+1)$-dimensional stable manifold of $p_2$;
 the heteroclinic point $q$ belongs to this intersection.

Let $\cW^1_*$ be the set of diffeomorphisms
 in $\cW^1\cap \Diff^{\infty}(M)$
such that 
\begin{itemize}
\item the return maps $f^{\pi_1}$ and $f^{\pi_2}$
near the points $p_1$ and $p_2$
 are locally linear in certain $C^\infty$ coordinates;
\item $\log\lambda_c(p_1)$ and $\log \lambda_c(p_2)$ are
 rationally linearly independent, and
\item in addition to the heteroclinic intersection
 of $W^{u}(p_1,U_{\cC})$ and $W^{s}(p_2,U_{\cC})$,
 there exists a non-transverse intersection
 of $W^{s}(p_1,U_{\cC})$ and $W^{u}(p_2,U_{\cC})$.
\end{itemize}
It is not difficult to see that
 $\cW^1_*$ is $C^r$-dense in $\cW^1 \cap \Diff^r(M)$
 for any $1 \leq r \leq \infty$.
Indeed, the first condition is fulfilled by Sternberg theorem \cite{S}
 if the eigenvalues $\lambda(p_1)$ of the linearization matrix
 $(D f^{\pi_1})_{p_1}$ and $\lambda(p_2)$ of $(D f^{\pi_2})_{p_2}$
 are non-resonant, i.e., for $p=p_1$ and $p=p_2$,
\begin{equation*}
 \lambda_j(p)\neq \prod_{i=1}^{|\dd|} \lambda_i(p)^{m_i} 
\end{equation*}
 for all $j=1,\dots, |\dd|$
 and all integer indices $m_i\geq 0$ such that $m_1+\dots + m_{\dd}\geq 2$.
One can easily achieve this, and the second condition as well,
 by an arbitrarily small perturbation of $f$
 supported in a small neighborhood of the points $p_1$ and $p_2$. 
In order to obtain the last condition
 while keeping the previous two conditions intact,
 we use Proposition~\ref{lemma:connecting}.
Notice that the perturbation in Proposition~\ref{lemma:connecting}
 is chosen such that it does not affect the local behavior
 near $\cO(p_1)$ and $\cO(p_2)$.
Thus, we can apply Proposition~\ref{lemma:connecting} without disturbing the 
conditions about the eigenvalues at $p_1$ and $p_2$.

For diffeomorphisms in $\cW_{*}^1$, we will prove the following:
\begin{prop}
\label{prop:1-flat 1}
Let $f \in \cW^1_*$ and $\{p_1,p_2\}$ be the heteroclinic pair
 with $U_{\cC}$-heteroclinic point $q$.
For any neighborhood $V$ of $\{p_1,p_2\}$
 and any $C^\infty$ neighborhood $\cU$ of the identity map in $\Diff^\infty(M)$,
 there exist $h \in \cU$ with the support of $h$ contained in $V$ such that
 the diffeomorphism $h\circ f$ has
 a $1$-flat periodic point $p_* \in \Per_\dd(h \circ f,{U_\cC})$ such that
 $p_{*} \not\in \cl{U_{\bl}}$ and
 $p_{*}$ is linked to the blender in $\cU_{\bl}$.
\end{prop}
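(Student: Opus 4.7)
The plan is to construct a periodic orbit that shadows the heterodimensional cycle $p_1\xrightarrow{q}p_2\xrightarrow{q'}p_1$ (the auxiliary heteroclinic $q'\in W^s(p_1)\cap W^u(p_2)$ is supplied by the definition of $\cW^1_*$), and to tune the numbers $m_1$, $m_2$ of periods spent near $p_1$ and $p_2$ so that the central multiplier of the resulting orbit equals $1$. The mechanism is that such an orbit has central multiplier $\lambda_c(p_1)^{m_1}\lambda_c(p_2)^{m_2}D$, where $D$ is a fixed factor coming from the unperturbed transitions between the linearizing regions. Because $\log\lambda_c(p_1)$ and $\log\lambda_c(p_2)$ are rationally independent, Kronecker's theorem supplies pairs $(m_1,m_2)\in\mathbb{Z}_{>0}^2$ making this product arbitrarily close to $1$, and a last small perturbation brings it to exactly $1$.

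Concretely, since $\lambda_c(p_2)<1$ I apply Lemma~\ref{lemma:linearization} at $p_2$ in the $C^\infty$ linearizing chart supplied by $\cW^1_*$. An iterate of $q$ approaching $p_2$ plays the role of $q_-\in W^s(p_2)$, and an iterate of $q'$ approaching $p_2$ in backward time plays the role of $q_+\in W^{uu}(p_2)$; the center-tangent curve $l_-$ is taken inside the transverse heteroclinic $W^u(p_1)\cap W^s(p_2)$, while $l_+$ is any center-tangent curve through $q_+$ in a chosen local $W^{cu}(p_2)$. The lemma with $n=m_2$ gives a $C^\infty$-small perturbation $h^{(2)}$ supported near $\hat f^{\pi_2}(q_-)$ and $q_+$ for which $(h^{(2)}\circ f)^{m_2\pi_2}$ sends $q_-$ to $q_+$ and carries $l_-$ onto $l_+$ with center parameter multiplied by $\lambda_c(p_2)^{m_2}$. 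At $p_1$, where $\lambda_c(p_1)>1$, I apply the same lemma to $f^{-1}$ (for which $\lambda_c^{-1}<1$ and the strong-stable/unstable directions swap), with iterates of $q\in W^u_f(p_1)=W^s_{f^{-1}}(p_1)$ and of $q'\in W^s_f(p_1)=W^{uu}_{f^{-1}}(p_1)$ playing the roles of $q_-$ and $q_+$; this yields a second perturbation $h^{(1)}$ supported near iterates of $q$ and $q'$ around $p_1$.

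Because $f$ is left unperturbed on the finite transitions between the two linearizing regions, the composed map $g=h^{(1)}\circ h^{(2)}\circ f$ has a genuine periodic orbit starting at the chosen iterate of $q$; its period is $N=m_1\pi_1+m_2\pi_2+C$ where $C$ counts the transition steps. Its central multiplier is $\mu=\lambda_c(p_1)^{m_1}\lambda_c(p_2)^{m_2}D$, where $D\neq 0$ is the fixed product of center-direction derivatives along the unperturbed transitions. Kronecker's theorem lets us choose arbitrarily large $(m_1,m_2)$ with $|\mu-1|$ arbitrarily small; the resulting periodic point $p_*$ lies in $U_\cC$, is strongly partially hyperbolic by Lemma~\ref{lemma:cone persistence}, and is linked to the blender by Lemma~\ref{lemma:inherit tangle} since its orbit visits neighborhoods of $p_1$ and $p_2$, whose strong-invariant manifolds already meet the blender disk system.

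The most delicate step is to force $\mu=1$ exactly. For this I compose $g$ with one additional $C^\infty$-small perturbation supported in a tiny neighborhood of a single orbit point of $p_*$, chosen disjoint from $\cl{U_\bl}$ and from the supports of $h^{(1)}$ and $h^{(2)}$, and acting in local coordinates as $x\mapsto(1+\varepsilon)x$ in the center direction on a yet smaller scale. This keeps the orbit of $p_*$ intact, multiplies $\mu$ by $1+\varepsilon$, and (for small $\varepsilon$) preserves the strong partial hyperbolicity and the blender linkage by Lemmas~\ref{lemma:cone persistence} and~\ref{lemma:inherit tangle}. Since $\mu$ was already close to $1$, an intermediate-value choice of $\varepsilon$ makes the new multiplier equal to $1$, giving the required $1$-flat periodic point. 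The main obstacle is the careful bookkeeping required to verify that this final perturbation genuinely moves $\mu$ through $1$ without violating the support and size constraints or disturbing the blender linkage.
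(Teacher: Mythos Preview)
Your proposal is correct and follows essentially the same approach as the paper's proof: both apply Lemma~\ref{lemma:linearization} at $p_2$ (for $f$) and at $p_1$ (for $f^{-1}$) to build a periodic orbit shadowing the cycle with central multiplier $\lambda_c(p_1)^{m_1}\lambda_c(p_2)^{m_2}\cdot D$, invoke the rational independence of $\log\lambda_c(p_i)$ to make this arbitrarily close to $1$, then use a final local perturbation near a single orbit point to force exact $1$-flatness, and finally appeal to Lemma~\ref{lemma:inherit tangle} for the blender linkage. The only cosmetic difference is that the paper fixes the periodic point as the specific heteroclinic iterate $q_1^u$ from the outset, which slightly simplifies the bookkeeping for the last perturbation.
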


This proposition gives the part of Proposition~\ref{prop:one-flat}.
The remaining part is given by
\begin{prop}
\label{prop:1-flat + AS}
In the hypothesis of Proposition \ref{prop:1-flat 1},
 we furthermore assume $\tau_A(q)\neq 0$
 and $\tau_S(q) \neq 0$. 
Then, in the conclusion of Proposition~\ref{prop:1-flat 1}, 
we can take $p_{\ast}$ such that
 $\tau_A^{\Per}(p_{\ast}; h \circ f)=\tau_A(q;f)$,
 and $\tau_S^{\Per}(p_{\ast}; h \circ f)=\tau_A(q;f)$.
\end{prop}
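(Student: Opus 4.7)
The proof amounts to reading off the central germ of the first-return map at the periodic point produced by Proposition~\ref{prop:1-flat 1} and then invoking the cocycle identities~(\ref{cocpr}). Since $f\in\cW^1_*$, the return maps near $p_1$ and $p_2$ are linear in some $C^\infty$ charts; in particular the $c$-linearizations $\psi^u_{p_1}$ and $\psi^s_{p_2}$ agree with the center coordinate in these charts, so $n_i$ iterations near $p_i$ contribute a pure linear factor $L_i=\lambda_c(p_i)^{n_i}$ in the center direction. The construction of $p_*$ in Proposition~\ref{prop:1-flat 1} makes $n_1$ passes near $p_1$, crosses the heteroclinic to $p_2$ through a point near $q$, makes $n_2$ passes near $p_2$, and closes up through a return heteroclinic point $q'\in W^u(\cO(p_2))\cap W^s(\cO(p_1))$ produced by Lemma~\ref{lemma:connecting} and the blender. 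Thus the central germ $F_c$ of $(h\circ f)^{\pi(p_*)}$ at $p_*$ factors, up to an orientation-preserving smooth conjugacy, as
\[
F_c \;=\; \psi_{q'}\circ L_2\circ \psi_q \circ L_1,
\]
where $\psi_q,\psi_{q'}$ are the orientation-preserving transition maps of Section~\ref{ss:signature}. By Lemma~\ref{lemma:A,S conjugacy} the signs of $A(F_c)$ and $S(F_c)$ are invariants of this conjugacy as long as $F_c$ is $1$-flat.

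Applying~(\ref{cocpr}) repeatedly and using $A(L_i)=S(L_i)=0$, one obtains
\[
A(F_c)=L_1\bigl(A(\psi_q)+L_2\,\psi_q'(0)\,A(\psi_{q'})\bigr),\qquad
S(F_c)=L_1^2\, S(\psi_q)+\bigl(L_1 L_2\,\psi_q'(0)\bigr)^2 S(\psi_{q'}).
\]
The $1$-flatness condition $F_c'(0)=1$ reads $L_1 L_2\,\psi_q'(0)\,\psi_{q'}'(0)=1$, so $L_1 L_2\,\psi_q'(0)=1/\psi_{q'}'(0)$ is a fixed positive constant, and $L_1,L_2>0$ by the $f$-invariance of the $c$-orientation. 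One then chooses $n_1$ arbitrarily large in the construction, forcing $L_1\to+\infty$ and $L_2\to 0^+$. The correction term $L_2\,\psi_q'(0)\,A(\psi_{q'})$ in the bracket then vanishes, and the bounded term $(1/\psi_{q'}'(0))^2 S(\psi_{q'})$ is dominated by $L_1^2\, S(\psi_q)$. Under the hypotheses $\tau_A(q)\neq 0$ and $\tau_S(q)\neq 0$, for all sufficiently large $n_1$ we obtain $\sgn A(F_c)=\sgn A(\psi_q)=\tau_A(q;f)$ and $\sgn S(F_c)=\sgn S(\psi_q)=\tau_S(q;f)$, which yields the asserted equalities for $\tau_A^{\Per}(p_*;h\circ f)$ and $\tau_S^{\Per}(p_*;h\circ f)$ on choosing an adapted $3$-central curve at $p_*$.

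The main obstacle is checking that the procedure of Proposition~\ref{prop:1-flat 1} admits an arbitrarily large choice of $n_1$ (with $n_2$ and a final small perturbation absorbing the $1$-flatness constraint), and that the local perturbations supported near $q$ and near $q'$ do not degrade the signatures that enter the computation. The latter is handled by Remark~\ref{rmk:signa}: because we assume $A(\psi_q)\neq 0$ and $S(\psi_q)\neq 0$ for the original $f$, the signs at the persisting transverse heteroclinic intersection are locally constant in the $C^2$ and $C^3$ topologies respectively, so a sufficiently small perturbation preserves $\tau_A(q)$ and $\tau_S(q)$; the signature at the newly created $q'$ enters only through asymptotically vanishing or bounded terms and therefore cannot overturn the dominant contribution of $\psi_q$.
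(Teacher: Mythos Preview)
Your argument has a genuine gap at its central step: the claimed factorization
\[
F_c \;=\; \psi_{q'}\circ L_2\circ \psi_q \circ L_1
\]
up to smooth conjugacy is not justified, and in fact does not hold in the general partially hyperbolic setting.

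First, the object $\psi_{q'}$ is not well-defined in the sense of Section~\ref{ss:signature}. The heteroclinic point $q'\in W^{u}(p_2)\cap W^{s}(p_1)$ is an intersection of manifolds of dimensions $d_u$ and $d_s$, whose sum is $|\dd|-1$; this is an isolated, non-transverse intersection, not a curve, so there is no intrinsic one-dimensional transition map at $q'$ analogous to $\psi_q$. What actually plays the role of your $\psi_{q'}$ is the map the paper calls $F_3$: the restriction of $f^{n_2}$ to a center curve $\ell_3$ inside the center-stable manifold $\cW^{cs}$ of the periodic point $p_*$, mapped to a curve $\ell_4$ inside $\cW^{cu}$. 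These curves, and hence $F_3$, depend on $m_1,m_2$.

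Second, the derivatives of $F_3$ are \emph{not} uniformly bounded; they can grow like powers of $\lambda_2^{-m_2}$ (see Remark~\ref{rmk:huge}). So you cannot treat $A(\psi_{q'})$ and $S(\psi_{q'})$ as fixed constants and let $L_2\to 0$ to kill their contribution. The heart of the paper's proof is precisely to show that, despite this growth, one still has $A(F_3)=o(\lambda_2^{-m_2})$ and $S(F_3)=o(\lambda_2^{-2m_2})$ (Proposition~\ref{prop:esti}), which is exactly the borderline rate needed for the $\psi_q$-contribution to dominate in the cocycle formula~(\ref{as34}). Proving this requires two nontrivial ingredients you have bypassed: the $C^k$-flatness of $\cW^{cu}$ and $\cW^{cs}$ at the relevant points (Lemma~\ref{lemma:surf}, via a graph-transform estimate), and a cross-form analysis of the transition near $q_2$ that converts the implicit relation between $\ell_3$ and $\ell_4$ into the derivative bounds~(\ref{f3pb}),~(\ref{f23}). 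Note also that the perturbations from Lemma~\ref{lemma:linearization} used in Proposition~\ref{prop:1-flat 1} control the iterates only up to $o(|t|)$, so they give no direct third-order information on $F_c$; this is why the paper recomputes $F$ from scratch along the actual invariant center manifolds of $p_*$ rather than reading it off from the construction.
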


\begin{proof}
[Proof of Proposition~\ref{prop:1-flat 1}]
Take $q=q_1 \in W^u(p_1,U_\cC) \cap W^s(p_2,U_\cC)$
 and $q_2 \in W^{u}(p_2,U_\cC) \cap W^{s}(p_1,U_\cC)$;
 these heteroclinic points exist by the definition of $\cW_{*}^1$.
Let $\Lambda=\cO(p_1) \cup \cO(p_2) \cup \cO(q_1) \cup \cO(q_2)$.
By Lemma \ref{lemma:heteroclinic PH} and Remark~\ref{rmk:hetero PH},
the $f$-invariant set $\Lambda$ admits a partially hyperbolic splitting
 $E^c\oplus E^s\oplus E^u$ of index $\dd$.
Let $\pi_i$ be the period of $p_i$ for $i=1,2$.
By Lemma \ref{lemma:inherit tangle},
 there exist neighborhoods $U_\Lambda$, $U_1$, and $U_2$
 of $\Lambda$, $p_1$, and $p_2$, respectively,
 and a neighborhood $\cU_*$ of the identity map in $\Diff^r(M)$
 such that the following holds:
 \begin{itemize}
\item the open sets $U_1,\dots,f^{\pi_1}(U_1), U_2,\dots, f^{\pi_2}(U_2)$
 are mutually disjoint subsets of $U_\Lambda  \setminus \cl{U_{bl}}$,
 \item for any $h \in \cU_*$, any compact $h \circ f$-invariant subset of $U_\Lambda$
 is a strongly partially hyperbolic set of index $\dd$, and 
 \item any $p_* \in \Per(h \circ f,U_\cC) \cap U_1$
 is linked to the blender in $U_{\bl}$ for $h \circ f$.
\end{itemize}

By the linearizability assumption, for each $i=1,2$,
 if $U_i$ is small enough, then there exists a $C^\infty$ coordinate chart
 $\vphi_i:U_i \ra \RR^{\dd}$ at $p_i$ such that
 the map $\vphi_i \circ f^{\pi_i} \circ \vphi_i^{-1}$,
 defined on a polyball $B_{\dd,i}$, has the form
\begin{equation*}
\hat f_i(x,y,z)=(\lambda_i x,A^s_i y, A^u_i z)
\end{equation*}
 where $\lambda_i=\lambda_c(p_i)$ are the central multipliers
 of the points $p_i$ ($i=1,2$), $\lambda_1>1>\lambda_2>0$,
 $A^s_i \in \GL(\RR^{d_s})$, and $A^u_i \in \GL(\RR^{d_u})$.

Notice that $\lambda_i=\lambda_c(p_i;f)$.
Let $\vphi_i^x$ be the $x$-coordinate function of $\vphi_i$.
The map $\vphi_2^x$
 satisfies $\vphi_2^x \circ f^{\pi_i}=\lambda_2 \cdot \vphi_2^x$.
Thus, it is a $c$-linearization
 (see Section~\ref{ss:invari} for the definition).
By replacing $\vphi_2$ with $-\vphi_2$, if necessary,
 we may always assume that 
 $\vphi_2^x$ is compatible with 
 the $c$-orientation $\omega_\cC$.
Similarly, we may assume that
 $\vphi_1^x$ is compatible with $\omega_\cC$.
By Lemma \ref{lemma:c-orientation}, the
 compatibility implies that
 $D\vphi_i^x(v)>0$ if and only if $\omega_\cC(v)>0$,
 where $v$ is any vector from $\cC^c(z)$
and $z$ is any point in $U_i$.

Take $\delta>0$ such that
\begin{align*}
 W^s_\delta(p_1)  & \subset \vphi_1^{-1}(B_{\dd,1}^s), &
 W^u_\delta(p_1)  & \subset \vphi_1^{-1}(B_{\dd,1}^{cu}),\\
 W^s_\delta(p_2)  & \subset \vphi_2^{-1}(B_{\dd,2}^{cs}), &
 W^u_\delta(p_2)  & \subset \vphi_2^{-1}(B_{\dd,2}^{u}).
\end{align*}
Since $q_1 \in W^u(p_1) \cap W^s(p_2)$
 and $q_2 \in W^{u}(p_1) \cap W^{s}(p_1)$,
 we can pick 4 heteroclinic points
\begin{align*}
 q_1^s & \in W^s_\delta(p_2) \cap \cO(q_1), &
 q_1^u & \in W^u_\delta(p_1) \cap \cO(q_1), \\
 q_2^s & \in W^s_\delta(p_1) \cap \cO(q_2), &
 q_2^u & \in W^u_\delta(p_2) \cap \cO(q_2).
\end{align*}
Take $n_1,n_2 \geq 1$ such that
 $f^{n_1}(q_1^u)=q_1^s$, $f^{n_2}(q_2^u)=q_2^s$
 and let
 \[
Q=\{f^j(q_i^u) \mid i=1,2, \, 1 \leq j \leq n_i\}.
 \]
The intersection of $W^u(p_1)$ and $W^s(p_2)$ is transverse
 at the points of the orbit of $q_1$.
As $\dim (W^u(p_1))+ \dim(W^s(p_2))=|\dd|+1$,
 near each of these points the intersection is a regular smooth curve. 
For the heteroclinic points $q_1^u$ and $q_1^s$
 we denote these curves as  $l^u_1$ and $l^s_1$.
We parameterize these curves such that
 they can be viewed as local $C^\infty$ maps
 $l^u_1:(\RR,0) \ra (\vphi_1^{-1}(B_{\dd,1}^{cu}), q_1^u)$,
 $l^s_1:(\RR,0) \ra (\vphi_2^{-1}(B_{\dd,2}^{cs}),q_1^s)$,
 such that  $\vphi_1^x(l^u_1(t))=\vphi_2^x(l^s_1(t))=t$.
By construction, the curve $l^s_1$ is taken to $l^u_1$ by $f^{n_1}$,
 so there exists a local $C^\infty$-map
 $G:(\RR,0) \ra (\RR,0)$ such that
\begin{equation}
\label{transg1}
 f^{n_1} \circ l^u_1(t)=l^s_1(G(t)).
\end{equation}
Note that by construction,
 $G$ coincides with the transition map $\psi_q$ defined by (\ref{transmap})
 where one should put $q=q_1^s$.

Recall that the tangent $(dl^u_1/dt)(0)$ lies
 in the center subspace $E^c(q_1^u)$
 and $(dl^s_2/dt)(0)$ lies in $E^c(q_2^s)$.
In particular, they are contained in the cone field $\cC^c$.
The invariance of the $c$-orientation $\omega_{\cC}$ and
 the compatibility of $\vphi_i^x$ implies that $G'>0$.
Denote $\mu_1=G'(0)$. By (\ref{transg1}), we have
\begin{equation*}
 f^{n_1} \circ l^u_1(t)=l^s_1(\mu_1 t)+o(t).
\end{equation*}

In a similar way, we consider curves lying
 in the intersection of $W^{cs}(p_1)$ and $W^{cu}(p_2)$
 near the heteroclinic points $q_2^{s,u}$.
The manifolds $W^{cs}(p_1)$ and $W^{cu}(p_2)$ are not defined uniquely;
 we choose them such that in the linearizing coordinates
 $\vphi_{1}$ 
 the manifold $W^{cs}_\delta(p_1)$ is 
 given by the equation $z=0$
 and in $\vphi_{2}$ the 
 manifold $W^{cu}_\delta(p_2)$ is given by the equation $y=0$.
In particular, the manifolds $W^{cs}(p_1)$ and $W^{cu}(p_2)$
 are of $C^\infty$ class.
So, $W^{cu}_\delta(p_2)\cap f^{-n_2} (W^{cs}_\delta(p_1))$ contains
 a $C^\infty$ curve $l^u_2$ through $q_2^u$ tangent to $E^c(q_2^u)$
 and $ f^{n_2} (W^{cu}_\delta(p_2))\cap W^{cs}_\delta(p_1)$ contains
 a $C^\infty$ curve $l^s_2$ through $q_2^s$ tangent to $E^c(q_2^s)$.
In other words, we have local $C^\infty$ maps
 $l^u_2:(\RR,0) \ra (B_{\dd,2}^{cu},q_2^u)$,
 $l^s_2:(\RR,0) \ra (B_{\dd,1}^{cs},q_1^s)$
such that
 $\vphi_2^x \circ l^u_2(t)=\vphi_1^x \circ l^s_2(t)=t$ and
\begin{equation*}
 f^{n_2} \circ l^u_2(t)=l^s_2(\mu_2 t)+o(|t|)
\end{equation*}
for some $\mu_2>0$

Now, we apply Lemma \ref{lemma:linearization}
 for $\hat f=(\hat f_1)^{-1}$, $l_-=l^u_1$, and $l_+=l^s_2$.
We also apply Lemma \ref{lemma:linearization}
 for $\hat f=\hat f_2$, $l_-=l^s_1$, and $l_+=l^u_2$.
We thus find that   there exist $N \geq 1$ and
 a family $(h_{m_1,m_2})_{m_{1}, m_{2}\geq N}$ of diffeomorphisms in $\cU_1$
 such that
\begin{gather*}
 \supp(h_{m_1,m_2})
   \subset (U_1 \cup U_2) \setminus Q,\\
 \{(h_{m_1,m_2} \circ f)^{j\pi_i}(q^s_i) \mid 0 \leq j \leq m_i\}
   \subset U_i,\\
 (h_{m_1,m_2} \circ f)^{m_i\pi_i} \circ l^s_i(t)
   =l^u_i(\lambda_i^{m_i} t)+o(t) 
\end{gather*}
 for $i=1,2$.
Then, for all $m_{1}, m_{2} \geq N$,
\begin{equation*}
 (h_{m_1,m_2} \circ f)^{m_1\pi_1+m_2\pi_2+n_1+n_2} \circ l^u_1(t)
 =l^u_1(\lambda_1^{m_{1}}\mu_2 \cdot \lambda_2^{m_2} \mu_1 t)+o(|t|). 
\end{equation*}
Since $\lambda_1 > 1 > \lambda_2>0$ and
 $\log\lambda_1$ and $\log\lambda_2$ are rationally linearly independent,
 there exist a sequence $((m_{1,j},m_{2,j}))_{j \geq 1}$
 of pairs of positive integers
 such that $\lambda_1^{m_{1,j}}\mu_2\lambda_2^{m_{2,j}}\mu_1$
 converges to $1$ as $j\to \infty$.
 
Thus, $q_1^u=l^u_1(0)$ becomes a periodic point
 in $\Per_\dd(h_{m_{1,j}, m_{2,j}} \circ f,{U_\cC}) $
 such that $\cO(q_1^u; h_{m_{1,j},m_{2,j}} \circ f) \subset U_{\Lambda}$,
 and $(h_{m_{1,j},m_{2,j}})^{n_1}(q_1^u)=q_1^s \in U_2$.
Moreover, its central multiplier
 $\lambda^*=\lambda_c(q_1^u;h_{m_{1,j},m_{2,j}} \circ f)$ gets
 as close as we want to $1$ as $j\to \infty$.
It remains to note that for any sufficiently large $j$,
 by an additional small perturbation
 one can modify the diffeomorphism $h_{m_{1,j},m_{2,j}}$ near $q_1^u$
 such that for the modified diffeomorphism $\tilde h_{m_{1,j},m_{2,j}}$
 the point $q_1^u$ will remain the periodic point
 of $\tilde h_{m_{1,j}, m_{2,j}} \circ f$
 and $\lambda^*$ will become exactly $1$. 
Indeed, by construction there is a neighborhood $W$ of $q_1^u$
 such that $\cO(q_1^u; h_{m_{1,j},m_{2,j}} \circ f) \cap W =q_1^u$
 holds for every $j$. 
Thus the closeness of the central multiplier $\lambda^*$ to $1$,
 together with the strong partial hyperbolicity 
 guarantees the existence of the required sequence of perturbations
 $\tilde{h}_{m_{1,j},m_{2,j}}$.

Thus, we have constructed the 1-flat periodic point $p_*=q_1^u$
 for the map $h\circ f$
 where $h=\tilde{h}_{m_{1,j},m_{2,j}}$ and $j$ is large enough.
Note that the orbit of $p_*$ lies entirely in $U_\Lambda$
 and intersects with both $U_1$ and $U_2$.
By the choice of $U_\Lambda$, $U_1$, and 
$U_2$,
the periodic point $p_*$ is linked to the blender.
\end{proof}
 
\subsection{$C^r$-flatness of the center-stable and center-unstable manifolds}

In the rest of this section we prove Proposition~\ref{prop:1-flat + AS}.
The proof is done by comparing the behavior of the first return map
 at the periodic point restricted to a central curve
 and the transition map at the heteroclinic point
 (see Proposition~\ref{prop:conv1}).
As our calculation shows,
 the $C^r$-distance between these maps may be quite large,
 see Remark~\ref{rmk:huge}. 
Nevertheless, we are able to establish the
closeness of the 
characteristics we are interested in, namely,
the non-linearities and the Schwarzian derivatives.

Note that for large enough $m_{1,2}$,
 the maps $f_{m_1,m_2}=\tilde{h}_{m_1, m_2} \circ f$ constructed
 in Proposition~\ref{prop:1-flat 1}
 keep many of the properties of the map $f$ itself, as detailed below.
We therefore will omit the indices $m_{1,2}$ but denote the original map $f$
 as $f_\infty$ from now on. 
So, $f\to f_\infty$ in the $C^\infty$-topology as $m_{1,2}\to+\infty$.
Note that the support of the perturbations $\tilde{h}_{m_1, m_2}$ is
 away from a neighborhood of the orbits of periodic points $p_{1, 2}$,
 so the return maps $f^{\pi_1}$ and $f^{\pi_2}$ coincide with
 $f_\infty^{\pi_1}$ and $f_\infty^{\pi_2}$ near $p_{1,2}$.
Thus, by shrinking the domain of the 
definition of $\vphi_{1, 2}$, we can assume 
that they remain linear in the charts:
\begin{equation}
\label{eqlini}
\vphi_i\circ f^{\pi_i} \circ (\vphi_i)^{-1}(x, y, z)
 = (\lambda_i x, A^s_i y, A^u_i z ), \qquad i =1,2,
\end{equation}
 where the origin corresponds to $\vphi_i (p_i)$,
 and $\pi_i$ is the period of $p_i$. 
We assume that the linearization is defined
 over polyballs $B_{\mathbf{d}, i}$, $i=1, 2$. 
Since $p_1$ and $p_2$ have a transverse heteroclinic connection,
 which is robust under small perturbations of the map,  
 there is a point $q^u_{1} \in (\vphi_1)^{-1}(B^{cu}_{\mathbf{d}, 1})$
 which is sent to 
 $q_{1}^s \in (\vphi_2)^{-1}(B^{cs}_{\mathbf{d}, 2})$
 by some iteration of $f$, that is,
 there exists $n_1>0$ such that $f^{n_1}(q^u_{1}) = q^s_{1}$. 
We choose the points $q_1^u$ and $q_1^s$
 continuously dependent on the map $f$,
 so they tend to the original heteroclinic points $q_1^{s,u}$
 as $f\to f_\infty$.

Near each of these points the intersection of
 the $(d_u+1)$-dimensional manifold $W^u(p_1)$
 and the $(d_s+1)$-dimensional manifold $W^s(p_2)$ is a smooth curve.
We denote the corresponding curves as $l^u_1$ and $l^s_1$.
We parameterize them such that they are given by local $C^\infty$ maps
 $l^u_1:(\RR,0) \ra (\vphi_1^{-1}(B_{\dd,1}^{cu}), q_1^u)$,
 $l^s_1:(\RR,0) \ra (\vphi_2^{-1}(B_{\dd,2}^{cs}),q_1^s)$,
 such that $\vphi_1^x(l^u_1(t))=\vphi_2^x(l^s_1(t))=t$.
The corresponding transition map $G:(\RR,0) \ra (\RR,0)$ is defined
 by relation (\ref{transg1}).
 It depends continuously on $f$ in 
 the $C^\infty$ topology, thus
 $S(G)$ and $A(G)$ have the same sign
 for every $f$ which is $C^3$ close to $f_\infty$.

For the original map $f_\infty$
 we also have heteroclinic points
 $q_2^u \in (\vphi_2)^{-1}(B^{uu}_{\mathbf{d}, 2})$,
 $q_2^s \in (\vphi_1)^{-1}(B^{ss}_{\mathbf{d}, 1})$
 such that $f^{n_2}(q^u_{2}) = q^s_{2}$ for some $n_2>0$.
These points correspond to a non-transverse intersection
 of $W^u(p_2)$ and $W^s(p_1)$, that is,
the sum of dimensions of these manifolds 
is less than $|\dd|$. 
Thus, in general, this heteroclinic intersection disappears
 as the map $f_\infty$ is perturbed.

By Proposition~\ref{prop:1-flat 1}
 the maps $f$ we consider here have a 1-flat periodic point
 $p_{\ast}$ of period $\pi_{\ast} :=m_1\pi_1+m_2\pi_2 +n_1 +n_2$. 
\begin{rmk}
\label{lamrmk}
Recall that the sequence $(m_{1, j}, m_{2,j})$ was chosen so that 
 $\lambda_1^{m_{1,j}} \lambda_2^{m_{2,j}}$ converges to some positive constant
 as $j\to +\infty$.
Therefore, we further assume that the product
 $\lambda_1^{m_1} \lambda_2^{m_2}$ is bounded away from zero and infinity.
\end{rmk}

The periodic point $p_{\ast}$ has the following itinerary: 
\begin{itemize}
\item $f^{n_1}(p_{\ast})$ is a point close to $q^s_{1}$,
\item $f^{n_1+\pi_2 j}(p_{\ast})$ is in the linearized region 
 $(\vphi_2)^{-1}(B_{\mathbf{d}, 2})$ for $j=0,\ldots, m_2$ 
 and $f^{n_1+m_2\pi_2}(p_{\ast})$ is a point close to $q^u_{2}$,
\item $f^{n_1+m_2\pi_2+n_2}(p_{\ast})$ is a point close to $q^s_{2}$, and  
\item $f^{n_1+m_2\pi_2+n_2+j\pi_1}(p_{\ast})$ is in the linearized region 
 $(\vphi_1)^{-1}(B_{\mathbf{d}, 1})$ for $j=0, \dots, m_1$.
\end{itemize}

Recall that $\mathcal{O}(p_{\ast})$ admits
 a partially hyperbolic splitting
 deriving from the strong partial hyperbolicity in $\cU_{\cC}$. 
By construction, the return map $f^{\pi_*}$ has derivative equal to $1$
 in the center direction at the point $p_*$.
Therefore, by the center manifold theorem,
 we can find a center-unstable manifold $\mathcal{W}^{cu}$ 
 passing through $p_{\ast}$
 which is $C^k$ where $k$ can be chosen arbitrarily large. 
We choose $k \geq 3$.
Similarly, we consider a center-stable $C^k$-manifold $\mathcal{W}^{cs}$
 through the point $f^{n_1}(p_{\ast})$.

The manifold $\mathcal{W}^{cu}$ is tangent to the subspace $E^{cu}$
 at the point $p_*$.
Since $p_*$ is close to $p_1$, $E^{cu}$ is close to $y=0$
in the linearizing chart $\vphi_1$.
Therefore, $\mathcal{W}^{cu}$ is a hypersurface of the form
 $y=\psi_{cu}(x,z)$ for some $C^k$-smooth function $\psi_{cu}$.
Similarly, $\mathcal{W}^{cs}$ is, in the chart $\vphi_2$,
 a surface of the form $z=\psi_{cs}(x,y)$
 for some $C^k$-smooth function $\psi_{cs}$.

In general,
 suppose $\Psi$ is a piece of a hypersurface in $B_{\mathbf{d}, i}$, $i=1,2$,
 which is a graph of a $C^k$-function $\psi$,
 that is, $\Psi = \{(x, \psi(x, z), z )\}$,
 where $x, z$ vary in some domain in $B_{\dd, i}^{cu}$.
Let $X= (x_0, y_0, z_0)$ be a point in $\Psi$.
We define $\| \partial^k_{cu} \Psi \|_{X}$
 as the value $\|\psi(x, z)-y_0\|_{(x_0, z_0), k}$ 
 (see Section~\ref{ss:matrix} for the definition of this semi-norm).
In a similar way we define $\|\partial^k_{cs} \Psi \|_{X}$
 where $\Psi$ is a surface of the form $\{(x, y, \psi(x, y))\}$.

\begin{lemma}
\label{lemma:surf}
As $m_1$ and $m_2$ go to $+\infty$,  
 $\|\partial^k_{cu} \mathcal{W}^{cu}\|_{\vphi_1(p_{\ast})}$
 and $\|\partial^k_{cs} \mathcal{W}^{cs}\|_{\vphi_2(f^{n_1}(p_{\ast}))}$ 
 converge to zero.
\end{lemma}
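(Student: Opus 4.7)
The plan is to perform a graph-transform analysis of $\mathcal{W}^{cu}$ in the linearizing chart $\vphi_1$, exploiting that on the bulk of the $m_1$ iterations in the $p_1$-linearized region the return map $f^{\pi_1}$ coincides with $\hat f_1$.

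Write $\mathcal{W}^{cu}$ locally as $y=\psi_{cu}(x,z)$ in $\vphi_1$, and let $p_*^{(0)},\ldots,p_*^{(m_1)}=p_*$ denote the consecutive iterates of $p_*$ under $f^{\pi_1}$ in the linearized region, with $p_*^{(0)}$ close to $q_2^s$ and $p_*^{(m_1)}=p_*$ close to $q_1^u$. Since $E^{cu}=\{y=0\}$ throughout the chart, the $f^{(j-m_1)\pi_1}$-image of $\mathcal{W}^{cu}$ near $p_*^{(j)}$ is a graph $y-y(p_*^{(j)})=\tilde\psi^{(j)}(\xi,\zeta)$ in local coordinates centered at $\vphi_1(p_*^{(j)})$, with $\tilde\psi^{(j)}(0)=0$ and $D\tilde\psi^{(j)}|_0=0$. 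The perturbation $\tilde h_{m_1,m_2}$ meets the orbit of $p_*$ in this region only in $O(1)$ points; on all other consecutive pairs $(j,j+1)$ one has $f^{\pi_1}=\hat f_1$ and invariance gives the exact recursion
\[
\tilde\psi^{(j+1)}(\xi,\zeta)=A_1^s\,\tilde\psi^{(j)}\bigl(\lambda_1^{-1}\xi,(A_1^u)^{-1}\zeta\bigr).
\]

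Differentiating at the origin, each Taylor coefficient of $\tilde\psi^{(j+1)}$ of order $(\alpha,\beta)$ is obtained from the corresponding one of $\tilde\psi^{(j)}$ by left-multiplication by $A_1^s$, the scalar factor $\lambda_1^{-|\alpha|}$, and $|\beta|$-fold tensoring with $(A_1^u)^{-1}$, giving the estimate
\[
\|D^{(\alpha,\beta)}\tilde\psi^{(j+1)}|_0\|\leq\|A_1^s\|\cdot\lambda_1^{-|\alpha|}\cdot\|(A_1^u)^{-1}\|^{|\beta|}\cdot\|D^{(\alpha,\beta)}\tilde\psi^{(j)}|_0\|.
\]
By strong partial hyperbolicity at $p_1$ we have $\lambda_1^{-1}<1$ and $\|(A_1^u)^{-1}\|<1$, so this factor is at most $\|A_1^s\|<1$ per step. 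Iterating across the $m_1-O(1)$ bulk steps yields
\[
\|\tilde\psi^{(m_1)}\|_{0,k}\leq C\,\|A_1^s\|^{m_1}\,\|\tilde\psi^{(j_0)}\|_{0,k}
\]
for a bounded index $j_0$, where $C$ absorbs the finitely many boundary steps on which the perturbation is active; these involve smooth diffeomorphisms converging to the identity in $C^\infty$ by Lemma \ref{lemma:linearization}, hence their contribution is bounded uniformly in $m_1,m_2$. A uniform bound on $\|\tilde\psi^{(j_0)}\|_{0,k}$ is then supplied by standard invariant manifold persistence on the compact strongly partially hyperbolic set $\Lambda=\cO(p_1)\cup\cO(p_2)\cup\cO(q_1)\cup\cO(q_2)$ of $f_\infty$, which carries a locally uniform family of $C^k$ center-unstable plaques depending continuously on base point and on the map, and the orbit of $p_*^{(j_0)}$ stays $C^\infty$-close to $\Lambda$ as $m_1,m_2\to\infty$; combining these estimates gives $\|\partial^k_{cu}\mathcal{W}^{cu}\|_{\vphi_1(p_*)}\to 0$.

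The statement for $\mathcal{W}^{cs}$ at $f^{n_1}(p_*)$ is obtained by the symmetric argument in $\vphi_2$. Writing $\mathcal{W}^{cs}$ as $z=\psi_{cs}(x,y)$ and letting $q^{(0)}=f^{n_1}(p_*),\ldots,q^{(m_2)}$ be the consecutive $f^{\pi_2}$-iterates in the $p_2$-linearized region, the correct direction of iteration is now backward; using $\hat f_2^{-1}$ one obtains
\[
\tilde\psi^{(j)}(\xi,\eta)=(A_2^u)^{-1}\tilde\psi^{(j+1)}(\lambda_2\xi,A_2^s\eta),
\]
and each Taylor coefficient at $0$ contracts by a factor at most $\|(A_2^u)^{-1}\|<1$ per backward step, since $\lambda_2<1$ and $\|A_2^s\|<1$. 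Together with the uniform persistence bound at $q^{(m_2)}$ this gives $\|\partial^k_{cs}\mathcal{W}^{cs}\|_{\vphi_2(f^{n_1}(p_*))}=O(\|(A_2^u)^{-1}\|^{m_2})\to 0$. The main subtle point in both cases is the uniform $C^k$-bound on the initial graph: once that is in place, the conclusion follows by direct iteration of the linear graph transform, and the explicit contraction rates $\|A_1^s\|$ and $\|(A_2^u)^{-1}\|$ deliver the geometric decay.
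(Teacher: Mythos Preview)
Your argument takes a genuinely different route from the paper, and the divergence is exactly at the point you yourself flag as ``the main subtle point.'' The paper never tries to bound the entry graph $\tilde\psi^{(j_0)}$ at all. Instead, in Lemma~\ref{lemma:esti} it pushes the graph $\Psi=\mathcal{W}^{cu}$ around the \emph{entire} period $\pi_*$---through $f^{n_1}$, then the $m_2$ linear steps near $p_2$, then $f^{n_2}$, then the $m_1$ linear steps near $p_1$---and obtains an estimate of the shape
\[
\|\partial_{cu}^k\bar\Psi\|_{\vphi_1(p_*)}\leq a_{m_1,m_2}\bigl(\|\partial_{cu}^k\Psi\|_{\vphi_1(p_*)}+C\bigr)+b_{m_1,m_2},
\]
with $a_{m_1,m_2},b_{m_1,m_2}\to 0$. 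Because $\mathcal{W}^{cu}$ is $f^{\pi_*}$-invariant one has $\bar\Psi=\Psi$, so $\delta=\|\partial_{cu}^k\mathcal{W}^{cu}\|$ solves $\delta\leq a_{m_1,m_2}(\delta+C)+b_{m_1,m_2}$, forcing $\delta\to 0$ with no a~priori input. The closed-loop fixed-point inequality is the paper's mechanism, and it works for \emph{any} $C^k$ choice of $\mathcal{W}^{cu}$.

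Your approach uses only the $m_1$ linear steps in the $p_1$-chart, which indeed contracts by $\|A_1^s\|^{m_1}$, but then the burden is to bound $\|\tilde\psi^{(j_0)}\|_{0,k}$ uniformly in $m_1,m_2$. Here there is a real gap. By your own definition $\tilde\psi^{(j_0)}$ is the $f^{-(m_1-j_0)\pi_1}$-image of $\mathcal{W}^{cu}$; its $C^k$ size depends on the particular center-unstable manifold chosen, and backward iteration of a $cu$-graph does not preserve $C^k$ bounds (cone invariance controls at best the slope, not higher jets). ``Invariant manifold persistence on $\Lambda$'' does not supply what you need: HPS-type theory produces a specific locally invariant plaque family with uniform $C^k$ bounds, but it says nothing about backward images of an \emph{arbitrary} center-unstable manifold of the periodic point $p_*$ (which is not even a point of $\Lambda$). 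To repair your argument you would have to \emph{choose} $\mathcal{W}^{cu}$ to be a leaf of a uniform $C^k$ plaque family along the full periodic orbit $\cO(p_*)$ for the perturbed map, argue that the hyperbolicity constants are uniform because the orbit stays near $\Lambda$, and check this choice is $f^{\pi_*}$-invariant so that its pullbacks agree with the plaques at earlier iterates. That is feasible but is precisely the content missing from the proof; the paper's closed-loop method bypasses it completely.
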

We only give a proof for the manifold $\cW^{cu}$.
The proof for $\cW^{cs}$ is obtained in the same way,
 due to the symmetry of the problem.
We will use the following:
\begin{lemma}
\label{lemma:esti}
Let $\Psi := \{(x, \psi(x, z), z) \} \subset B_{\dd, 1}$
 be a piece of $C^k$-surface such that $\vphi_1 (p_{\ast}) \in\Psi$
 and $\|\partial^k_{cu} \Psi\|_{\vphi_1(p_{\ast})} \leq \delta$.
 Suppose that the tangent space of $(\vphi_1)^{-1}(\Psi)$ at the point $p_{\ast}$
 is contained in the invariant cone field $\cC^{cu}$.
Let $\bar{\Psi} = \vphi_{1}\circ  f^{\pi_{\ast}}\circ \vphi_{1}^{-1}(\Psi)$.
Then $\bar\Psi$ is a surface of the form $y=\bar \psi(x, z)$
 and
\begin{equation}
\label{bpb}
\|\partial_{cu}^k \bar{\Psi}\|_{\vphi_1(p_{\ast})}
 \leq  C_{1, k}(\lambda_1^{m_1}\lambda_2^{m_2})^{-k}
 \|A_{1}^s\|^{m_1}\|A_{2}^s\|^{m_2}(\delta + C_{2, k}) 
+ \|A_{1}^s\|^{m_1}\lambda_1^{-km_1}C_{3,k},
\end{equation}
where $C_{1, k}$, $C_{2, k}$, and $C_{3, k}$ are constants
 independent of the choice of $m_i$,
 and $\lambda_{1,2}$, $A^s_{1,2}$ are defined in (\ref{eqlini}).
\end{lemma}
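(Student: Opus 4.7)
The plan is to decompose the return map in the chart $\vphi_1$ as
\[
\vphi_1 \circ f^{\pi_*} \circ \vphi_1^{-1} = L_1^{m_1} \circ T_2 \circ L_2^{m_2} \circ T_1,
\]
where $L_i(x,y,z) = (\lambda_i x, A_i^s y, A_i^u z)$ is the linear return map near $p_i$ and $T_1 = \vphi_2 \circ f^{n_1} \circ \vphi_1^{-1}$, $T_2 = \vphi_1 \circ f^{n_2} \circ \vphi_2^{-1}$ are fixed smooth transitions through the heteroclinic points $q_1^s$, $q_2^s$. I would propagate the graph function $\psi_i$ of the successive image $\Psi_i$ (with evolving base point $p_i^*$) through these four stages and estimate its $C^k$-semi-norm $\|\psi_i - y_0^{(i)}\|_{p_i^*,k}$ at each step.

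For a linear stage, a Taylor expansion around the shifted base $L_j^m(x_0,y_0,z_0)$ yields $\bar\psi(x,z) = (A_j^s)^m \psi(\lambda_j^{-m} x, (A_j^u)^{-m} z)$, so the $(I,J)$-order Taylor coefficient is multiplied by $(A_j^s)^m \lambda_j^{-|I|m} ((A_j^u)^{-m})^{\otimes |J|}$. The $1$-partial hyperbolicity inequalities $\|A_j^s\|\max\{1,\lambda_j^{-1}\}<1$ and $\|(A_j^u)^{-1}\|\max\{1,\lambda_j\}<1$ at $p_j$ then produce a contraction factor of $\|A_1^s\|^{m_1}\lambda_1^{-km_1}$ through $L_1^{m_1}$ and $\|A_2^s\|^{m_2}\lambda_2^{-km_2}$ through $L_2^{m_2}$, the dominant contribution at order $k$ coming from the $|I|=k$ coefficient. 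For a transition stage, I would parametrize $\Psi_i$ as a graph over $(x,z)$, compose with $T_j$, and then reparametrize the image as a graph over the new $(x,z)$-plane via the implicit function theorem --- its hypothesis being guaranteed by the tangency of $\Psi_i$ to the forward-invariant cone $\cC^{cu}$. Lemma~\ref{lem:prod_norm} and Lemma~\ref{lemma:poly 2} applied to the resulting composition of $C^k$ maps yield $\|\psi_{i+1}\|_k \leq C\|\psi_i\|_k + C'$ with $C, C'$ independent of $m_1, m_2$, since $T_j$ is a fixed smooth map and, thanks to Remark~\ref{lamrmk}, all the intermediate base points $p_i^*$ remain in fixed compact neighborhoods of $\vphi_j(p_j)$ as $m_1, m_2 \to \infty$.

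Chaining the four estimates produces the claimed bound. The first term $C_{1,k}(\lambda_1^{m_1}\lambda_2^{m_2})^{-k}\|A_1^s\|^{m_1}\|A_2^s\|^{m_2}(\delta + C_{2,k})$ collects the propagated input semi-norm $\delta$ and the additive constants $C'$ from the two transitions, rescaled by the combined linear contractions (using $(\lambda_1^{m_1}\lambda_2^{m_2})^{-k} = \lambda_1^{-km_1}\lambda_2^{-km_2}$). The second term $\|A_1^s\|^{m_1}\lambda_1^{-km_1}C_{3,k}$ isolates the ``baseline'' contribution coming from the $k$-th order Taylor coefficients of $T_2$ at its limiting base point, which are present even when $\psi_2 \equiv y_0^{(2)}$ and which are then passed through only the final stage $L_1^{m_1}$.

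The main obstacle is the book-keeping in the transition stages: the constants supplied by Lemma~\ref{lemma:poly 2} must be made uniform in $m_1, m_2$. This requires both the convergence of the intermediate base points to fixed limits (so that $T_j$'s $C^k$-norm can be controlled on a single compact neighborhood) and a uniform lower bound on the Jacobian of the reparametrization map $(x,z) \mapsto (T_j^x(x,\psi_i(x,z),z), T_j^z(x,\psi_i(x,z),z))$; the latter is the quantitative expression of the transversality of the tangent space of $\Psi_i$ to the strong stable direction, which is a direct consequence of the $f$-invariance of the cone $\cC^{cu}$.
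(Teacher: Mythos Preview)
Your proposal is correct and follows essentially the same route as the paper: the same four-stage decomposition $\Psi=\Psi_1 \xrightarrow{T_1} \Psi_2 \xrightarrow{L_2^{m_2}} \Psi_3 \xrightarrow{T_2} \Psi_4 \xrightarrow{L_1^{m_1}} \bar\Psi$, the same explicit rescaling of Taylor coefficients through the linear stages, and an affine bound $\|\psi_{i+1}\|_k \leq C\|\psi_i\|_k + C'$ through each transition with constants independent of $m_1,m_2$. The paper states the transition estimate directly in terms of sup-norms of $\vphi_2\circ f^{n_j}\circ\vphi_1^{-1}$ rather than invoking Lemmas~\ref{lem:prod_norm} and~\ref{lemma:poly 2} and the implicit function theorem explicitly, but the content is the same; your identification of the origin of the two terms in (\ref{bpb}) also matches the paper's chaining.
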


Once we have proven this lemma,
we can obtain Lemma~\ref{lemma:surf} for $\cW^{cu}$ as follows.
Let $\|\partial_{cu}^k \cW^{cu}\|_{\vphi_1(p_{\ast})} = \delta_{m_1,m_2}$.
Since $\cW^{cu}$ is invariant with respect to $f^{\pi_*}$,
 we find from (\ref{bpb}) that
\begin{equation*}
\delta_{m_1,m_2}\leq  C_{1, k}(\lambda_1^{m_1}\lambda_2^{m_2})^{-k}
\|A_{1}^s\|^{m_1}\|A_{2}^s\|^{m_2}(\delta_{m_1,m_2} + C_{2, k}) 
+ \|A_{1}^s\|^{m_1}\lambda_1^{-km_1}C_{3,k}.
\end{equation*}
Since $\lambda_1>1$, $\|A_{1}^s\|<1$, $\|A_2^s\| < 1$
 and $\lambda_1^{m_1}\lambda_2^{m_2}$ is uniformly bounded
 (see Remark \ref{lamrmk}), we have that
 $(\lambda_1^{m_1}\lambda_2^{m_2})^{-k}
  \|A_{1}^s\|^{m_1}\|A_{2}^s\|^{m_2} \to 0$ 
 and $\|A_{1}^s\|^{m_1}\lambda_1^{-km_1} \to 0$ 
 as $m_1, m_2 \to \infty$.
This implies that $\delta_{m_1,m_2} \to 0$ as $m_1, m_2 \to \infty$.

It remains to prove Lemma~\ref{lemma:esti}.

\begin{proof}
[Proof of Lemma~\ref{lemma:esti}]
Since the tangent space of $\Psi$ is contained
 in the invariant cone field $\cC^{cu}$,
 the tangent space of the image of $\Psi$ under the iteration of $f$ 
 also lies in $\cC^{cu}$.
In particular, it follows that $\bar \Psi$ 
 is also a surface of the form $y=\bar\psi(x,z)$. 

Put $\Psi_1 = \Psi$ and
\begin{gather*}
\Psi_2 = \vphi_2 \circ  f^{n_1} \circ \vphi^{-1}_1 (\Psi_1), \quad
\Psi_3 = \vphi_2 \circ  f^{m_2\pi_2} \circ \vphi^{-1}_2 (\Psi_2), \\
\Psi_4 = \vphi_1 \circ  f^{n_2} \circ \vphi^{-1}_2 (\Psi_3), \quad 
\bar\Psi =\Psi_5 = \vphi_1 \circ  f^{m_1\pi_1} \circ \vphi^{-1}_1 (\Psi_4).
\end{gather*}

First, let us estimate
 $\| \partial^k_{cu}\Psi_2\|_{\vphi_2( f^{n_1}(p_\ast))}$.
We have that $\|\partial_{cu}^k \Psi_1\|_{\vphi_1(p_\ast)} \leq \delta$.
Since $\Psi_2$ is a image
 by the map $\vphi_2 \circ  f^{n_1} \circ \vphi^{-1}_1$
 whose derivatives are bounded independently of the values
 of $m_1$ and $m_2$
 (since $n_1$ is a constant that does not depend on $m_{1,2}$),
 we have an estimate of the form
\begin{equation*}
\|\partial_{cu}^k \Psi_2\|_{\vphi_2( f^{n_1}(p_\ast))} \leq  C'_{1, k}\delta + C'_{2,k},
\end{equation*}
 where $C'_{i, k}$ are constants defined
 in terms of the supremum norms of the map
 $\vphi_2 \circ  f^{n_1} \circ \vphi^{-1}_1$. 

Next, we estimate
 $\|\partial_{cu}^k \Psi_3\|_{\vphi_2( f^{n_1+m_2\pi_2}(p_\ast))}$.
Let $(x, y, z) \in \Psi_3$.
As the map $ f^{\pi_2}$ is linear in the chart $\vphi_2$
 (see (\ref{eqlini})) and the effect of $\tilde{h}_{m_1, m_2}$ goes 
 to zero as $m_{1,2} \to +\infty$, we have
\begin{equation*}
y = (A^s_2)^{m_2}\psi_2((\lambda_2)^{-m_2}x, (A_2^u)^{-m_2}z), 
\end{equation*}
 where $\psi_2$ is the function whose graph is the hypersurface $\Psi_2$. 
By taking the derivatives up to order $k$,
 and taking in account that $\|(A_2^u)^{-m_2}\|<1$,
 we obtain the following estimate: 
\begin{equation*}
\|\partial_{cu}^k \Psi_3\|_{\vphi_2( f^{n_1+m_2}(p_\ast))} 
\leq  (A_2^s)^{m_2}\lambda_2^{-km_2}\|\partial_{cu}^k \Psi_2\|_{\vphi_2( f^{n_1}(p_\ast))}.
\end{equation*}
One can see that similar estimates hold
 for $\|\partial_{cu}^k \Psi_4\|$ and $\|\partial_{cu}^k \Psi_5\|$.
By combining all the estimates, we obtain the conclusion.
\end{proof}

\subsection{Calculation of $A$ and $S$}

Let us compute the non-linearity and the Schwarzian derivative at $p_{\ast}$.
For this purpose we fix a center curve passing through $p_{\ast}$.
A convenient curve is
$\ell_1 = \vphi_1( \mathcal{W}^{cu})
 \cap( \vphi_1\circ  f^{-n_1}\circ \vphi_2^{-1})(\mathcal{W}^{cs})$. 
It is tangent to the center direction $E^c$ at $\vphi_1(p_{\ast})$.
 Notice that, by Lemma~\ref{lemma:surf}, 
 $\vphi_1(\mathcal{W}^{cu})$ is $C^3$-close to $\vphi_1(W^{u}(p_1))$
 and $\vphi_2(\mathcal{W}^{cs})$ is $C^3$-close to $\vphi_1(W^{s}(p_2))$
 in a neighborhood of $f^{n_1}(p_{\ast})$,
 if $m_1$ and $m_2$ are sufficiently large. 
Thus, the curve $\ell_1$ is $C^3$-close to the curve $l_1^u$,
 i.e., to the heteroclinic intersection 
$\vphi_1(W^{u}(p_1) \cap W^{s}(p_2))$ near the point $\vphi(q_1^u)$. 

We denote 
\begin{gather}
\label{eltm}
\ell_2 = F_1(\ell_1)
  := \vphi_2\circ f^{n_1}\circ \vphi_1^{-1}(\ell_1), \quad 
\ell_3 = F_2(\ell_2)
 := \vphi_2\circ f^{m_2\pi_2}\circ \vphi_2^{-1}(\ell_2), \\ \label{eltm1}
\ell_4 = F_3(\ell_3)
 :=\vphi_1\circ f^{n_2}\circ \vphi_2^{-1}(\ell_3), \quad
\ell_5 = F_4(\ell_4)
 :=\vphi_1\circ f^{m_1\pi_1}\circ \vphi_1^{-1}(\ell_4). 
\end{gather}
We want to analyze the nonlinearity and 
 the Schwarzian derivative $A(F)$ and $S(F)$
 of the map $F=\vphi_1\circ f^{\pi_\ast}\circ\vphi^{-1}_1$
 defined on the curve $\ell_1$.
Note that $F = F_4 \circ F_3 \circ F_2 \circ F_1$,
 where the maps $F_i$ are defined in formulas (\ref{eltm}) and (\ref{eltm1}).

Let $(x, y, z)$ be the coordinates of $B_{\mathbf{d}, 1}$ and 
$(\tilde{x}, \tilde{y}, \tilde{z})$ be the coordinates of $B_{\mathbf{d}, 2}$.
Let the point $\vphi_1(p_*)\in\ell_1$ have coordinates $(x_0,y_0,z_0)$,
 the point $F_1(\vphi_1(p_*)) \in \ell_2$ have coordinates
 $(\tilde x_0,\tilde y_0,\tilde z_0)$,
 the point $F_2\circ F_1(\vphi_1(p_*))\in\ell_3$ have coordinates
 $(\tilde x_1,\tilde y_1,\tilde z_1)$,
 and the point $F_3 \circ F_2\circ F_1(\vphi_1(p_*)) \in \ell_4$ have coordinates
 $(x_1, y_1, z_1)$.
We set 
\begin{gather}
\label{parmtel}
\ell_{1} = \{ (x=x_0+t, y=y_0+\sigma_1(t), z=z_0+\eta_1(t)) \}, \\
\label{parmtel2}
\ell_{2} = \{ (\tilde{x}=\tilde x_0+t, \tilde y =\tilde y_0+\sigma_2(t),
   \tilde z=\tilde z_0 + \eta_2(t)) \}, \\
\label{parmtel3}
\ell_{3} = \{ (\tilde{x}=\tilde x_1+t, \tilde y =\tilde y_1 + \sigma_3(t),
   \tilde z=\tilde z_1+\eta_3(t)) \}, \\
\label{parmtel4}
\ell_{4} = \{ (x=x_1+t, y=y_1+ \sigma_4(t), z=z_1+\eta_4(t)) \}.
\end{gather}
Since the curve $\ell_1$ is at least $C^3$-close to the curve $l_1^u$
 for large $m_{1,2}$,
 the restrictions of the map $F_1$ to these two curves
 are also $C^3$-close.
Therefore, the map $F_1$ is, in the parameterization
 (\ref{parmtel}),(\ref{parmtel2})
 at least $C^3$-close to the transition map $G$
 for the heteroclinic point $q_1^u$, as defined by (\ref{transg1}).

Let us state the goal of this section in the form of the proposition. 
\begin{prop}
\label{prop:conv1}
As $m_1, m_2 \to \infty$, we have
$$A(F)(\vphi_1(p_*))  \to A(G)(\vphi_1(q)), \qquad S(F)(\vphi_1(p_*)) \to S(G)(\vphi_1(q)) .$$ 
In particular, if $m_1$ and $m_2$ are sufficiently large, then the signature 
of the 1-flat point $p_{*}$ is the same as for the heteroclinic point $q$. 
\end{prop}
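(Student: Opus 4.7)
The plan is to apply the cocycle formulas (\ref{cocpr}) to the decomposition $F = F_4 \circ F_3 \circ F_2 \circ F_1$ at the fixed point $0$, exploiting the crucial fact that $F_2$ and $F_4$ act essentially affinely on the central parameter. Since the curves $\ell_2, \ell_3, \ell_4, \ell_5$ are parameterized by the $\tilde x$- or $x$-coordinate (see (\ref{parmtel2})--(\ref{parmtel4})), and the return maps $\hat f_1, \hat f_2$ are linear in the charts $\vphi_1, \vphi_2$, the induced central-parameter maps reduce to $F_2: t \mapsto \lambda_2^{m_2} t$ and $F_4: t \mapsto \lambda_1^{m_1} t$, up to affine contributions from the perturbations $\tilde h_{m_1, m_2}$ whose sizes vanish in $C^3$-norm as $m_{1,2} \to \infty$. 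Granting this, one obtains $A(F_i)(0) = S(F_i)(0) = 0$ for $i = 2, 4$, while $F_2'(0) = \lambda_2^{m_2}$ and $F_4'(0) = \lambda_1^{m_1}$.

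Iterating (\ref{cocpr}) and using that $F_i(0) = 0$ for each $i$, this yields
\begin{align*}
A(F)(0) &= A(F_3)(0) \cdot \lambda_2^{m_2} \cdot F_1'(0) + A(F_1)(0), \\
S(F)(0) &= S(F_3)(0) \cdot \lambda_2^{2 m_2} \cdot (F_1'(0))^2 + S(F_1)(0).
\end{align*}
Since $\lambda_2 \in (0, 1)$, the prefactors $\lambda_2^{m_2}$ and $\lambda_2^{2 m_2}$ tend to zero as $m_2 \to \infty$. The quantities $A(F_3)(0), S(F_3)(0), F_1'(0)$ remain uniformly bounded because, by Lemma~\ref{lemma:surf}, the curves $\ell_1, \ldots, \ell_4$ converge in $C^3$ to smooth limit curves on the heteroclinic skeleton of $\{p_1, p_2, q_1, q_2\}$. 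Moreover, since $\ell_1 \to l_1^u$ in $C^3$ and the enveloping map $\vphi_2 \circ f^{n_1} \circ \vphi_1^{-1}$ is a fixed $C^\infty$ diffeomorphism, the central-parameter map $F_1$ converges in $C^3$ to the transition map $G$, so $A(F_1)(0) \to A(G)(0)$ and $S(F_1)(0) \to S(G)(0)$. Passing to the limit in the displayed formulas then gives the required convergence. The final statement about signatures follows from the local constancy of $\sgn A$ and $\sgn S$ on nonzero reals (Remark~\ref{rmk:signa}), together with the hypothesis $A(G), S(G) \neq 0$ from Proposition~\ref{prop:1-flat + AS}.

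The hard part will be the rigorous verification that $A(F_i) = S(F_i) = 0$ on the central parameter for $i = 2, 4$. Although the return maps $\hat f_1, \hat f_2$ are genuinely linear in the linearizing polyballs, the orbit of $p_*$ enters and exits these polyballs through regions where the perturbations $\tilde h_{m_1, m_2}$ act nontrivially. One has to revisit the explicit formulas for $h_{n, \pm}$ from the proof of Lemma~\ref{lemma:linearization} and the bump-function smoothing from the proof of Lemma~\ref{lemma:r-flat 2}: these act on the $y$- and $z$-coordinates by polynomials in the central variable whose coefficients shrink to zero, so that their contribution to the non-linearity and the Schwarzian derivative of the central-parameter maps $F_2, F_4$ vanishes in the limit, which is what the cocycle computation above requires.
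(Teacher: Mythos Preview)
Your cocycle decomposition and the reduction to $A(F)=A(F_3)\cdot(F_2\circ F_1)'+A(F_1)$, $S(F)=S(F_3)\cdot[(F_2\circ F_1)']^2+S(F_1)$ is exactly the route the paper takes, and your treatment of $F_1\to G$ and of the affine nature of $F_2,F_4$ is fine. The gap is in the sentence ``The quantities $A(F_3)(0), S(F_3)(0), F_1'(0)$ remain uniformly bounded because, by Lemma~\ref{lemma:surf}, the curves $\ell_1,\ldots,\ell_4$ converge in $C^3$ to smooth limit curves on the heteroclinic skeleton.'' This is false for $\ell_3$ and $\ell_4$ in the parameterizations (\ref{parmtel3})--(\ref{parmtel4}), and consequently $A(F_3)$ and $S(F_3)$ need \emph{not} be bounded.

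Concretely, $\ell_3$ is the image of $\ell_2$ under $m_2$ iterations of the linear map $\hat f_2$, so $\sigma_3(\tilde x-\tilde x_1)=(A_2^s)^{m_2}\sigma_2(\lambda_2^{-m_2}(\tilde x-\tilde x_1))$; differentiating gives $\sigma_3''=O(\|A_2^s\|^{m_2}\lambda_2^{-2m_2})$, which is only $o(\lambda_2^{-m_2})$, not $O(1)$. The analogous blow-up occurs for $\eta_4$. These feed into $F_3$ through the transition map $\vphi_1\circ f^{n_2}\circ\vphi_2^{-1}$, and one obtains $F_3''=o(\lambda_2^{-m_2})$, $F_3'''=o(\lambda_2^{-2m_2})$ --- potentially unbounded; the paper states this explicitly in Remark~\ref{rmk:huge}. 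Lemma~\ref{lemma:surf} does not help here: it controls $\cW^{cu}$ and $\cW^{cs}$ only at the specific points $\vphi_1(p_*)$ and $\vphi_2(f^{n_1}(p_*))$, not after $m_2$ further iterations. What actually saves the argument is that $A(F_3)=o(\lambda_2^{-m_2})$ and $S(F_3)=o(\lambda_2^{-2m_2})$ are \emph{just barely} small enough to be killed by the prefactors $\lambda_2^{m_2}$ and $\lambda_2^{2m_2}$. Proving this (the paper's Proposition~\ref{prop:esti}) is the real content of the proposition: it requires writing the transition map in cross-form (\ref{crf})--(\ref{xcross}), showing $\partial u/\partial\tilde x$ is bounded away from zero via the cone condition, and then differentiating the implicit relation (\ref{f3cross}) while tracking the growth rates (\ref{st1})--(\ref{st2}). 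Your proposal skips this entirely.
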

\begin{proof}
We have
\begin{equation*}
A(F) = A((F_4\circ F_3) \circ (F_2\circ F_1) ),
 \qquad S(F) = S((F_4\circ F_3) \circ (F_2\circ F_1)).
\end{equation*}
Notice that the map $F_2$ is affine: 
\begin{equation}
\label{t2l}
\tilde{x} \mapsto \lambda_2^{m_2}\tilde{x}.
\end{equation}
Thus,  we have $A(F_2) =0$, $S(F_2)=0$. Hence, by (\ref{cocpr}),
\begin{equation*}
 A(F_2 \circ F_1) = A(F_2)\cdot F_1' + A(F_1) = A(F_1), \qquad 
 S(F_2 \circ F_1) = S(F_1). 
\end{equation*}
Similarly, we have $A(F_4 \circ F_3) =A(F_3)$, $S(F_4 \circ F_3) =S(F_3)$.
Hence,
\begin{equation}
\label{as34}
 A(F) = A(F_3)\cdot(F_2\circ F_1)'+A(F_1),
 \qquad S(F) = S(F_3)\cdot\left[(F_2\circ F_1)'\right]^2+S(F_1).
\end{equation}

As we mentioned,
 by the $C^k$-convergence of $\ell_1$ to $l_1^u$,
 the map $F_1$ converges to the transition map $G$ in $C^3$ topology,
 which implies $A(F_1)\to A(G)$ and $S(F_1)\to S(G)$ as $m_{1,2}\to+\infty$.
Note also that
\begin{equation*}
 (F_2\circ F_1)' = O(\lambda_2^{m_2}), 
\end{equation*}
 as follows from (\ref{t2l}) and the uniform boundedness of $F_1$ in $C^k$.
Therefore, we will immediately obtain Proposition~\ref{prop:conv1}
 from equation (\ref{as34}),
 once we prove the following:
\begin{prop}
\label{prop:esti}
As $m_1, m_2 \to +\infty$, 
\begin{equation*}
 A(F_3) = o(\lambda_2^{-m_2}), \qquad S(F_3) = o(\lambda_2^{-2m_2}). 
\end{equation*}
\end{prop}
The proof these estimates involves calculations of
the derivatives of $F_3$ up to order 3.
For this purpose,
 we first collect information about the curves $\ell_{3}$ and $\ell_4$.
Since $\ell_3$ is the image of the curve $\ell_2$
 by $f^{m_2\pi_2}$ and $\ell_4$ is the image of $\ell_1$
 by $f^{-m_1\pi_1}$,
 it follows from (\ref{parmtel})-(\ref{parmtel4}) and (\ref{eqlini}),
 that
\begin{align}
\label{s3e4}
 \sigma_3(\tilde{x}-\tilde x_1)
 & = (A_2^s)^{m_2}\sigma_2(\lambda_2^{-m_2}(\tilde{x}-\tilde x_1)),\\
\label{s3e40}
\eta_4(x-x_1) & = (A_1^u)^{-m_1}\eta_1(\lambda_1^{m_1}(x-x_1)).
\end{align}

Notice that, $\ell_2$ is contained in $\mathcal{W}^{cs}$ and 
$\ell_1$ is contained in $\mathcal{W}^{cu}$.
Since these surfaces are close to $W^s(p_2)$ and $W^u(p_1)$ respectively,
 the derivatives of the functions $\sigma_2$ and $\eta_1$ in
 (\ref{parmtel}), (\ref{parmtel2}) are uniformly bounded.
Thus, we infer from (\ref{s3e4})
 (together with the fact that $o(\lambda_1^{m_1})=o(\lambda_2^{-m_2})$,
 see Remark \ref{lamrmk})
 that
\begin{align}
\label{st1}
 \sigma_3'  & = o(1), &
  \sigma_3'' & = o(\lambda_2^{-m_2}), &
 \sigma_3''' & = o(\lambda_2^{-2m_2}), \\
\label{st2}
  \eta_4' & = o(1), &
  \eta_4''& = o(\lambda_2^{-m_2}), &
 \eta_4'''& = o(\lambda_2^{-2m_2}).
\end{align}
Let
\begin{equation*}
 \Phi(\tilde{x}, \tilde{y}, \tilde{z}) 
 =\vphi_1 \circ f^{n_2} \circ \vphi_2^{-1} (\tilde{x}, \tilde{y}, \tilde{z})
 =(x, y, z) 
\end{equation*}
 be the coordinate representation of the map
 $\vphi_1 \circ f^{n_2} \circ \vphi_2^{-1}$
 from a neighborhood of the point
 $\vphi_2(f^{n_1+m_2\pi_2}(p_{\ast})) \in B_{\dd, 2}$
 to a neighborhood of the point
 $\vphi_1(f^{n_1+m_2\pi_2+n_2}(p_{\ast})) \in B_{\dd, 1}$.
It will be convenient for us to rewrite this map
 in the so-called {\em cross-form} (see e.g. \cite{GST8}). 

Let us explain what it is. Since the map $f$ is strongly partially hyperbolic,
 its iteration $\Phi: (\tilde{x}, \tilde{y}, \tilde{z})\mapsto (x,y,z)$
 is also strongly partially hyperbolic,
 so $\Phi$ takes a hypersurface whose tangent space lies in $\cC^{cu}$
 into a surface whose tangent space lies in $\cC^{cu}$.
In particular, it takes a surface $(\tilde x,\tilde y)=\text{const.}$
 into a surface of the form $z=\psi(x,y)$
 where $\psi$ is a smooth function with uniformly bounded derivative
 (for all $m_{1,2}$ large enough).
This means that $\partial z/\partial \tilde z$ is invertible
 (uniformly for all $m_{1,2}$ large enough).
Therefore, there exists a local diffeomorphism $U$
 (the cross-form of $\Phi$) such that
\begin{equation}
\label{crf}
 (x, y, \tilde{z}) = U(\tilde{x}, \tilde{y}, z)
\end{equation}
 if and only if $(x,y,z)=\Phi(\tilde{x}, \tilde{y}, \tilde{z})$.
The derivatives of $U$ up to any given order are uniformly bounded.

We denote the first coordinate of $U$ by $u$, that is, we put 
\begin{equation}
\label{xcross}
 x = u(\tilde{x}, \tilde{y}, z)
\end{equation}
 in (\ref{crf}).
Note that $\partial u/\partial \tilde{x}$ is bounded away from zero.
Indeed, by differentiating (\ref{xcross}), we get
\begin{equation}
\label{dfcrf}
 dx
  = \frac{\partial u}{\partial \tilde x} d\tilde{x}
  + \frac{\partial u}{\partial \tilde y} d\tilde{y}
  + \frac{\partial u}{\partial z} d z.
\end{equation}
By the strong partial hyperbolicity of $\Phi$,
 the image of any hypersurface whose tangent space lies in $\cC^{cu}$
 is transverse to any hypersurface whose tangent space lies in $\cC^s$.
In particular,
 the image under $\Phi$ of $\tilde y= \text{const.}$ and the hyperplane $(x,z)= \text{const.}$
 are transverse.
The transversality condition implies that the relation
\begin{equation*}
0 = \frac{\partial u}{\partial \tilde x}\; d\tilde{x},
\end{equation*}
which is obtained by putting $d\tilde y=0$ and $(dx,dz)=0$ in (\ref{dfcrf}),
 has only trivial solution $d\tilde x=0$.
This means $\partial u /\partial \tilde x\neq 0$
 for the diffeomorphism $f_\infty$, and hence,
 for every $C^1$ close diffeomorphism $f$
 that is, for $m_{1,2}$ sufficiently large.

By construction,
 we have the following formula for the map $F_3: \ell_3\to \ell_4$
 (whose derivatives we need to estimate):
\begin{equation}
\label{f3cross}
 F_3(\tilde x)
  =u(\tilde x, \;\tilde y_0+\sigma_3(\tilde x-\tilde x_1),
   \;z_1 + \eta_4(F_3(\tilde x)-x_1))
\end{equation}
 (see (\ref{parmtel3}), (\ref{xcross})).
By differentiating both sides by $\tilde x$,
 we obtain 
\begin{equation*}
F_3' =\frac{\partial u}{\partial \tilde x}
+ \frac{\partial u}{\partial \tilde y} \sigma_3'
+ \frac{\partial u}{\partial z} \eta_4'  F_3'
\end{equation*}
 and hence,
\begin{equation}
\label{fstar}
F_3' = \frac{\,\displaystyle \frac{\partial u}{\partial \tilde x}
+\frac{\partial u}{\partial \tilde y} \sigma_3'\,}{\displaystyle 1- \frac{\partial u}{\partial z} \eta_4'},
\end{equation}
 where the derivatives of $u$ are evaluated at
 $(\tilde x, \tilde y_0+\sigma_2(\tilde x-\tilde x_1),
  z_1 + \eta_4(F_3(\tilde x)-x_1))$,
 and $\eta_4'$ at $F_3(\tilde x)-x_1$.

Since $\sigma'_3=o(1)$, $ \eta'_4=o(1)$ (see (\ref{st1}) and (\ref{st2})),
 and the derivatives of $u$ are uniformly bounded,
 we find that
\begin{equation*}
 F_3'= \frac{\partial u}{\partial \tilde x}+o(1). 
\end{equation*}
Since $\partial u/\partial \tilde{x}$ is bounded away
 from zero and infinity, we find that
\begin{equation}
\label{f3pb}
 F_3'=O(1), \qquad (F_3')^{-1}=O(1),
\end{equation}
 uniformly for all large $m_{1,2}$.  

Let us differentiate equation (\ref{fstar}) with respect to $\tilde{x}$.
Note that the partial derivatives of $u$ are differentiated
 by the rule
\begin{equation*}
\frac{d}{d\tilde x}
 = \frac{\partial}{\partial \tilde x}
 + \sigma_3'\frac{\partial}{\partial \tilde y}
 + \eta_4' F_3' \frac{\partial}{\partial \tilde z}
\end{equation*}
 and the function $\eta_4$ and its derivatives $\eta_4^{(s)}$
 are differentiated by the rule
\begin{equation*}
\frac{d}{d\tilde x} (\eta_4^{(s)}) = \eta_4^{(s+1)} \cdot F_3'. 
\end{equation*}
Since  $\sigma_3'=o(1)$ and $\eta_4'=o(1)$ (see (\ref{st1}) and (\ref{st2})),
 $u$ and its derivatives are bounded and
 $F_3'$ is bounded by (\ref{f3pb}),
 we find from (\ref{fstar}) that
\begin{align*}
|F_3''| 
 & \leq C_1 +C_2 |\sigma_3''| + C_3 |\eta_4''|,\\
|F_3'''|
& \leq  C_1 +C_2 |\sigma_3''| + C_3 |\eta_4''|
 +C_4 |\sigma_3'''| + C_5 |\eta_4'''|
\end{align*}
 where $C_{1,2,3,4,5}$ are some constants.
By (\ref{st1}) and (\ref{st2}),  this gives us
\begin{equation}\label{f23}
F_3''  =o(\lambda_2^{-m_2}), \qquad F_3'''  =o(\lambda_2^{-2m_2}).
\end{equation}
By combining the estimates (\ref{f23}) and (\ref{f3pb})
 in the definition (\ref{asdef})
 of the nonlinearity $A$ and the Schwarzian $S$,
 we immediately obtain Proposition~\ref{prop:esti},
 which concludes the proof of Proposition~\ref{prop:conv1}.
\end{proof}

\begin{rmk}
\label{rmk:huge}
This calculation does not preclude of the map $F_3$
 having very large second and third derivatives,
 in some cases resulting in very large derivatives of the map $F$.
\end{rmk}

%
%

\section{$k$-flat periodic points}\label{kflatsec}
In this section, we prove Proposition~\ref{prop:r-flat}.

\subsection{Germs of one-dimensional diffeomorphisms}
We start with a result about the composition of germs.
\begin{prop}
\label{prop:germ}
Let $k$ be a positive integer.
Let $\{F_i\}_{i=1,\dots,8}$ and $\{G_i\}_{i=1,\dots,8}$
 be orientation-preserving germs in $\Diff_\loc^\infty(\RR,0)$ such that
 each $F_i$ is $k$-flat.
If $k=1$, assume also that
\begin{equation}
\label{keq1a} A(F_1) \cdot A(F_3)<0, \quad
 S(F_1) \cdot S(F_3)<0, \quad
 |S(F_1)/A(F_1)| >|S(F_3)/A(F_3)|.
\end{equation}
If $k=2$, assume that
\begin{equation}
\label{keq2a}
 S(F_1) \cdot S(F_3)<0.
\end{equation}
Then, for any neighborhood $\cV$ of the identity map in $\cP^{k+1}(\RR,0)$
 and any $N \geq 1$,
 there exist polynomial maps $H_i\in \cV$
 and integers $n_i\geq N$ ($i=1,\dots,8$) such that the germ
\begin{equation}
\label{fbard}
 \bar{F}=G_8 \circ (H_8 \circ F_8)^{n_8} \circ \dots \circ G_1
 \circ (H_1 \circ F_1)^{n_1}
\end{equation}
 is $(k+1)$-flat.
Furthermore, $S(\bar{F}) \cdot S(F_1)>0$ in the case $k=1$.
\end{prop}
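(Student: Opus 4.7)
The proof works with the $(k+1)$-jets at $0$ and uses the cocycle formulas (\ref{cocpr}) for the nonlinearity $A$ and the Schwarzian derivative $S$. The central observation is the following: if we arrange $H_i'(0)=1$, so that $T_i := H_i\circ F_i$ satisfies $T_i'(0)=1$, then the iterate $T_i^{n_i}$ acts \emph{linearly} in $n_i$ on the values of $A$ and $S$ at $0$, namely $A(T_i^{n_i})(0)=n_i A(T_i)(0)$ and $S(T_i^{n_i})(0)=n_i S(T_i)(0)$. This reduces a highly nonlinear composition problem to an affine one in the large integer parameters $n_i$.

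First I would arrange $\bar F'(0)=1$ by a small perturbation of the $H_i'(0)$'s: since $F_i'(0)=1$, we have $\bar F'(0)=\prod_i G_i'(0)H_i'(0)^{n_i}$, which for any large integers $n_i$ can be normalized to $1$ by an arbitrarily small change of the $H_i'(0)$'s. Next, for $2\le j\le k$, since $F_i$ is $k$-flat its $j$-th Taylor coefficient at $0$ vanishes, so the $j$-th Taylor coefficient of $\bar F$ at $0$ depends only on the $G_i$'s and on the $j$-th and lower coefficients of the $H_i$'s; an inductive linear algebra argument then kills these coefficients one order at a time by suitable choices of the $H_i$-coefficients of orders $2,\dots,k$ (all of which remain small in $\cV$).

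It remains to cancel the $(k+1)$-th order coefficient. For $k\ge 3$ the $(k+1)$-th coefficient of $H_i$ is a free parameter in $\cV$, and the contribution of the iterate $T_i^{n_i}$ to the $(k+1)$-th Taylor coefficient of $\bar F$ is of the form $n_i(c_i+h_{i,k+1})$ plus lower-order terms, where $c_i$ is the $(k+1)$-coefficient of $F_i$; the factor $n_i$ can be taken as large as needed, so arbitrarily small choices of the $h_{i,k+1}$'s suffice to cancel the fixed $G_i$-contribution, and no sign condition is required. For $k=2$, $A(F_i)=0$ for all $i$, so the condition $A(\bar F)(0)=0$ reduces to tuning the $H_i$'s, while the Schwarzian cocycle gives $S(\bar F)(0)=\sum_i \mu_i n_i S(F_i)+R$, where $\mu_i>0$ are weights built from $G_j'(0)$ and $R$ is a fine correction depending on $H_i,G_i$; the hypothesis $S(F_1)S(F_3)<0$ permits choosing $n_1,n_3$ to cancel the leading terms, with the $H_i$-perturbations closing the gap.

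The case $k=1$ is the most delicate, since one must simultaneously achieve $A(\bar F)(0)=0$ (giving $2$-flatness) and $S(\bar F)(0)\cdot S(F_1)>0$. The cocycle identities yield affine expressions $A(\bar F)(0)=\sum_i\lambda_{i-1}(A(G_i)(0)+n_i A(T_i)(0))$ and an analogous formula for $S(\bar F)(0)$ with weights built from the $\lambda_j$'s. Using $A(F_1)A(F_3)<0$, one chooses large $n_1,n_3$ with ratio $n_1/n_3$ close to $-\lambda_2 A(F_3)/(\lambda_0 A(F_1))$ to cancel the leading $A$-contribution, with the quadratic coefficients of $H_1,H_3$ providing exact fine-tuning. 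Substituting this ratio into $S(\bar F)(0)$, the leading surviving term is proportional to $A(F_1)S(F_3)-A(F_3)S(F_1)$, and the strict inequality $|S(F_1)/A(F_1)|>|S(F_3)/A(F_3)|$ combined with $S(F_1)S(F_3)<0$ forces this expression to have the same sign as $S(F_1)$. The main technical obstacle throughout is the interplay between the discrete $n_i$-adjustments and the continuous $H_i$-adjustments: one must verify that the $H_i$-perturbations, though confined to $\cV$, produce effects on $A$ and $S$ of $T_i^{n_i}$ that scale linearly in $n_i$ and therefore suffice to close all residual gaps without disturbing the cancellations established at earlier orders.
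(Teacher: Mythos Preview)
Your approach is genuinely different from the paper's, and there is a real gap at the very first step that your sketch does not close.

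You need both $H_i'(0)=1$ (so that $T_i=H_i\circ F_i$ is $1$-flat and the cocycle identities give $A(T_i^{n_i})=n_iA(T_i)$, $S(T_i^{n_i})=n_iS(T_i)$) and $\bar F'(0)=1$. But $\bar F'(0)=\prod_i G_i'(0)\cdot\prod_i H_i'(0)^{n_i}$, and nothing in the hypotheses forces $\prod_i G_i'(0)=1$. So as soon as you perturb the $H_i'(0)$'s away from $1$ to normalize $\bar F'(0)$, each $T_i$ ceases to be $1$-flat and the exact linearity in $n_i$ is lost: one gets $A(T_i^{n_i})=A(T_i)\cdot\frac{\mu_i^{n_i}-1}{\mu_i-1}$ with $\mu_i=T_i'(0)\neq 1$, and similarly for $S$. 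With $\mu_i=1+c_i/n_i$ these expressions are still asymptotically linear in $n_i$, but now with $n_i$-dependent multiplicative corrections that feed back into every later stage of your induction (orders $2,\dots,k$, then the sign computation at order $k+1$). You flag this as ``the main technical obstacle'' but do not resolve it; in particular, for $k=1$ the claimed sign of the surviving Schwarzian term depends delicately on these correction factors as well as on the fixed contributions from $G_i$, $F_5$, $F_7$, and you have not shown they cannot overwhelm the $A(F_1)S(F_3)-A(F_3)S(F_1)$ term.

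The paper avoids this entirely by a separation-of-roles trick. It uses only the \emph{even}-indexed $H_{2j}$ to absorb the $G_i$'s: taking $H_{2j}$ to be (the degree-$k$ Taylor polynomial of) the $n_{2j}$-th root of $\Phi_j=G_{2j}^{-1}\circ G_{2j-1}^{-1}$ furnished by \cite[Lemma~3.1]{AST}, one gets $\bar F_{2j}:=G_{2j}\circ(H_{2j}\circ F_{2j})^{n_{2j}}\circ G_{2j-1}$ equal to the identity up to order $k$. Since $k$-flat maps commute with $1$-flat maps modulo $o(t^{k+1})$, all the $\bar F_{2j}$ slide to the left and $\bar F$ reduces, up to $o(t^{k+1})$, to $\hat F\circ(H_7F_7)^{n_7}\circ(H_5F_5)^{n_5}\circ(H_3F_3)^{n_3}\circ(H_1F_1)^{n_1}$ with $\hat F$ $k$-flat. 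Now the odd $H_{2j-1}$ can be kept exactly $1$-flat, the cocycle identities apply cleanly, and the final cancellation (and the sign of $S(\bar F)$ when $k=1$) is delegated to black-box results \cite[Lemmas~3.5--3.7]{AST} applied to $F_1,F_3$ (and $F_5,F_7$ for $k\ge 3$). Your monolithic approach may ultimately be salvageable, but it would require substantially more bookkeeping than you indicate, whereas the paper's even/odd split makes the whole argument essentially algebraic.
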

\begin{proof}
For a proof, we use several results of \cite{AST}.  
\begin{lemma}
[{\cite[Lemma 3.1]{AST}}]
\label{lemma: AST3.1}
For any $\Phi \in \Diff_\loc^\infty(\RR,0)$ satisfying $\Phi' >0$
 and for every $k \geq 1$,
 there exists a continuous (in the $C^{\infty}$-topology) family of germs 
 of diffeomorphisms $\{\Phi^\mu\}_{\mu \in \RR}$ such that 
\begin{equation*}
 \Phi^0=\Id, \qquad \Phi^1(t)=\Phi+o(t^{k}), 
\end{equation*}
and 
\begin{equation*}
\Phi^\mu \circ \Phi^{\mu'}(t)=\Phi^{\mu+\mu'}(t)+o(t^{k}) 
\end{equation*}
 for all $\mu, \mu' \in \RR$.
\end{lemma}

\begin{lemma}
[{\cite[Lemma 3.5]{AST}}]
\label{lemma: AST3.5}
Let $Q_1$ and $Q_2$ from $\Diff_\loc^\infty(\RR,0)$ be $1$-flat germs.
Assume that $A(Q_1) \cdot A(Q_2)<0$, $S(Q_1) \cdot S(Q_2)<0$,
 and $|S(Q_1)/A(Q_1)|>|S(Q_2)/A(Q_2)|$.
Then, for any neighborhood $\cV$ of the identity map in $\cP^2(\RR,0)$
 and any $\alpha,\beta \in \RR$, there exists a $1$-flat map $H \in \cV$
 such that
\begin{align*}
 A(Q_2^n\circ (H \circ Q_1)^m)+\alpha & = 0,\\
 S(Q_1) \cdot \left(S(Q_2^n\circ (H \circ Q_1)^m))+\beta \right) & >0
\end{align*}
for some integers $m,n \geq 1$.
\end{lemma}

\begin{lemma}
[{\cite[Lemma 3.6]{AST}}]
\label{lemma: AST3.6}
Let $Q_1$ and $Q_2$ from $\Diff_\loc^\infty(\RR,0)$ be $2$-flat germs 
 such that $Q_1'''(0) \cdot Q_2'''(0) <0$.
Then, for any neighborhood $\cV$ of the identity map in $\cP^3(\RR,0)$
 and any $\gamma \in \RR$,
 there exists a $1$-flat polynomial $H \in \cV$ such that
\begin{equation}
\label{f2mh}
 Q_2^n \circ (H \circ Q_1)^m(t) =t +\gamma t^3+o(t^3)
\end{equation}
 for some integers $m,n\geq 1$.
Moreover, $m$ and $n$ can be taken arbitrarily large
\end{lemma}
\begin{rmk}
According to Lemma~\ref{lemma: AST3.6}, 
the fact $m$ and $n$ can be chosen arbitrarily large
is not stated explicitly in the original statement
but it is clear from the proof.
\end{rmk}

\begin{lemma}
 [{\cite[Lemma 3.7]{AST}}]
\label{lemma: AST3.7}
Suppose $k \geq 3$.
Let $Q_1,\dots,Q_4$ from $\Diff_\loc^\infty(\RR,0)$ be $k$-flat germs.
Then, for any neighborhood $\cV$ of the identity map in $\cP^{k+1}(\RR,0)$
 and any $\alpha \in \RR$,
 there exist $1$-flat maps $R_1,\dots,R_4 \in \cV$ such that
\begin{equation}
\label{h4f4}
 (R_4 \circ Q_4)^n \circ \dots \circ (R_1 \circ Q_1)^n(t)
 =t+\alpha t^{k+1} +o(t^{k+1})
\end{equation}
 for an integer $n$ which can be chosen arbitrarily large.
\end{lemma}

\begin{rmk}
In the original formulation of Lemma~3.6 and Lemma~3.7 in \cite{AST} above, 
 the germs $H$ are $R_i$ are $C^\infty$ diffeomorphisms.
However,
 if we replace them by their Taylor polynomials up to order $k+1$,
 the relations (\ref{f2mh}) and (\ref{h4f4}) will still hold.
Therefore, we can take $H$ and $R_i$ as polynomial maps.
The $1$-flatness of the maps $H$ and $R_i$ immediately follows
 from the explicit formulas for them given
 in the proof of Lemmas~3.6 and 3.7 in \cite{AST}.
\end{rmk}

We can now proceed to the proof of Proposition~\ref{prop:germ}.
First, we define the maps 
 $H_2, H_4, H_6, H_8$ and the numbers $n_2,n_4,n_6,n_8$.
For that, we apply Lemma \ref{lemma: AST3.1}
 to each of the maps $\Phi_j=G_{2j}^{-1} \circ G_{2j-1}^{-1}(t)$
 for $j=1,\dots,4$.
This gives us four continuous families
 $\{ \Phi_j^\mu \}_{\mu \in \RR}$ of smooth germs
 such that $\Phi_j^0$ is the identity map,
 $\Phi_j^1(t)=G_{2j}^{-1} \circ G_{2j-1}^{-1}(t)+o(t^k)$,
 and $\Phi_j^{\mu+\mu'}(t)=\Phi_j^\mu \circ \Phi_j^{\mu'}(t)+o(t^k)$.
Let $H_{2j}$ be the Taylor polynomial 
 of $\Phi_j^{1/n_{2j}}(t)$ up to order $k$. 
We take $n_{2j} \geq N$ large enough such that $H_{2j} \in \cV$.

With this choice of $H_{2,4,6,8}$ and $n_{2,4,6,8}$, we have
\begin{equation}
\label{repl}
\bar{F}=\bar F_8 \circ (H_7 \circ F_7)^{n_7}
  \circ \bar F_6 \circ (H_5 \circ F_5)^{n_5}
  \circ \bar F_4 \circ (H_3 \circ F_3)^{n_3}
  \circ \bar F_2 \circ (H_1 \circ F_1)^{n_1}
\end{equation}
 for the map $\bar F$ from (\ref{fbard}),
 where we denote
\begin{equation*}
 \bar{F}_{2j}=G_{2j} \circ (H_{2j} \circ F_{2j})^{n_{2j}} \circ G_{2j-1}. 
\end{equation*}
Since $F_{2j}$ is $k$-flat,
 it follows from the construction of $\bar H_{2j}$ that
 $\bar{F}_{2j}$ is also $k$-flat:
\begin{equation*}
\bar{F}_{2j}(t)
 =G_{2j} \circ H_{2j}^{n_{2j}} \circ G_{2j-1}(t)+o(t^k)=t+o(t^{k+1}).
\end{equation*}
The maps $H_{1,3,5,7}$ (which will be determined below) are $1$-flat,
 so the maps $(H_{2j-1} \circ F_{2j-1})^{n_{2j-1}}$ in (\ref{repl})
 are also $1$-flat ($j=1,\dots,4$).
Therefore, it follows from the $k$-flatness of the maps $\bar F_{2j}$ that
\begin{equation}
\label{repl0}
\bar{F}
 =\hat F \circ (H_7 \circ F_7)^{n_7} \circ (H_5 \circ F_5)^{n_5}
  \circ (H_3 \circ F_3)^{n_3} \circ (H_1 \circ F_1)^{n_1}
  + o(t^{k+1}),
\end{equation}
where
\begin{equation}
\label{htf}
 \hat F=\bar F_8 \circ \bar F_6 \circ \bar F_4 \circ \bar F_2.
\end{equation}

We can now prove Proposition~\ref{prop:germ} for the case $k\geq 3$.
Indeed, apply Lemma \ref{lemma: AST3.7}
 to the quadruple $(Q_1,Q_2,Q_3,Q_4)=(F_1,F_3,F_5,F_7)$
 and the constant $\alpha=-\frac{1}{k!}\hat{F}^{(k+1)}$.
Then, letting $H_{2j-1}=R_j$ and $n_{2j-1}=n$
 where $n$ and $R_j$ ($j=1,\dots,4$) are given by Lemma \ref{lemma: AST3.7},
 the map $\bar F$ becomes $(k+1)$-flat 
(see (\ref{h4f4}) and (\ref{repl0})), as required.

In the cases $k=1$ and $k=2$, let $H_3=H_5=H_7=\mathrm{Id}$ and $n_5=n_7=N$.
Then, equation (\ref{repl}) becomes
\begin{equation*}
\bar{F}
 = \bar{F}_8 \circ F_7^N \circ \bar{F}_6 
 \circ F_5^N \circ \bar{F}_4 \circ F_3^{n_3}
 \circ \bar{F}_2 \circ (H_1 \circ F_1)^{n_1}
\end{equation*}
As all the maps in the right-hand side of this formula are $1$-flat,
 the map $\bar F$ is also $1$-flat (at least).
Note also that it follows from the $1$-flatness
 of all the maps in the right-hand side
 and from the cocycle property (\ref{cocpr}) that 
\begin{align}
\label{asbarf1}
A(\bar{F})
 & = A(\hat{F}) + N\cdot (A(F_7)+A(F_5))+
  A(F_3^{n_3}\circ (H_1 \circ F_1)^{n_1}), \\
\label{asbarf2}
S(\bar{F})
 & = S(\hat{F}) + N\cdot (S(F_7)+S(F_5))
 + S(F_3^{n_3}\circ (H_1 \circ F_1)^{n_1}),
\end{align}
 where $\hat F$ is given by (\ref{htf}).

Define constants $\alpha$ and $\beta$ by
\begin{align}
\label{defalbk1}
\alpha & = A(\hat{F}) + N \cdot (A(F_5)+A(F_7)), &
\beta  & = S(\hat F) + N\cdot(S(F_5)+S(F_7)).
\end{align}
Recall that $A$ vanishes for $2$-flat maps,
 therefore $\alpha=0$ if $k = 2$.

In the case $k=1$, apply Lemma \ref{lemma: AST3.5}
 to the pair of germs $(Q_1,Q_2)=(F_1,F_3)$.
 Letting $H_1=H$, $n_1=m$, and $n_2=m$,
 where $H$ are $m$ are given in the conclusion of Lemma~\ref{lemma: AST3.5},
 we have $A(\bar F) =0$, see (\ref{asbarf1}) and (\ref{defalbk1}).
 This means the $2$-flatness of the map $\bar F$.
 We also have $S(F_1) \cdot S(\bar F) >0$ as required.

In the case $k=2$, 
notice that $S(F_1)\cdot S(F_3) <0$ and the $2$-flatness of $F_1$ and $F_3$
imply $F_1^{(3)}\cdot F_3^{(3)} <0$. 
Thus, we can apply Lemma \ref{lemma: AST3.6} to $(Q_1,Q_2)=(F_1,F_3)$
and the constant $\gamma=-\beta/6$.
Then, letting  $H_1=H$, $n_1=m$, and $n_2=m$,
where $H$, $m$, and $n$ are given in the conclusion 
of Lemma~\ref{lemma: AST3.6},
 we obtain 
 $A(\bar F)=0$ and $S(\bar F)=0$, see (\ref{asbarf1}), (\ref{asbarf2}), and (\ref{defalbk1}).
 This implies that the 3-flatness of the map $\bar F$.
\end{proof}

\subsection{Construction of $k$-flat points}\label{kflatsubsec}

We start the proof of Proposition~\ref{prop:r-flat}. First, given eight 
$k$-flat periodic points, after a preparatory perturbation, 
we apply Lemma~\ref{lemma:connecting} repeatedly
 to obtain a network of heteroclinic connections between the periodic points. 
Then, we use Lemma~\ref{lemma:r-flat 2} and Proposition~\ref{prop:germ}
 together and construct a $(k+1)$-flat periodic point. 

Let us recall the setting.
We consider a diffeomorphism $f \in \cW^1 \cap \Diff^\infty(M)$. 
Thus, there exist open subsets $U_{\cC}$ and $U_{\bl}$ of $M$
 with $\cl{U_{\bl}} \subset U_{\cC}$
 such that $f$ admits an invariant transverse pair of cone fields on $U_{\cC}$,
 preserves an orientation on it, and admits a blender in $U_{\bl}$.

\begin{lemma}
\label{lemma:network}
Suppose $f \in \cW^1 \cap \mathrm{Diff}^{\infty}(M)$ has
 eight $k$-flat periodic points
 $p_1,\dots,p_8 \in \Per_{\dd}(f, U_{\cC})$
 with mutually different orbits
 and there exists a blender in an open set $U_{\bl}$
 with $\cl{U_{\bl}} \subset U_{\cC}$
 such that each $p_i$ is linked to $U_{\bl}$
 but not contained in $\cl{U_{\bl}}$.
Then, for any given neighborhood $V$ of $\{ p_i\}$
 and any $C^{\infty}$ neighborhood $\cV \subset \Diff^{\infty}(M)$
 of the identity map, 
 there exists a diffeomorphism $h \in \cV$ such that
 $\supp(h) \subset V \setminus \left( \bigcup_{i=1}^8 \cO(p_i)\right)$
 and
 $W^{uu}(p_i, U_{\cC} ;h \circ f) \cap W^{ss}(p_{i+1},U_{\cC} ;h \circ f)
 \neq \emptyset$, 
 for every $i =1, \dots, 8$ (we put $p_9 = p_1$).
\end{lemma}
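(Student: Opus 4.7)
The plan is to construct the required perturbation $h$ as a composition $h_8\circ\cdots\circ h_1$ of $C^\infty$-small perturbations, obtained by applying the connecting lemma (Lemma~\ref{lemma:connecting}) inductively to each pair $(p_i,p_{i+1})$ in turn. Setting $f_0=f$ and $f_i=h_i\circ f_{i-1}$, the inductive hypothesis at step $i$ will be: (a) each $p_j$ ($j=1,\ldots,8$) is a $k$-flat periodic point of $f_{i-1}$ lying in $\Per_\dd(f_{i-1},U_\cC)\setminus\cl{U_{\bl}}$ and linked to the blender for $f_{i-1}$; and (b) for every $j<i$ there is a heteroclinic point $y_j\in W^{uu}(\cO(p_j),U_\cC;f_{i-1})\cap W^{ss}(\cO(p_{j+1}),U_\cC;f_{i-1})$. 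After eight steps, $f_8$ is the desired map.

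To carry out the inductive step, apply Lemma~\ref{lemma:connecting} to the diffeomorphism $f_{i-1}$, the pair $(p_i,p_{i+1})$, and a neighborhood of $p_i$ contained in $V\setminus\cl{U_{\bl}}$. Its hypotheses are all part of the inductive hypothesis. The lemma produces a point $x_{\ast,i}\in W^{uu}(p_i,U_\cC;f_{i-1})\setminus\{p_i\}$ such that, for any neighborhood $V_{\ast,i}$ of $x_{\ast,i}$ and any $C^\infty$-neighborhood $\cU_i$ of the identity, there exists $h_i\in\cU_i$ with $\supp(h_i)\subset V\cap V_{\ast,i}$ producing the new heteroclinic intersection. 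The essential task is to choose $x_{\ast,i}$ and $V_{\ast,i}$ so that the new perturbation preserves everything built up so far.

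Concretely, we require $V_{\ast,i}$ to be disjoint from the set
\begin{equation*}
\Sigma_i \;=\; \bigcup_{j=1}^{8}\cO(p_j)\;\cup\;\bigcup_{j<i}\cO(y_j)\;\cup\;\cl{U_{\bl}}.
\end{equation*}
Each orbit $\cO(y_j)$ is a countable set whose closure equals $\cO(y_j)\cup\cO(p_j)\cup\cO(p_{j+1})$, so the intersection of $\Sigma_i$ with the positive-dimensional manifold $W^{uu}(p_i,U_\cC;f_{i-1})$ is meagre; after possibly shrinking the neighborhood of $p_i$ used in Lemma~\ref{lemma:connecting}, one may assume $x_{\ast,i}\notin\Sigma_i$ and then choose $V_{\ast,i}$ small enough to lie in the complement of $\Sigma_i$. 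With this choice, $h_i$ is supported away from $\bigcup\cO(p_j)$, so the orbits $\cO(p_j)$ together with the local dynamics of $f_{i-1}$ on their neighborhoods (and hence the $k$-flatness of each $p_j$ and the associated central data) are unchanged; it is supported away from $\cl{U_{\bl}}$, so the blender persists verbatim; and it is supported away from the previously produced heteroclinic orbits $\cO(y_j)$ ($j<i$), so those intersection points survive for $f_i$.

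The remaining properties in the inductive hypothesis — partial hyperbolicity and the cone/orientation structure on $U_\cC$, the linked property with respect to the blender, and the heteroclinic intersections — are all $C^1$-open conditions (using Lemma~\ref{lemma:cone persistence}, the $C^1$-robustness of the blender, and transversality / openness of set intersections), so by taking each $\cU_i$ sufficiently small we may assume they are preserved at every step. Finally, choosing each $\cU_i$ inside $\cV^{1/8}$ (in a suitably shrunk $C^\infty$-neighborhood of the identity closed under composition) guarantees that $h=h_8\circ\cdots\circ h_1\in\cV$. The only genuinely delicate point in the argument is the bookkeeping needed to keep all supports simultaneously disjoint from $\Sigma_i$ at each step; because $V_{\ast,i}$ may be taken arbitrarily small and $\Sigma_i$ is nowhere dense on $W^{uu}(p_i,U_\cC;f_{i-1})$, this bookkeeping poses no real obstacle.
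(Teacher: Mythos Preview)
Your approach is essentially the same as the paper's: build $h=h_8\circ\cdots\circ h_1$ by applying Lemma~\ref{lemma:connecting} once per pair $(p_i,p_{i+1})$, keeping each support small enough not to disturb the periodic orbits, the blender, or the previously created heteroclinic orbits. This is exactly what the paper does (choosing pairwise disjoint neighborhoods $V_i\ni p_i$ and placing $\supp(h_i)\subset V_i$).

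One point deserves sharpening. Your justification that $x_{\ast,i}$ can be kept off $\Sigma_i$ by ``meagreness'' plus ``shrinking the neighborhood of $p_i$'' is not quite sufficient as stated: Lemma~\ref{lemma:connecting} hands you a specific $x_\ast$, and meagreness of $\Sigma_i\cap W^{uu}(p_i)$ alone does not prevent that particular point from landing in $\Sigma_i$. The clean reason---and the one the paper uses---is that $x_{\ast,i}\in W^{uu}(p_i;f_{i-1})\setminus\{p_i\}$, while each previously created $\cO(y_j)$ lies in $W^{uu}(\cO(p_j);f_{i-1})$ for some $j<i$; since the $p_j$ have mutually distinct periodic orbits, their strong unstable manifolds are pairwise disjoint, so $x_{\ast,i}\notin\cO(y_j)$ automatically. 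Combined with $x_{\ast,i}\in V_i$ (disjoint from the other $\cO(p_j)$ and from $\cl{U_{\bl}}$) and $x_{\ast,i}\neq p_i$, this gives $x_{\ast,i}\notin\Sigma_i$ without any genericity argument, and then $V_{\ast,i}$ can be shrunk to lie in the complement. With this correction your proof goes through.
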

This lemma creates a heteroclinic network of flat periodic points.
The next lemma produces a $(k+1)$-flat point 
by adding a perturbation to this network.

\begin{lemma}
\label{lemma:r-flat 1}
Let $k \geq 1$ and
 suppose that $f \in \cW^1 \cap \mathrm{Diff}^{\infty}(M)$ admits
 eight $k$-flat periodic points 
 $p_1,\dots,p_8 \in \Per_{\dd}(f, U_{\cC})$
 with mutually different orbits
 and there exists a heteroclinic point
 $q_{j} \in W^{uu}(p_{j},U_{\cC}) \cap W^{ss}(p_{j+1}, U_{\cC})$
 for each $j=1,\dots,8$ (where we put $p_9 = p_1$). 
We also suppose that each $p_{i}$ admits $C^\infty$ Takens coordinates
 with a center germ $F_{i}$.
Put  $\Lambda = \bigcup_{i=1}^8 (\cO(p_i) \cup \cO(q_i))$.
Then, for any neighborhood $\cU \subset \Diff^{\infty}(M)$ 
 of the identity map,
 any neighborhood $V \subset M$ of $\Lambda$,
 and any neighborhoods $U_{p_1},\dots,U_{p_8}$ of $p_{1}, \dots, p_{8}$
 respectively,
 there exist eight germs $G_{1},\dots,G_{8} \in \Diff_\loc^\infty(\RR,0)$
 such that for any 
 $H_{1},\dots,H_{8}\in \cP^{k+1}(\RR,0)$
that are sufficiently close to the identity maps
 and any sufficiently large integers $n_{1},\dots,n_{8}$,
 there exist $h \in \cU$ and a periodic point
 $\bar{p} \in \Per_\dd(h \circ f, U) \setminus \cl{U_\bl}$
 for which the following holds:
 \begin{itemize} 
 \item $\cO(\bar{p}; h \circ f) \subset V$, 
 $\cO(\bar{p} ;h \circ f)
  \cap U_{p_1} \neq \emptyset, 
  \cO(\bar{p} ;h \circ f) \cap U_{p_8} \neq \emptyset$,
 \item $\supp(h) \subset \bigcup_{i=1}^{8}U_{p_i}$,
 \item the germ 
\begin{equation}
\label{asteqbf}
\bar{F} = G_{8} \circ (H_{8} \circ F_{8})^{n_{8}}
 \circ \cdots \circ G_{1} \circ (H_{1} \circ F_{1})^{n_{1}} +o(t^{k+1})
\end{equation}
 is a center germ of $\bar{p}$ of $h \circ f$
 for some $(k+1)$-central curve, and
\item the period of $\bar{p}$ is larger than $\sum_{i=1}^8 n_i$.
\end{itemize}
\end{lemma}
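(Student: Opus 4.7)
The plan is to chain Lemma~\ref{lemma:r-flat 2}, applied in the Takens coordinates at each $p_i$, so that one full pass around the heteroclinic cycle realizes in the center direction the composition (\ref{asteqbf}) modulo $o(|t|^{k+1})$. Let $\varphi_i:U_{p_i}\to\RR^{\dd}$ be the given Takens coordinates and $\hat f_i:=\varphi_i\circ f^{\pi_i}\circ\varphi_i^{-1}$ the return map, in Takens standard form with center germ $F_i$; after a preliminary $\psi_\alpha$-conjugation in the $x$-direction (Remark~\ref{rmk:conju}) in each chart, the pinching conditions of Lemma~\ref{lemma:r-flat 2} hold simultaneously for all eight maps at order $r=k+1$. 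For each heteroclinic orbit pick $q_i^u\in W^{uu}_\delta(p_i)\cap\cO(q_i)$ and $q_i^s\in W^{ss}_\delta(p_{i+1})\cap\cO(q_i)$ with $f^{N_i}(q_i^u)=q_i^s$; in $\varphi_i$-coordinates $q_i^u\in B_\dd^u$, and in $\varphi_{i+1}$-coordinates $q_i^s\in B_\dd^s$, since the local strong manifolds coincide with the coordinate subspaces. As in the proof of Lemma~\ref{lemma:heteroclinic PH}, $E^{cu}$ and $E^{cs}$ share a common one-dimensional direction $E^c$ along $\cO(q_i)$, so $W^{cu}(p_i)\cap W^{cs}(p_{i+1})$ contains a $C^\infty$ curve $I_{q_i}$ tangent to $E^c$ at $q_i$. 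Parameterize $I_{q_i}$ near $q_i^u$ in $\varphi_i$-coordinates as $l_+^{(i)}(t)=(t,0,\hat z^{(i)}(t))\subset B_\dd^{cu}$ and near $q_i^s$ in $\varphi_{i+1}$-coordinates as $l_-^{(i+1)}(t)=(t,\hat y^{(i+1)}(t),0)\subset B_\dd^{cs}$; the orientation-preserving germs $G_i\in\Diff_\loc^\infty(\RR,0)$ determined by $\varphi_{i+1}\circ f^{N_i}\circ\varphi_i^{-1}(l_+^{(i)}(t))=l_-^{(i+1)}(G_i(t))$ are the ones appearing in (\ref{asteqbf}), and they depend only on $f$ and the setup data.

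Given $H_i\in\cP^{k+1}(\RR,0)$ close to identity, introduce a $C^\infty$-small diffeomorphism $\tilde h_i$ supported in a small neighborhood of $\cO(p_i)$ and disjoint from $\{\hat f_i(q_{i-1}^s),q_i^u\}$, acting in $\varphi_i$-coordinates as $(x,y,z)\mapsto(H_i(x),y,z)$ near the origin. Then $\tilde h_i\circ\hat f_i$ is still in Takens standard form, with unchanged $A_i^{s,u}(x)$ and new center germ $H_i\circ F_i$; strong partial hyperbolicity and the pinching conditions persist. Apply Lemma~\ref{lemma:r-flat 2} to each $\tilde h_i\circ\hat f_i$ with the chosen curves $l_\pm^{(i)}$ at order $r=k+1$: for every sufficiently large $n_i$ it yields compactly supported $h_{n_i}^{(i)}$ with support shrinking to $\{\hat f_i(q_{i-1}^s),q_i^u\}$, arbitrarily $C^\infty$-close to the identity, keeping the forward orbit of $q_{i-1}^s$ inside $B_\dd$, and satisfying
\begin{equation*}
\bigl(h_{n_i}^{(i)}\circ\tilde h_i\circ\hat f_i\bigr)^{n_i}\circ l_-^{(i)}(t)=l_+^{(i)}\bigl((H_i\circ F_i)^{n_i}(t)\bigr)+o(|t|^{k+1}).
\end{equation*}
The eight support sets are pairwise disjoint and disjoint from the heteroclinic transition segments $\{f^j(q_i^u):0\le j\le N_i\}$, so the $\varphi_i$-pullbacks of all $\tilde h_i$ and $h_{n_i}^{(i)}$ assemble into a single $h\in\cU$ supported in $\bigcup_i U_{p_i}$.

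Set $\bar p:=q_8^s$; in $\varphi_1$-coordinates this is $l_-^{(1)}(0)$. Tracking $h\circ f$ around the cycle of length $T:=\sum_i(n_i\pi_i+N_i)$ shows that $\bar p$ returns to itself at time $T$ and the induced action on the center parameter of $l_-^{(1)}$ is
\begin{equation*}
\bar F(t)=G_8\circ(H_8\circ F_8)^{n_8}\circ\cdots\circ G_1\circ(H_1\circ F_1)^{n_1}(t)+o(|t|^{k+1}).
\end{equation*}
Thus $\varphi_1^{-1}\circ l_-^{(1)}$ is a $(k+1)$-central curve of $\bar p$ under $h\circ f$ with central germ $\bar F$. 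The orbit visits every $U_{p_i}$, lies in $V$ once the Takens charts and the $h_{n_i}^{(i)}$ are small enough, avoids $\cl{U_\bl}$, is strongly partially hyperbolic by persistence from $\Lambda$ (so $\bar p\in\Per_\dd(h\circ f,U)$), and has least period $T>\sum_i n_i$ because the eight distinct $U_{p_i}$ preclude any cyclic symmetry of the itinerary.

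The main technical obstacle is propagating the $o(|t|^{k+1})$ remainder through all eight concatenations. This is handled because each $G_i$ is a fixed smooth germ with bounded derivatives and each $(H_j\circ F_j)^{n_j}$ has derivative close to $1$ at the origin (since $H_j$ is close to identity and $F_j$ is $k$-flat), so composing an $o(|t|^{k+1})$ remainder with either type of map yields again an $o(|t|^{k+1})$ remainder, and iterating through the full cycle assembles the individual errors into a single $o(|t|^{k+1})$ remainder for $\bar F$.
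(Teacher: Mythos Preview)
Your argument follows essentially the same route as the paper's proof: set up Takens charts, define the curves $l_\pm^{(i)}$ and the transition germs $G_i$, introduce the $H_i$-perturbation in the $x$-coordinate, apply Lemma~\ref{lemma:r-flat 2} in each chart, and assemble the eight pieces around the cycle. Two points deserve more care. First, the preliminary $\psi_\alpha$-conjugation you invoke from Remark~\ref{rmk:conju} replaces each $F_i$ by $L_{\alpha_i}^{-1}\circ F_i\circ L_{\alpha_i}$; to recover formula~(\ref{asteqbf}) with the \emph{original} $F_i$ you must apply $\tilde H_i:=L_{\alpha_i}^{-1}\circ H_i\circ L_{\alpha_i}$ in the rescaled chart and set $G_i:=L_{\alpha_{i+1}}\circ\tilde G_i\circ L_{\alpha_i}^{-1}$, which you do not mention. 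Second, your support claims are inaccurate as stated: the map $\tilde h_i$ acting as $(x,y,z)\mapsto(H_i(x),y,z)$ near the origin fixes $q_{i-1}^s$ and $q_i^u$ (since they have $x=0$) but its support is not disjoint from them, and the transition segment $\{f^j(q_i^u):0\le j\le N_i\}$ has both endpoints inside the $U_{p_j}$'s; the paper handles this by a careful choice of nested polyballs $B_i'\subset B_i$ and a bump function $\rho_i$ vanishing near the incoming heteroclinic point, together with choosing $q_i^u$ as the outermost point of $\cO(q_i)\cap B_i^u$ so that its forward $f^{\pi_i}$-iterates leave $B_i$. These are bookkeeping issues rather than gaps in the strategy, but they are exactly where the work lies.
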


First, let us show
 how Proposition~\ref{prop:r-flat} and Remark~\ref{rmk:peri}
 follow from these lemmas and Proposition~\ref{prop:germ}.
\begin{proof}
[Proof of Proposition~\ref{prop:r-flat} and Remark~\ref{rmk:peri}]
Let eight $k$-flat periodic points $p_1,\dots,p_8$ satisfying 
 the assumptions of Proposition~\ref{prop:r-flat} be given.
 
For each $p_i$, thanks to their flatness
 we can find an $m$-central curve for any $m$. 
Put $\tilde{k} = \max \{ k, 3\}$. 
 Take a $\tilde{k}$-central curve $\ell_p$
 and apply Lemma~\ref{lemma:Takens} to $p_i$ and $\ell_p$.
 Then, by adding a perturbation,
 which is arbitrarily small in the $C^{\infty}$ topology and
 whose support is contained in an arbitrarily small neighborhood of ${p_i}$,
 we can assume that each $p_i$ admits $C^{\infty}$ Takens coordinates
 and $p_i$ is still $k$-flat with the same central germ.   
 
Next, we apply Lemma~\ref{lemma:network}.
Then by adding an arbitrarily small perturbation to $f$
we obtain the heteroclinic connection 
$W^{uu}(p_i; f) \cap W^{ss}(p_{i+1}; f) \neq \emptyset$
for every $i$.
Since the support of this perturbation can be
 chosen arbitrarily close to $p_i$
 but is disjoint from $p_i$,
we can assume that each $p_i$ still admits Takens coordinates,
possibly with a smaller domain of definition. 
Notice that this perturbation also does not change the 
central germs.

When $k=1$ we perform preparatory coordinate transformations 
that gives us the condition on $S(F)/A(F)$ in the 
assumptions of Proposition~\ref{prop:germ} if necessary,
see Remark~\ref{rmk:conju}.
Notice that the coordinate transformations
correspond to a conjugacy by an orientation-preserving diffeomorphism,
so it does not change the signatures of the germs,
see Lemma~\ref{lemma:A,S conjugacy}.

Now, $\{p_i\}$ and $f$ satisfy the assumptions of 
Lemma~\ref{lemma:r-flat 1} and the center germs $\{F_i\}$ of $\{p_i\}$
satisfy the assumptions of Proposition~\ref{prop:germ}.
We apply Lemma~\ref{lemma:r-flat 1},
which gives us germs $\{G_i\}$.
Then we apply Proposition~\ref{prop:germ}, which gives us
germs $\{H_i\}$ arbitrarily close to the identity and large numbers $n_1,\ldots, n_8$.
Then, the conclusion of Lemma~\ref{lemma:r-flat 1} gives us
 a diffeomorphism $h$ which is $C^\infty$ close to the identity map
 such that $h \circ f$ admits a periodic point $\bar{p}$
 whose orbit is contained in a given neighborhood $V$ of $\Lambda$
 and passes through given neighborhoods $U_{p_1}$ and $U_{p_8}$.
Since the center germ of $\bar{p}$ is given by (\ref{asteqbf}),
 by Proposition~\ref{prop:germ},
 we know that $\bar{p}$ is a $(k+1)$-flat periodic point.

Let us discuss the signature of 
the Schwarzian derivative in the case $k=2$. 
In this case, from Proposition~\ref{prop:germ}
we know that $\tau^{\Per}_S(\bar{p}) = \tau^{\Per}_S(p_1)$.
Thus, if we exchange $p_1$ and $p_3$ at the very beginning,
then we obtain the conclusion with the equality  
$\tau^{\Per}_S(\bar{p}) = \tau^{\Per}_S(p_3)$.
Since $p_1$ and $p_3$ have opposite Schwarzian derivative,
it follows that we can choose the signature $\tau^{\Per}_S(\bar{p})$ as we want.

Finally, by Lemma~\ref{lemma:inherit tangle} we can guarantee that
 if we chose the neighborhood $V$ and the neighborhoods $U_{p_1}$ and $U_{p_8}$
 sufficiently small,
 the orbit of $\bar p$ is linked to the blender. 
\end{proof}

It remains to prove the two lemmas. Let us complete their proofs. 
\begin{proof}
[Proof of Lemma~\ref{lemma:network}]
First, let us create a perturbation
 which produces the connection between
 $W^{uu}(p_1, U_{\cC})$ and $W^{ss}(p_{2}, U_{\cC})$.
Given the neighborhood $V$ of $\{p_1,\dots,p_8\}$,
 we choose a neighborhood $V_i$ of $p_i$ for each $i=1,\dots,8$
 such that $V_i \subset V$ and $V_i \cap V_j=\emptyset$ if $i \neq j$.
By assumption, since $p_1$ and $p_2$ are linked to the blender,
 $W^{uu}(p_1, U_{\cC})$ contains a disk in $\cD^u$
 and $W^{ss}(p_2, U_{\cC})$ contains a disk in $\cD^u$
 where $(\cD^s, \cD^u)$ are the disk system of the blender.
Lemma~\ref{lemma:connecting}, together with the fact that $p_1$ and $p_2$ are 
outside $U_{\mathrm{bl}}$,
 gives us a point $x_1 \in V_1 \cap (W^{uu}(p_1,U_{\cC}) \setminus \{p_1\})$
 and an arbitrarily small perturbation $h_1$ 
 such that there exists a point of heteroclinic connection
 $q_1 \in W^{uu}(p_1, U_{\cC}; h_1 \circ f)
 \cap W^{ss}(p_{2}, U_{\cC}; h_1 \circ f)$.
Notice that, $h_1$ can be chosen in such a way that
 the support of $h_1$ does not intersects with $\bigcup_{i=1}^8 \cO(p_i)$.
Indeed, the support of $h_1$ can be chosen so that
it is concentrated in a very small neighborhood of the point $x_1$
 in $W^{uu}(p_1, U_{\cC}) \setminus \{p_1\}$,
 which is disjoint from  $\bigcup_{i=1}^8 \cO(p_i)$.

Then, we again apply Lemma~\ref{lemma:connecting}
 for the pair of points $p_2$ and $p_3$
 and the map $h_1\circ f$ as follows.
Since $p_2$ and $p_3$ are linked to the blender,
 we can apply Lemma~\ref{lemma:connecting}
 and obtain a point
 $x_2 \in V_2 \cap (W^{uu}(p_2, U_{\cC},h_1 \circ f) \setminus \{p_2\})$
 and an arbitrarily small perturbation $h_2$ 
 whose support is contained in a very small neighborhood of $x_2$ 
 such that $h_2 \circ h_1 \circ f$ has a heteroclinic point
 $q_2 \in W^{uu}(p_2, U_{\cC}; h_2\circ h_1 \circ f)
  \cap W^{ss}(p_{3}, U_{\cC}; h_2\circ h_1 \circ f)$.
Notice that the support of $h_2$ can be chosen so that it is disjoint from $\cO(q_1)$
 since $x_2$ is a point of $W^{uu}(p_2;h_1 \circ f)$
 and it is disjoint from $\cO(q_1) \subset W^{uu}(p_1;h_1 \circ f)$.
Thus, the perturbation by the composition of $h_2$ does not 
affect the previous connection
 $q_1 \in W^{uu}(p_1, U_{\cC}; h_1 \circ f) \cap
 W^{ss}(p_{2}, U_{\cC}; h_1 \circ f)$.

We continue this construction and obtain $h_3,\dots,h_8$.
Then the diffeomorphism $h := h_8 \circ \cdots \circ h_1$ satisfies
 the  desired conditions. 
\end{proof}

\begin{proof}
[Proof of Lemma~\ref{lemma:r-flat 1}]
First, by shrinking $U_{p_i}$,
 we may assume that $U_{p_i} \cap U_{p_j} = \emptyset$ for $i\neq j$.
We also assume that
 we have $f^k(U_{p_i})  \subset V$ for every $i$ and $0\leq k \leq \pi_i$,
 where $\pi_{i}$ is the period of $p_{i}$.
For each $i$, we take a $C^{\infty}$ Takens coordinate chart $(\vphi_i, U_i)$.
Assume $U_i \subset U_{p_i}$.
We fix a polyball $B_{\dd,i} \subset \vphi_i(U_i)$ such that 
 $f_{i}=\vphi_i \circ f^{\pi_{i}} \circ \vphi_i^{-1}$ is
 in the Takens $C^{\infty}$ standard form on $B_{\dd, i}$,
 that is, for $(x, y, z) \in B_{\dd,i}$,
 $f_{i}(x,y,z)=(F_{i}(x),A^s_{i}(x)y, A^u_{i}(x)z)$
 where $A^s_{i} :B_{\dd, i}^c \ra \GL(\RR^{d_s})$
 and $A^u_{i}: B_{\dd, i}^c \ra \GL(\RR^{d_u})$,
 are matrices which depend $C^\infty$-smoothly on $x \in B_{\dd, i}^c$.
We denote by $\vphi_{i}^x$ the $x$-coordinate function of $\vphi_{i}$.

For each $i=1,\ldots 8$, 
we take heteroclinic points
 $q^s_{i} \in \vphi_{i}^{-1}(B_{\dd,i}^s) \cap \cO(q_{i-1})$
(we put $q_0 = q_8$) and
 $q^u_{i} \in \vphi_{i}^{-1}(B_{\dd,i}^u) \cap \cO(q_{i})$. 
For each $i$,
 we can choose a small polyball $B_{i} \subset B_{\dd, i}$ such that 
\begin{equation*}
\vphi^{-1}_{i}(\overline{B_{i}}) \cap \Lambda = 
\{ p_i \} \cup \{ f^{m\pi_i}(q^s_{i}), f^{-n\pi_i}(q^u_{i}) \mid 
m \geq M_s, n \geq M_u\}
\end{equation*}
 for some non-negative integers $M_s$ and $M_u$.
By replacing $q^u_{i}$ with $f^{-n_u\pi_i}(q^u_{i})$
 and $q^s_{i}$ with $f^{n_s\pi_i}(q^s_{i})$,
 we may assume $M_s = M_u =0$. So, from now on, we assume that
\begin{equation*}
\vphi^{-1}_{i}(\overline{B_{i}}) \cap \Lambda = 
\{ p_i \} \cup \{ f^{m\pi_i}(q^s_{i}), f^{-n\pi_i}(q^u_{i}) \mid 
m \geq 0, n \geq 0\}
\end{equation*}
We also choose another smaller polyball $B'_{i}$
 satisfying $\overline{B'_i} \subset B_i$ such that
\begin{equation*}
\vphi^{-1}_{i}(\overline{B'_{i}}) \cap \Lambda = 
\{ p_i \} \cup \{ f^{m\pi_i}(q^s_{i}), f^{-n\pi_i}(q^u_{i}) \mid 
m \geq 1, n \geq 0\},
\end{equation*}
In other words, $B'_{i}$ is a polyball shrunk
 in such a way that it does not contain the point
 $q_i^s$ but contains all the other points of $\Lambda \cap B_i$.
We fix smooth bump functions $\rho_i(x, y, z): B_i \to \mathbb{R}$ such that:
\begin{itemize}
\item $\rho_i(x, y, z) =1$ if $(x, y, z) \in B'_i$.
\item $\rho_i(x, y, z) =0$ if $(x, y, z)$ is near the boundary of $B_i$.
\item $\rho_i(x, y, z) =0$ near $q_i^s$.
\end{itemize}
In a way similar to Lemma \ref{lemma:linearization},
for every $i$ we find integers $N_{i} \geq 1$ such that
\begin{equation*}
 f^{N_{i}}(q^u_{i})=q^s_{i+1} 
\end{equation*}
 and $C^\infty$ local maps  
 $l^s_{i}:(\RR,0) \ra (B_{i}^{cs},q^s_{i})$ and
 $l^u_{i}:(\RR,0) \ra (B_{i}^{cu},q^u_{i})$ such that
\begin{equation*}
\vphi_{i}^x \circ l^s_{i}(t)=\vphi_{i}^x \circ l^u_{i}(t)=t, 
\end{equation*}
and 
\begin{equation*}
f^{N_{i}} \circ l^u_{i}(t) =l^s_{i+1}(G_{i}(t)) 
\end{equation*}
 for some orientation-preserving germs $G_{i} \in \Diff_\loc^\infty(\RR,0)$.

Take $y^s_{i} \in \RR^{d_s}$ and $z^u_{i} \in \RR^{d_u}$ 
 for each $i$ such that
 $\vphi_{i}(q_{i}^s)=(0,y^s_{i},0)$ and $\vphi_{i}(q_i^u)=(0,0,z^u_{i})$ hold.
Take a neighborhood $\cU^\frac{1}{8}$ of the identity map in $\Diff^\infty(M)$ 
 such that $g_{1} \circ \dots \circ g_{8} \in\cU$ 
 for any $g_{1},\dots,g_{8} \in \cU^\frac{1}{8}$.
 
For a polynomial map $H \in \cP^{r+1}(\RR,0)$,
 let $\gamma_{i, H}$ ($i=1,\dots,8$) be a diffeomorphism such that
$\gamma_{i, H}= \Id$ outside $\vphi_i^{-1} (B_i)$,
 while inside $\vphi_i^{-1} (B_i)$ the map $\gamma_{i, H}$ satisfies
\begin{equation*}
 \vphi_{i} \circ \gamma_{i,H} \circ \vphi_{i}^{-1}(x,y,z)
 =( \rho_i (x, y, z) (H(x) -x) + x, y, z).
\end{equation*}
for all $(x,y,z) \in B_i$.

Let $\cN_i$ be a neighborhood of the identity map in $\cP^{r+1}(\RR,0)$
 such that if $H\in \cN_i$, then $\gamma_{i, H} \in \cU^{\frac{1}{8}}$
 and the pinching conditions in the assumption of Lemma~\ref{lemma:r-flat 2}
 holds with $r=k+1$ for $A^s=A^s_i$, $A^u=A^u_i$ and $F_c=H\circ F_i$.

Notice that the following holds:
\begin{itemize}
\item On $B'_i$, we have
 $ \vphi_{i} \circ \gamma_{i,H}  \circ \vphi_{i}^{-1}(x,y,z)
   = ( H(x), y, z)$.
Thus, $\vphi_i$ is a Takens coordinate chart
 for the map $(\gamma_{i,H} \circ f)^{\pi_i}$ near $p_i$.
\item The point $p_{i}$ is a $(k+1)$-partially hyperbolic fixed point
 for $(\gamma_{i,H} \circ f)^{\pi_{i}}$.
\item On $B^{su}_{i}$, we have 
 $\vphi_i\circ \gamma_{i,H} \circ \vphi^{-1}_i|_{B^{su}_{i}} = \mathrm{Id}$.
In particular,
 perturbing the map $f$ by taking the composition with $\gamma_{i,H}$
 does not affect the orbits contained in $B_i^{su}$. 
\item Since $\gamma_{i,H} \circ f = f$
 near $\{f^m(q^u_i) \mid  0 \leq m \leq N_i \}$,
 the curves $\{l_i^s\}, \{l_i^u\}$ and the germs $\{G_i\}$
 are not affected when $f$ is composed with $\gamma_{i,H}$.
\end{itemize}
Now, we apply Lemma~\ref{lemma:r-flat 2} to
 $\hat f=\vphi_{i} \circ (\gamma_{i,H} \circ f)^{\pi_{i}} \circ \vphi_{i}^{-1}$
 letting $r = k+1$:
for each $i=1,\dots,8$,
 we obtain a sequence of diffeomorphisms
 $\{h_{i,H,n}\}_{n \geq 1}$ of $B_{i}$ such that
 $\supp(h_{i,H,n})$ converges to the pair of points
 $\{f^{\pi_{i}, q_{i}^u}(q_{i}^s)\}$,
 the map $h_{i,H,n}$ converges to the identity map as $n\to +\infty$
 in the $C^\infty$ topology, and
\begin{equation*}
 ( h_{i,H,n} \circ \vphi_i \circ (\gamma_{i, H} \circ f)^{\pi_{i}}
  \circ \vphi_i^{-1})^{ n} \circ l_{i}^s(t)
 =l_{i}^u((H \circ F_{i})^n(t))+o(t^{k+1})
\end{equation*}
for sufficiently large $n$ (say, for $n \geq N_i$).

We define $\tilde{h}_{i, H, n} =\vphi^{-1}_i \circ h_{i, H, n} \circ \vphi_i$
 on $\vphi^{-1}_i(B_i)$
 and extend it to the whole of $M$ as the identity map.
Then, $\tilde{h}_{i, H, n}\in \mathrm{Diff}^{\infty}(M)$.
Put $\cN = \cap_{i=1}^8 \cN_i$ and $N = \max_{i=1,\ldots,8} \{N_{i}\}$.
Now, take any maps $H_{i}\in \cN$ and any integers $n_{1},\dots,n_{8} \geq N$.
Put $\bar{F}= G_{8} \circ (H_{8} \circ F_{8})^{n_{8}} \circ
 \dots \circ G_{1} \circ (H_{1} \circ F_{1})^{n_{1}}$
 and $\Pi = \left(\sum_{i=1}^8 \pi_{i} n_{i}+N_{i}\right)$.
Then by Lemma~\ref{lemma:r-flat 2},
 if $n_{1},\dots,n_{8}$ are sufficiently large,
 the diffeomorphism 
 $\tilde{h}=(\tilde{h}_{1, H_1, n_{1}} \circ \gamma_{1, H_1}) \circ
  \dots \circ  (\tilde{h}_{8, H_8, n_{8}} \circ \gamma_{8, H_8})$ 
 satisfies $\tilde{h} \in \cU$, 
 $\supp(\tilde{h}) \subset  \bigcup_{i=1}^8 U_{p_i}$,
\begin{equation*}
 (\tilde{h} \circ f)^{\Pi} \circ l_{1}^s(t)
 =l_{1}^s(\bar{F}(t))+o(t^{k+1}),
\end{equation*}
 and $q_1^s$ is a periodic point of $\tilde{h} \circ f$ of period $\Pi$ such that
\begin{gather*}
\cO(q_{1}^s;\tilde{h} \circ f) \subset
 \bigcup_{i=1}^8\left(\{f^n(q_{i}^u) \mid 0 \leq n \leq N_{i}\}
 \cup  \left(\bigcup_{j=0}^{\pi_i}
   f^j\left(\vphi_{i}^{-1}(B_{i})\right) \right)\right) \subset V, \\
  \cO(q_{1}^s;\tilde{h} \circ f) \cap U_{p_1} \neq \emptyset,
 \qquad \cO(q_{1}^s ;\tilde{h} \circ f) \cap U_{p_8} \neq \emptyset,
\end{gather*}
see (\ref{asteqbf}) for the definition of $\bar{F}$.
Therefore, the point $\bar{p} = q_{1}^s$
 and the diffeomorphism $\tilde{h}$ satisfy the required conditions.
\end{proof}

%
%

\section{Examples}
\label{sec:examples}
In this section, we give a construction of
 partially hyperbolic diffeomorphisms
 which satisfy the hypotheses of the main theorem.
We also give examples
 which elucidate the importance of the conditions on the nonlinearity
 and the Schwarzian derivatives. 

\subsection{Fundamental construction}\label{funds}
In this subsection, we describe a construction
of a partially hyperbolic diffeomorphism from a one-step skew-product.

We begin with the Shub-Wilkinson's example
 of a partially hyperbolic diffeomorphism on the 3-torus
 $\mathbb{T}^3 = \mathbb{T}^2 \times \mathbb{T}^1$,
 which has the following form:
\begin{equation}
\label{shwi}
 (x, y) \in \mathbb{T}^2 \times \mathbb{T}^1 \mapsto (Bx, f_{x}(y)),
\end{equation}
 where $B$ is a hyperbolic toral automorphism
 and $f_{x}$ is a smooth family of diffeomorphisms
 of the circle $\mathbb{T}^1$.
Let $m$ be a positive integer.
We choose the matrix $B$ such that the following holds:
\begin{itemize}
\item there exist $m$ fixed points $\{P_i\}$
 of $B : \mathbb{T}^2 \to \mathbb{T}^2$,
\item for each $i=1,\dots,m$ there is a parallelogram
 $R_i \subset \mathbb{T}^2$ with the center at $P_i$
 whose edges are parallel to the eigenvectors of $B$,
 such that different parallelograms $R_i$ are mutually disjoint
 and they behave in Markovian fashion under $B$: for each $i$ and $j$,
 $B(R_i) \cap R_j$ is a (non-empty) disjoint union
 of subrectangles of $R_j$ and $B(R_i)$ crosses $R_j$ properly.
\end{itemize}
One can see that these properties persist,
 with the same set of rectangles, when $B$ is replaced
 by $B^n$ for any $n\geq 1$.

Let $\{g_i\}_{i = 1, \ldots, m}$ be
 a family of orientation-preserving diffeomorphisms
 of the circle $\mathbb{T}$.
Consider a diffeomorphism $\tilde{F}$ of $\mathbb{T}^3$ by (\ref{shwi})
which satisfies $f_x(y) = g_{i}(y)$ if $x \in R_i \subset \mathbb{T}^2$.
The map $\tilde{F}$ restricted to the locally maximal invariant set
 of $ \left( \sqcup R_i \right) \times \mathbb{T}^1$ acts
 as a one-step skew-product map.

It is not difficult to see that
 there is an extension of the locally defined map $\tilde{F}$
 to the ambient space $\mathbb{T}^3$ in such a way that 
 the extended map defines a diffeomorphism of $\mathbb{T}^3$
 keeping the condition (\ref{shwi}).
Let $F$ be such a diffeomorphism.
It has an $F$-invariant foliation $\mathcal{F}^c = \mathcal{F}^c (F)$
 given by $\{ \{ x \} \times \mathbb{T} \mid x \in \mathbb{T}^2 \}$.
Notice that, by replacing $B$ with its power,
 we can assume that the foliation $\mathcal{F}^c$ is
 $k$-normally hyperbolic for $F$ for any fixed integer $k >0$.

In the following, we are interested in the behavior of
 perturbations $G$ of $F$, that is,
 the $C^k$ diffeomorphisms $G$ which are $C^k$ close to $F$
 in $\mathrm{Diff}^k(\mathbb{T}^3)$. 
Recall two theorems from \cite{HPS}.

\begin{prop}
[\cite{HPS}, Theorems 7.1, 7.4]\label{prop:hps-persist}
Let $k \geq 2$.
If $G$ is $C^k$ close to $F$
 then there is a center-leaf conjugacy $h$ between $F$ and $G$.
Namely, there is a homeomorphism $h : \mathbb{T}^3 \to \mathbb{T}^3$
 such that $\cF(G) := h(\mathcal{F}^c)$ is
 a $G$-invariant $C^k$ lamination which is $k$-normally hyperbolic.
\end{prop}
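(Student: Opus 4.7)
The plan is to invoke the standard machinery of \cite{HPS}: this is essentially a restatement of the persistence theorem for normally hyperbolic invariant laminations, so I will reproduce the outline of how one would establish it from first principles, emphasizing the ingredients which are needed here.

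First I would set up the appropriate function space. Since $\mathcal{F}^c$ is a $k$-normally hyperbolic $F$-invariant foliation of $\mathbb{T}^3$ by circles, one fixes a small tubular neighborhood of each leaf and considers the space $\Sigma$ of continuous sections of the Grassmannian bundle of $1$-planes (or equivalently, the space of continuous families of $C^1$-disks transverse to the strong stable/unstable directions) which are $C^0$-close to the tangent distribution $T\mathcal{F}^c$. On this space, the dynamics of $G$ induces a graph transform $\mathcal{G}_*$: given a candidate family of center-disks, one pulls back by $G$ and reparametrizes as a graph over the reference disks using the stable/unstable splitting for $F$ (which persists under $C^1$-small perturbation by the cone-field argument of Proposition~\ref{prop:cone PH}).

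The key analytic step is to show that $\mathcal{G}_*$ is a contraction on a suitable complete metric. This is exactly where the hypothesis of $k$-normal hyperbolicity is consumed: the bunching inequalities ensure that the expansion/contraction in the normal direction dominates any $k$-th order distortion one sees along the center, so that the graph transform has a unique fixed point which yields a $G$-invariant lamination $\mathcal{F}(G)$ whose leaves are $C^k$-smooth. The smoothness of the leaves is obtained by applying the $C^k$-section theorem (see \cite[Proposition 7.6]{Shu}, as already used in the proof of Lemma~\ref{lemma:invariant subbundles}) leafwise, once one knows the leaves exist as topological objects.

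Next I would construct the center-leaf conjugacy $h$. Since both $\mathcal{F}^c$ and $\mathcal{F}(G)$ are transverse to a common strong-unstable/strong-stable splitting of cones, one defines $h(x)$ to be the unique intersection of the $G$-leaf close to the $F$-leaf through $x$ with a fixed transversal through $x$; holonomy along the normal directions identifies the leaf spaces homeomorphically. The fact that $h$ intertwines $F$ and $G$ at the level of leaves follows directly from the invariance of $\mathcal{F}(G)$ under $G$ together with the definition. The continuity of $h$ is a consequence of the continuous dependence of the fixed point of $\mathcal{G}_*$ on $G$ in the $C^k$-topology.

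The hard part, and the reason one usually just cites \cite{HPS}, is verifying the contraction estimate for the graph transform in the $C^k$-sense: one must simultaneously track $C^0$-closeness of the leaves and a uniform $C^k$-bound, and these require the full strength of the $k$-normal hyperbolicity assumption on $\mathcal{F}^c$ (which in our setting is secured by replacing $B$ by a sufficiently high power in the preceding construction). Once these estimates are in hand, the existence of $\mathcal{F}(G)$, its $C^k$-smoothness of leaves, its $k$-normal hyperbolicity for $G$, and the existence of the homeomorphism $h$ all follow by standard arguments.
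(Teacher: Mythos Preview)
The paper does not prove this proposition at all: it is stated with an explicit citation to \cite[Theorems~7.1, 7.4]{HPS} and no argument is given, since it is a direct restatement of the classical persistence theorem for normally hyperbolic laminations. Your outline is a reasonable sketch of the HPS graph-transform argument, but it goes well beyond what the paper itself does, which is simply to invoke the reference.
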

\begin{prop}
[\cite{HPS}, Theorem 6.1, 6.8]\label{prop:hps-close}
If $F$ is $C^k$-close to $G$ then each perturbed leaf $h(L)$ 
is uniformly $C^k$-close to the original leaf $L$ of $\mathcal{F}^c$.
\end{prop}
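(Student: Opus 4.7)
The plan is to follow the graph-transform approach that underlies the Hirsch--Pugh--Shub theory. Fix the $F$-invariant $k$-normally hyperbolic lamination $\mathcal{F}^c$ and, for each leaf $L$, identify a uniform tubular neighborhood in the normal bundle $NL$. Any nearby $G$-invariant leaf close to $L$ in the $C^0$ sense can be written as the graph of a section $\sigma_L : L \to NL$ of small $C^k$-norm. The strategy is to view the assignment $L \mapsto \sigma_L$ as a fixed point of a graph transform $\Gamma_G$ acting on the Banach space of bounded $C^k$ sections of the normal bundle of the whole lamination, and then to exploit the fact that $\Gamma_G$ depends continuously on $G$ in the $C^k$-operator topology.

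Concretely, I would proceed as follows. First, I would cover the lamination by finitely many foliation charts and build a bundle-atlas so that $NL$ can be trivialized with uniform estimates; this allows the space $\Sigma^k$ of bounded $C^k$-sections to become a Banach space. Second, I would verify that $\Gamma_F$ fixes the zero section and that its linearization at $0$ equals the push-forward by $DF$ acting on normal jets, fibered over the center dynamics. The $k$-normal hyperbolicity hypothesis says precisely that the ratio between the normal contraction/expansion and the $k$-th power of the central stretching is strictly less than $1$, so $\Gamma_F$ is a uniform contraction on $\Sigma^k$ by the $C^r$-section theorem (the standard Hirsch--Pugh--Shub fiber contraction argument, applied inductively on the derivative order up to $k$). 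Third, I would show that $\Gamma_G$ is a $C^k$-small perturbation of $\Gamma_F$ whenever $G$ is $C^k$-close to $F$; this is a direct consequence of the continuity of the graph transform in the map and the uniform bounds from the chart construction. Fourth, by the standard persistence of hyperbolic fixed points under Lipschitz perturbations in Banach spaces, $\Gamma_G$ has a unique fixed point $\sigma_G \in \Sigma^k$ with
\[
\|\sigma_G\|_{C^k} \leq C\,\|\Gamma_G - \Gamma_F\|_{\mathrm{op}} \leq C'\,\mathrm{dist}_{C^k}(F,G),
\]
where $C,C'$ are constants depending only on the hyperbolicity rates of $\mathcal{F}^c$. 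Finally, reading off the leaves of $h(\mathcal{F}^c) = \mathcal{F}^c(G)$ as the graphs of $\sigma_G|_L$ gives the desired uniform $C^k$-closeness of $h(L)$ to $L$.

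The one genuinely delicate step is the third one, namely keeping the graph transform well-defined on a space of \emph{$C^k$} sections as $k$ grows. The issue is that applying $\Gamma_G$ to a $C^k$ section requires controlling compositions of $G$ with the section, and the Faà di Bruno formula introduces products of derivatives of the central dynamics up to order $k$; it is precisely at this order that the strict inequalities in the definition of $k$-normal hyperbolicity are needed so that the fiber contraction constant at jet level $k$ remains below $1$. One must also ensure, via the inductive construction from lower-order jets, that the fixed point obtained in each jet bundle is compatible, yielding a genuine $C^k$ section rather than merely a $C^{k-1}$ section with bounded $k$-th derivative. Once this is in place, the quantitative closeness estimate above propagates from the zeroth jet all the way up to the $k$-th, establishing the conclusion.
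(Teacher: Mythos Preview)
The paper does not supply its own proof of this proposition: it is stated as a citation of Theorems~6.1 and~6.8 of \cite{HPS} and used as a black box in Section~\ref{sec:examples}. So there is no proof in the paper to compare your attempt against.

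That said, your outline is essentially a sketch of the Hirsch--Pugh--Shub argument itself: the graph transform on a Banach space of $C^k$ sections of the normal bundle, contraction via the fiber contraction theorem (with the $k$-normal hyperbolicity providing the needed spectral gap at each jet level), and continuous dependence of the fixed point on the map $G$. This is the correct skeleton of the proof in \cite{HPS}, and you have correctly identified the delicate step (keeping the contraction constant below~$1$ at jet order $k$, which is exactly what $k$-normal hyperbolicity guarantees). For the purposes of this paper, however, you would simply cite the result rather than reproduce the argument.
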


\subsection{The set $\cW^{\infty}$ is non-empty}
\label{ss:non-emp}
In this section, we give an example of a diffeomorphism in $\cW^{\infty}$,
namely, a $C^\infty$-diffeomorphism which satisfies conditions
of the Main Theorem. See Section~\ref{sec:outline} for the definition of $\cW^{\infty}$.

We use the convention $\mathbb{T}^1 = \mathbb{R} / (10 \mathbb{Z})$.
Choose four orientation-preserving diffeomorphisms
 $g_1, g_2, g_{b+}, g_{b-}$ of $\mathbb{T}$
 which satisfy the following conditions for all $x \in [-4, 4]$:
\begin{itemize}
\item[(C1)] $g_{i}(x)$ ($i =1, 2$) has
 a repeller-attractor pair contained in $(-2, 2)$ 
 with a heteroclinic connection $q^i$.
In other words, $g_i$ has two fixed points $p_{i, +}$ and $p_{i, -}$ such that
 $p_{i, +}$ is repelling, $p_{i, -}$ is attracting and
 there exists a point $q^i \in W^u(p_{i, +}) \cap W^s(p_{i, -})$. 
We also assume that $\{p_{i, +} , p_{i, -}\} \cup \cO(q^i) \subset (-2, 2)$.
\item[(C2)] The signs of $A$ and $S$ at $q^1$ and $q^2$
 are opposite (see \cite[Section 2.1]{AST} for the definitions of signs).
\item[(C3)] $g_{b+}$ and $g_{b-}$ have a $C^1$-robust blender
 (in the sense of \cite[Section 2.1]{AST}) containing the interval $[-3, 3]$.
Furthermore, $g^{\pm1}_{b+}([-2, 2]) \subset (-3, 3)$
 and $g^{\pm1}_{b-}([-2, 2]) \subset (-3, 3)$
 (for instance take $g_{b\pm}(x)  = \displaystyle 0.99(x\pm4) \mp4$). 
\end{itemize}
The construction of the maps $g_{1,2}$ can be done
 in the same way as in \cite[Section 8]{AST}. 

For the four maps $g_1, g_2, g_3=g_{b+}, g_4=g_{b-}$, 
 we take the skew product diffeomorphism $F$
 as described in Section \ref{funds}.
We assume that $F$ is $1$-normally hyperbolic to the center foliation $\cF^{c}$.
Furthermore, we choose $F$ such that
 it preserves the orientation of the center direction.
Thus, we can define the transverse pair of cone fields
 and the $c$-orientation which are invariant under $F$.
Recall that these properties are $C^1$ robust.

Let us check that $F$ satisfies the assumptions of the Main Theorem.
First, as it is explained in \cite[Section 5]{BDV},
 we can see that $F$ has a $C^1$robust blender
 in $U_{\mathrm{bl}} := (R_{b+} \sqcup R_{b-}) \times [-3, 3]$.
Namely, we can take 
\begin{itemize}
\item an open set $\mathcal{D}^u \subset \mathrm{Emb}^{1}(I, U_{\mathrm{bl}})$
 which contains the set of segments (an embedding of a one-dimensional disk) 
 parallel to strong unstable direction in $U_{\mathrm{bl}}$, and
\item an open set $\mathcal{D}^s \subset \mathrm{Emb}^{1}(I, U_{\mathrm{bl}})$
 which contains the set of segments parallel to strong stable direction
 in $U_{\mathrm{bl}}$,
\end{itemize}
 such that
\begin{itemize}
\item for every $G$ sufficiently $C^1$ close to $F$, the following holds:
 for every $\sigma \in \mathcal{D}^s$, 
 the set of segments in $\mathcal{D}^u$ which is tangled with $\sigma$
 is dense in $\mathcal{D}^u$.
\end{itemize}

Next, we check that the conditions on the heteroclinic connections. 
In each parallelogram $R_{i} \times \mathbb{T}^1$ ($i=1, 2$), 
 there is a fixed fiber corresponding to the point $P_i\in \mathbb{T}^2$.
In this fiber, there is a hyperbolic fixed point
 which corresponds to the repeller $p_{i,+}$ of $g_i$.
We denote this fixed point as $p^{i}_{u}$.
In each fixed fiber, there is another hyperbolic fixed point,
 corresponding to the attractor $p_{i,-}$ of $g_i$,
 which is attracting in the fiber direction
 and is heteroclinically connected to $p^i_{u}$.
We denote this fixed point as $p^i_{s}$.
 They all have one-dimensional strong unstable direction
 and one-dimensional strong stable direction. 
Thus the pair $(p^i_{u}, p^i_{s})$ satisfies the condition
 about the dimension of the stable and unstable subspaces.
Furthermore, all the fixed points admit dominated splitting of type $(1, 1, 1)$.

Since the blender we are interested in is
 in the region $(R_{b+} \sqcup R_{b-}) \times \mathbb{T}^1$,
 the fixed points $\{p^i_u, p^i_s\}$ are outside the blender. 
Also, the pairs $\{p^i_u, p^i_s\}$ have heteroclinic connections $q^i$
 and their signatures are opposite by construction. 

Let us check the connection between the stable manifold of $p^i_u$
 and the blender.
In the region $R_{i}\times \mathbb{T}^1$,
 the stable manifold $W^{ss}(p^i_u)$ is given
 by a segment which is parallel to the strong stable eigenvector at $p^i_u$.
In particular, we have
 $W^{ss}_{\mathrm{loc}}(p^i_u) \subset R_{i} \times [-2, 2]$,
 where $W^{ss}_{\mathrm{loc}}(p^i_u)$ denotes
 the connected component of $W^{ss}(p^i_u) \cap R_{i} \times [-2, 2]$ 
 containing $p^i_u$. Since $W^{ss}(p^i_u)$ properly crosses
 the region $R_{i}\times \mathbb{T}^1$
 and $B(R_{i}) \cap R_{b\pm} \neq \emptyset$,
 it follows by condition (C3) that $W^{ss}(p^i_u)$ contains a segment
 in $R_{b\pm} \times (-3, 3)$
 which, by construction, is parallel to the strong stable direction. 
Since this segment belongs to $\mathcal{D}^s$,
 we have confirmed the condition about the connection
 between $p^i_u$ and the blender. 
Similarly, one can check the condition
 about the connection between $p^i_s$ and the blender. 

Thus we have checked that the map $F$ satisfies
 all the requirements of the theorem.

\subsection{An example of $C^1$-generic super-exponential growth
 which is not $C^2$-generic}
\label{c123}
In this section,
 we give an example of a partially hyperbolic diffeomorphism $F$
 such that for every $C^2$-close map
 the growth of the number of periodic points is at most exponential,
 while a $C^1$-generic in any sufficiently small $C^1$-neighborhood of $F$
 exhibits a super-exponential growth.

In this section we use the convention $\mathbb{T}^1 = \mathbb{R} / (10\mathbb{Z})$ in this section.
Let $\mathcal{A}$ be the set of $C^2$ diffeomorphism $f$ of
 the circle $\mathbb{T}^1$ which have the following properties:
\begin{itemize}
\item $f$ preserves the orientation of $\mathbb{T}^1$;
\item there exists $k >0$ such that $f^{-k}([-4, -1]) \subset (-3, -2)=(7,8)$;
\item there exists $k >0$ such that $f^{k}([4, 7]) \subset (5, 6)=(-5,-4)$;
\item $f'(y) >1$ for every $y \in [-3, -2]$;
\item $ 0< f'(y) <1$ for every $y \in [5, 6]$;
\item $A(f)(y) >0$ for $y\in [-1, 4]$.
\end{itemize}

\begin{lemma}
\label{lemma:2-super}
Let $\{f_x\}_{x \in \mathbb{T}^2}$ be a smooth family of diffeomorphisms 
 such that $f_x \in\mathcal{A}$ for all $x \in \mathbb{T}^2$. 
Let $F \in \mathrm{Diff}^2(\mathbb{T}^3)$ be
 the corresponding map $F(x, y) = (Bx, f_x(y))$.
If $F$ is $2$-normally hyperbolic to the flat center leaves $\cF^c(F)$,
 then there exists a $C^2$-neighborhood $\mathcal{N}$ of $F$
 in $\mathrm{Diff}^2(\mathbb{T}^3)$ such that 
 for every $G \in \mathcal{N}$
 the number of periodic points grows at most exponentially with period. 
\end{lemma}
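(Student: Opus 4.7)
The plan is to reduce the counting problem to a one-dimensional problem on center leaves, and then use the defining positivity of the non-linearity on $[-1,4]$ to obtain a uniform bound on fixed points of the induced compositions.

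First, by the $2$-normal hyperbolicity hypothesis together with Propositions~\ref{prop:hps-persist} and~\ref{prop:hps-close}, every $G$ in a small $C^2$-neighborhood $\cN$ of $F$ admits a $G$-invariant $C^2$ center lamination $\cF(G)=h(\cF^c(F))$ whose leaves are uniformly $C^2$-close to the flat fibers, via a center-leaf conjugacy $h$. The conjugacy identifies the quotient dynamics $\tilde B$ on the leaf space $\mathbb{T}^3/\cF(G)\cong\mathbb{T}^2$ with $B$ topologically, so the number of period-$n$ points of $\tilde B$ equals $\#\Fix(B^n)=O(|\lambda|^n)$, where $\lambda$ is the leading eigenvalue of $B$. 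A periodic point $p$ of $G^n$ necessarily lies on a leaf $L$ satisfying $G^n(L)=L$, hence projects to a period-$n$ point $\bar x$ of $\tilde B$, and is a fixed point of the return map $\tilde F_n:=G^n|_L:L\to L$. Under the $C^2$ identification of $L$ with $\mathbb{T}^1$, $\tilde F_n$ has the form $\tilde f_{n-1}\circ\cdots\circ\tilde f_0$, where each $\tilde f_i$ is $C^2$-close to $f_{x_i}\in\mathcal{A}$ along the $B$-orbit of a lift $x_0$ of $\bar x$.

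The main step is to prove the existence of a constant $K=K(F,\cN)$ such that $\#\Fix(\tilde F_n)\le K$ for all $n$ and all admissible orbits. By $C^2$-closeness, each $\tilde f_i$ inherits the structural properties of $\mathcal{A}$ on slightly shrunk intervals: uniform expansion near $[-3,-2]$, uniform contraction near $[5,6]$, Markov behavior of the intervals under $\tilde f_i^{\pm k}$, and strictly positive non-linearity on a fixed interval close to $[-1,4]$. The cocycle formula $A(g\circ f)(y)=A(g)(f(y))\,f'(y)+A(f)(y)$ implies that whenever the orbit $y,\tilde f_0(y),\tilde f_1\tilde f_0(y),\ldots$ remains in the middle region, each step contributes a positive amount to $A(\tilde F_n)(y)$; consequently, on any subinterval of $\mathbb{T}^1$ whose $\tilde F_n$-orbit stays inside $[-1,4]$, the derivative $\tilde F_n'$ is monotone, which forces $\tilde F_n-\mathrm{id}$ to have at most two zeros there. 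Orbits entering the contracting neighborhood of $[5,6]$ are pushed uniformly into it, contributing at most one fixed point per contracting basin; the expanding neighborhood of $[-3,-2]$ is handled symmetrically by time-reversal. The number of transitions between these regimes is bounded by the Markov combinatorics of the three intervals, so the total fixed-point count is bounded by a constant $K$. This is essentially an adaptation of the one-step skew-product analysis of \cite{AST} to the perturbed fiber maps $\tilde f_i$.

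Combining both bounds yields $\#\{p\mid G^n(p)=p\}\le K\cdot\#\Fix(\tilde B^n)=O(|\lambda|^n)$ for every $G\in\cN$, which is the desired exponential bound. The main obstacle is the uniform fixed-point bound $K$: while positive non-linearity gives the clean count on orbits confined to $[-1,4]$, the interplay between the middle region and the two hyperbolic end regions must be carefully controlled along compositions of arbitrary length, an analysis that parallels the one-dimensional treatment of \cite{AST}.
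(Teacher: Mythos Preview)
Your overall strategy matches the paper's: reduce to the center-leaf dynamics via normal hyperbolicity, bound the number of fixed points of the return map on each periodic leaf by a constant, and then multiply by the exponential count of periodic leaves coming from $B$. The use of the cocycle identity for $A$ to get monotonicity of $\tilde F_n'$ on orbits staying in $[-1,4]$ is exactly the convexity argument the paper uses (a composition of increasing convex maps is convex, hence at most two fixed points).

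Where you diverge is in the treatment of orbits interacting with the side regions. You speak of ``transitions between these regimes'' being ``bounded by the Markov combinatorics of the three intervals''. This is both vague and unnecessary, and if taken literally it does not obviously yield an $n$-independent bound. The paper's point is much simpler: there are \emph{no} transitions for periodic orbits. The conditions on $\cA$ force $\mathbb{T}^2\times[4-\delta_1,7+\delta_1]$ to be mapped (after a bounded number of iterates) into $\mathbb{T}^2\times(5,6)$, which is forward-invariant and uniformly contracting in the center; hence any periodic orbit touching this region is trapped there and contributes exactly one periodic point per leaf. The symmetric statement holds for $[-4-\delta_2,-1+\delta_2]$ under $G^{-1}$. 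Consequently any periodic orbit not accounted for by these two cases must lie entirely in $X=\mathbb{T}^2\times[-1+\delta_2,4-\delta_1]$, and your convexity argument then applies cleanly to the whole orbit. This yields at most $1+1+2=4$ periodic points per leaf, with no combinatorial bookkeeping required. Once you replace the transition discussion with this invariance observation, your argument is complete and coincides with the paper's.
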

\begin{proof}
Let $G$ be a $C^2$ diffeomorphism of $\mathbb{T}^3$
 which is $C^2$ close to $F$.
By Proposition~\ref{prop:hps-persist},
 the diffeomorphism $G$ has a $C^2$ center lamination $\cF^c(G)$
 and there is a global leaf conjugacy $h: \mathbb{T}^3 \to \mathbb{T}^3$
 between $\cF^c(F)$ and $\cF^c(G)$.
Notice that to count the number of periodic points of $G$,
 we only need to consider periodic leaves of $\cF^c(G)$.

By definition of $F$ and a compactness argument, 
one can see that there exists $k_1 >0$ and $\delta_1>0$ such that
\begin{equation*}
F^{k_1}(\mathbb{T}^2\times [4-\delta_1, 7+\delta_1])
 \subset \mathbb{T}^2\times (5, 6).
\end{equation*}
Since this is a $C^0$-robust property, 
 the same holds for every $G \in \mathrm{Diff}^2(\mathbb{T}^3)$ 
 sufficiently $C^2$-close to $F$.
Notice that, since $F$ is uniformly contracting
 in the center direction everywhere in $\mathbb{T}^2\times (5, 6)$
 and, by Proposition~\ref{prop:hps-close},
 the center leaves of $G$ are uniformly $C^2$-close to the center leaves
 of $F$, we see that $G$, too,
 uniformly contracts the center leaves everywhere
 in $\mathbb{T}^2\times [5, 6]$.
It follows that each periodic center leaf of $G$ has only one periodic orbit
 that passes through $\mathbb{T}^2\times [4-\delta_1, 7+\delta_1]$.
By a similar argument for $F^{-1}$,
 we obtain that there exists $\delta_2 >0$ such that 
 for $G$ sufficiently close to $F$,
 every periodic leaf has only one periodic orbit
 that passes through $\mathbb{T}^2\times [-4-\delta_2, -1+\delta_2]$.

Let us consider the periodic orbits lying in the region
 $X =\mathbb{T}^2 \times [-1 + \delta_2, 4 -\delta_1]$.
Recall that if $x \in X$ is a periodic point of $G$
 then we have $G^n(x) \in X$ for every $n \in \mathbb{Z}$,
 since the region outside $X$ is
 in the basin of an attracting region $\mathbb{T}^2\times (5, 6)$
 or a repelling region $\mathbb{T}^2\times (-3, -2)$. 
Now, recall that the action of $F$ on any central fiber
 is orientation-preserving and the value $A(f_x)$
 is positive for $(x,y)\in X$.
This means that the restriction of $F$
 on the intersection of any central fiber with $X$
 is a monotonically increasing, convex function.
 The same holds for the map $G$,
 as its restriction to center fibers is $C^2$-close to that of $F$.
As a composition of increasing convex functions is also convex,
 it follows that every return map on a periodic fiber is a convex function
on the intersection of the fiber with $X$.
Since a convex map on an interval can have no more than 2 fixed points,
we see that each periodic leaf has at most two periodic orbits in $X$. 

As a consequence, for $G$ which is sufficiently close to $F$ in the $C^2$-topology,
 each periodic leaf contains at most for periodic orbits.
Since periodic leaves correspond to the periodic points
 of the hyperbolic toral automorphism $B$,
 and the number of those periodic points grows exponentially with period,
 we obtain that the growth number of periodic points of $G$ cannot be
 super-exponential. 
\end{proof}

On the other hand,
 one can choose the map $F$ in this lemma
 in such a way that a $C^1$-generic map
 in its arbitrarily small $C^1$-neighborhood exhibits super-exponential growth.
This is done as follows.
We choose the base map $B$ so that
 it has three Markovian parallelograms $(R_i)_{i=1,2,3}$.
Then consider the maps on $[-1, 4]$ given as follows:
\begin{align*}
f_1(y) &= (1-3\varepsilon)y + \varepsilon y^2\\
f_2(y) &= (1-3\varepsilon)(y-1) +1 + \varepsilon (y-1)^2\\
f_3(y) &= (1-3\varepsilon^2)y + \varepsilon (y- 1/3)(y-2/3)
\end{align*}
where $\varepsilon >0$ is a small real number. 
They are orientation-preserving diffeomorphisms on their images
 and satisfy $A(f) >0$ on $[-1, 4]$.
Then, we extend these maps as diffeomorphisms of $\mathbb{R}/10\mathbb{Z}$
 so that the resulted maps belong to $\mathcal{A}$.
We take a smooth family $\{f_x( \cdot )\}$
 so that $f_x(y) = f_i$ in $R_i$ and $f_x( \cdot ) \in \mathcal{A}$
 for every $x \in \mathbb{T}^2$.
The construction of such an extension is not difficult
 and the details are left to the reader. 

By replacing the base map with higher power if necessary, 
we see that the corresponding skew-product map $F$
satisfies the assumption of Lemma~\ref{lemma:2-super}.
Thus every map sufficiently $C^2$-close to $F$ has
at most exponential growth of number of periodic points. 

Meanwhile, one can check that $F\in\cW^1$.
Indeed, the maps $f_1$ and $f_2$ have a $C^1$-robust blender
 in an interval contained in $(0, 1)$ 
by a similar reasoning as in Section~\ref{ss:non-emp}.
The map $f_3$ has an attractor-repeller pair near $y= 1/3, 2/3$.
Thus, the map has a heteroclinic pair which is connected to the blender.
By Main Theorem for $r=1$, 
 we see that in a $C^1$-neighborhood of $F$,
 $C^1$-generically we have super-exponential growth
 of the number of periodic points. 


\subsection{An example of $C^2$-generic super-exponential growth which is not $C^3$-generic}

Below we use the convention $\mathbb{T}^1 = \mathbb{R}/6\mathbb{Z}$.
Let us consider the set $\mathcal{B}$ of $C^3$-diffeomorphism $f$ of the circle
$\mathbb{T}^1$ which satisfy the following conditions:
\begin{itemize}
\item $f$ preserves the orientation of $\mathbb{T}^1$;
\item there exists $k >0$ such that $f^{-k}([2, 5]) \subset (3, 4)$;
\item $f'(y) <1$ for $y \in (3, 4)$;
\item $f([-1, 2]) \subset (-1, 2)$;
\item $S(f)(y) <0$ for $y \in (-1, 2)$.
\end{itemize}
As in the previous subsection, we have the following:
\begin{lemma}
\label{lem:last}
Let $\{f_x\}_{x \in \mathbb{T}^2}$ be  a smooth family
 of diffeomorphisms of $\mathbb{T}^1$ such that
 $f_x \in \mathcal{B}$ for every $x \in \mathbb{T}^2$. 
Let $F$ be the corresponding skew product diffeomorphism
 such that the center foliation $\cF^c(F)$ is $3$-normally hyperbolic.
Then, there exists a $C^3$-neighborhood $\mathcal{W}$ of $F$
 in $\mathrm{Diff}^3(\mathbb{T}^3)$ such that 
 every $G \in \mathcal{W}$ has at most exponential growth
 of the number of periodic points.
\end{lemma}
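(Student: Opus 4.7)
The plan is to mirror the proof of Lemma~\ref{lemma:2-super}, replacing the convexity argument based on $A(f_x)>0$ by a Schwarzian-derivative argument based on $S(f_x)<0$. By Propositions~\ref{prop:hps-persist} and~\ref{prop:hps-close}, every $G$ sufficiently $C^3$-close to $F$ admits a $C^3$ center lamination $\cF^c(G)$ whose leaves are uniformly $C^3$-close to the flat fibers of $F$; each periodic orbit of $G$ corresponds to a fixed point of the return map $\Phi_L=G^m|_L$ on some periodic leaf $L$ of period $m$, an orientation-preserving $C^3$ circle diffeomorphism. Since the number of periodic leaves of period dividing $n$ is $\#\Fix(B^n)$ and grows exponentially, it will suffice to bound $\#\Fix(\Phi_L)$ by a universal constant, uniformly in $L$ and $G\in\mathcal{W}$. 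By compactness and openness of the conditions defining $\mathcal{B}$, I pick $\delta>0$ so that, uniformly in $x\in\mathbb{T}^2$, the inclusions $f_x([-1,2])\subset(-1+\delta,2-\delta)$ and $f_x^{-k}([2-\delta,5+\delta])\subset(3+\delta,4-\delta)$ hold, $f_x'<1-\delta$ on $[3-\delta,4+\delta]$, and $S(f_x)<-\delta$ on $[-1-\delta,2+\delta]$. By $C^0$-robustness of the first two inclusions and by the $C^3$-closeness of leaves, all four properties transfer to $G$ and to the restriction $\Phi_L$ on each leaf. In particular, the arc $\hat{A}:=L\cap(\mathbb{T}^2\times[-1,2])$ is sent by $\Phi_L$ strictly into its own interior (Brouwer yields a fixed point of $\Phi_L$ in $\hat{A}$, so $\Phi_L$ has rotation number $0$ and $\Fix(\Phi_L^j)=\Fix(\Phi_L)$ for every $j\ge1$), and by the cocycle formula~(\ref{cocpr}) together with forward invariance of $\hat{A}$, $S(\Phi_L)<0$ holds on $\hat{A}$ in any $C^3$-parameterization close to the fiber one.

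I now split the fixed points of $\Phi_L$ into two cases. In \emph{Case~(a)} the fixed point lies in $\hat{A}$. The identity $(1/\sqrt{\varphi'})''=-S(\varphi)/(2\sqrt{\varphi'})$ shows that $S(\Phi_L)<0$ on $\hat{A}$ is equivalent to strict convexity of $1/\sqrt{\Phi_L'}$ there, so the level set $\{\Phi_L'=1\}\cap\hat{A}$ contains at most two points, and applying Rolle's theorem to $\Phi_L-\mathrm{id}$ yields at most three fixed points in $\hat{A}$. In \emph{Case~(b)} the fixed point $p$ lies outside $\hat{A}$. By forward invariance of $\hat{A}$, the whole orbit of $p$ lies in $\mathbb{T}^2\times[2,5]$, and then the transferred backward condition forces every point $q$ of the orbit into $\mathbb{T}^2\times(3,4)$, because $G^k(q)$ is on the orbit and therefore in $[2,5]$. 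Two consequences follow: no fixed point of $\Phi_L$ can lie in the gap arcs $L\cap(\mathbb{T}^2\times([2,3]\cup[4,5]))$, and $\Phi_L'(p)$, being a product of $m$ central derivatives of $G$ each evaluated at a point of $\mathbb{T}^2\times(3,4)$, is $<(1-\delta/2)^m<1$, so every Case~(b) fixed point is strictly attracting. If Case~(b) produced two fixed points $p_1<p_2$ in the arc $L\cap(\mathbb{T}^2\times(3,4))$, the sign change of $\Phi_L-\mathrm{id}$ between them would force a third fixed point in the same arc with $\Phi_L'\ge1$, contradicting the derivative bound; hence Case~(b) contributes at most one fixed point. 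Altogether $\#\Fix(\Phi_L)\le4$ on every periodic leaf and $\#\{x:G^n(x)=x\}\le 4\cdot\#\Fix(B^n)$, which grows at most exponentially.

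The hard part will be the Case~(a) bound, which rests on translating strict negativity of the Schwarzian into strict convexity of $1/\sqrt{\Phi_L'}$ and then running a Rolle-type count. A secondary but routine point is ensuring that the Schwarzian computation, performed fiberwise for $F$, carries over to the slightly curved leaves of $G$; this follows from Proposition~\ref{prop:hps-close} and the continuous dependence of $S$ under $C^3$-perturbations, together with the uniform strict inequality $S(f_x)<-\delta$.
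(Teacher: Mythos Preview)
Your proof is correct and follows essentially the same two-region strategy as the paper: split the leaf into the arc over $[-1,2]$, where the negative Schwarzian of the return map bounds the fixed-point count, and the complementary arc over $[2,5]$, where center contraction on $(3,4)$ forces at most one fixed point. The paper states the Schwarzian bound (``such maps can have no more than 3 fixed points'') without justification; your convexity argument via $(1/\sqrt{\varphi'})''=-S(\varphi)/(2\sqrt{\varphi'})$ and Rolle is exactly the standard proof and fills that in. Your rotation-number-zero observation is a useful addition the paper omits: it guarantees that periodic points of $G$ on a leaf of least period $m$ actually have $G$-period $m$, so that bounding $\#\Fix(\Phi_L)$ really does bound all periodic points on $L$; without it one would need to repeat the Schwarzian argument for every power $\Phi_L^j$.
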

\begin{proof}
Let $G$ be a diffeomorphism sufficiently close to $F$ in $C^3$.
 Since the region $\mathbb{T}^2\times [2, 5]$ is forward-invariant
 and the map is contracting on $\mathbb{T}^2\times [3, 4]$ in the 
 center direction,
 the number of periodic points whose orbits
 touches the set $\mathbb{T}^2\times [2, 5]$
 grow at most exponentially with period
 (see a similar argument in Lemma \ref{lemma:2-super}).  
So we only need to count periodic orbits
 that lie inside the region $\mathbb{T}^2\times [-1, 2]$.
In this region, the map $F$ restricted to any center fiber has
negative Schwarzian derivative.
Thus, if $G$ is sufficiently close to $F$ then, for every periodic leaf,
the corresponding first-return map has a negative Schwarzian derivative too.
Such maps can have no more than 3 fixed points.
Accordingly, each periodic leaf of the center lamination for $G$
 contains at most three periodic points.
Hence, the number of periodic points grows
 at the same (or lower) exponential rate
 as the number of periodic points
 of the hyperbolic toral automorphism $x\mapsto Bx$.
\end{proof}

Meanwhile, it is possible to take the map $F$ in Lemma \ref{lem:last}
 such that
 it belongs to $\cW^2$. Then Theorem~\ref{thm:precise} tells us that
 a $C^2$-generic map in a small $C^2$-neighborhood of $F$ exhibits
 a super-exponential growth of the number of periodic points.

Again, like in Section \ref{c123},
 we start with the base map $B$
 with three Markovian parallelograms $(R_i)_{i=1,2,3}$.
 We take three maps that act on $[-1, 2]$ as follows:
\begin{align*}
f_1(y) &= (1-\delta)y - \varepsilon y^3,\\
f_2(y) &= (1-\delta)(y-1) + 1 - \varepsilon (y-1)^3,\\
f_3(y) &= y - \varepsilon(y-1/2)(y-1/2-\varepsilon)(y-1/2+\varepsilon),
\end{align*}
where $\varepsilon$ and $\delta$ are some positive real numbers.
Then, it is not hard to check that if $\delta$ is sufficiently small and
 $\varepsilon$ is much smaller than $\delta$,
 then these are diffeomorphisms on their images.
By extending them appropriately,
 we can construct a smooth family of diffeomorphisms $\{f_x\}$
 so that $f_x =f_i$ for $x \in R_i$
 and $f_x \in \mathcal{B}$ for every $x \in \mathbb{T}^2$. 

The maps $f_{1,2}$ create a blender
 (as in the examples discussed in the previous sections).
The map $f_3$ has two repeller-attractor pairs near $y=1/2$. 
Using the criterion in \cite[proposition 8.3]{AST},
 we can see that these attractor-repeller pairs have
 heteroclinic connections with the opposite values of $\tau_A$.
Also, one can see that these fixed points are linked by the blender in $\mathbb{T}^3$.
By replacing the base map if necessary, we can also obtain the $3$-normal hyperbolicity.
Thus, we have constructed $F \in \cW^2$ keeping the assumptions 
of Lemma~\ref{lem:last}.

%
%

\bigskip
\bigskip
\bigskip

\begin{itemize}
\item[]  \emph{Masayuki ASAOKA (asaoka-001@math.kyoto-u.ac.jp)}
\begin{itemize}
\item[] Department of Mathematics, Kyoto University, 
\item[] Kitashirakawa-Oiwakecho, Kyoto 606-8502, Japan
\end{itemize}
\item[] \emph{Katsutoshi SHINOHARA (ka.shinohara@r.hit-u.ac.jp)}
\begin{itemize}
\item[]  Graduate School of Commerce and Management, 
\item[]  Hitotsubashi University,
\item[]  2-1 Naka, Kunitachi, Tokyo 186-8601, Japan
\end{itemize}
\item[] \emph{Dmitry TURAEV (d.turaev@imperial.ac.uk)}
\begin{itemize}
\item[] Department of Mathematics, Imperial College London,
\item[] 180 Queen's Gate, London, UK 
\item[] 4 Lobachevsky University of Nizhny Novgorod, 
\item[] Nizhny Novgorod 603950, Russia
\end{itemize}
\end{itemize}

\end{document}